\newtheorem{thm}{Theorem}[section]
\newtheorem{prop}[thm]{Proposition}
\newtheorem{lem}[thm]{Lemma}
\newtheorem{cor}[thm]{Corollary}
\theoremstyle{definition}
\newtheorem{defn}[thm]{Definition}
\theoremstyle{remark}
\newtheorem*{rem}{Remark}
\newcommand{\C}{\mathbb{C}}
\newcommand{\E}{\mathbb{E}}
\newcommand{\flow}{\operatorname{Flow}}
\newcommand{\im}{\operatorname{Im}}
\newcommand{\N}{\mathbb{N}}
\newcommand{\PP}{\mathbb{P}}
\newcommand{\Q}{\mathbb{Q}}
\newcommand{\R}{\mathbb{R}}
\newcommand{\re}{\operatorname{Re}}
\newcommand{\Span}{\operatorname{Span}}
\numberwithin{equation}{section}
\begin{document}

\title[Invariant Gibbs Measure]{Invariant Gibbs Measure for 3D NLW \\ in Infinite Volume}
\author{Samantha Xu}
\begin{abstract}
Consider the radial nonlinear wave equation $-\partial_t^2 u + \Delta u = u^3$, $u :\R_t \times \R_x^3 \to \R$, $u(t,x) = u(t,|x|)$. In this paper, we construct a Gibbs measure for this system and prove its invariance under the flow of the NLW. In particular, we are in the infinite volume setting.

For the finite volume analogue, specifically on the unit ball with zero boundary values, an invariant Gibbs measure was constructed by Burq, Tvetkov, and de Suzzoni in \cite{BT2,Suz} as a Borel measure on super-critical Sobolev spaces.

In this paper, we advocate that the finite volume Gibbs measure be considered on a space of weighted H\"older continuous functions. The measure is supported on this space and the NLW is locally well-posed there, a counter-point to the Sobolev super-criticality noted by Burq and Tzvetkov. Furthermore, the flow of the NLW leaves this measure invariant.

We use a multi-time Feynman--Kac formula to construct the infinite volume limit measure by computing the asymptotics of the fundamental solution of an appropriate parabolic PDE. We use finite speed of propagation and results from descriptive set theory to establish invariance of the infinite volume measure. 

To the best of our knowledge, this paper provides the first construction and proof of invariance of a Gibbs measure in infinite volume outside of the 1D case.
\end{abstract}
\maketitle

\tableofcontents


\section{Introduction}

Consider the 3D radial, defocusing, cubic, nonlinear wave equation (NLW)
	\begin{equation}\label{NLW}
	\left\{\begin{array}{l}
	-\partial_t^2 u + \Delta u = u^3,\\
	u:\R_t \times \R_x^3 \to \R, \\ 
	u(t,x) = u(t,|x|) = u(t,r)
	\end{array}\right.
	\end{equation}
with Hamiltonian 
	$$
	H(u) = \int_{\R^3} \tfrac{1}{4}|u|^4 + \tfrac{1}{2}|\nabla u|^2 + \tfrac{1}{2}|u_t|^2.
	$$
In this paper, we construct the Gibbs measure $m_\infty = m_{\infty,1} \otimes m_{\infty,2}$ for this system, which we informally write as
	\begin{align*}
	dm_{\infty}(u,u_t) &= \frac{1}{Z} \exp\left(-H(u) \right) ``d(u,u_t)"
		\\
	dm_{\infty,1}(u) &= \frac{1}{Z_1} \exp\left(-\int_{\R^3} \tfrac{1}{4}|u|^4 + \tfrac{1}{2}|\nabla u|^2 \right) ``du"
		\\
	dm_{\infty,2}(u_t) &= \frac{1}{Z_2} \exp\left(-\int_{\R^3} \tfrac{1}{2}|u_t|^2\right) ``du_t"
	\end{align*}
and show that the flow of \eqref{NLW} is defined for all time on the support of $m_\infty$, and leaves $m_\infty$ invariant. Namely, the flow is a measure preserving transformation for all time (cf., Theorem~\ref{mainer}). 

To prove this theorem, we introduce a change of variables to reformulate Theorem~\ref{mainer} in a one dimensional setting. In this setting, we break the reformulation into three parts (Theorems~\ref{invariancefinite}, \ref{infvolmeas}, and \ref{invarianceinf}), and prove each part separately. This reformulation is detailed in Section~\ref{reform}.

To better understand the interpretation of such a measure, let us first examine a simpler system. Let $L > 0$ and let $B(0,L) \subseteq \R^3$ denote the ball in $\R^3$ of radius $L$ and with center at $0$. Let $\Delta_L = \Delta_{L,Dir}$ denote the Laplacian on $B(0,L)$ with Dirichlet boundary conditions. Consider the linear wave equation
	\begin{equation}\label{freewave}
	\left\{\begin{array}{l}
	-\partial_t^2 u + \Delta_L u = 0,\\
	u:\R_t \times B(0,L) \to \R, \\ 
	u(t,x) = u(t,|x|) = u(t,r), \\
	u|_{\R_t \times \partial B(0,L)} \equiv 0
	\end{array}\right.
	\end{equation}
as well as its corresponding ``free measure"
	\begin{equation}\label{freegibbs}
	d\tilde{m}_{L}(u,u_t) 
		=
	\frac{1}{Z} \exp\left(-\int_{B(0,L)} \frac{1}{2}|\nabla u|^2 \right) ``du" \otimes \frac{1}{Z'} \exp\left(-\int_{B(0,L)} \frac{1}{2}|u_t|^2\right) ``du_t".
	\end{equation}
Since we can diagonalize $\Delta_L$ in this setting, each of the two factors above is suggestive of a Gaussian measure on some infinite dimensional Hilbert space. 

Indeed, consider the following orthonormal\footnote{The normalized Lebesgue measure on $B(0,L)$ is given by $\frac{1}{4\pi} r^2 dr d\sigma_{S^2}$.} basis for $L^2_{rad}(B(0,L))$ consisting of eigenfunctions of $\Delta_L$:
	\begin{equation}\label{3deigfun}
	f_{n,L}(r) := \frac{ \sqrt{2/L}\sin(n\pi r /L)}{r}, \hbox{\hskip 18pt } n =1,2,\ldots
	\end{equation}
with eigenvalues $(n\pi /L)^2$, $n =1,2,\ldots,$ respectively. For each $s \in \R$, consider the radial, homogeneous Sobolev space
		\begin{align*}
	&\dot{H}^s_{rad,0}(B(0,L) \to \C) :=
		\\
	&\hbox{\hskip 8pt}  
	\left\{ 
		g = \sum_{n=1}^\infty c_n f_{n,L}  	
			\;\middle|\;
		c_n \in \C	, \|g\|_{\dot{H}^s_{rad,0}}^2 := \sum_{n=1}^\infty |n\pi/L|^{2s} |c_n|^2 < \infty
	\right\},
	\end{align*}
and $\dot{H}^s_{rad,0}(B(0,L) \to \R)$, the sub-space with real coefficients. We equip these spaces with the usual Borel $\sigma$-algebra.

Letting $s < \frac{1}{2}$, we interpret the expression in \eqref{freegibbs} as the image measure on 
	\[
	\dot{H}^s_{rad,0} (B(0,L) \to \R) \times \dot{H}^{s-1}_{rad,0}(B(0,L) \to \R)
	\]
under the map
	\begin{equation}\label{randomize3d}
	\omega \mapsto \left( \sum_{n=1}^\infty \frac{a_n(\omega)}{n \pi / L} f_{n,L}(r), \sum_{n=1}^\infty b_n(\omega) f_{n,L}(r) \right) \hbox{\hskip 18pt }a_n(\omega), b_n(\omega) \sim \mathcal{N}_\R(0,1) \hbox{ i.i.d.}
	\end{equation}
where $\omega$ is an element of some suitable probability space $(\Omega, \mathcal{F}, P)$ and $a_n,b_n$ are i.i.d. standard Gaussian random variables. Indeed,	the restriction $s < \frac{1}{2}$ is due to the fact that
	\[
	\E^\omega\left[ \Big\|\sum_{n=1}^\infty \frac{a_n(\omega)}{n \pi / L} f_{n,L}(r)\Big\|_{\dot{H}^s_{rad,0}} + \Big\|\sum_{n=1}^\infty b_n(\omega) f_{n,L}(r)\Big\|_{\dot{H}^{s-1}_{rad,0}} \right] < \infty
	\]
if and only if $s < \frac{1}{2}$ and so the free measure is actually supported on such Sobolev spaces. It can be shown (cf., \cite[Prop. 2.10]{Suz}) that this measure is left invariant under the flow of the linear wave equation in \eqref{freewave}.

Let us now consider a system that is the nonlinear version of \eqref{freewave}, as well as the finite volume version of \eqref{NLW}:
	\begin{equation}\label{secondorderfinite}
	\left\{\begin{array}{l}
	-\partial_t^2 u + \Delta_L u = u^3,\\
	u:\R_t \times B(0,L) \to \R, \\ 
	u(t,x) = u(t,|x|) = u(t,r), \\
	u|_{\R_t \times \partial B(0,L)} \equiv 0
	\end{array}\right.
	\end{equation}
and its Gibbs measure
	\begin{equation}\label{secondorderfinitegibbs}
	dm_L(u,u_t) 
		:= 
	dm_{L,1}(u) \otimes dm_{L,2}(u_t)
	\end{equation}
where
	\begin{align*}
	dm_{L,1}(u)
		&=
	\frac{1}{Z_1} \exp\left(-\int_{B(0,L)} \frac{1}{4}|u|^4 + \frac{1}{2}|\nabla u|^2 \right) ``du"
		\\
	dm_{L,2}(u_t)
		&=
	\frac{1}{Z_2} \exp\left(-\int_{B(0,L)} \frac{1}{2}|u_t|^2\right) ``du_t"
	\end{align*}
Observe that, formally speaking, 
	\begin{equation}\label{3dmeasradnik}
	dm_L(u,u_t) = \frac{1}{Z} \exp\left(-\int_{B(0,L)} \frac{1}{4}|u|^4 \right) d\tilde{m}_{L}(u,u_t),
	\end{equation}
with $Z$ being some normalizing constant. This system was studied\footnote{Burq and Tzvetkov actually considered the Gibbs measure for NLW $-\partial_t^2 u + \partial_L u = |u|^{p} u$ when $p < 3$, though invariance of the Gibbs measure was only shown in the case $p=2$ by de Suzzoni.} by Burq and Tzvetkov in \cite{BT2} and by de Suzzoni in \cite{Suz}. They considered the case $L =1$, but the analysis for general $L >0$ is identical if we simply re-scale. In \cite{BT2}, it was noted that 
	\begin{equation}\label{nonnull}
	\left\| \sum_{n=1}^\infty \frac{a_n(\omega)}{n\pi / L} f_{n,L}(r)  \right\|^4_{L^4(B(0,L))}  < \infty, \hbox{\hskip 18pt} \omega\hbox{ a.s.}
	\end{equation}
In particular, the Radon--Nikodym derivative
	\begin{equation}\label{radnik3d}
	(u,u_t) \mapsto \exp\left(-\frac{1}{4} \int_{B(0,L)} |u|^4\right)
	\end{equation}
is positive $\tilde{m}_L$ almost surely, and so the expression in \eqref{3dmeasradnik} is indeed well-defined. In particular, $m_{L}$ is mutually absolutely continuous with respect to the free measure $\tilde{m}_L$. It was shown by Burq and Tzvetkov that \eqref{secondorderfinite} is globally well-posed on a set of $m_L$ measure $1$. Building upon the results in \cite{BT2}, it was shown by de Suzzoni that the flow of \eqref{secondorderfinite} leaves the Gibbs measure $m_L$ invariant. We discuss a modification of these results in Theorem~\ref{invariancefinite}.
	
To construct the Gibbs measure for our system \eqref{NLW}, we shall consider a suitable infinite volume limit of the Gibbs measures in \eqref{secondorderfinitegibbs}. To this end, we shall employ techniques from the theory of stochastic processes. In particular, we seek to work in a space where evaluation at a point is a well-defined linear functional.
	
We first discuss the construction and interpretation of
	\begin{align*}
	dm_{\infty, 1}(u) := \frac{1}{Z_1} \exp\left(-\int_{\R^3} \frac{1}{4}|u|^4 + \frac{1}{2}|\nabla u|^2 \right) ``du"
	\end{align*}
by further analyzing the support of $m_{L,1}$. Consider the random series 
	\begin{align}
		\label{bblaw}
	f^\omega(r) 
		&= 
	\sum_{n=1}^\infty \frac{a_n(\omega)}{n\pi / L} \sqrt{\frac{2}{L}}\sin(n \pi r /L), 
		\hbox{\hskip 18pt} a_n(\omega) \sim \mathcal{N}_\R(0,1) \hbox{ i.i.d.}
		\\
		\nonumber
		&= 
	r \sum_{n=1}^\infty \frac{a_n(\omega)}{n \pi /L} f_{n,L}(r), 
	\end{align}
By Mercer's theorem (cf., \cite{Kac} or \cite{Loe}), the series almost surely converges uniformly and has law of the standard Brownian bridge from  $(0,0)$ to $(L,0)$ . Namely, $f^\omega(r)$ is a Gaussian process in $r$ of mean zero with
	\begin{equation}\label{brownianbridge}
	\E[ f^\omega(r)f^\omega(r')] = r\left(1 - \frac{r'}{L}\right), \hbox{\hskip 18pt } 0 \leq r \leq r' \leq L.
	\end{equation}
In particular, $f^\omega(r)$ is almost surely $s$-H\"older continuous for every $s \in [0, \frac{1}{2})$. Since $m_{L,1}$ is mutually absolutely continuous with respect to the law of $r^{-1} f^\omega(r)$, it follows that $m_{L,1}$ is supported on the space
	\begin{align*}
	r^{-1}C^s_{rad,0}(B(0,L) \to \R) 
		&:= 
	\{ f : f|_{\partial B(0,L)} \equiv 0, \lim_{r \to 0} rf(r) = 0, \|f\|_{s,L} < \infty\}
	\end{align*}
for every $s \in [0,\frac{1}{2})$, where 
	\begin{equation}\label{weightedholdernorm}
	\|f\|_{s,L} := \sup_{\substack{r_1, r_2 \in [0,L]\\r_1 \neq r_2}} \frac{r_1f(r_1)-r_2f(r_2)}{|r_1-r_2|^s}.
	\end{equation}
Note that the condition $\lim_{r \to 0} rf(r) =0$ comes from the fact that $f^\omega(0) = 0$ almost surely. It follows from the discussion in Section~\ref{reform} and in Appendix~\ref{LS} that $r^{-1}C^s_{rad,0}(B(0,L) \to \R)$ is indeed a measurable subset of $\dot{H}^s_{rad,0}(B(0,L) \to \R)$ and that the restriction of the Borel $\sigma$-algebra on $\dot{H}^s_{rad,0}$ to $r^{-1}C^s_{rad,0}$ coincides with the Borel $\sigma$-algebra generated by \eqref{weightedholdernorm}. We shall construct $m_{\infty,1}$ on the space of locally (weighted-)H\"older continuous functions as a suitable infinite volume limit of $m_{L,1}$. 
\begin{defn}
For $s \in [0,1]$, consider the space
	\[
	r^{-1} C^s_{rad,loc}(\R^3 \to \R) := \{ f : \lim_{r \to 0} rf(r) = 0, \|f\|_{s,L} < \infty \hbox{ for all } L > 0\},
	\]
equipped with the topology and Borel $\sigma$-algebra induced by
	\[
	d(f_1, f_2) = \sum_{n=1}^\infty 2^{-n} \frac{\|f_1 - f_2\|_{s,n}}{1 + \|f_1 - f_2\|_{s,n}}.
	\]	
For $R > 0$, let 
	\[
	r^{-1}C^s_{rad}(B(0,R) \to \R) := \{ f : \lim_{r \to 0} rf(r) =0, \|f\|_{s,R} < \infty\}
	\]
be equipped with Borel $\sigma$-algebra induced by $\|\cdot\|_{s,R}$. For $0 < R < L \leq \infty$, we denote the restriction map $\rho^L_R : r^{-1} C^s_{rad}(B(0,L)) \to r^{-1}C^s_{rad}(B(0,R))$ by
	\[
	\rho^L_R f := f|_{B(0,R)}.
	\]

We define the Borel measure $m_{L,1}|_{B(0,R)}$ on $r^{-1} C^s_{rad}(B(0,R) \to \R)$ via
	\[
	m_{L,1}|_{B(0,R)}(A) := m_{L,1}((\rho^L_R)^{-1}(A))
	\]
for Borel measurable subsets $A \subseteq r^{-1} C^s_{rad}(B(0,R) \to \R)$. We say that a Borel measure $m_{\infty,1}$ on $r^{-1} C^s_{rad,loc}(\R^3 \to \R)$ is \textit{an infinite volume limit of $\{m_{L,1}\}_{L > 0}$} if, for every Borel measurable $A \subseteq r^{-1} C^s_{rad}(B(0,R) \to \R)$, 
	\begin{equation}\label{compatible}
	\lim_{L \to \infty} m_{L,1}|_{B(0,R)}(A) = m_{\infty,1}|_{B(0,R)}(A),
	\end{equation}
where the latter expression is $m_{\infty,1}|_{B(0,R)}(A) := m_{\infty,1}((\rho^\infty_R)^{-1}(A))$.
\end{defn}

We now discuss the construction and interpretation of 
	\[
	dm_{\infty,2} = \frac{1}{Z_2} \exp\left(-\int_{\R^3} \frac{1}{2} |u_t|^2 \right) ``du_t",
	\]
also known as (radial) white noise on $\R^3$. This is a classical object and may be constructed via abstract methods such as B\^ochner--Minlos theory (cf., \cite{Sim}). However, we shall take an alternate, direct approach that is more suited towards proving invariance. To this end, we analyze the support of $m_{L,2}$. 

Now, the random series $\sum_{n=1}^\infty b_n(\omega) \sqrt{\frac{2}{L}} \sin(n \pi r / L)$, $b_n(\omega) \sim \mathcal{N}_\R(0,1)$ i.i.d., almost surely does not converge as a function, and so we cannot expect $m_{L,2}$ to be supported on a space of functions. However, the image measure $((-\Delta_L)^{-1/2})^* m_{L,2}$ on $\dot{H}^s_{rad,0}(B(0,L) \to \R)$, $s < \frac{1}{2}$, has the law
	\[
	[(-\Delta_L)^{-1/2})^* m_{L,2}](A) = P\Big(\sum_{n=1}^\infty \frac{b_n(\omega)}{n\pi/L} f_{n,L}(r) \in A \Big),
	\] 
for Borel $A \subset \dot{H}^s_{rad,0}(B(0,L) \to \R)$. Again, this is the law of Brownian bridge divided by $r$. Thus, $((-\Delta_L)^{-1/2})^* m_{L,2}$ is supported on $r^{-1} C^s_{rad,0}(B(0,L) \to \R)$ for $0 \leq s < 1/2$. As the infinite volume limit of Brownian bridge is Brownian motion (we will make this notion precise in Theorem~\ref{construct} and Section~\ref{brownian}), we make the following definition. 

\begin{defn}
Let $\{B(r)\}_{r \geq 0}$ denote a version of Brownian motion, i.e., $B(r)$ is a Gaussian process of mean zero and $\E[X(r)X(r')] = \min(r,r')$. Fixing $s \in [0,\frac{1}{2})$, let $\mathcal{F}$ denote the restriction of the Borel $\sigma$-algebra on $r^{-1} C^s_{rad,loc}(\R^3 \to \R)$ to the set\footnote{This set consists of tempered distributions and, by the Law of the Iterated Logarithm, is in the support of $r^{-1}B(r)$.} 	
	\[
	A = \{f \in r^{-1} C^s_{rad,loc}(\R^3 \to \R) \mid \lim_{r\to\infty} f(r) = 0 \}.
	\]
We define $m_{\infty,2}$ to be a measure on $(-\Delta)^{1/2}A$, equipped with $\sigma$-algebra $(-\Delta)^{1/2}\mathcal{F}$, such that 
	\begin{equation}\label{whitenoise}
	[((-\Delta)^{-1/2})^* m_{\infty,2}](A) = P(r^{-1} B(r) \in A)
	\end{equation}
for all $A \in \mathcal{F}$.
\end{defn}

We now state our main result.

\begin{thm}\label{mainer}
Fix $s \in [0,\frac{1}{2})$. There exists a unique Borel measure $m_{\infty,1}$ on $r^{-1} C^s_{rad,loc}(\R^3 \to \R)$ which is the infinite volume limit of $\{m_{L,1}\}_{L > 0}$. 

With $m_{\infty,2}$ defined in \eqref{whitenoise}, let
	\[
	m_{\infty} := m_{\infty,1} \otimes m_{\infty,2}.
	\]
For $\frac{1}{3} < s < \frac{1}{2}$, the flow of \eqref{NLW} is globally defined on a set of $m_\infty$-measure $1$, and the restriction of the flow to this set is a measure preserving transformation for all time.
\end{thm}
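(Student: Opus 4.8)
The plan is to pass, as in Section~\ref{reform}, to the one-dimensional picture via the substitution $v(t,r) = r\,u(t,r)$, under which \eqref{NLW} becomes the half-line wave equation $-v_{tt}+v_{rr} = v^3/r^2$ with $v(t,0)=0$, the free Gaussian factor becomes (a weighting of) Brownian motion in $r$, and $m_{\infty,1}$ becomes that motion reweighted by $\exp(-\tfrac14\int_0^\infty v^4/r^2\,dr)$. In this setting the theorem splits into the three assertions proved as Theorems~\ref{invariancefinite}, \ref{infvolmeas}, and \ref{invarianceinf}: (i) invariance of the finite-volume $m_L$ on the weighted Hölder space, obtained by revisiting the Burq--Tzvetkov and de Suzzoni analysis — local well-posedness of the singular $1$D equation for Brownian-regularity data, where the restriction $s>\tfrac13$ is exactly what makes the forcing $v(t)^3/r^2$ locally integrable near $r=0$ (using $|v(r)|\lesssim r^s$ from $v(0)=0$, so that $r^{3s-2}\in L^1_{\mathrm{loc}}$) and hence harmless for the $C^s$ Duhamel iteration, followed by Bourgain's globalization driven by the invariant Gibbs measure; (ii) construction of $m_{\infty,1}$; and (iii) its invariance.

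For (ii) I would express $m_{L,1}|_{B(0,R)}$, tested against cylinder functions depending on $v$ at finitely many radii $r_1<\cdots<r_k\le R$, through a multi-time Feynman--Kac formula: each such expectation is a ratio whose numerator and denominator are weighted, respectively unweighted, compositions of heat-type semigroups for the parabolic operator $-\tfrac12\partial_r^2 + \tfrac14\,x^4/r^2$ run between the bridge endpoints $0$ and $L$. Sending $L\to\infty$ is then the computation of the asymptotics of the fundamental solution of this PDE as its right endpoint escapes; the bridge converges to Brownian motion and the normalizing factors converge, producing a consistent family of finite-$R$ limits. Kolmogorov extension on the separable little-Hölder subspace actually carrying all these measures (the relevant Polishness is addressed in Appendix~\ref{LS}) yields $m_{\infty,1}$; uniqueness is immediate, since the Borel $\sigma$-algebra on $r^{-1}C^s_{rad,loc}(\R^3\to\R)$ is generated by the restriction maps $\rho^\infty_R$. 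The white-noise factor $m_{\infty,2}$ is the corresponding bridge-to-motion limit and rides along in the product.

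Assertion (iii) is where finite speed of propagation enters. It makes the value of the flow $\Phi_t(f)$ on $B(0,R)$ depend only on $f|_{B(0,R+|t|)}$, and, once $L>R+|t|$ so that the Dirichlet boundary lies outside the domain of dependence, identifies the finite-volume flow $\Phi_{t,L}$ with the \emph{same} local flow $\Phi_t^R$ on $B(0,R)$. Hence the set of data whose infinite-volume solution survives on $B(0,R)$ until time $T$ is $(\rho^\infty_{R+T})^{-1}(\mathrm{Good}(R,T))$ for an intrinsic set $\mathrm{Good}(R,T)\subseteq r^{-1}C^s_{rad}(B(0,R+T)\to\R)$ that is common to all large $L$ and to the limit; since finite-volume globalization gives $m_L$-a.e.\ the property, $m_L|_{B(0,R+T)}(\mathrm{Good}(R,T))=1$, and \eqref{compatible} forces $m_\infty|_{B(0,R+T)}(\mathrm{Good}(R,T))=1$, so the flow is globally defined $m_\infty$-a.e.\ once we intersect over countably many $R,T$. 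The same localization reduces invariance to the identity $(\Phi^R_t)_*\big(m_\infty|_{B(0,R+|t|)}\big) = m_\infty|_{B(0,R)}$, which I obtain by passing to the limit via \eqref{compatible} in the finite-volume identity $(\Phi^R_t)_*\big(m_L|_{B(0,R+|t|)}\big) = m_L|_{B(0,R)}$ (itself a consequence of $(\Phi_{t,L})_*m_L=m_L$ and finite speed of propagation); agreement of all finite-$R$ marginals then gives $(\Phi_t)_*m_\infty=m_\infty$ by a Dynkin-system argument.

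The main obstacle, and the reason descriptive set theory is needed, is that every one of these limiting manipulations requires $\mathrm{Good}(R,T)$ and the local flow maps $\Phi^R_t$ to be measurable simultaneously for the whole family of mutually singular measures $\{m_L\}_{L>0}$ and $m_\infty$, whereas ``there exists a solution of the singular, Sobolev-supercritical equation with the prescribed properties'' is a priori only an analytic condition and the data-to-solution correspondence need not be better than measurable. I would therefore invoke the universal measurability of analytic sets together with a Jankov--von Neumann measurable selection to realize $\mathrm{Good}(R,T)$ and the graphs of the $\Phi^R_t$ as universally measurable objects, legitimizing $(\Phi^R_t)^{-1}(A)$ as a $\mu$-measurable set for each $\mu$ in the family and extending \eqref{compatible} to it; reconciling this with the continuity of $\Phi^R_t$ on $\mathrm{Good}(R,T)$ furnished by local well-posedness, and with the Polish structure of the underlying little-Hölder subspace, is the technical heart of the invariance proof, while the Feynman--Kac fundamental-solution asymptotics of step (ii) is where the bulk of the analytic work is concentrated.
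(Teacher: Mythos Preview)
Your outline matches the paper's three-theorem decomposition and correctly identifies the key analytical inputs: the $s>\tfrac13$ threshold for local integrability of $v^3/r^2$, the multi-time Feynman--Kac representation via the fundamental solution of $-\partial_r\phi+\tfrac12\partial_x^2\phi-\tfrac14\frac{x^4}{r^2}\phi=0$, and finite speed of propagation for the invariance transfer.

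However, you have one substantive misunderstanding that leads you to overcomplicate step (iii). The restricted measures $m_L|_{B(0,R)}$ and $m_\infty|_{B(0,R)}$ are \emph{not} mutually singular --- they are mutually absolutely continuous, with an explicit Radon--Nikodym derivative $\frac{\phi(L,0;R,f(R))}{F(R,f(R))\,\phi(L,0;0,0)}$ that falls directly out of the Feynman--Kac computation in step (ii) (this is Theorem~\ref{construct}, parts 5--6). Once you have this, transferring the full-measure statement ``the finite-volume flow is global'' from $m_L$ to $m_\infty$ is immediate: a set of full $m_L|_{B(0,R)}$-measure automatically has full $m_\infty|_{B(0,R)}$-measure, no limit in $L$ required. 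The completions $\mathcal{F}_R$ with respect to all these measures coincide, so there is no ``family of singular measures'' to worry about.

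Consequently, Jankov--von Neumann selection is unnecessary. The flow map $\flow_\infty(t,\cdot)$ is already \emph{continuous} on its domain $\Omega_\infty$ by the local well-posedness you established in step (i) (combined with finite speed of propagation to localize); this is the paper's Lemma~\ref{flownice}. Since the flow is continuous and injective (by time-reversibility), the Lusin--Souslin theorem gives Borel-to-Borel preservation directly. The only genuine use of analytic-set machinery is that sets of the form $(\rho^L_R)^{-1}\circ\rho^\infty_R(K)$ --- which arise when you saturate a closed set $K$ by the restriction equivalence relation --- are analytic rather than Borel, hence universally measurable; no selection theorem is needed. The paper then argues via an inequality $\nu_\infty(K)\le\nu_\infty(\flow_\infty(t,K))$ for closed $K$ (Lemma~\ref{closedgood}), followed by inner regularity and complementation, rather than your pushforward identity, but either route works once the measurability is in place.
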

\begin{rem}
As mentioned earlier, we shall prove this theorem by first reformulating the problem in similar setting. In particular, we complexify \eqref{NLW} as well as reduce to a wave equation on the half-line. We construct an infinite volume limit measure on a space of functions on the half-line, and establish the invariance of this measure under the flow on the reduced, complexified NLW. We detail this reduction in Section~\ref{reform}.
\end{rem}
	
	\subsection*{Acknowledgements}
	
	I would like to thank my PhD advisor, Rowan Killip, for suggesting this line of investigation and for many helpful discussions. I would also like to thank Terence Tao and Erik Walsberg for helpful comments. This work was supported in part by NSF grants DMS 1265868 (P.I. Rowan Killip) and DMS 1001531 (P.I. Rowan Killip).
	
	\subsection{General Background}
A foundational result is Liouville's theorem, which states that the flow of a Hamiltonian ODE preserves Lebesgue measure. Invariant measures for Hamiltonian PDEs were first considered by Lebowitz, Rose, and Speer in \cite{LRS}, and refined by Bourgain in \cite{Bou}. In these papers, they considered a focusing, non-linear Schr\"odinger (NLS) equation on the circle and constructed an $L^2$-truncated Gibbs measure. By applying a frequency truncation (to obtain a finite dimensional system), invoking Liouville's theorem for Hamiltonian ODEs, and using uniform probabilistic estimates to remove the truncations, Bourgain proved global existence of solutions on a set of full measure and the invariance of the Gibbs measure under the NLS. Prior to Bourgain's result, only local well-posedness results were available in that setting. 

One benefit of randomization is that one may work in systems with super-critical scaling. Data with ill-behaved solutions generally lie in null sets of these measures, and the invariance of the Gibbs measure can be used as a conservation law to upgrade local in time existence to global existence. Indeed, the critical scaling for \eqref{secondorderfinite} is $s_c = \frac{1}{2}$, and so $m_L$ is supported on super-critical Sobolev spaces.

Furthermore, we may construct the Gibbs measure for $-\partial_t^2 u + \Delta_L u = |u|^p u$ for all $p < 4$. In \cite{BT2}, Burq and Tzvetkov show almost surely global existence on a set of full Gibbs measure in the case $p < 3$. The case $3 \leq p < 4$ was proven by Bourgain and Bulut in \cite{BB3}.

Another example is \cite{NPS}, where Nahmod, Pavlovi\'c, and Staffilani considered the 2D and 3D Navier--Stokes equations with randomized initial data in super-critical spaces. This is not in a Hamiltonian setting, and hence one cannot expect invariant measures via Liouville's theorem on the Fourier truncations. Nevertheless, they randomized about a fixed initial datum, and applied large deviation estimates, to obtain almost sure global existence of weak solutions, with uniqueness in the 2D case.

Another recent work is \cite{NaSt}, where Nahmod and Staffilani proved almost sure local well-posedness for a 3D quintic NLS on $\mathbb{T}^3$ with data below $H^1(\mathbb{T}^3)$, also in a supercritical regime. Other works on finite volume spatial domains include \cite{Bou2D}, where Bourgain used Wick ordering to construct an invariant Gibbs measure for a 2D NLS on the torus; \cite{Ohwhite}, where Oh proved invariance of mean 0 white noise for the 1D KdV equation on the circle; \cite{CoOh}, where Colliander and Oh showed almost sure global existence of solutions of 1D cubic NLS with initial datum in $H^s(\mathbb{T})$, $-\frac{1}{12} < s < 0$. Further works in the finite volume setting include \cite{BGross, BB1, BB2, NORS, NRSS, Ohcoup, Rich, SoS, ThTz, Tzve, Tzono}, etc, and references therein.

When the underlying spatial domain is of infinite volume, then the construction of such an invariant Gibbs measure is significantly more delicate. In contrast with the finite volume case, Gibbs measures in this setting are singular with respect to an analogous infinite volume ``free measure''. 

The only other case in which an infinite volume invariant measure is constructed is in \cite{McV}, where Mckean and Vaninsky constructed a Gibbs measure for a 1D NLW on the half-line. Using techniques from stochastic analysis, they reduced the construction of such a measure to computing the asymptotics of the fundamental solution of a parabolic PDE with time-independent coefficients. Indeed, the measure was realized as a stationary diffusion on the half-line. They also constructed invariant measures for analogous finite volume NLW and used finite speed of propagation to upgrade to invariance of the infinite volume measure. 

In contrast to \cite{McV}, our measure does not correspond to a stationary diffusion. The fact that we are not in a strictly one dimensional setting means that our parabolic PDE has time dependent coefficients with singularities as we approach the space-time origin (cf., \eqref{parabpde}). 

The fact that our finite volume systems are posed with zero boundary values also makes the measure theory more delicate: generic paths in the support of the finite volume measures are ignored by the infinite volume measure. This is in contrast to \cite{McV}, where the finite volume systems were considered with periodic boundary values.

Other work in infinite volume settings include: in \cite{Bou2}, where Bourgain analyzed a 1D periodic NLS with uniform estimates on arbitrarily large intervals; in \cite{Thom}, where Thomann randomized coefficients of eigenfunctions of a Schr\"odinger operator with a confining potential, and showed almost sure global existence of solutions of power-type NLS on $\R^d$; in \cite{LM}, where L\"uhrmann and Mendelson fixed an initial datum and randomized with respect to its Littlewood--Paley pieces. Other works in the infinite volume setting include \cite{Rid} and \cite{Suz2}. Note that these works do not consider Gibbs measures. 
	
To the best of our knowledge, this paper provides the first construction and proof of invariance of a Gibbs measure in infinite volume outside of the 1D case.

\subsection{Reformulation of Theorem~\ref{mainer}}\label{reform}

As mentioned earlier, we shall make some reductions to our NLW as well as revisit the Burq--Tzvetkov--de Suzzoni result. In this reduced setting, the analogue of Theorem~\ref{mainer} is broken up into three parts---Theorems~\ref{invariancefinite}, \ref{infvolmeas}, and  \ref{invarianceinf}---which we shall prove in Sections~\ref{finitevol}, \ref{bigconstruct}, and \ref{invariance}, respectively. 

We first consider the finite volume setting. Recall, the three dimensional radial Laplacian can be written as $\Delta = \partial_r^2 + \frac{2}{r} \partial_r$. So, $u(t,r)$ is a (classical) solution of \eqref{secondorderfinite} if and only if $v(t,r) := ru(t,r)$ is a (classical) solution of 
	\begin{equation}\label{secondorderfinite1d}
	\left\{\begin{array}{l}
	-\partial_{t}^2 v + \partial_{r}^2 v = \frac{v^3}{r^2}, 
		\\
	v : \R_t \times [0,L] \to \R,
		\\
	v(t,0)=v(t,L) = 0.
	\end{array}\right.
	\end{equation}
Here, $\partial_r^2$ denotes Laplacian with zero boundary values, which is easily understood through Fourier expansion (see below).

Let us also complexify \eqref{secondorderfinite1d} and reduce it to a first order equation. Letting $|\partial_r| := \sqrt{-\partial_r^2}$, then $\phi$ is a solution of \eqref{secondorderfinite1d} if and only if 
	\begin{equation}
	w := v + i |\partial_r|^{-1} \partial_t v = ru + i|\partial_r|^{-1} \partial_t (ru)
	\end{equation}
is a solution (see definition below) of
	\begin{equation}\label{firstorderfinite1d}
		\left\{
		\begin{array}{l}
	-i \partial_t w + |\partial_r| w = - |\partial_r|^{-1} \left(\frac{(\re w)^3}{r^2} \right), 
		\\
	w: \R_t \times [0,L] \to \C,
		\\
	w(t,0) = w(t,L) = 0.
		\end{array}
		\right.
	\end{equation}
We also have the following change of variables on the initial data:
	\begin{equation}\label{initdatachange}
	(u,u_t)|_{t=0} = (f_1, f_2) \implies w|_{t=0} = rf_1 + i|\partial_r|^{-1} (rf_2).
	\end{equation}
We precisely define what we mean by a solution.

\begin{defn}[Strong Solution for \eqref{firstorderfinite1d}]\label{strongfinite}
Let $B = \dot{H}^s_0([0,L])$ or $B = C^s_0([0,L])$. Let $T \in [0,\infty)$. We say that $w(t,r) : [-T,T] \times [0,L] \to \C$ is a \textit{strong solution of \eqref{firstorderfinite1d} on $[-T,T]$ with initial datum $g \in B$} if
	\begin{enumerate}
	\item $w \in C^0_{t} B([-T,T]\times [0,L] \to \C)$, which is to say: for fixed $t \in [-T,T]$ we have $w(t,r) \in B$ and that the $B$-norm of $w$ varies continuously in time.
	\item $w(0,r) = g(r)$, and
	\item $w(t,r)$ obeys the corresponding Duhamel formula
		\[
		w(t,r) = \left[e^{-it|\partial_r|}g\right](r) -i\int_0^t \left[\frac{e^{-i(t-\tau)|\partial_r|}}{|\partial_r|} \re\left(\frac{w(\tau,\cdot)^3}{(\cdot)^2}\right)\right](r) d\tau,
		\]
	for each $t \in [-T,T]$.
	\end{enumerate}
Furthermore, if $w$ is the unique strong solution on $[-T,T]$ with initial datum $g \in B$, then we write 
	\[
	\flow_L(t,g)(r) := w(t,r), \hbox{\hskip 18pt }|t| \leq T.
	\]
\end{defn}

We next turn to the relevant function spaces and their measurable structures. Consider the following orthonormal basis of $L^2_r([0,L])$ consisting of eigenfunction of $\partial_r^2$:
	\[
	e_{n,L}(r) := \sqrt{2/L} \sin(n\pi r /L), \hbox{\hskip 18pt }n=1,2,\ldots,
	\]
with eigenvalues $(n\pi / L)^2$, $n = 1,2,\ldots$, respectively. For each $s \in \R$, consider the homogeneous Sobolev space
	\[
	\dot{H}^s_0([0,L] \to \C) := \left\{ g = \sum_{n=1}^\infty c_n e_{n,L} \mid c_n \in \R, \|g\|_{H^s_0}^2 := \sum_{n=1}^\infty (n\pi/L)^{2s} |c_n|^2 < \infty  \right\}.
	\]
and $\dot{H}^s_0([0,L] \to \R)$, the sub-space with real coefficients. We equip these spaces with the usual Borel $\sigma$-algebra. 

Recalling \eqref{3deigfun}, observe that $e_{n,L} = rf_{n,L}$. In general, $f(r) \mapsto r f(r)$ is (up to a constant multiple) an isometry from $L^2_{rad}(B(0,L)) \to L^2_r([0,L])$, because
	\[
	\int_{S^2}\int_0^L |f(r)|^2 \; r^2 dr \frac{dS}{4\pi} = \int_0^L |rf(r)|^2 \; dr.
	\]
Furthermore, multiplication by $r$ is an isometry $\dot{H}^s_{rad,0}(B(0,L)) \to \dot{H}^s_0([0,L])$.

The analogue of the randomization in \eqref{randomize3d} under the change of variables $(f_1,f_2) \mapsto r f_1 + i|\partial_r|^{-1}(rf_2)$ (cf., \eqref{initdatachange}) is
	\begin{equation}\label{randomize}
	\omega \mapsto \sum_{n=1}^\infty \frac{a_n(\omega) + ib_n(\omega)}{n \pi /L} e_{n,L}(r), \hbox{\hskip 18pt} a_n(\omega), b_n(\omega) \sim \mathcal{N}_\R(0,1), \hbox{ i.i.d.}
	\end{equation}
where, again, $\omega$ is an element of some suitable probability space $(\Omega, \mathcal{F}, P)$. Fixing $s < \frac{1}{2}$, let us denote $\mu_{L}$ to be the image measure on $\dot{H}^s_0([0,L] \to \C)$ under the map in \eqref{randomize}. To separate the randomizations, let $\mu_{L,1}$ and $\mu_{L,2}$ denote the image measure on $\dot{H}^s_0([0,L]\to \R)$ under the maps
	\begin{equation}\label{seprand}
	\omega \mapsto \sum_{n=1}^\infty \frac{a_n(\omega)}{n\pi /L} e_{n,L}(r) \hbox{\hskip 18pt and \hskip 18pt} \omega \mapsto \sum_{n=1}^\infty \frac{b_n(\omega)}{n\pi /L} e_{n,L}(r),
	\end{equation}
respectively. The connection between $\mu_{L}, \mu_{L,1},$ and $\mu_{L,2}$ is as follows: for Borel sets $A_1, A_2 \subseteq \dot{H}^s_0([0,L] \to \R)$, 
	\begin{equation}
	\mu_{L}\big( \{ g \mid \re(g) \in A_1, \im(g) \in A_2 \} \big) = \mu_{L,1}(A_1) \mu_{L,2}(A_2).
	\end{equation}
As with the original case in \eqref{freewave} and \eqref{freegibbs}, $\mu_L$ is invariant under the flow of the linear, first order wave equation 
	\[
		\left\{
		\begin{array}{l}
	-i \partial_t w + |\partial_r| w = 0, 
		\\
	w: \R_t \times [0,L] \to \C,
		\\
	w(t,0) = w(t,L) = 0.
		\end{array}
		\right.
	\]

Let us construct the analogue of \eqref{3dmeasradnik} in this setting. Recall that the Radon--Nikodym derivative in \eqref{radnik3d} only depends on the first component and that, for Borel measurable functions $\Psi: \dot{H}^s_0([0,L]\to \R) \to \R$, we have 
	\[
	\int_{\dot{H}^s_0([0,L])} \Psi(f) \; d\mu_{L,1}(f) = \int_{\dot{H}^s_{rad,0}(B(0,L))}  \Psi(rf) \; d\tilde{m}_{L,1}(f).
	\]
In other words, letting 
	\[
	\Psi(f) = \exp\left(-\frac{1}{4}\int_0^L |f(r)|^4r^{-2} \; dr\right)
	\]
then $\Psi(rf) = \exp\left(-\frac{1}{4}\int_0^L |f(r)|^4r^{2} \; dr\right) = \exp\left(-\frac{1}{4}\int_{B(0,L)} |f(r)|^4\right)$, which recovers \eqref{radnik3d}. Thus, under the change of variables \eqref{initdatachange}, the corresponding Gibbs measure for \eqref{firstorderfinite1d} is
	\begin{equation}\label{1dgibbs}
	d\nu_L(g) := \frac{1}{Z_L}\exp\left(-\frac{1}{4}\int_0^L (\re g(r))^4 r^{-2} \; dr\right) d\mu_L(g),
	\end{equation}
with $Z_L$ being a normalization constant. As with the case of $\tilde{m}_L$ and $m_L$, the measure $\mu_L$ and $\nu_L$ are mutually absolutely continuous. Let us separate $\nu_L$ into its ``real'' and ``imaginary'' components. Namely, let
	\begin{equation}\label{1dgibbsreal}
	d\nu_{L,1}(f) := \frac{1}{Z_L} \exp\left(-\frac{1}{4}\int_0^L |f(r)|^4r^{-2} \; dr\right) d\mu_{L,1} (f) 
	\end{equation}
and let 
	\begin{equation}\label{1dgibbsimag}
	\nu_{L,2} = \mu_{L,2}.
	\end{equation} 
Then the connection between $\nu_L,\nu_{L,1},\nu_{L,2}$ is as follows: for Borel measurable $A_1,A_2 \subseteq \dot{H}^s_0([0,L] \to \C)$,
	\begin{equation}\label{finitevolmeas2}
	\nu_{L}(\{g \mid \re(g) \in A_1 , \im(g) \in A_2\}) = \nu_{L,1}(A_1)\nu_{L,2}(A_2).
	\end{equation}
To construct the infinite volume measure, we will consider the infinite volume limits of $\nu_{L,1}$ and $\nu_{L,2}$ separately, and piece them back together in a way similar to \eqref{finitevolmeas2}. See Section~\ref{bigconstruct} for further details.

Using techniques similar to those by Bourgain, the following result was shown by Burq, Tzvetkov, and de Suzzoni. 
\begin{thm}[Burq--Tvetkov--de Suzzoni\footnote{Strictly speaking, the results in \cite{BT2,Suz} were stated in the 3-dimensional, complexified setting. Theorem~\ref{BTS} is the restatement of their result after multiplication by $r$. }, \cite{BT2,Suz}]\label{BTS}
Let $s < \frac{1}{2}$, let $L > 0$, and let $\nu_L$ be as in \eqref{1dgibbs}. There exists a measurable set $\Pi_L \subseteq \dot{H}^s_{0}([0,L] \to \C)$ with the following properties
	\begin{enumerate}
	\item $\nu_L(\Pi_L) = 1$.
	\item Each $g \in \Pi_L$ admits a unique, global, strong solution in $\dot{H}^s_0$. Namely, $\flow_L(t,g)$ is defined for all $t \in \R$, and, for each $T \in [0,\infty)$, we have 
		\[
		\flow_L(t,g) \in C_t^0\dot{H}^s_{0}([-T,T] \times [0,L] \to \C).
		\]
	\item For each measurable set $A \subseteq \Pi_L$ and for each $t \in \R$, the set $\flow_L(t,A) = \{ \flow_L(t,g) \mid g \in A\}$ is a measurable subset of $\dot{H}^s_{0}([0,L]\to\C)$ and 
		\[
		\nu_L(\flow_L(t,A)) = \nu_L(A).
		\]
	\end{enumerate}
\end{thm}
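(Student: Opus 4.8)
The plan is to run Bourgain's globalization scheme from \cite{Bou}, fed by the probabilistic estimates of Burq--Tzvetkov \cite{BT2} and the measure-invariance argument of de Suzzoni \cite{Suz}, everything transported through the isometry $f \mapsto rf$ recorded in \eqref{initdatachange} (the companion Theorem~\ref{invariancefinite} stated below is a variant proved along the same lines). The four steps are: (i) frequency-truncate \eqref{firstorderfinite1d} to a finite-dimensional Hamiltonian ODE and apply Liouville's theorem to get invariance of the truncated Gibbs measure; (ii) prove local well-posedness of \eqref{firstorderfinite1d} with a local existence time controlled by the size of the data, together with convergence of the truncated flows; (iii) combine (i) and (ii) with uniform-in-$N$ Gaussian tail bounds to produce, for each $T$ and $\eps$, a set of $\nu_L$-measure at least $1-\eps$ whose data have solutions bounded on $[-T,T]$, and assemble $\Pi_L$; (iv) deduce the invariance statement (3) by passing to the limit from the truncated flows.

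For (i), let $P_{\le N}$ be the orthogonal projection onto $\Span\{e_{1,L},\dots,e_{N,L}\}$ and consider
\[
-i\partial_t w + |\partial_r| w = -P_{\le N}|\partial_r|^{-1}\!\left(\frac{(\re P_{\le N} w)^3}{r^2}\right),
\]
with flow $\Phi^N_L(t)$. On the range of $P_{\le N}$ this is a Hamiltonian ODE with Hamiltonian $\tfrac12\sum_{n\le N}(n\pi/L)|c_n|^2 + \tfrac14\int_0^L (\re P_{\le N} w)^4 r^{-2}\,dr$, and on the complementary frequencies it is the free rotation $e^{-it|\partial_r|}$, which preserves $\mu_L$; so Liouville's theorem gives that the truncated Gibbs measure $d\nu^N_L := (Z^N_L)^{-1}\exp(-\tfrac14\int_0^L(\re P_{\le N}g)^4 r^{-2}\,dr)\,d\mu_L$ is $\Phi^N_L(t)$-invariant for every $t$. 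Since by \eqref{nonnull} the density $d\nu_L/d\mu_L$ is a.e.\ positive and the truncated densities converge to it $\mu_L$-a.e.\ while remaining uniformly integrable, $\nu^N_L \to \nu_L$ in $L^1(\mu_L)$.

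Steps (ii)--(iii) are run not in the supercritical space $\dot{H}^s_0$ but in the H\"older space $C^s_0([0,L]\to\C)$ with $\tfrac14 < s < \tfrac12$, where---because $\mu_L$ is, via \eqref{randomize} and \eqref{bblaw}, the law of a complex Brownian bridge---the singular integral $\int_0^L(\re g)^4 r^{-2}\,dr$ is finite and the Duhamel map of Definition~\ref{strongfinite} contracts on a ball of $C^0_t C^s_0$ over a time interval of length $\delta \sim \|g\|_{C^s_0}^{-\theta}$; this gives existence, uniqueness, continuous dependence, and $\Phi^N_L(\tau)g \to \flow_L(\tau)g$ uniformly on the local interval, uniformly for $g$ in a ball. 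The Gaussian concentration estimate $\nu^N_L(\{\|g\|_{C^s_0} > R\}) \le Ce^{-cR^2}$, uniform in $N$, combined with the invariance from (i) and a union bound over the $\sim TR^{\theta}$ subintervals of length $\delta$, yields $\nu^N_L(\{\sup_{|\tau|\le T}\|\Phi^N_L(\tau)g\|_{C^s_0} > R\}) \le CTR^{\theta}e^{-cR^2}$; choosing $R = R(T,\eps)$ and letting $N\to\infty$ via the local stability and the $L^1(\mu_L)$-convergence $\nu^N_L\to\nu_L$ gives a set $\Pi^{T,\eps}_L$ with $\nu_L(\Pi^{T,\eps}_L)\ge 1-\eps$ on which $\flow_L(\tau)$ exists and is bounded by $2R$ for $|\tau|\le T$. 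Set $\Pi_L := \bigcap_{T\in\N}\bigcup_{m\in\N}\Pi^{T,1/m}_L$; then $\nu_L(\Pi_L)=1$ and every $g\in\Pi_L$ has a unique global strong solution, giving (1) and (2).

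For (iv) and the main obstacle: fixing $t$ and measurable $A\subseteq\Pi_L$, continuity and injectivity of $\flow_L(t,\cdot)$ on $\Pi_L$ together with the Lusin--Souslin theorem (the descriptive-set-theoretic ingredient advertised in the introduction; see Appendix~\ref{LS}) show that $\flow_L(t,A)$ is Borel, and its measure is computed by intersecting with the $\Pi^{T,1/m}_L$, replacing $\flow_L(t,\cdot)$ by $\Phi^N_L(t)$ and $\nu_L$ by $\nu^N_L$ there, using $\nu^N_L\circ(\Phi^N_L(t))^{-1}=\nu^N_L$, and taking $N\to\infty$ and then $T,m\to\infty$. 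The real difficulty is not any single estimate but the bookkeeping forced by the singular nonlinearity $r^{-2}(\re w)^3$: the space $\dot{H}^s_0$ in which the theorem is phrased does not control $\int_0^L(\re g)^4 r^{-2}\,dr$ (and its elements need not be continuous for $s<\tfrac12$), so the entire argument must be carried out in $C^s_0$ or an intermediate weighted space while simultaneously checking that $C^s_0$ carries full $\nu_L$-measure, that all the tail bounds there are uniform in the truncation $N$, and that the measurable identifications between $C^s_0$ and $\dot{H}^s_0$ are compatible with the flow---precisely the delicate interplay isolated by Burq and Tzvetkov and settled in \cite{BT2,Suz}.
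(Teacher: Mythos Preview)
The paper does not give its own proof of this theorem: Theorem~\ref{BTS} is stated as a result of Burq--Tzvetkov and de~Suzzoni and is used as a black box (see the proof of Theorem~\ref{invariancefinite} in Section~\ref{prooffinite}, which begins ``Let $\Pi_L$ be as in Theorem~\ref{BTS}\ldots''). So there is no ``paper's proof'' to compare against; the relevant comparison is with the arguments in \cite{BT2,Suz}.

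Your outline of the Bourgain scheme (truncate, Liouville, local theory, uniform tails, pass to the limit) is the right skeleton, but the flesh you put on it in steps (ii)--(iii) is not what \cite{BT2,Suz} do---it is essentially this paper's \emph{own} contribution toward Theorem~\ref{invariancefinite}. Burq--Tzvetkov do not run a contraction in $C^s_0$; they use large-deviation bounds of Khinchin type to show that the linear evolution of randomized data lies almost surely in a subcritical Strichartz-type space (schematically $\|e^{-it|\partial_r|}g\|_{L^4_{t,x}}<\infty$ a.s., with Gaussian tails), and then run a Da~Prato--Debussche style fixed point for the nonlinear remainder in a smoother Sobolev space. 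De~Suzzoni's invariance argument then proceeds by the truncation limit you describe. The $C^s_0$ local well-posedness is precisely what this paper introduces as a \emph{counter-point} to the Sobolev super-criticality of \cite{BT2}; you should not attribute it to them.

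There is also a concrete error in your range: the Duhamel contraction in $C^s_0$ does not close for $\tfrac14<s<\tfrac12$. The key pointwise bound $|v(t,r)|\le r^s\|v\|_{C^0_tC^s_r}$ makes $v^3/r^2$ behave like $r^{3s-2}$ near the origin, and the estimates in Proposition~\ref{lwpfiniteforsecondorder} (see \eqref{lip} and the subcase computations) require $3s>1$, i.e.\ $s>\tfrac13$. The threshold $s>\tfrac14$ you quote is what makes the \emph{density} $\int_0^L(\re g)^4 r^{-2}\,dr$ finite on $C^s_0$, not what makes the fixed-point argument work. Finally, the Lusin--Souslin step you invoke in (iv) is again this paper's addition (used for Theorem~\ref{invariancefinite}, not Theorem~\ref{BTS}); the measurability in \cite{BT2,Suz} is handled directly in the Sobolev setting without it.
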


In Section~\ref{finitevol}, we modify the Burq--Tzvetkov--de Suzzoni result. As mentioned above in \eqref{bblaw}, the randomization in \eqref{seprand} obeys the law of the standard Brownian bridge from $(0,0)$ to $(L,0)$. Thus, for $0 \leq s < \frac{1}{2}$, $\mu_{L,1}$ and $\mu_{L,2}$ are supported on 
	\[
	C^{s}_0([0,L]\to \R):= \{f : [0,L] \to \R \mid f \hbox{ is }s\hbox{-H\"older continuous and } 0=f(0)=f(L)\},	
	\]
equipped with the usual H\"older norm. It follows that $\mu_L$ is supported on $C^{s}_0([0,L]\to \C).$ By mutual absolute continuity, $\nu_L$ is also supported on $C^s_0([0,L] \to \C)$. 

We will show that \eqref{firstorderfinite1d} is locally well-posed in $C^s_0([0,L]\to \C)$, thus making it an excellent space for the analysis of this random initial data problem. This also provides a counter-point to the Sobolev super-criticality noted by Burq and Tzvetkov.

Also, we slightly extend the measure theory in the following sense: we complete the Borel $\sigma$-algebra on $C^s_0([0,L]\to\C)$ with respect to the measure $\nu_L$, and we call a set \textit{$\nu_L$-measurable} if it is an element of this larger $\sigma$-algebra. By abuse of notation, we also denote the extension of the measure to this larger $\sigma$-algebra by $\nu_L$. As we shall see in Section~\ref{invariance}, this setting is convenient because for every Borel set $A \subseteq C^s_0([0,L]\to\C)$ and for every $0 < R < L$, the set 
	\[
	\tilde{A} = \left\{f:[0,L] \to C \mid \exists g\in A \hbox{ such that } g|_{[0,R]} \equiv f|_{[0,R]} \right\}
	\]
is not necessarily Borel, but is still $\nu_L$-measurable. Indeed, we shall see that $\tilde{A}$ is analytic (i.e., a continuous image of a Borel set). Sets of this form arise naturally when we seek to apply finite speed of propagation arguments.

The proof of the following modified result, as well as the discussion of the proof method, is the content of Section~\ref{finitevol}.

\begin{thm}\label{invariancefinite}
Let $\frac{1}{3} < s < \frac{1}{2}$, let $L > 0$. Then \eqref{firstorderfinite1d} is locally well-posed in $C^s_0([0,L]\to \C)$. Let $\nu_L$ be as in \eqref{1dgibbs}. Then, there exists a Borel measurable set $\Omega_L \subseteq C^{s}_0([0,L]\to \C)$ such that
	\begin{enumerate}
	\item $\nu_L(\Omega_L) = 1$.
	\item Each $g \in \Omega_L$ admits a unique global, strong solution in $C^s_0$. Namely, $\flow_L(t,g)$ is defined for all $t \in \R$, and, for each $T \in (0,\infty)$, we have 
		\[
		\flow_L(t,g) \in C_t^0C_r^s([-T,T] \times [0,L] \to \C).
		\] 
	\item For each $\nu_L$-measurable set $A \subseteq \Omega_L$ and for each $t \in \R$, the set 
	\[
	\flow_L(t,A) := \{ \flow_L(t,g) \mid g \in A\}
	\] 
	is also $\nu_L$-measurable subset of $C^{s}_0([0,L]\to \C)$ and 
		\[
		\nu_L(\flow_L(t,A)) = \nu_L(A).
		\]
	Moreover, if $A$ is Borel, then so is $\flow_L(t,A)$.
	\end{enumerate}
\end{thm}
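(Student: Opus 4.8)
The plan is to decouple the three assertions: local well-posedness in $C^s_0$, the existence of the full-measure set $\Omega_L$ of global solutions, and the measure-preservation/measurability statement. For local well-posedness, I would run a Picard iteration on the Duhamel map $\Phi w(t,r) = [e^{-it|\partial_r|}g](r) - i\int_0^t [|\partial_r|^{-1} e^{-i(t-\tau)|\partial_r|}\re(w(\tau,\cdot)^3/(\cdot)^2)](r)\,d\tau$ in the Banach space $C^0_t C^s_r([-T,T]\times[0,L]\to\C)$. The linear propagator $e^{-it|\partial_r|}$ acts as a Fourier multiplier of modulus $1$ in each mode, so it is bounded on $\dot H^s_0$; to see it is bounded on $C^s_0$ one notes that $e^{-it|\partial_r|}$ is, on the sine basis, a translation-type operator (d'Alembert formula on the interval after odd reflection), hence an isometry of H\"older norms up to the boundary conditions. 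The nonlinear estimate is the crux here: I must show $f\mapsto |\partial_r|^{-1}(\re(f)^3/r^2)$ maps $C^s_0([0,L])$ to $C^s_0([0,L])$ with the cube providing the contraction gain. Since $f\in C^s_0$ vanishes at $r=0$ with modulus $\lesssim r^s$, we have $|f(r)|^3/r^2 \lesssim r^{3s-2}$, which is integrable near $0$ exactly when $s>1/3$ — this is where the hypothesis $s>1/3$ enters. One then gains a derivative from $|\partial_r|^{-1}$ (which improves regularity by one order, e.g.\ $W^{-1,p}\to L^p$ or on the level of the kernel it is an integration against a bounded kernel on $[0,L]$) to land back in $C^s_0$; standard fractional Leibniz / paraproduct bounds for the cube $f^3$ in $C^s$ close the estimate. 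Shrinking $T$ depending on $\|g\|_{C^s}$ gives a contraction and hence a unique strong solution, and continuous dependence on the datum comes for free from the contraction-mapping argument.

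Next, for the global-in-time statement and the set $\Omega_L$, I would leverage Theorem~\ref{BTS}: we already have a full-$\nu_L$-measure set $\Pi_L\subseteq \dot H^s_0$ of data with unique global $\dot H^s_0$-solutions. The strategy is to intersect with the support: recall $\nu_L$ is supported on $C^s_0([0,L]\to\C)$ (by mutual absolute continuity with $\mu_L$, which is Brownian-bridge law, hence $s$-H\"older a.s.\ for $s<1/2$). So set (roughly) $\Omega_L := \Pi_L \cap C^s_0([0,L]\to\C)$, which has $\nu_L(\Omega_L)=1$. For $g\in\Omega_L$ one must upgrade the $\dot H^s_0$ solution to a $C^s_0$ solution: local well-posedness in $C^s_0$ gives a $C^s_0$ solution on a short interval $[-T,T]$ with $T=T(\|g\|_{C^s})$, and by uniqueness in $\dot H^s_0$ this agrees with $\flow_L(t,g)$ there; the obstruction to iterating is that the $C^s$-norm might blow up before the $\dot H^s$-norm does. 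To rule this out I would show a persistence-of-regularity / a priori bound: along the known global $\dot H^s_0$-flow, feed the (finite for all time) $\dot H^s_0$-norm back into the Duhamel formula and bootstrap to control $\|\flow_L(t,g)\|_{C^s}$ on any $[-T,T]$ — concretely, Strichartz- or energy-type estimates for the 1D wave propagator give $C^s_r$ control in terms of $\dot H^s_0$ plus a Duhamel term whose $r^{-2}$ singularity is, again, tamed by $s>1/3$. Hence the $C^s_0$-solution is global on $\Omega_L$, establishing (1) and (2).

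For part (3), the measure-preservation, I would argue that on $\Omega_L$ the map $\flow_L(t,\cdot)$ coincides (as a set map) with the $\dot H^s_0$-flow from Theorem~\ref{BTS}, because the $C^s_0$-solution is the $\dot H^s_0$-solution by uniqueness. So for any $\nu_L$-measurable $A\subseteq\Omega_L$, $\flow_L(t,A)$ as a subset of $C^s_0$ equals $\flow_L(t,A)$ as a subset of $\dot H^s_0$ intersected with $C^s_0$; Theorem~\ref{BTS}(3) gives that the latter is $\dot H^s_0$-measurable with $\nu_L(\flow_L(t,A))=\nu_L(A)$, and then the Lusin--Souslin machinery (the measurable-isomorphism fact $C^s_0\hookrightarrow\dot H^s_0\cap C^0$ stated in the excerpt, together with Appendix~\ref{LS}) transfers this to $\nu_L$-measurability inside $C^s_0$. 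The refinement ``if $A$ is Borel then $\flow_L(t,A)$ is Borel'' needs the flow map on $\Omega_L$ to be Borel bi-measurable: since for each fixed $t$, $g\mapsto\flow_L(t,g)$ is continuous on each ball $\{\|g\|_{C^s}\le k\}$ (from the continuous-dependence part of local well-posedness, iterated finitely many times up to time $t$), $\Omega_L$ is a countable union of such balls, so the flow is Borel, and a continuous (even Borel) injective image of a Borel set in a Polish space is Borel by the Lusin--Souslin theorem.

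**Main obstacle.** I expect the hard part to be the nonlinear $C^s$-estimate for $|\partial_r|^{-1}(\re(f)^3/r^2)$ — simultaneously handling the non-translation-invariant singular weight $r^{-2}$ at the spatial origin, the fractional H\"older regularity up to both endpoints $r=0$ and $r=L$ (the boundary conditions must be preserved by the nonlinear term), and extracting enough smallness in $T$ for the contraction — and, closely related, the a priori bound needed to propagate $C^s_0$ regularity along the global $\dot H^s_0$-flow so that local well-posedness can be iterated to all time on $\Omega_L$.
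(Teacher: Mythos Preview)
Your overall architecture is right, and your handling of local well-posedness and the measure-theoretic part~(3) is close to the paper's (the paper does LWP more explicitly by first treating the second-order real equation via d'Alembert formulas and a case-by-case domain-of-dependence analysis, then lifting to the first-order complex equation using Privalov's theorem for the Hilbert transform on $C^s$; your sketch is vaguer but on the same track, and you correctly identify the role of $s>\tfrac13$).

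The genuine gap is in your globalization step. You set $\Omega_L=\Pi_L\cap C^s_0$ and then propose to propagate $C^s_0$ regularity along the global $\dot H^s_0$-flow by ``feeding the (finite for all time) $\dot H^s_0$-norm back into the Duhamel formula'' and invoking ``Strichartz- or energy-type estimates for the 1D wave propagator [that] give $C^s_r$ control in terms of $\dot H^s_0$''. This does not work: there is no embedding $\dot H^s_0\hookrightarrow C^s$ (the inclusion goes the other way), and neither energy nor Strichartz estimates produce pointwise H\"older control from Sobolev data. More concretely, the nonlinear estimate for $|\partial_r|^{-1}(\re(w)^3/r^2)$ in $C^s$ requires the vanishing $|w(t,r)|\lesssim r^s$ at $r=0$, which is a $C^s_0$ statement and is \emph{not} supplied by $\dot H^s_0$ membership alone. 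So there is no a priori mechanism preventing $\|\flow_L(t,g)\|_{C^s}$ from blowing up in finite time even though $\|\flow_L(t,g)\|_{\dot H^s_0}$ stays finite; your iteration of the local theory could stall.

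The paper's fix is to build the needed $C^s$ regularity into $\Omega_L$ rather than prove it: one sets
\[
\Omega_L:=\bigcap_{t\in\Q}\flow_L\bigl(t,\Pi_L\cap C^s_0\bigr),
\]
which has full $\nu_L$-measure by the invariance in Theorem~\ref{BTS}. For $g\in\Omega_L$ this forces $\flow_L(q,g)\in C^s_0$ at every rational time $q$; starting from each such $q$, local well-posedness in $C^s_0$ gives $C^0_tC^s_r$ on a nondegenerate interval around $q$, and these intervals cover any compact $[-T,T]$. Uniqueness in $\dot H^s_0$ glues the pieces. This sidesteps the persistence-of-regularity problem entirely. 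Once you adopt this definition of $\Omega_L$, your argument for part~(3) (Lusin--Souslin, Borel-measurability of the flow via continuous dependence, completion to $\nu_L$-measurable sets) goes through essentially as you wrote it.
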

\begin{rem}
Observe that the scaling $w \mapsto w_\lambda(t,r) := w(\lambda t, \lambda r)$ preserves solutions of \eqref{firstorderfinite1d}. Thus, scaling-invariant space is $C^{0}_r$. It follows that $C^s_r$, with $\frac{1}{3} < s < \frac{1}{2}$, are sub-critical spaces with respect to this scaling.
\end{rem}

We turn to the infinite volume setting. Given that the finite volume measures $\nu_L$ are supported on $C^s_0([0,L] \to \C)$, we shall construct the infinite volume limit measure on the space
	\[
	C^s_{loc}([0,\infty) \to \C) := \{ f : [0,\infty) \to \C \mid \|f\|_{C^s([0,L])} < \infty \hbox{ for all } L > 0\},
	\]
where we equip this space with the metric
	\begin{equation}\label{metricinf}
	d(g_1,g_2) = \sum_{n=1}^\infty 2^{-n} \frac{\|g_1 - g_2\|_{C^s([0,n])}}{1+\|g_1 - g_2\|_{C^s([0,n])}}
	\end{equation}
as well as the induced metric topology and Borel structure. Also, we expect a compatibility criterion similar to that of \eqref{compatible}. Given the measure in this 1D setting, we may recover the measure in the original, 3D setting by taking the image measure via the map 	
	\[
	g \mapsto \left(r^{-1} \re(g) , r^{-1} |\partial_r|\im(g) \right).
	\]
The proof of the following result, as well as the discussion of the proof method, is the content of Section~\ref{bigconstruct}.

\begin{thm}\label{infvolmeas}
Fix $0 \leq s < \frac{1}{2}$. There exists a unique Borel probablity measure $\nu_\infty$ on $C^s_{loc}([0,\infty) \to \C)$ such that for each $R > 0$ and for each Borel measurable $A \subseteq C^s([0,R] \to \C)$,
		\begin{equation}\label{compatible1d}
		\lim_{L \to \infty} \nu_{L}|_{[0,R]}(A) = \nu_{\infty}|_{[0,R]}(A).
		\end{equation}
As before, $\nu_{L}|_{[0,R]}$ and $\nu_{\infty}|_{[0,R]}$ denote the (Borel, probability) image measures on $C^0([0,R] \to \C)$ given by the image, under the restriction map $g \mapsto g|_{[0,R]}$, of $\nu_L$ and $\nu_\infty$, respectively.

Moreover, for each $L > 0$, the measures $\nu_L|_{[0,R]}$ and $\nu_\infty|_{[0,R]}$ are mutually absolutely continuous. Let $\mathcal{F}_R$ denote the completion of the Borel $\sigma$-algebra on $C^s([0,R])$ with respect to any of these measures. Then \eqref{compatible1d} holds for every $A \in \mathcal{F}_R$. 
\end{thm}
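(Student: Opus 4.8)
The plan is to construct $\nu_\infty$ as a projective (Kolmogorov-type) limit of the family $\{\nu_L|_{[0,R]}\}$, by first establishing the required compatibility of these restrictions as $L\to\infty$ for each \emph{fixed} $R$, and then invoking Kolmogorov's extension/consistency theorem on the space $C^s_{loc}([0,\infty)\to\C)$. The central analytic input will be the multi-time Feynman--Kac representation of $\nu_{L,1}$: by the construction of $\nu_{L}$ in \eqref{1dgibbs} and the Brownian bridge law of the randomization \eqref{randomize}, the finite-dimensional distributions of $\nu_{L,1}$ at points $0<r_1<\cdots<r_N<R$ are given (after normalization) by integrals against products of the fundamental solution $\phi_V(r_j,x_j;r_{j-1},x_{j-1})$ of the parabolic PDE $\partial_\tau\phi = \tfrac12\partial_x^2\phi - \tfrac{x^4}{4\tau^2}\phi$ associated to the potential $\tfrac14 x^4 r^{-2}$ (cf.\ \eqref{parabpde}), with a terminal factor $\phi_0(L,0;r_N,x_N)/\phi_0(L,0;0,0)$ coming from the bridge pinned at $r=L$. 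The imaginary part contributes an independent ordinary Brownian bridge factor.

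First I would derive the precise formula for $\nu_L|_{[0,R]}$'s finite-dimensional distributions in terms of $\phi_V$ and of the free bridge kernel $\phi_0$. The key step is then to show that, as $L\to\infty$, the terminal ratio $\phi_0(L,0;r_N,x_N)/\phi_0(L,0;0,0)$ converges to the Feynman--Kac ground-state ratio $h(r_N,x_N)/h(0,0)$, where $h$ is obtained from the large-$L$ asymptotics of $\phi_V(L,0;\,\cdot\,)$ — this is precisely the "asymptotics of the fundamental solution" computation that the introduction advertises, and I expect it to be the main obstacle: one must control the time-dependent, singular-at-the-origin potential $x^4/(4\tau^2)$ uniformly enough to extract a clean limiting positive eigenfunction-type factor $h$. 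Granting this, the limiting finite-dimensional distributions define a consistent family (consistency under dropping a coordinate is immediate from the Chapman--Kolmogorov property of $\phi_V$), and I would verify the Kolmogorov continuity criterion — using the a.s.\ $s$-Hölder bounds inherited from Brownian bridge/motion together with a uniform-in-$L$ moment estimate on increments (Khinchin/Gaussian moments for the bridge part, and a Feynman--Kac domination of the $\phi_V$-weighted moments by the free ones, since the potential is nonnegative) — to conclude that the limit measure is supported on $C^s_{loc}([0,\infty)\to\C)$ and is the unique Borel measure with the stated restrictions. Uniqueness follows because a Borel probability measure on the Polish space $C^s_{loc}$ is determined by the laws of the restriction maps $g\mapsto g|_{[0,n]}$, which in turn are determined by finite-dimensional distributions via a $\pi$--$\lambda$ argument.

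For the mutual absolute continuity statement, I would argue that for fixed $R$, both $\nu_L|_{[0,R]}$ and $\nu_\infty|_{[0,R]}$ are mutually absolutely continuous with the \emph{free} measure $\mu_\infty|_{[0,R]}$ (the law of $(r^{-1}\re,\,\cdot\,)$ built from Brownian motion restricted to $[0,R]$): indeed the Radon--Nikodym derivative in each case is, up to the normalization constant, $\exp(-\tfrac14\int_0^R(\re g)^4 r^{-2}\,dr)$ composed with the appropriate conditioning, and since for $\tfrac14<s<\tfrac12$ this functional is a.s.\ finite and strictly positive on $C^s([0,R])$ (as $\int_0^R r^{-2}|g|^4\,dr<\infty$ for $s$-Hölder $g$ vanishing appropriately at $0$), all four measures are pairwise equivalent on $[0,R]$. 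Transitivity of mutual absolute continuity then gives $\nu_L|_{[0,R]}\sim\nu_\infty|_{[0,R]}$, their completed Borel $\sigma$-algebras coincide, and \eqref{compatible1d} extends from Borel $A$ to $A\in\mathcal{F}_R$ by the standard approximation of a completion-measurable set from inside and outside by Borel sets of equal measure.
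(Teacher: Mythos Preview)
Your overall architecture matches the paper's proof (which is carried out as Theorem~\ref{construct}): multi-time Feynman--Kac to get the finite-dimensional distributions of $\nu_{L,1}$ in terms of the fundamental solution of \eqref{parabpde}, large-$L$ asymptotics of that fundamental solution to identify the limiting FDDs, then Kolmogorov consistency/continuity to build $\nu_{\infty,1}$ on $C^s_{loc}$, tensor with Wiener measure for the imaginary part, and finally mutual absolute continuity on each $[0,R]$.

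There is, however, a genuine error in your FDD formula. After reweighting the Brownian bridge by $\exp(-\tfrac14\int_0^L f^4 r^{-2}\,dr)$ and normalizing, the multi-time Feynman--Kac formula (Theorem~\ref{multitimebridge}) gives
\[
\PP_{\nu_{L,1}}(f(r_j)\in B_j)=\int \frac{\phi_V(L,0;r_N,x_N)}{\phi_V(L,0;0,0)}\prod_{j}\phi_V(r_j,x_j;r_{j-1},x_{j-1})\,dx,
\]
i.e.\ the terminal ratio is $\phi_V(L,0;r_N,x_N)/\phi_V(L,0;0,0)$, \emph{not} the free heat-kernel ratio $\phi_0(L,0;r_N,x_N)/\phi_0(L,0;0,0)$ as you wrote. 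This matters: the $\phi_0$ ratio tends to $1$ as $L\to\infty$, which would leave you with a sub-probability measure (the $\phi_V$ propagator loses mass because the potential is nonnegative). The whole content of the asymptotic step --- and the only place where real work happens --- is computing
\[
F(r_N,x_N):=\lim_{L\to\infty}\frac{\phi_V(L,0;r_N,x_N)}{\phi_V(L,0;0,0)},
\]
which is nontrivial precisely because the potential $x^4/(4r^2)$ is time-dependent and singular at $r=0$. The paper handles this by the change of variables $\Phi(r,x;s,y)=\tfrac{s}{3}\phi_V(\tfrac{r^3}{27},\tfrac{xr}{3};\tfrac{s^3}{27},\tfrac{ys}{3})$, which converts \eqref{parabpde} into a perturbation of the anharmonic oscillator semigroup $e^{-rH}$ with $H=-\tfrac12\partial_x^2+\tfrac14 x^4$, and then extracts the ground-state asymptotic (Proposition~\ref{asymptoticsmassless}). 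Your description of this step as ``ground-state ratio $h(r_N,x_N)/h(0,0)$'' is the right intuition, but you have it attached to the wrong ratio.

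On the mutual absolute continuity: your transitivity argument through the free measure is correct in principle, but your identification of the Radon--Nikodym derivative as just $\exp(-\tfrac14\int_0^R(\re g)^4 r^{-2}\,dr)$ ``composed with the appropriate conditioning'' is incomplete --- the conditioning contributes a nontrivial factor depending on the endpoint $g(R)$. The paper exploits the Markov structure to write the derivative explicitly as
\[
\frac{d\nu_{L,1}|_{[0,R]}}{d\nu_{\infty,1}|_{[0,R]}}(f)=\frac{\phi_V(L,0;R,f(R))}{F(R,f(R))\,\phi_V(L,0;0,0)},
\]
which depends only on $f(R)$; this makes both the equivalence and the convergence \eqref{compatible1d} transparent via generalized dominated convergence. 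Your route would eventually arrive at the same place, but only after you make ``appropriate conditioning'' precise, which amounts to re-deriving this formula.
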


Finally, we turn to the PDE. Similar to before, the change of variables
	\[
	u \mapsto w(t,r):=ru(t,r)+ir \left(|\partial_r|^{-1} \partial_t u\right)(t,r)
	\] 
is a bijective correspondence between solutions $u$ of \eqref{NLW} and solutions $w$ of 
	\begin{equation}\label{1DNLW}
	\left\{\begin{array}{l}
	-i\partial_t w + |\partial_r| w = -|\partial_r|^{-1} \left( \frac{(\re w)^3}{r^2} \right) \\
	w(t,r): \R_t \times [0,\infty) \to \C\\
	w(t,0) = 0
	\end{array}\right.
	\end{equation}
In this setting, $|\partial_r| := \sqrt{-\partial_r^2}$, where the Laplacian is defined with zero boundary values on $[0,\infty)$.

\begin{defn}[Strong Solution for \eqref{1DNLW}]\label{stronginf}
Let $T \in [0,\infty)$ and let $g \in C^s_{loc}([0,\infty) \to \C)$. We say that $w(t,r) : [-T,T] \times [0,\infty) \to \C$ is a \textit{strong solution of \eqref{firstorderfinite1d} on $[-T,T]$ with initial datum $g$} if
	\begin{enumerate}
	\item $w(0,r) = g(r)$,
	\item $w(t,r)$ obeys the corresponding Duhamel formula
		\[
		w(t,r) = \left[e^{-it|\partial_r|}g\right](r) -i\int_0^t \left[\frac{e^{-i(t-\tau)|\partial_r|}}{|\partial_r|} \re\left(\frac{w(\tau,\cdot)^3}{(\cdot)^2}\right)\right](r) d\tau,
		\]
	for each $t \in [-T,T]$, and
	\item For each $R > 0$, we have $w \in C^0_{t} C^s_r([-T,T]\times [0,R] \to \C)$.
	\end{enumerate}
Furthermore, if $w$ is the unique strong solution on $[-T,T]$ with initial datum $g$, then we write 
	\begin{equation}\label{flowinf}
	\flow_\infty(t,g)(r) := w(t,r), \hbox{\hskip 18pt }|t| \leq T.
	\end{equation}
\end{defn}

As with Theorem~\ref{invariancefinite}, we complete the Borel $\sigma$-algebra on $C^s_{loc}([0,\infty)\to\C)$ with respect to the measure $\nu_\infty$, and we call a set \textit{$\nu_\infty$-measurable} if it is an element of this larger $\sigma$-algebra. By abuse of notation, we also denote the extension of the measure to this larger $\sigma$-algebra by $\nu_\infty$.

The proof of the following result, as well as the discussion of the proof method, is the content of Section \ref{invariance}.

\begin{thm}\label{invarianceinf}
Fix $\frac{1}{3} < s < \frac{1}{2}$. There exists a Borel measurable set $\Omega_\infty \subseteq C^s_{loc}([0,\infty) \to \C)$ such that
	\begin{enumerate}
	\item $\nu_\infty(\Omega_\infty) = 1$;
	\item Each $g \in \Omega_\infty$ admits a unique global, strong solution: $\flow_\infty(t,g)$ is defined for all $t \in \R$ and, for each $T>0$ and $R>0$, we have 
		\[
		\flow_\infty(t,g)|_{[0,R]} \in C_t^0C^s_r([-T,T] \times [0,R] \to \R).
		\]
	\item For each $\nu_\infty$-measurable subset $A \subseteq \Omega_\infty$ and for each $t \in \R$, the set 
	\[
	\flow_\infty(t,A) := \{ \flow_\infty(t,g) \mid g \in A\}
	\]
is also a $\nu_\infty$-measurable subset of $C^s_{loc}([0,\infty) \to \C)$ and
		\[
		\nu_\infty(\flow_\infty(t,A)) = \nu_\infty(A).
		\] 
	Moreover, if $A$ is Borel, then so is $\flow_\infty(t,A)$.
	\end{enumerate}
\end{thm}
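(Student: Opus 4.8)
\emph{Proof proposal.} The strategy has three ingredients: (i) build $\flow_\infty$ by patching together the finite-volume flows $\flow_L$ of Theorem~\ref{invariancefinite} using finite speed of propagation; (ii) transfer the invariance of the $\nu_L$ to $\nu_\infty$ using the compatibility~\eqref{compatible1d} and the mutual absolute continuity of restrictions from Theorem~\ref{infvolmeas}, together with the Markov-in-$r$ structure of $\nu_\infty$ to absorb the influence of the far boundary; and (iii) invoke the Lusin--Souslin theorem for the measurability and Borel-preservation assertions. \emph{Step 1 (finite speed of propagation).} I would first record unit-speed propagation for \eqref{1DNLW} and \eqref{firstorderfinite1d}. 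Since the nonlinearity depends only on $\re w$, the cleanest route runs through the second-order equation \eqref{secondorderfinite1d}: for $v = \re w$, the pair $(v, v_t)$ solves a genuine wave equation with the time-local (though spatially singular) potential $v^3/r^2$, and energy estimates on backward light cones give that $(v(t,\cdot), v_t(t,\cdot))|_{[0,R]}$ is determined by $(v(0,\cdot), v_t(0,\cdot))|_{[0,R+|t|]}$, uniformly in $L$ and independently of the right-endpoint boundary condition, provided $L > R + |t|$. Note that $w$ is recovered from $(v, v_t)$ through the \emph{nonlocal} operator $|\partial_r|^{-1}$, so it is the pair $(v, v_t)$ (equivalently the physical Cauchy data $(u, u_t)$), not $w$ itself, that carries finite speed of propagation, and tracking this while moving between the two formulations is one of the technical points.

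\emph{Step 2 (the set $\Omega_\infty$ and global well-posedness).} For integers $L > R > 0$, let $\Omega_\infty^{R,L}$ be the set of $g$ with $g|_{[0,R]} \in \{h|_{[0,R]} : h \in \Omega_L\}$. The set on the right is analytic (a continuous image of the Borel set $\Omega_L$), hence lies in the completion $\mathcal{F}_R$; since $\{g : g|_{[0,R]} \in \{h|_{[0,R]} : h \in \Omega_L\}\}$ contains $\Omega_L$ it has full $\nu_L|_{[0,R]}$-measure, hence full $\nu_\infty|_{[0,R]}$-measure by Theorem~\ref{infvolmeas}, so $\nu_\infty(\Omega_\infty^{R,L}) = 1$. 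Intersecting over all such $R, L$ yields an analytic set of full $\nu_\infty$-measure, inside which I choose (by regularity of $\nu_\infty$) a Borel set $\Omega_\infty$ of full measure. To construct the flow on $\Omega_\infty$, I would work with the second-order problem: given $g \in \Omega_\infty$, solve \eqref{secondorderfinite1d} globally by patching the finite-volume solutions associated to the $\flow_L(t,h)$, $h \in \Omega_L$ (consistency of the patches is exactly Step 1, applied to $(v, v_t)$), obtaining $(v(t,\cdot), v_t(t,\cdot))$ for all $t$, and then set $\flow_\infty(t,g) := v(t,\cdot) + i|\partial_r|^{-1}v_t(t,\cdot)$; uniqueness of the strong solution again follows from Step 1. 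That this satisfies Definition~\ref{stronginf} follows from the algebraic correspondence between \eqref{secondorderfinite1d} and \eqref{1DNLW}, and the local $C^0_t C^s_r$ regularity is inherited from the $\flow_L$. This establishes parts (1) and (2).

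\emph{Step 3 (invariance, and the main obstacle).} By the group law $\flow_\infty(t+t',\cdot) = \flow_\infty(t,\cdot)\circ\flow_\infty(t',\cdot)$ and the $\pi$--$\lambda$ theorem, it suffices to prove $\nu_\infty(\flow_\infty(t,A)) = \nu_\infty(A)$ for cylinder sets $A = \{g : g|_{[0,R]} \in A_0\}$, $A_0 \subseteq C^s([0,R]\to\C)$ Borel. Writing $B_L = \{h : h|_{[0,R]} \in A_0\} \subseteq C^s_0([0,L]\to\C)$, compatibility~\eqref{compatible1d} gives $\nu_\infty(A) = \lim_{L\to\infty}\nu_L(B_L)$, while Theorem~\ref{invariancefinite}(3) gives $\nu_L(B_L) = \nu_L(\flow_L(t, B_L))$. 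The plan is then to use Step 1 to show that, restricted to $[0, R-|t|]$, the set $\flow_L(t, B_L)$ is stable in $L$ (for $L$ large) and matches the corresponding restriction of $\flow_\infty(t, A)$, to push $L \to \infty$ (again via~\eqref{compatible1d} and mutual absolute continuity), and finally to let $R \to \infty$ to recover the strip of width $|t|$, obtaining $\nu_\infty(\flow_\infty(t,A)) = \nu_\infty(A)$. I expect this to be the main obstacle, for two reasons: the nonlocality of $|\partial_r|^{\pm 1}$ forces the matching in Step 1 to be phrased in Cauchy-data variables and then ported back through the change of coordinates, so the conditioning on $[R+|t|,\infty)$ must be controlled using the Markov-in-$r$ structure of $\nu_\infty$ (the analogue of the argument that works for 1D wave equations); and the zero-boundary nature of the $\nu_L$ means a generic finite-volume path is $\nu_\infty$-negligible, so every reduction has to be carried out with measurable (often merely analytic) \emph{sets}, not with individual functions.

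\emph{Step 4 (measurability).} The patching construction and local well-posedness make $\flow_\infty(t,\cdot):\Omega_\infty \to C^s_{loc}([0,\infty)\to\C)$ continuous and injective, with inverse $\flow_\infty(-t,\cdot)$; by the Lusin--Souslin theorem it is therefore a Borel isomorphism onto its image and carries Borel subsets of $\Omega_\infty$ to Borel sets, which together with Step 3 gives the final assertion of part (3). For a general $\nu_\infty$-measurable $A \subseteq \Omega_\infty$, write $A = B \cup N$ with $B$ Borel and $N$ contained in a Borel $\nu_\infty$-null set $N'$; then $\flow_\infty(t,B)$ is Borel with $\nu_\infty(\flow_\infty(t,B)) = \nu_\infty(B) = \nu_\infty(A)$ by Step 3, while $\flow_\infty(t,N) \subseteq \flow_\infty(t,N')$ lies in a Borel $\nu_\infty$-null set; hence $\flow_\infty(t,A)$ is $\nu_\infty$-measurable with $\nu_\infty(\flow_\infty(t,A)) = \nu_\infty(A)$, which completes part (3).
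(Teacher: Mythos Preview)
Your overall architecture matches Section~\ref{invariance}: build $\Omega_\infty$ from restrictions of the $\Omega_L$ using mutual absolute continuity, patch the $\flow_L$ into $\flow_\infty$, transfer invariance through~\eqref{compatible1d}, and invoke Lusin--Souslin for Borel preservation. Step~4 is essentially the paper's argument.

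The real divergence is Step~3. You try to prove $\nu_\infty(\flow_\infty(t,A)) = \nu_\infty(A)$ directly for cylinder sets $A = (\rho^\infty_R)^{-1}(A_0)$ and then appeal to $\pi$--$\lambda$; the paper instead proves only the one-sided inequality $\nu_\infty(K) \le \nu_\infty(\flow_\infty(t,K))$ for \emph{closed} $K \subseteq \Omega_\infty$ (Lemma~\ref{closedgood}) and then recovers equality for all Borel $A$ by inner regularity plus complementation. Closedness is exactly what makes the $R\to\infty$ step work: after finite-volume invariance and $L\to\infty$, one is left with
\[
\nu_\infty(K) \le \nu_\infty\bigl((\rho^\infty_{R})^{-1}\circ\rho^\infty_{R}(\flow_\infty(t,K))\bigr),
\]
and to conclude one needs $\bigcap_R (\rho^\infty_R)^{-1}\circ\rho^\infty_R(\flow_\infty(t,K)) = \flow_\infty(t,K)$. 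The paper proves this in~\eqref{closedflow} by a converging-sequence argument using both closedness of $K$ and finite speed for the backward flow; for a general cylinder $A$ (whose flow-image is not a cylinder and need not be closed) this identity can fail badly, so your ``let $R\to\infty$ to recover the strip'' is a genuine gap. Replacing cylinder sets by compact $K$ via inner regularity, and accepting a one-sided inequality, fixes it.

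On the finite-speed concern you raise in Steps~1--2: the paper does not take your route through second-order variables but simply asserts and uses finite speed of propagation for $\flow_L$ in the $w$-picture (Corollary~\ref{stability}, the patching in~\eqref{flowR}, and~\eqref{fsop}). Your caution about the nonlocality of $|\partial_r|^{\pm 1}$ is well placed, but note that your proposed workaround inherits the same obstruction: agreement of $g$ on $[0,R]$ does not force agreement of the second-order velocity $|\partial_r|\im g$ on $[0,R]$, so patching $(v,v_t)$ from $g|_{[0,R]}$-data in Step~2 is no more justified than patching $w$. Making this rigorous would require phrasing the restriction/compatibility in terms of the pair $(\re g,\,|\partial_r|\im g)$ from the outset; the Markov-in-$r$ remark alone does not close this.
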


\section{Finite volume invariant measures}\label{finitevol}
	
In this section, we prove Theorem~\ref{invariancefinite}. The main new ingredients are a local well-posedness theory on $C^s_0([0,L] \to \C)$, as well as some measure theoretic considerations from Appendix~\ref{LS}. 


\subsection{Local Well-Posedness in $C^s$}

We first establish the local well-posedness of \eqref{secondorderfinite1d} in $C^s_0([0,L]\to\R)$, as the free propagator in this setting can be written down explicitly. Afterwards, we show that the complexified wave equation \eqref{firstorderfinite1d} is locally well-posed in $C^s_0([0,L] \to \C)$.
 
To obtain explicit formulas for the linear evolution on $[0,L]$ with Dirichlet boundary values, we apply the usual odd reflections, and use d'Alembert's formula. For $L \geq 2$, for $0 \leq t \leq 1$, and for $0 < r < L$, we have	
	\begin{equation}\label{costnabla}
	\left[\cos(t|\partial_r|) f \right](r) 
		= 
	\left
		\{\begin{array}{ll}
	\frac{1}{2}(f(r+t) - f(t-r)) & r-t < 0, \\ \\
	\frac{1}{2}(f(r+t) + f(r-t)) & 0 \leq r - t \leq r+t \leq L, \\ \\
	\frac{1}{2} \left(f(r-t)-f(2L-r - t) \right) & L < r+t
		\end{array}
	\right.
	\end{equation}
and
	\begin{equation}\label{sintnabla}
	\left[\frac{\sin(t|\partial_r|)}{|\partial_r|} g \right](r) 
		= 
	\left
		\{\begin{array}{ll}
	\displaystyle \frac{1}{2} \int_{t-r}^{t+r} g(\rho) \; d\rho  & r-t < 0, \\ \\
	\displaystyle \frac{1}{2} \int_{r-t}^{r+t} g(\rho) \; d\rho  & 0 \leq r-t \leq r+t < L, \\ \\
	\displaystyle \frac{1}{2} \int_{r-t}^{2L-r-t} g(\rho) \; d\rho  & L \leq r+t
		\end{array}
	\right.
	\end{equation}
with similar formulas when $-1 \leq t \leq 0$.

First, we establish some estimates on the linear propagator itself.

\begin{lem}\label{freeevgood}
Let $f \in C_0^{s}([0,L] \to \R)$. For $0 < T < \infty$, we have
	\[
	[\cos(t|\partial_r|)f](r), [\sin(t|\partial_r|)f](r)\in C_t^0 C_r^{s}([-T,T] \times [0,L] \to \R).
	\]
Furthermore, there exists $C > 0$ such that
	\begin{align}
	&\left\|[\cos(t|\partial_r|)f](r)\right\|_{C^0_tC^s_r([-T,T] \times [0,L])} \leq C \|f\|_{C^s_0} 
		\\
		\label{sinevol}
	&\left\|[\sin(t|\partial_r|)f](r)\right\|_{C^0_tC^s_r([-T,T] \times [0,L])} \leq C \|f\|_{C^s_0}.
	\end{align}
Also, for every $t \in [-T,T]$, we have
	\[
	0=\left[\sin(t|\partial_r|)f\right](0) = \left[\sin(t|\partial_r|)f\right](L) = \left[\cos(t|\partial_r|)f\right](0) = \left[\cos(t|\partial_r|)f\right](L)
	\]
\end{lem}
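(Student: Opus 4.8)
The plan is to work directly from the explicit d'Alembert formulas \eqref{costnabla} and \eqref{sintnabla}, treating each of the three spatial regimes ($r - t < 0$, $0 \le r-t \le r+t \le L$, and $L < r+t$) separately and then observing that the pieces match continuously across the boundary lines $r = t$ and $r + t = L$. Here is how I would organize it.

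First I would handle the $\cos(t|\partial_r|)f$ bound. In each of the three regimes the expression is a fixed linear combination (with coefficients $\pm\tfrac12$) of evaluations of $f$ at points of the form $r \pm t$, $t - r$, or $2L - r - t$, each of which is an affine function of $r$ (for fixed $t$) mapping $[0,L]$ into $[0,L]$ after the odd/even reflection is accounted for. Since $f \in C^s_0$, composition with a $1$-Lipschitz affine map preserves the $C^s$ seminorm, and sums of at most two such terms multiply the seminorm by a bounded constant; likewise the sup norm is controlled by $\|f\|_{C^0} \le C\|f\|_{C^s_0}$ (using $f(0) = 0$ and $L \ge 2$ so $L^s$ is a harmless constant on $[0,L]$ — actually one bounds $\|f\|_{C^0}$ by $L^s\|f\|_{C^s_0}$ via $f(0)=0$). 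This gives the spatial $C^s_r$ bound uniformly in $t \in [-T,T]$. For joint continuity in $(t,r)$, I would note that within the interior of each regime the formula is manifestly jointly continuous (each $r \pm t$ etc.\ varies continuously and $f$ is continuous), and across the interfaces $r = t$ and $r + t = L$ the two neighbouring formulas agree — e.g.\ at $r = t$, $f(t-r) = f(0) = 0$, so $\tfrac12(f(r+t) - f(t-r)) = \tfrac12 f(r+t) = \tfrac12(f(r+t) + f(r-t))$; at $r + t = L$, $f(2L - r - t) = f(L) = 0$, matching the middle formula. Hence $\cos(t|\partial_r|)f$ is a well-defined continuous function on $[-T,T]\times[0,L]$, and the $C^0_t C^s_r$ estimate follows. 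The boundary values $[\cos(t|\partial_r|)f](0) = [\cos(t|\partial_r|)f](L) = 0$ come from plugging $r = 0$ (which lands in the first regime: $f(t) - f(t) = 0$, using evenness of the cosine extension — more precisely the odd reflection of $f$ forces this) and $r = L$ into the appropriate pieces and invoking $f(0) = f(L) = 0$.

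For $\frac{\sin(t|\partial_r|)}{|\partial_r|} g$ the argument is the same flavour but one gains regularity: in each regime the expression is $\tfrac12\int$ of $g$ over an interval whose endpoints are affine in $r$, so it is a difference of two antiderivatives of $g$ evaluated at $1$-Lipschitz affine arguments. Since $g \in C^s$, its antiderivative is $C^{1,s} \subset C^1$, and the chain rule shows the $C^s_r$ seminorm of $\frac{\sin(t|\partial_r|)}{|\partial_r|}g$ is controlled by $\|g\|_{C^0} \cdot T \le C T \|g\|_{C^s_0}$ (in fact one gets a Lipschitz-in-$r$ bound, hence $C^s$ with room to spare). The sup-norm bound is $\le \tfrac12 \cdot 2T \cdot \|g\|_{C^0}$. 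Joint continuity and the interface matching are checked exactly as before — at $r = t$ the lower limit $t - r$ and $r - t$ both equal $0$ so the two integral formulas coincide, and at $r + t = L$ the upper limits $r + t$ and $2L - r - t$ both equal $L$. Wait — I should be careful that the lemma statement writes $\|[\sin(t|\partial_r|)f](r)\|$, i.e.\ with the convention that in \eqref{sinevol} ``$\sin(t|\partial_r|)f$'' abbreviates $\frac{\sin(t|\partial_r|)}{|\partial_r|}f$ consistently with \eqref{sintnabla}; I would state this convention explicitly at the start of the proof. The boundary-value claims $[\sin(t|\partial_r|)f](0) = [\sin(t|\partial_r|)f](L) = 0$ follow because at $r = 0$ the integration interval $[t,t]$ (first regime) has zero length, and at $r = L$ the interval $[L-t, L-t]$ (third regime, or by symmetry) degenerates.

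The main obstacle, such as it is, is purely bookkeeping: verifying that the three case-definitions in \eqref{costnabla}–\eqref{sintnabla} genuinely glue to a continuous function and that I have correctly tracked how the odd reflection at $r = 0$ and $r = L$ turns into the sign patterns shown (so that, for instance, the $C^s$ seminorm of the reflected-and-restricted function is comparable to that of $f$ itself, rather than acquiring a jump at the reflection points). No single estimate is deep; the care needed is in handling the finitely many affine substitutions uniformly and in confirming the interface identities, which I would do by substituting the interface relations $r = t$ and $r + t = L$ and using $f(0) = f(L) = 0$ as above.
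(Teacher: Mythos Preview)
Your argument for $\cos(t|\partial_r|)f$ matches the paper's (the paper simply says this follows immediately from \eqref{costnabla}), and your bookkeeping of the three regimes and interface matching is correct.

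However, there is a genuine gap in the $\sin(t|\partial_r|)$ part: your interpretation of the operator is wrong. In the lemma, $\sin(t|\partial_r|)f$ really means the Fourier multiplier $\sin(t|\partial_r|)$ applied to $f$ (multiplying the $n$th sine coefficient by $\sin(tn\pi/L)$), \emph{not} $\frac{\sin(t|\partial_r|)}{|\partial_r|}f$. This is the form in which the lemma is actually used later (Proposition~\ref{lwpfinite}, equation~\eqref{linflow}), where one needs $\|\sin(t|\partial_r|)\re(g)\|_{C^s} \lesssim \|g\|_{C^s}$. The operator $\sin(t|\partial_r|)$ has no d'Alembert formula like \eqref{sintnabla}; it is genuinely nonlocal (it is essentially the Hilbert transform of $\cos(t|\partial_r|)$), so your direct case-by-case translation argument cannot reach it.

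The missing ingredient is Privalov's theorem. The paper's route is: from \eqref{costnabla} and \eqref{sintnabla} one reads off $\partial_r\bigl[\frac{\sin(t|\partial_r|)}{|\partial_r|}f\bigr] = \cos(t|\partial_r|)f$, which by the first part lies in $C^0_tC^s_r$ with norm $\lesssim \|f\|_{C^s_0}$. Then one writes $\sin(t|\partial_r|)f = |\partial_r|(\partial_r)^{-1}\bigl[\cos(t|\partial_r|)f\bigr]$; the operator $|\partial_r|(\partial_r)^{-1}$ is the finite-volume Hilbert transform, and Privalov's theorem gives its boundedness on $C^s([0,L])$, whence \eqref{sinevol}. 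Likewise, your boundary-value argument (degenerate integration interval) establishes $\frac{\sin(t|\partial_r|)}{|\partial_r|}f(0) = \frac{\sin(t|\partial_r|)}{|\partial_r|}f(L) = 0$, but not the claimed $\sin(t|\partial_r|)f(0) = \sin(t|\partial_r|)f(L) = 0$; for that the paper argues that $\sin(t|\partial_r|)f$ is H\"older (hence its sine series converges uniformly) and is represented by a pure sine series, so it vanishes at the endpoints.
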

\begin{proof}	
The assertions for $\cos(t|\partial_r|)$ follow immediately from \eqref{costnabla}. 

Observe that, by \eqref{costnabla} and \eqref{sintnabla}, we have
	\[
	\partial_r \left[\frac{\sin(t|\partial_r|)}{|\partial_r|} f \right](r) = [\cos(t|\partial_r|)f](r),
	\]
and so 
	\[
	\left\|\partial_r \left[\frac{\sin(t|\partial_r|)}{|\partial_r|} f \right](r) \right\|_{C^0_tC^s_r([-T,T] \times [0,L])} \lesssim \|f\|_{C^s_0} 
	\]
Now, the operator $|\partial_r|(\partial_r)^{-1}$ is the finite volume Hilbert transform and, by Privalov's theorem (cf., \cite{Zyg}), is a bounded linear map from $C^s([0,L])$ to itself. Thus, \eqref{sinevol} follows.

Finally, recall that the Fourier series of H\"older continuous functions converge uniformly to the original function. For fixed $t$, the function $\sin(t|\partial_r|)f$ is $s$-H\"older continuous, and the Fourier series of $\sin(t|\partial_x|) f$ is still a sine series. Thus,
	\[
	0=\left[\sin(t|\partial_x|) f\right](0) = \left[\sin(t|\partial_x|) f\right](L).\qedhere
	\]
\end{proof}

We use Lemma~\ref{freeevgood} to establish a local well-posedness for the second order equation \eqref{secondorderfinite1d}. Similarly to Definition~\ref{strongfinite}, we say that $v(t,r) : [-T,T] \times [0,L]\to \R$ is a strong solution of \eqref{secondorderfinite1d} on $[-T,T]$ with initial data $(f_1,f_2)$ if
	\begin{enumerate}
	\item $v(t,r) \in C^0_tC^s_r([-T,T]\times[0,L]\to\R)$,
	\item $(v,v_t)|_{t=0} = (f_1,f_2)$,
	\item $v(t,r)$ obeys the Duhamel formula
		\[
	v(t,r) = \left[\cos(t|\partial_r|)f_1\right](r) + \left[\frac{\sin(t|\partial_r|)}{|\partial_r|}f_2\right](r) - [K(v)](t,r),
		\]
where
		\[
		[K(v)](t,r) := \int_0^t \left( \frac{\sin(t-\tau)|\partial_r|}{|\partial_r|}\frac{[v(\tau,\cdot)]^3}{(\cdot)^{2}} \right)(r) d\tau.
		\]
	\end{enumerate}

\begin{prop}\label{lwpfiniteforsecondorder}
Fix $\frac{1}{3} < s < \frac{1}{2}$ and fix $L > 2$. Let $f_1 \in C^s_0([0,L] \to \R)$ and let $f_2$ be a distribution supported on $[0,L]$ such that $|\partial_r|^{-1}f_2 \in C^s_0([0,L] \to \R)$. There exists $T \in (0,1)$, whose value depends on $L$, $\|f_1\|_{C^s}$, $\||\partial_r|^{-1}f_2\|_{C^s}$, and a unique strong solution $v(t,r)$ of \eqref{secondorderfinite1d} on $[-T,T]$ with initial data $(f_1,f_2)$.

Moreover,
		\begin{equation}\label{oldflowbound}
		\|v\|_{C^0_tC^s_r([-T,T]\times[0,L] \to \R)} \lesssim_s \|f_1\|_{C^s_0} + \left\||\partial_r|^{-1} f_2 \right\|_{C^s_0}
		\end{equation}
and for every $\sigma \in [0,1)$,
	\begin{equation}\label{almostlip1}
	v(t,r) - \left[\cos(t|\partial_r|)f_1\right](r) - \left[\frac{\sin(t|\partial_r|)}{|\partial_r|}f_2\right](r) \in C^0_tC^\sigma_r([-T,T]\times[0,L] \to \R).
	\end{equation}
\end{prop}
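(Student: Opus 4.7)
My plan is a Picard contraction argument in the closed ball of radius $R$ in $X_T := C^0_t C^s_r([-T,T]\times[0,L]\to\R)$ for the map
\begin{equation*}
\Phi(v)(t,r) := \bigl[\cos(t|\partial_r|)f_1\bigr](r) + \Bigl[\tfrac{\sin(t|\partial_r|)}{|\partial_r|}f_2\Bigr](r) - K(v)(t,r).
\end{equation*}
Lemma~\ref{freeevgood} already places the two free propagators in $X_T$ with norm $\lesssim \|f_1\|_{C^s_0} + \||\partial_r|^{-1}f_2\|_{C^s_0}$ and produces the desired vanishing at $r=0,L$. Everything therefore reduces to a single nonlinear bound $\|K(v)\|_{X_T} \leq C(L,s)\,T^{\alpha}\,\|v\|_{X_T}^3$ together with its Lipschitz variant (using $v_1^3-v_2^3=(v_1-v_2)(v_1^2+v_1v_2+v_2^2)$); the standard bookkeeping then yields a unique fixed point, uniqueness, and the a priori bound \eqref{oldflowbound}.

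The crucial pointwise input is that any $v(\tau,\cdot)\in C^s_0$ vanishes at $\rho=0$, so $|v(\tau,\rho)|\leq\|v\|_{X_T}\rho^s$ and
\begin{equation*}
\Bigl|\frac{v(\tau,\rho)^3}{\rho^2}\Bigr|\leq\|v\|_{X_T}^3\,\rho^{3s-2},
\end{equation*}
which is locally integrable at the origin \emph{precisely because $s>1/3$} makes $3s-2>-1$. Using \eqref{sintnabla} (together with the odd-reflection variants in the two boundary regimes), $K(v)(t,r)$ is $\tfrac12$ times a double integral of $v^3/\rho^2$ over a light-cone of radius $\lesssim T$; a direct computation yields the $L^\infty_{t,r}$ bound $\|K(v)\|_{L^\infty}\lesssim_s T^{3s}\|v\|_{X_T}^3$.

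The main obstacle is the matching $C^s_r$ estimate. For $r_1<r_2$ in $[0,L]$ and $\Delta:=r_2-r_1$, the difference $K(v)(t,r_2)-K(v)(t,r_1)$ is the $\tau$-integral of two integrals $I_\pm(\tau)$ of $h:=v^3/\rho^2$ over windows of length $\Delta$ placed around $r_1\pm(t-\tau)$ (boundary regimes handled identically by odd reflection). Each $I_\pm$ is controlled by the two complementary bounds
\begin{equation*}
\int_a^{a+\Delta}\rho^{3s-2}d\rho\leq\min\bigl(a^{3s-2}\Delta,\ C_s\Delta^{3s-1}\bigr),\qquad a\geq 0,\ \Delta>0,
\end{equation*}
and the crux is to combine them judiciously after $\tau$-integration. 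In the far regime $a\geq\Delta$, concavity of $x\mapsto x^{3s-1}$ (valid since $3s-1\in(0,\tfrac12)$) gives $\int_0^t a^{3s-2}d\tau\lesssim t^{3s-1}$, yielding a contribution $\lesssim T^{3s-1}\Delta\lesssim L^{1-s}T^{3s-1}\Delta^s$. In the near regime $a<\Delta$, the endpoint lives in a $\tau$-interval of length at most $\min(\Delta,T)$, giving a contribution $\lesssim\Delta^{3s-1}\min(\Delta,T)$; a short case split on whether $\Delta\leq T$ or $\Delta>T$ shows this is $\lesssim T^{2s}\Delta^s$. Combining the two regimes,
\begin{equation*}
\|K(v)\|_{C^0_t C^s_r}\lesssim\bigl(T^{3s-1}+T^{2s}\bigr)\|v\|_{X_T}^3,
\end{equation*}
so $\alpha=\min(3s-1,2s)>0$ closes the estimate. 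The Lipschitz bound is word-for-word identical with $|v_1(\tau,\rho)-v_2(\tau,\rho)|\leq\|v_1-v_2\|_{X_T}\rho^s$ used in place of one factor of $v$.

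The boundary values $v(t,0)=v(t,L)=0$ persist under iteration because they hold for the free part (Lemma~\ref{freeevgood}) and, by the odd-reflection structure of \eqref{sintnabla}, also for $K(v)$. For the improved regularity \eqref{almostlip1}, I would differentiate \eqref{sintnabla} in $r$, carefully splitting into the regimes $r>t-\tau$ and $r<t-\tau$ and tracking the boundary terms at $\tau=t-r$ (which cancel because the two integrands agree there). This produces an explicit formula for $\partial_r K(v)(t,r)$ as a finite sum of $\tau$-integrals of $h$ evaluated at \emph{nonnegative} arguments; the pointwise bound $|h(\tau,\rho)|\lesssim\|v\|_{X_T}^3\rho^{3s-2}$ together with the same type of $\tau$-integration performed above (essentially $\int_0^t u^{3s-2}du \lesssim T^{3s-1}$ after an appropriate change of variable) yields $\|\partial_r K(v)\|_{L^\infty_{t,r}}\lesssim T^{3s-1}\|v\|_{X_T}^3$. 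Thus $K(v)$ is Lipschitz in $r$ on $[-T,T]\times[0,L]$, and in particular belongs to $C^0_t C^\sigma_r$ for every $\sigma<1$, establishing \eqref{almostlip1}.
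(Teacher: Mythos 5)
Your proposal is correct and rests on the same skeleton as the paper's proof: a contraction in $C^0_tC^s_r$, the pointwise bound $|v(\tau,\rho)|\leq\rho^s\|v\|$ turning the nonlinearity into $\rho^{3s-2}$ (integrable precisely because $s>\tfrac13$), and an estimate of $K(v)(t,r_2)-K(v)(t,r_1)$ by integrals of $\rho^{3s-2}$ over the symmetric difference of the two light cones. Where you diverge is in the execution of the H\"older estimate. The paper enumerates seven subcases according to how $D(t,r)$ and $D(t,r')$ sit relative to each other and to the boundary (with a separate Case 1 for disjoint cones), each time invoking the Lipschitz bound $b^{3s}-a^{3s}\lesssim_L b-a$; you instead observe that the symmetric difference is always a pair of moving windows of width $\Delta=r_2-r_1$ centered at $r_1\pm(t-\tau)$, and run a single far/near dichotomy in $\tau$ using $\int_a^{a+\Delta}\rho^{3s-2}\,d\rho\leq\min(a^{3s-2}\Delta,\,C_s\Delta^{3s-1})$ and concavity of $x\mapsto x^{3s-1}$. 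This collapses the case analysis considerably (the boundary regimes really do reduce to windows at nonnegative positions after odd reflection, and for $T<1<L-1$ the reflected windows never approach the singularities of the extension at $2L\mathbb{Z}$), at the cost of a slightly worse power $T^{\min(3s-1,2s)}$ in place of the paper's $T^{1-\sigma}$ — immaterial for closing the contraction. Your treatment of \eqref{almostlip1} is also different: the paper interpolates its two bounds $\min(t,\,C_L(r-r'))\lesssim t^{1-\sigma}(r-r')^\sigma$ to get $C^\sigma$ for all $\sigma<1$, whereas you differentiate $K(v)$ in $r$ directly and bound $\partial_rK(v)$ in $L^\infty$ by $\int_0^t(\cdot)^{3s-2}d\tau\lesssim t^{3s-1}$; this actually yields the (marginally stronger) Lipschitz conclusion, which of course implies \eqref{almostlip1}. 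Both routes are sound.
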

\begin{proof}
We use the abbreviation $C_t^0C_r^s$ in place of $C_t^0 C_r^s([-T,T]\times[0,L])$, where $T$ is a constant to be specified later. We want to show that the mapping
	\begin{equation}\label{contract}
	v(t,r) \mapsto \left[\cos(t|\partial_r|)f_1\right](r) + \left[\frac{\sin(t|\partial_r|)}{|\partial_r|}f_2\right](r) - [K(v)](t,r),
	\end{equation}
admits a unique fixed point in $C_t^0C_r^s$ with zero boundary values by showing that it is a contraction for sufficiently small $T$. 

For $(t,r) \in [-T,T]$, we denote $D(t,r)$ to be the domain of dependence from $(t,r)$. For example, if $r - t < 0$, then
	\begin{align*}
	D(t,r) 
		&= 
	\{(\tau, \rho) \mid t - r \leq \tau \leq t \hbox{ and } r'-t+\tau\leq\rho\leq t+r'-\tau \} \; \cup
		\\
		&\hbox{\hskip 18pt}
	\{(\tau,\rho) \mid 0 \leq \tau \leq t - r \hbox{ and } t-r'-\tau \leq \rho \leq  t+r'-\tau \}.
	\end{align*}

We use two key estimates. Recalling $u(t,0)= 0$, the first estimate is
	\begin{equation}\label{holderbound}
	|v(t,r)| \leq r^s \|v\|_{C_t^0C_r^s},
	\end{equation}
which shall give us integrability in the Duhamel terms. Recalling $3s > 1$, the second estimate is
		\begin{equation}\label{lip}
		b^{3s}-a^{3s} \lesssim_L b-a, \hbox{\hskip 36pt} 0 \leq a \leq b \leq L
		\end{equation}
since the function $f(x) =x^{3s}$ is Lipschitz on bounded domains.

For the remainder of this proof, let us assume $0 \leq r' < r \leq L$. We treat only the case $t \geq 0$; the negative time case is similar.

\begin{itemize}
\item[\textbf{Case 1},] $t \leq \frac{r-r'}{2}$. In this case, $D(t,r) \cap D(t,r') = \emptyset$. So, we simply use
	\begin{equation}\label{nointersect}
	|[K(v)](t,r) - [K(v)](t,r')|  \leq |[K(v)](t,r)| + |[K(v)](t,r')|
	\end{equation}
and estimate each term separately. We seek a bound of the form
	\[
	|[K(v)](t,r) - [K(v)](t,r')| \lesssim_L \|v\|_{C^0_t C^s_r}^3 t,
	\]
as we may then use the estimate $t \lesssim t^{1-\sigma} (r-r')^\sigma$ for every $\sigma \in [0,1)$. In particular, specializing to $\sigma = s$ and taking $t$ sufficiently small will lead us to the desired fixed point of \eqref{contract}. We only estimate $[K(v)](t,r)$; the case $[K(v)](t,r')$ is similar.
	\begin{itemize}
	\item[\textbf{Subcase 1.1},] $r - t < 0$. In particular, $0 < r < t$. Then \eqref{sintnabla}, \eqref{holderbound}, and \eqref{lip} gives
	\begin{align*}
	|[K(v)](t,r)| 
		&\lesssim
	\left[\int_0^{t-r} \int_{(t-\tau)-r}^{r+(t-\tau)} + \int_{t-r}^t \int_{r-(t-\tau)}^{r+(t-\tau)} \right]
		\frac{|v(\tau,\rho)|^3}{\rho^{2}} \; d\rho \; d\tau
		\\
		\\
		&\lesssim
	\|v\|_{C_t^0C_r^s}^3 \left[\int_0^{t-r} \int_{(t-\tau)-r}^{r+(t-\tau)} + \int_{t-r}^t \int_{r-(t-\tau)}^{r+(t-\tau)} \right]
		\rho^{3s-2} \; d\rho \; d\tau
		\\
		&\lesssim_s
	\|v\|_{C_t^0C_r^s}^3 \Big[ (t+r)^{3s} - (t-r)^{3s} \underbrace{- 2(r)^{3s}}_{\leq 0} \Big] 
		\\
		&\lesssim_{s,L}
	\|v\|_{C_t^0C_r^s}^3 r
		\\
		&\lesssim_{s,L}
	\|v\|_{C_t^0C_r^s}^3 t
	\end{align*}
	\item[\textbf{Subcase 1.2},] $0 \leq r-t \leq r + t \leq L$. Then \eqref{sintnabla}, \eqref{holderbound}, and \eqref{lip} gives
		\begin{align*}
	|[K(v)](t,r)| 
		&\lesssim
	\|v\|_{C_t^0C_r^s}^3\int_0^t \int_{r-(t-\tau)}^{r+(t-\tau)} \rho^{3s-2} \; d\rho \; d\tau
		\\
		&\lesssim_s
	\|v\|_{C_t^0C_r^s}^3\big[ (r+t)^{3s} - r^{3s} + \underbrace{(r-t)^{3s} - r^{3s}}_{\leq 0} \big]
		\\
		&\lesssim_{s,L}
	\|v\|_{C_t^0C_r^s}^3 t 
	\end{align*}
	\item[\textbf{Subcase 1.3},] $L < r + t$. In particular, $L-r < t$ and $r - t < L -t < 2L - r - t$. Then \eqref{sintnabla}, \eqref{holderbound}, and \eqref{lip} gives 
			\begin{align*}
	|[K(v)](t,r)| 
		&\lesssim
	\|v\|_{C_t^0C_r^s}^3\left[ \int_0^{t-(L-r)} \int_{r-(t-\tau)}^{2L - r - (t-\tau)} + \int_{t-(L-r)}^t \int_{r-(t-\tau)}^{r+(t-\tau)}\right] 
		\rho^{3s-2} \; d\rho \; d\tau
		\\
		&\lesssim_s
	\|v\|_{C_t^0C_r^s}^3 \big( 2(L^{3s} - r^{3s}) + \underbrace{(r-t)^{3s} - (2L - r - t)^{3s}}_{\leq 0}  \big)
		\\
		&\lesssim_{s,L}
	\|v\|_{C_t^0C_r^s}^3 (L-r) 
		\\
		&\lesssim_{s,L}
	\|v\|_{C_t^0C_r^s}^3t.
	\end{align*}
	\end{itemize}
\item[\textbf{Case 2},] $\frac{r-r'}{2} < t $. In this case, $D(t,r) \cap D(t,r') \neq \emptyset$. Using \eqref{holderbound}, we have
	\begin{equation}
	\big\vert [K(v)](t,r') - [K(v)](t,r)\big\vert \leq \|v\|_{C^0_t C^s_r}^3 \iint_{D(t,r) \triangle D(t,r')} \rho^{3s-2} \; d\rho \; d\tau,
	\end{equation}
	where $A \triangle B := (A \setminus B) \cup (B \setminus A)$ denotes the symmetric difference set. Here, we seek an estimate of the form
		\[
		\iint_{D(t,r) \triangle D(t,r')} \rho^{3s-2} \; d\rho \; d\tau 
			\lesssim_{s,L}
		r-r'
		\]
	as we may then use the estimate $r-r' \lesssim t^{1-\sigma}(r-r')^\sigma$ for every $\sigma \in (0,1)$.
	
	\begin{itemize}
	\item[\textbf{Subcase 2.1},] $0\leq r'-t$ and $r +t \leq L$. The domain of dependence is sketched in Figure~\ref{midmid}. In this case, $D(t,r') \triangle D(t,r) \subset R_1 \cup R_2$, where
		\begin{align*}
		R_1 
			&= 
		\{(\tau,\rho) \mid  0 \leq \tau \leq t \hbox{ and } r'+t-\tau\leq\rho\leq r+t-\tau \},
		\\
		R_2
			&=
		\{ (\tau,\rho) \mid  0 \leq \tau \leq t \hbox{ and } r'-t+\tau\leq\rho\leq r-t+\tau \}.
		\end{align*}
	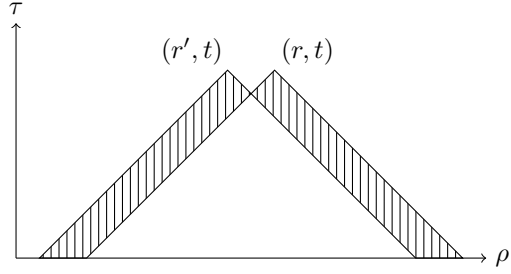
\begin{figure}
\begin{tikzpicture}[scale=1.25]
\draw[->] (0,0) -- (0,2.5);
\draw[->] (0,0) -- (5,0);
\node [left] at (2.3,2.2) {$(r',t)$};
\node [right] at (2.72,2.2) {$(r,t)$};
\node [above] at (0,2.5) {$\tau$};
\node [right] at (5,0) {$\rho$};
\path[draw,pattern=vertical lines] (0.25,0) -- (2.25,2) -- (2.5,1.75) -- (0.75,0) -- cycle;
\path[draw,pattern=vertical lines] (2.5,1.75) -- (2.75,2) -- (4.75,0) -- (4.25,0) -- cycle;
\end{tikzpicture}
\caption{Subcase 2.1: shaded region is $D(t,r) \triangle D(t,r')$.} \label{midmid}
	\end{figure}

Using \eqref{lip}, we have
		\begin{align*}
		\iint_{R_1} \rho^{3s-2} \; d\rho \; d\tau 
			&=
		\int_0^t\int_{r'+t-\tau}^{r+t-\tau} \rho^{3s-2} \; d\rho \; d\tau
			\\
			&\lesssim
		-(r)^{3s}+(r+t)^{3s}+(r)^{3s}-(r'+t)^{3s}
			\\
			&\lesssim_L
		r-r'.
		\end{align*}
	and
		\begin{align*}
		\iint_{R_2} \rho^{3s-2} \; d\rho \; d\tau
			&=
		\int_0^t\int_{r'-t+\tau}^{r-t+\tau} \rho^{3s-2} \; d\rho \; d\tau
			\\
			&\lesssim_s
		(r)^{3s} - (r')^{3s} + \underbrace{(r'-t)^{3s} - (r-t)^{3s}}_{\leq 0}
			\\
			&\lesssim_{s,L}
		r-r'.
		\end{align*}
	\item[\textbf{Subcase 2.2},] $r' -t < 0 \leq \frac{r' +r}{2}-t$ and $r +t \leq L$. The domain of dependence is sketched in Figure~\ref{lomidnocross}. In this case, $D(t,r') \triangle D(t,r) \subset R_1 \cup R_2 \cup R_3$, where
		\begin{align*}
		R_1 
			&= 
		\{(\tau,\rho) \mid  0 \leq \tau \leq t\hbox{ and } r'+t-\tau\leq\rho\leq r+t-\tau \},
		\\
		R_2
			&=
		\{ (\tau,\rho) \mid  t-r' \leq \tau \leq t \hbox{ and } r'-t+\tau\leq\rho\leq r-t+\tau \},
		\\
		R_3
			&=
		\{ (\tau,\rho) \mid  0 \leq \tau \leq t -r'\hbox{ and } t-r'-\tau\leq\rho\leq t+r'-\tau \}.
		\end{align*}
\begin{figure}
\begin{tikzpicture}[scale=1.25]
\draw[->] (0,0) -- (0,2.5);
\draw[->] (0,0) -- (5,0);
\node [left] at (0,0.5) {$t-r'$};
\node [above] at (1.5,2) {$(r',t)$};
\node [above] at (2.75,2) {$(r,t)$};
\node [above] at (0,2.5) {$\tau$};
\node [right] at (5,0) {$\rho$};
\path[draw,pattern=vertical lines] (0.5,0) -- (0,0.5) -- (1.5,2) -- (2.125,1.375) -- (0.75,0) -- cycle;
\path[draw,pattern=vertical lines] (2.125,1.375) -- (2.75,2) -- (4.75,0) -- (3.5,0) -- cycle;
\end{tikzpicture}
\caption{Subcase 2.2: shaded region is $D(t,r)\triangle D(t,r')$.} \label{lomidnocross}
\end{figure}

	Similarly to Subcase 2.1, we have $\iint_{R_1 \cup R_2} \rho^{3s-2} \; d\rho \; d\tau \lesssim_L r-r'$. Using \eqref{lip} and that $t \leq \frac{r'+r}{2}$, we have
		\begin{align*}
		\iint_{R^3} \rho^{3s-2}\;d\rho\;d\tau
			&\lesssim
		\int_0^{t-r'} \int_{t-r'-\tau}^{t+r'-\tau} \rho^{3s-2} \; d\rho \; d\tau
			\\
			&\lesssim_s
		-(2r')^{3s}+(t+r')^{3s}\underbrace{-(t-r')^{3s}}_{\leq 0}
			\\
			&\lesssim_{s,L}
		t - r' 
			\\
			&\lesssim_{s,L}
			r - r'.
		\end{align*}
	\item[\textbf{Subcase 2.3},] $\frac{r' +r}{2} -t < 0 \leq r-t$ and $r +t \leq L$. The domain of dependence is sketched in Figure~\ref{lomidcross}. In this case, $D(t,r') \triangle D(t,r) \subset \bigcup_{j=1}^4 R_j$, where
		\begin{align*}
		R_1 
			&= 
		\{(\tau,\rho) \mid  0 \leq \tau \leq t\hbox{ and } r'+t-\tau\leq\rho\leq r+t-\tau \},
		\\
		R_2
			&=
		\{ (\tau,\rho) \mid  t-r' \leq \tau \leq t \hbox{ and } r'-t+\tau\leq\rho\leq r-t+\tau \},
		\\
		R_3
			&=
		\{ (\tau,\rho) \mid  t-\tfrac{r'+r}{2} \leq \tau \leq t -r'\hbox{ and } 0 \leq\rho\leq r-r' \},
		\\
		R_4
			&=
		\{ (\tau,\rho) \mid  0 \leq \tau \leq t-\tfrac{r'+r}{2}\hbox{ and } r-t+\tau\leq\rho\leq t-r'-\tau \}.
		\end{align*}
	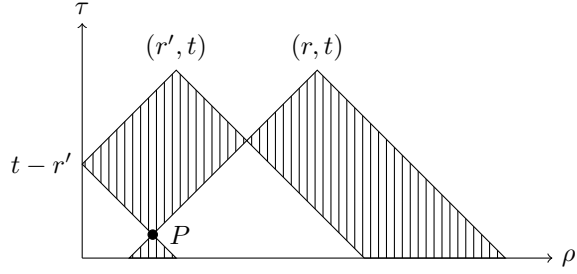
\begin{figure}
\begin{tikzpicture}[scale=1.25]
\draw[->] (0,0) -- (0,2.5);
\draw[->] (0,0) -- (5,0);
\draw[fill] (0.75,0.25) circle [radius=0.05];
\node [right] at (0.75,0.25) {$\;P$};
\node [above] at (1,2) {$(r',t)$};
\node [above] at (2.5,2) {$(r,t)$};
\node [left] at (0,1) {$t-r'$};
\node [above] at (0,2.5) {$\tau$};
\node [right] at (5,0) {$\rho$};
\path[draw,pattern=vertical lines] (0,1) -- (1,2) -- (1.75,1.25) -- (0.75,0.25) -- cycle;
\path[draw,pattern=vertical lines] (0.5,0) -- (0.75,0.25) -- (1,0) -- cycle;
\path[draw,pattern=vertical lines] (1.75,1.25) -- (2.5,2) -- (4.5,0) -- (3,0) -- cycle;
\end{tikzpicture}
\caption{Subcase 2.3: shaded region is $D(t,r)\triangle D(t,r')$, and $P = \big(t-\frac{r+r'}{2},\tfrac{r-r'}{2}\big)$.} \label{lomidcross}
\end{figure}

	Similarly to Subcase 2.1, we have $\iint_{R_1 \cup R_2} \rho^{3s-2} \; d\rho \; d\tau \lesssim_L r - r'$. Using \eqref{lip}, we have
		\begin{align*}
		\iint_{R^3} \rho^{3s-2}\;d\rho\;d\tau
			&\lesssim
		\int_{t-\frac{r'+r}{2}}^{t-r'} \int_0^{r-r'} \rho^{3s-2} \; d\rho d\tau
			\\
			&\lesssim_{s,L}
		r - r'.
		\end{align*}
	Also, we have
		\begin{align*}
		\iint_{R^4} \rho^{3s-2} \; d\rho \;d\tau
			&\lesssim
		\int_{0}^{t-\frac{r'+r}{2}} \int^{t-r'-\tau}_{r-t+\tau} \rho^{3s-2} \; d\rho \; d\tau
			\\
			&\lesssim_s
		-\left(\frac{r-r'}{2}\right)^{3s} +(t-r')^{3s}-\left(\frac{r-r'}{2}\right)^{3s}+(r-t)^{3s}.
		\end{align*}
	Using \eqref{lip} and the fact that $t \leq r$ means $t - \frac{r}{2} \leq \frac{r}{2}$, we have
		\[
		-\left(\frac{r-r'}{2}\right)^{3s} +(t-r')^{3s} \lesssim_L t- \frac{r}{2} - \frac{r'}{2} \lesssim_L r - r'.
		\]
	Using \eqref{lip} and the fact that $-t <-\frac{r +r'}{2}$, we have
		\[
		-\left(\frac{r-r'}{2}\right)^{3s}+(r-t)^{3s} \lesssim_L r - t -\frac{r-r'}{2} \lesssim_L r - r'.
		\]
	\item[\textbf{Subcase 2.4},] $r -t <0$ and $r +t \leq L$. The domain of dependence is sketched in Figure~\ref{lolo}. In this case, $D(t,r') \triangle D(t,r) \subseteq \bigcup_{j=1}^4 R_j$, where
		\begin{align*}
		R_1 
			&= 
		\{(\tau,\rho) \mid  0 \leq \tau \leq t\hbox{ and } r'+t-\tau\leq\rho\leq r+t-\tau \},
		\\
		R_2
			&=
		\{ (\tau,\rho) \mid  t-r' \leq \tau \leq t \hbox{ and } r'-t+\tau\leq\rho\leq r-t+\tau \},
		\\
		R_3
			&=
		\{ (\tau,\rho) \mid  t-r \leq \tau \leq t -r'\hbox{ and } 0 \leq\rho\leq r-r' \},
		\\
		R_4
			&=
		\{ (\tau,\rho) \mid  0 \leq \tau \leq t-r\hbox{ and } t-r'-\tau\leq\rho\leq t-r-\tau \}.
		\end{align*}
\begin{figure}
\begin{tikzpicture}[scale=1.5]
\draw[->] (0,0) -- (0,2.25);
\draw[->] (0,0) -- (4,0);
\node [left] at (0,1) {$t-r'$};
\node [left] at (0,0.5) {$t-r$};
\node [left] at (1,2.1) {$(r',t)$};
\node [right] at (1.5,2.1) {$(r,t)$};
\node [above] at (0,2.25) {$\tau$};
\node [right] at (4,0) {$\rho$};
\path[draw,pattern=vertical lines] (0,1) -- (1,2) -- (1.25,1.75) -- (0.25,0.75) -- cycle;
\path[draw,pattern=vertical lines] (1.25,1.75) -- (1.5,2) -- (3.5,0) -- (3,0) -- cycle;
\path[draw,pattern=vertical lines] (0.5,0) -- (1,0) -- (0.25,0.75) -- (0,0.5) -- cycle;
\end{tikzpicture}
\caption{Subcase 2.4: shaded region is $D(t,r) \triangle D(t,r')$.}\label{lolo}
\end{figure}

	Similarly to Subcase 2.1, we have $\iint_{R_1 \cup R_2} \rho^{3s-2} \; d\rho \; d\tau \lesssim_L r - r'$. Similarly to Subcase 2.3, we also have $\iint_{R^3} \rho^{3s - 2} \; d\rho\; d\tau \lesssim_L r - r'$. Using \eqref{lip}, we have
		\begin{align*}
		\iint_{R^4} \rho^{3s-2} \; d\rho \; d\tau 
			&\lesssim
		\int_0^{t-r} \int_{t-r'-\tau}^{t-r-\tau} \rho^{3s-2} \; d\rho \; d\tau
			\\
			&\lesssim_s
		-(r-r')^{3s}+(t-r')^{3s}-(t-r)^{3s} 
			\\
			&\lesssim_{s,L}
		r-r'.
		\end{align*}
	\item[\textbf{Subcase 2.5},] $0 \leq r' - t$ and $\frac{r+r'}{2} + t \leq L < r + t$. This case follows from reflecting the domains in Subcase 2.2 across the line $\rho = L/2$ and noting that $\rho^{3s-2}$ is a decreasing function for $\rho \geq 0$.
	\item[\textbf{Subcase 2.6},] $0 \leq r' - t$ and $r'+t \leq L < \frac{r+r'}{2} + t$, which follows from reflecting the domains in Subcase 2.3 across the line $\rho = L/2$.
	\item[\textbf{Subcase 2.7},] $r' + t > L$, which follows from reflecting the domains in Subcase 2.4 across the line $\rho = L/2$.
	\end{itemize}
\end{itemize}

Combining all the results from these cases, we have
	\[
	|K(v)(t,r) - K(v)(t,r')| \lesssim_L \|v\|_{C_t^0C_r^s}^3 (r-r')^\sigma t^{1-\sigma}
	\]
for every $\sigma \in (0,1)$, which is to say,
	\begin{equation}\label{holderselfmap}
	\big\|[K(v)](t,r) \big\|_{C^0_tC^\sigma_r([-T,T] \times [0,L])} \lesssim_L T^{1-\sigma} \|v\|_{C_t^0C_r^s}^3 .
	\end{equation}
Using the fact that $|a^3 - b^3| \lesssim |a-b|(|a|^2 + |b|^2) $, a similar computation shows that
	\begin{equation}\label{holdercontract}
	\big\|[K(v)](t,r) - [K(\tilde{v})](t,r) \big\|_{C^0_tC^\sigma_r} \lesssim_L T^{1-\sigma} \|v - \tilde{v}\|_{C_t^0C_r^s} \left( \|v\|_{C_t^0C_r^s}^2 + \|\tilde{v}\|_{C_t^0C_r^s}^2 \right).
	\end{equation}

Let us specialize to $\sigma = s$. Then, for $T$ sufficiently small, the map in \eqref{contract} is a self-mapping of the closed ball
	\[
	\left\{ v \in C^0_tC^s_r \mid \|v\|_{C^0_tC^s_r} \leq (C+1) \Big( \|f_1\|_{C^s_0} + \left\||\partial_r|^{-1} f_2\right\|_{C^s_0} \Big) \right\}
	\]
as well as a contraction. Here, $C$ is the constant from Lemma~\ref{freeevgood}. By contraction mapping, \eqref{contract} admits a unique fixed point $v$, which is also the desired strong solution.
\end{proof}	


\begin{prop}\label{lwpfinite}
Fix $\frac{1}{3} < s < \frac{1}{2}$ and fix $L > 2$. For each $\Lambda > 0$, there exists $T \in (0,1)$, whose value depends on $s,L,$ and $\Lambda$, with the following properties:
	\begin{enumerate}
	\item Let $g\in C^{s}_0([0,L] \to \C)$ such that $\|g\|_{C^s([0,L])} \leq \Lambda$. Then $\flow_L(t,g)$ is defined for all $t \in [-T,T]$. Furthermore,
			\begin{equation}\label{flowbound}
	\|\flow_L(t,g)\|_{C^0_tC^s_r([-T,T]\times[0,L])} \lesssim_{s,L} \|g\|_{C^s([0,L])}^3 + \|g\|_{C^s([0,L])},
		\end{equation}
	and for each $\sigma \in [0,1)$, 
		\begin{equation}\label{lipgood}
		\flow_L(t,g) - e^{-it|\partial_r|}g \in C_t^0 C^\sigma_r([-T,T] \times [0,L] \to \C).
		\end{equation}
	\item Let $g$ be as above. If $\tilde{g} \in C^s_0([0,L])$ is such that $\|g-\tilde{g}\|_{C^s([0,L])}$ is sufficiently small, depending on $\Lambda, s, L$, then $\flow_L(t,\tilde{g})$ also exists for all $t \in [-T,T]$ and
			\begin{equation}\label{flowcontract}
	\|\flow_L(t,g) - \flow_L(t,\tilde{g})\|_{C^0_tC^s_r([-T,T]\times[0,L])} \lesssim_{s,L} \|g - \tilde{g}\|_{C^s} (\|g\|_{C^s}^2 + \|\tilde{g}\|_{C^s}^2).
			\end{equation}
	\end{enumerate}
\end{prop}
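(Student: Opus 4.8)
The plan is to set up a contraction mapping for the Duhamel map of \eqref{firstorderfinite1d} directly on the space $X_T:=C^0_tC^s_r([-T,T]\times[0,L]\to\C)$, intersected with the subspace of functions vanishing at $r=0$ and $r=L$, for a time $T=T(s,L,\Lambda)\in(0,1)$ to be chosen, bootstrapping from the two facts already available: Proposition~\ref{lwpfiniteforsecondorder}, whose proof supplies the estimates \eqref{holderselfmap} and \eqref{holdercontract} for the operator $K$ (and which governs $\re w$, since $\re w$ ought to solve \eqref{secondorderfinite1d} with data $(\re g,|\partial_r|\im g)$), and Lemma~\ref{freeevgood}, which governs the linear propagator $e^{-it|\partial_r|}=\cos(t|\partial_r|)-i\sin(t|\partial_r|)$. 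First I would record that $\|e^{-it|\partial_r|}g\|_{X_T}\lesssim\|g\|_{C^s([0,L])}$ is immediate from Lemma~\ref{freeevgood}, and that, since $e^{-it|\partial_r|}$ carries sine series to sine series, the zero boundary conditions propagate automatically along the iteration; the same will hold for each nonlinear term below.

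The core estimate is for the Duhamel nonlinearity. Writing $\Phi(\tau,\rho):=(\re w(\tau,\rho))^3\rho^{-2}$ and using $e^{-is|\partial_r|}|\partial_r|^{-1}=\cos(s|\partial_r|)|\partial_r|^{-1}-i\sin(s|\partial_r|)|\partial_r|^{-1}$, the Duhamel integral of \eqref{firstorderfinite1d} is the sum of a real piece $-\int_0^t\frac{\sin((t-\tau)|\partial_r|)}{|\partial_r|}\Phi(\tau,\cdot)\,d\tau$ and an imaginary piece $-i\int_0^t\frac{\cos((t-\tau)|\partial_r|)}{|\partial_r|}\Phi(\tau,\cdot)\,d\tau$. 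The real piece is exactly $K(\re w)$, so \eqref{holderselfmap} and \eqref{holdercontract} apply verbatim: it maps $X_T$ into $C^0_tC^\sigma_r$ for every $\sigma\in(0,1)$ with norm $\lesssim_{s,L}T^{1-\sigma}\|\re w\|_{X_T}^3$, together with the corresponding Lipschitz bound. For the imaginary piece I would first bound $\big\|\int_0^t\cos((t-\tau)|\partial_r|)\Phi(\tau,\cdot)\,d\tau\big\|_{L^\infty_r([0,L])}\lesssim_{s,L}|t|^{3s-1}\|\re w\|_{X_T}^3$ by a one-variable integral computation using the explicit formula \eqref{costnabla}, the weighted bound $|\Phi(\tau,\rho)|\lesssim\|\re w\|_{X_T}^3\,\rho^{3s-2}$, and the hypothesis $3s>1$ (which is what makes $\rho^{3s-2}$ integrable and produces the positive power $|t|^{3s-1}$); then I would invoke the mapping property of $|\partial_r|^{-1}$ on $[0,L]$ with Dirichlet conditions, whose kernel is $\tfrac1\pi\log\big|\tfrac{r+\rho}{r-\rho}\big|$ together with boundary reflections, so that it carries $L^\infty([0,L])$ boundedly into $C^{\sigma'}([0,L])$ for every $\sigma'\in(0,1)$, in particular into $C^s([0,L])$, and carries sine series to sine series. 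Composing, and using $|a^3-b^3|\lesssim|a-b|(|a|^2+|b|^2)$ for the difference estimate, the imaginary piece maps $X_T$ into $X_T$ with norm $\lesssim_{s,L}T^{3s-1}\|\re w\|_{X_T}^3$ and the analogous Lipschitz bound. (Alternatively, one can absorb the extra $|\partial_r|^{-1}$ into the interval computations of the proof of Proposition~\ref{lwpfiniteforsecondorder} via a Privalov-type bound for the finite Hilbert transform.) I expect this verification for the imaginary piece --- pushing the domain-of-dependence estimates through the logarithmic potential $|\partial_r|^{-1}$ while tracking the power of $T$ --- to be the main obstacle.

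Granting these two families of bounds, fix $\sigma=s\in(0,1)$. On the closed ball $\{w\in X_T:\|w\|_{X_T}\le(C+1)\Lambda\}$, with $C$ the constant of Lemma~\ref{freeevgood}, the map $w\mapsto e^{-it|\partial_r|}g+(\text{Duhamel nonlinearity})$ is, for $T=T(s,L,\Lambda)$ small enough (since $T^{1-s}+T^{3s-1}\to0$), a self-map and a contraction; its unique fixed point is the unique strong solution in the sense of Definition~\ref{strongfinite}, so $\flow_L(t,g)$ is defined on $[-T,T]$, and the self-mapping and contraction inequalities yield \eqref{flowbound} and the $g$-versus-$\tilde g$ difference estimate. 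The regularity statement \eqref{lipgood} is the observation that $\flow_L(t,g)-e^{-it|\partial_r|}g$ equals the Duhamel nonlinearity evaluated at $\flow_L(t,g)$, which by the $\sigma$-versions of the estimates above lies in $C^0_tC^\sigma_r$ for every $\sigma\in(0,1)$; this is the exact analogue, after the change of variables \eqref{initdatachange}, of \eqref{almostlip1}. Finally, part (2) is the standard perturbative consequence of the contraction: if $\|g-\tilde g\|_{C^s}$ is small relative to $\Lambda$ then $\tilde g$ lies in a slightly larger ball on which the same $T$ works, so $\flow_L(t,\tilde g)$ exists on $[-T,T]$, and subtracting the two fixed-point equations and using the Lipschitz bounds (namely \eqref{holdercontract} and its imaginary-piece counterpart) against the contraction factor gives \eqref{flowcontract}.
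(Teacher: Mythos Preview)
Your argument is correct, and the technical ingredients are the same as the paper's, but the organization differs. The paper does \emph{not} run a contraction directly on the first-order equation. Instead it invokes Proposition~\ref{lwpfiniteforsecondorder} as a black box to produce the real solution $v$ of \eqref{secondorderfinite1d} with data $(\re g,|\partial_r|\im g)$, then sets $w:=v+i|\partial_r|^{-1}\partial_t v$ and verifies a posteriori that $|\partial_r|^{-1}\partial_t v\in C^0_tC^s_r$ with zero boundary values. Because $v$ is already fixed, no smallness factor is needed in the imaginary piece; the paper simply shows that $F(t,r):=\int_0^t\cos((t-\tau)|\partial_r|)\,\Phi(\tau,\cdot)\,d\tau$ lies in $C^0_tL^\infty_r$, writes $\tilde K(v)=|\partial_r|^{-1}F$ via Fubini, and then uses that $\partial_r|\partial_r|^{-1}$ is the finite-interval Hilbert transform (bounded on $L^p$, $1\le p<\infty$) together with Sobolev embedding $W^{1,p}\hookrightarrow C^\sigma$ to land in $C^0_tC^\sigma_r$ for every $\sigma<1$. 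This is precisely your claimed mapping $|\partial_r|^{-1}:L^\infty\to C^{\sigma'}$, but made explicit.

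What your direct-contraction route buys is a single fixed-point argument rather than a two-step construction; the cost is that you must extract the factor $T^{3s-1}$ from the $L^\infty$ bound on $F$ (your concavity computation $r^{3s-1}-(r-t)^{3s-1}\le t^{3s-1}$ is correct and is the only thing beyond the paper's estimate). Since the nonlinearity depends only on $\re w$, your contraction on the complex space actually decouples: the real-part iteration is literally the contraction of Proposition~\ref{lwpfiniteforsecondorder}, and the imaginary part is slaved to it, so the two proofs are equivalent once unpacked. One small point to make precise in your writeup: the interchange $\int_0^t\tfrac{\cos((t-\tau)|\partial_r|)}{|\partial_r|}\Phi\,d\tau=|\partial_r|^{-1}F$ needs a Fubini justification (the paper does this via the $L^q$ bound on the convolution kernel $h$ and the $L^p$ bound on $\Phi$ for $p<(2-3s)^{-1}$), and the boundary values $\tilde K(t,0)=\tilde K(t,L)=0$ should be checked from the odd-extension structure rather than asserted.
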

\begin{proof}
Let $g \in C^s_0([0,L]\to\C)$ such that $\|g\|_{C^s([0,L])} \leq \Lambda$. Observe that the pair $(\re(g), |\partial_r| \im(g))$ obeys the hypotheses of Proposition \ref{lwpfiniteforsecondorder}. Hence there is a time interval $[-T,T]$, depending on $\Lambda,s,L$, and a unique strong solution $v(t,r) :[-T,T]\times[0,L]\to\R$ of \eqref{secondorderfinite1d} with initial data $(\re(g), |\partial_r| \im(g))$. By \eqref{oldflowbound},
	\begin{equation}\label{goodbound}
	\|v\|_{C^0_tC^s_r([-T,T]\times[0,L])} \lesssim_{s,L} \|g\|_{C^s([0,L])}.
	\end{equation}
	
Observe that
	\[
	|\partial_r|^{-1}\partial_t v = -\sin(t|\partial_r|)\re(g) + \cos(t|\partial_r|)\im(g) - \int_0^t \frac{\cos((t-\tau)|\partial_r|)}{|\partial_r|} \frac{(v(\tau,\cdot))^3}{(\cdot)^2} \; d\tau.
	\]
We claim that 
	\[
	\left(|\partial_r|^{-1}\partial_t v\right)(t,r) \in C^0_tC^s_r([-T,T]\times[0,L]\to\C)
	\] 
and 
	\[
	0 = \left(|\partial_r|^{-1}\partial_t v\right)(t,0) = \left(|\partial_r|^{-1}\partial_t v\right)(t,L).
	\]
Assuming this claim, then 	
	\begin{equation}\label{solution}
	w := v + i|\partial_r|^{-1}\partial_t v
	\end{equation}
would be the unique\footnote{Uniqueness follows from $(v,v_t) = (\re(w), |\partial_r| \im(w))$ and the uniqueness aspect of Proposition \ref{lwpfiniteforsecondorder}.} strong solution of \eqref{firstorderfinite1d}.

We write $\|F\|_{C^0_t C^s_r} := \|F\|_{C^0_tC^s_r([-T,T]\times[0,L]\to\C)}$. By Lemma~\ref{freeevgood}, we have
	\begin{equation}\label{linflow}
	\left\|-\sin(t|\partial_r|)\re(g) + \cos(t|\partial_r|)\im(g)\right\|_{C^0_tC^s_r} \lesssim \|g\|_{C^s([0,L] \to \C)}.
	\end{equation}
Letting
	\[
	[\tilde{K}(v)](t,r) := \int_0^t \left[\frac{\cos((t-\tau)|\partial_r|)}{|\partial_r|} \frac{(v(\tau,\cdot))^3}{(\cdot)^2}\right](r) \; d\tau,
	\]
we shall prove $[\tilde{K}(v)](t,r) \in C^0_t W^{1,p}_r([-T,T] \times [0,L] \to \C)$ for large, but finite, $p$ and apply Sobolev embedding.

First, we realize $|\partial_r|^{-1}$ as a convolution operator: for $f \in L^p([0,L] \to \C)$ with $p \in (1,\infty]$, we extend it to $[-L,L]$ via $f(-r) = -f(r)$ for $r \in [0,L]$, and then extend to $\R$ via $2L$ periodicity. Having this extension, then
	\begin{equation}\label{convo}
	\left[|\partial_r|^{-1} f\right](r) = (f * h)(r) = \int_{-L}^L f(r-\rho)h(\rho) \; d\rho,
	\end{equation}
where
	\[
	h(r) = \sum_{n=1}^\infty (n \pi/ L)^{-1} \cos(n \pi r / L) = \re \left( \log (1-e^{i\pi r/L}) \right).
	\]
Note that $h(r) \in L^q([-L,L] \to \C)$ for every $q \in [1,\infty)$. 

Next, we check boundary conditions. By \eqref{holderbound}, we have
	\begin{equation}\label{origindecay}
	\|(v(t,r))^3r^{-2}\|_{C^0_t L^p_r ([-T,T] \times [0,L])} \lesssim_p \|v\|^3_{C^0_tC^s_r}.
	\end{equation}
for every $p \in [1, \frac{1}{2-3s})$. Given \eqref{convo}, \eqref{origindecay}, and the fact that both $\cos(n\pi (-\rho)/L)$ and $\cos(n\pi (L-\rho)/L) = (-1)^n\cos(n\pi \rho/L)$ are even in $\rho$, we have
	\[
	\left[|\partial_r|^{-1} \frac{(v(t,\cdot))^3}{(\cdot)^2} \right](0) = 0 = \left[|\partial_r|^{-1} \frac{(v(t,\cdot))^3}{(\cdot)^2} \right](L)
	\]
for all $t$. Given \eqref{costnabla}, it follows that $0=[\tilde{K}(v)](t,0) = [\tilde{K}(v)](t,L)$, as well.

 In view of \eqref{costnabla}, we also have
	\[
	\left\| \left[\cos((t-\tau)|\partial_r|) \frac{(v(\tau,\cdot))^3}{(\cdot)^2}\right](r) \right\|_{C^0_\tau L^p_r([0,t]\times[0,L])} \lesssim \|v\|_{C^0_t C^s_r}^3.
	\]
for every $t$. By H\"older's inequality,
	\begin{equation}\label{fubiniflow}
	\int_0^t \int_{-L}^L \left\vert h(r-\rho) \right\vert \left\vert \left[\cos((t-\tau)|\partial_r|) \frac{(v(\tau,\cdot))^3}{(\cdot)^2}\right](\rho) \right\vert \; d\rho \; d\tau \lesssim_L \|v\|_{C_t^0C^s_r}^3,
	\end{equation}
where we extend the integrands from $[0,L]$ to $\R$ in the manner above. Hence,
	\begin{equation}\label{tildeKbound}
	\left\| [\tilde{K}(v)](t,r) \right\|_{C^0_t C^0_r([-T,T] \times [0,L])} \lesssim \|v\|_{C_t^0C^s_r}^3.
	\end{equation}
Furthermore, by Fubini's Theorem, we may also write
	\begin{equation}\label{tildeKdef}
	[\tilde{K}(v)](t,r) = \left[|\partial_r|^{-1} F(t,\cdot)\right](r)
	\end{equation}
where
	\[
	F(t,r) := \int_0^t \left[\cos((t-\tau)|\partial_r|) \frac{(v(\tau,\cdot))^3}{(\cdot)^2}\right](r) \; d\tau.
	\]
		
If $0 \leq r -t < r + t \leq L$, then \eqref{costnabla} and \eqref{holderbound} gives
	\begin{align*}
	|F(t,r)|
		&\lesssim
	\int_0^t \left\lvert \frac{(v(\tau,r+(t-\tau))^3}{(r+(t-\tau))^2} \right\rvert + \left\lvert \frac{(v(\tau,r-(t-\tau))^3}{(r-(t-\tau))^2} \right\rvert d\tau
		\\
		&\lesssim
	\|v\|_{C^0_tC^s_r}^3 \int_0^t (r+(t-\tau))^{3s-2} + (r-(t-\tau))^{3s-2} \; d\tau
		\\
		&\lesssim_{s,L}
	\|v\|_{C^0_tC^s_r}^3.
	\end{align*}
When $r - t < 0$ or when $r + t > L$, we may similarly show $|F(t,r)| \lesssim_{s,L} \|v\|^3_{C^0_tC^s_r}$, and thus
	\begin{equation}\label{w1pbound}
	\|F(t,r)\|_{C^0_t C^0_r} \lesssim_{s,L} \|v\|^3_{C^0_tC^s_r}.
	\end{equation}
Finally, the operator $\partial_r|\partial_r|^{-1}$ (i.e., finite volume Hilbert transform) is a bounded linear operator from $L^p_r([0,L])$ to itself for every $p \in [1,\infty)$. Thus, \eqref{tildeKbound}, \eqref{tildeKdef}, and \eqref{w1pbound} gives
	\begin{equation}\label{tildeKw1p}
	\left\| [\tilde{K}(v)](t,r) \right\|_{C^0_t W^{1,p}_r([-T,T] \times [0,L])} \lesssim_{p,L} \|v\|_{C_t^0C^s_r}^3
	\end{equation}
for every $p \in [1,\infty)$. By Sobolev embedding, we have 
	\begin{equation}\label{almostlip2}
	\tilde{K}(v) \in C^0_t C^\sigma_r([-T,T]\times[0,L] \to \C)
	\end{equation}
for every $\sigma \in [0,1)$ and
	\begin{equation}\label{holderflow}
	\left\| [\tilde{K}(v)](t,r) \right\|_{C^0_t C^\sigma_r([-T,T] \times [0,L])} \lesssim_{\sigma,L} \|v\|_{C_t^0C^s_r}^3.  
	\end{equation}
Specializing to $\sigma = s$ and combining \eqref{goodbound}, \eqref{linflow}, \eqref{holderflow}, this proves
	\[
	\|w\|_{C^0_t C^s_r}\lesssim \|g\|_{C^s}^3 + \|g\|_{C^s}.
	\]
In particular, we have established \eqref{flowbound} and that $w$ is the unique strong solution of \eqref{firstorderfinite1d} with initial data $g$. \eqref{lipgood} follows from \eqref{almostlip1} and \eqref{almostlip2}. 

The second part of the proposition follows from the fact that \[|a^3 - b^3| \lesssim |a-b|(|a|^2+|b|^2)\] and arguing as above (and replacing $T$ by $T/2$, if necessary).
\end{proof}

An immediate corollary is the following continuity result. 

\begin{cor}\label{stability}
Let $L > 2$ and let $0 \leq R \leq L-2$. Let $g_k \in C^s_0([0,L]\to\C)$, $1 \leq k \leq \infty$, such that each $g_k$ admits a unique strong solution of \eqref{firstorderfinite1d} for $|t| \leq 1$. If
	\[
	\lim_{k \to \infty}\|g_k -g_\infty\|_{C^s([0,R+1])} = 0,
	\]
then 
	\begin{equation}\label{localconv}
	\lim_{k \to \infty} \|\flow_L(t,g_k) - \flow_L(t,g_\infty)\|_{C^0_tC^s_r([-1,1]\times[0,R])} = 0.
	\end{equation}
Furthermore, if 
	\[
	\lim_{k \to \infty}\|g_k -g_\infty\|_{C^s([0,L])} = 0,
	\]
then
	\begin{equation}\label{allconv}
	\lim_{k \to \infty} \|\flow_L(t,g_k) - \flow_L(t,g_\infty)\|_{C^0_tC^s_r([-1,1]\times[0,L])} = 0.
	\end{equation}
\end{cor}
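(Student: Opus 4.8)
The plan is to reduce both assertions to Proposition~\ref{lwpfinite}, which supplies short‑time existence together with the Lipschitz‑in‑data bound \eqref{flowcontract} and the a priori flow bound \eqref{flowbound}, and to the uniqueness clause of Definition~\ref{strongfinite}, which lets one concatenate short‑time flows. The global statement \eqref{allconv} is the easier one, so I would prove it first and then deduce \eqref{localconv} from it by a finite speed of propagation argument.

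For \eqref{allconv}: put $\Lambda_0 := \sup_k \|g_k\|_{C^s([0,L])} < \infty$ (finite since $g_k\to g_\infty$ in $C^s([0,L])$) and $M := \sup_{|t|\le 1}\|\flow_L(t,g_\infty)\|_{C^s([0,L])} < \infty$ (finite since $g_\infty$ has a strong solution on $[-1,1]$). Fix $\Lambda := 2(M+\Lambda_0+1)$, let $T = T(\Lambda,s,L)\in(0,1)$ be as in Proposition~\ref{lwpfinite}, and choose $\epsilon_0>0$ below the smallness threshold of \eqref{flowcontract} for this $\Lambda$ and also $\le M$; partition $[0,1]$ (and symmetrically $[-1,0]$) into $N := \lceil 1/T\rceil$ steps of length $\le T$. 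I would then show by induction on $j\le N$ that for all $k$ large enough — uniformly in $j$, since $N$ is fixed — one has $\|\flow_L(t,g_k)-\flow_L(t,g_\infty)\|_{C^s([0,L])}\le \epsilon_0$ for $|t|\le jT$; by the choice of $M$ this forces $\|\flow_L(t,g_k)\|_{C^s([0,L])}\le\Lambda$ there, so that restarting the flow at time $(j-1)T$ is legitimate and, by uniqueness, $\flow_L(t,g_k)$ on $[(j-1)T,jT]$ coincides with the flow issued from $\flow_L((j-1)T,g_k)$. Applying \eqref{flowcontract} on each step and composing gives $\|\flow_L(t,g_k)-\flow_L(t,g_\infty)\|_{C^s([0,L])}\le C(\Lambda)^N\,\|g_k-g_\infty\|_{C^s([0,L])}$ for $|t|\le 1$; since the prefactor is a fixed constant this tends to $0$, which both closes the induction and proves \eqref{allconv}.

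For \eqref{localconv}: the key input is finite speed of propagation. Since $R\le L-2$, the union of the backward light cones of $\{(t,r):|t|\le1,\ 0\le r\le R\}$, together with its reflection across $r=0$ forced by the Dirichlet condition, lies in $[-1,1]\times[0,R+1]$ and never meets $r=L$; because the nonlinearity of the real second order equation \eqref{secondorderfinite1d} is local, the restriction $\flow_L(t,g)|_{[0,R]}$ for $|t|\le1$ depends only on the datum restricted to $[0,R+1]$. I would make this concrete by introducing $\tilde g_k\in C^s_0([0,L]\to\C)$ that agrees with $g_k$ on $[0,R+1]$ and is continued to $0$ on $[R+1,L]$ by a fixed linear interpolation of the endpoint value $g_k(R+1)$. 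Then $\tilde g_k\to\tilde g_\infty$ in $C^s([0,L])$: the one‑sided pieces converge by hypothesis and by uniform convergence of $g_k(R+1)$, while a mixed Hölder quotient with $r_1\le R+1\le r_2$ is bounded by the sum of the one‑sided seminorms of $\tilde g_k-\tilde g_\infty$ since $|r_1-(R+1)|,|(R+1)-r_2|\le|r_1-r_2|$. Each $\tilde g_k$ inherits a strong solution on $[-1,1]$ with $\flow_L(t,\tilde g_k)|_{[0,R]}=\flow_L(t,g_k)|_{[0,R]}$, so applying the already proved \eqref{allconv} to $\{\tilde g_k\}$ and restricting to $[0,R]$ yields \eqref{localconv}.

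The main obstacle is this finite speed of propagation step. The cone localization is naturally a statement about the real pair $(v,\partial_t v)$ of \eqref{secondorderfinite1d}, whereas the datum here is packaged as the complex $w=g=v(0)+i|\partial_r|^{-1}\partial_t v(0)$, so one must verify that the explicit reflected propagators \eqref{costnabla}--\eqref{sintnabla} and the nonlocal operator $|\partial_r|^{-1}$ entering the reconstruction $w=v+i|\partial_r|^{-1}\partial_t v$ are compatible with restriction to the cone; the latter is handled, as in the proof of Proposition~\ref{lwpfinite}, by realizing $|\partial_r|^{-1}$ as convolution against the kernel $h$ of \eqref{convo} applied to the near‑origin function $v^3/r^2$. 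By contrast, the finitely‑many‑step amplification bookkeeping in \eqref{allconv} is routine.
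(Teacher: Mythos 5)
Your argument for \eqref{allconv} is correct and is essentially the paper's: fix $\Lambda$ from the global solution of $g_\infty$, take the uniform time step $T=T(\Lambda,s,L)$ from Proposition~\ref{lwpfinite}, and iterate \eqref{flowcontract} finitely many times, using uniqueness to concatenate and the induction hypothesis to keep the intermediate data inside the ball of radius $\Lambda$.

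The reduction of \eqref{localconv} to \eqref{allconv}, however, has a genuine gap. You assert that each truncated datum $\tilde g_k$ ``inherits a strong solution on $[-1,1]$.'' It does not: the hypothesis of the corollary guarantees strong solutions on $[-1,1]$ only for the $g_k$ themselves, and Proposition~\ref{lwpfinite} gives $\tilde g_k$ a solution only for a short time $T$ depending on its norm. Finite speed of propagation tells you that, wherever and as long as $\flow_L(t,\tilde g_k)$ exists, it agrees with $\flow_L(t,g_k)$ on the inner cone $\{r\le R+1-|t|\}$, but it says nothing about existence up to time $1$ in the region influenced by the modification on $[R+1,L]$; global existence in this paper is only an almost-sure statement, not one valid for arbitrary $C^s_0$ data, so you cannot invoke \eqref{allconv} for the sequence $\{\tilde g_k\}$. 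The paper avoids this by never asking the truncated data to live longer than one time step: it applies the cutoff $\Psi_{R+1}$ (your linear interpolation), runs the truncated flow only on $[-T,T]$, transfers the conclusion to $\flow_L(t,g_k)$ on the shrunken window $[0,R+1-T]$ by finite speed of propagation, and then \emph{re-truncates} the (globally existing, by hypothesis) solutions $\flow_L(T,g_k)$, $\flow_L(T,g_\infty)$ at $R+1-T$ and repeats, roughly $2\lfloor 1/T\rfloor$ times, with $\Lambda$ as a persistent bound. Your secondary worry about reconciling the complex first-order formulation with cone localization is legitimate but minor (it is handled exactly as in the proof of Proposition~\ref{lwpfinite}); the existence issue for the truncated data is the step that actually needs repair.
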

\begin{proof}
For each $\lambda \in [0, L-1]$, we define the ``linear cut-off'' operator $\Psi_R : C^s_0([0,L]) \to C^s_0([0,L])$ via
	\[
	[\Psi_\lambda g](r)
		=
	\left\{\begin{array}{ll}
	g(r) & 0 \leq r \leq \lambda, \\
	g(\lambda)(\lambda+1-x) & \lambda \leq r \leq \lambda+1, \\
	0 & \lambda+1 \leq r \leq L.
	\end{array}\right.
	\]
Indeed,
	\begin{equation}\label{linearbounds}
	\|\Psi_\lambda g\|_{C^s_0([0,L])} \lesssim_{s,L} \|g\|_{C^s_0([0,L])}
	\end{equation}
where the implicit constant is independent of the choice of $\lambda$. Also, we clearly have
	\begin{equation}\label{extensionconv}
	\lim_{k \to \infty} \| \Psi_{R+1}g_k - \Psi_{R+1} g_\infty\|_{C^s([0,L])} = 0.
	\end{equation}

Letting 
	\[
	\Lambda := \|\flow_L(t,g_\infty)\|_{C^0_tC^s_r([-1,1] \times [0,L])},
	\]
then \eqref{linearbounds} gives
	\[
	\|\Psi_{R+1}g_\infty\|_{C^s([0,L])} \lesssim_{s,L} \|g_\infty\|_{C^s([0,L])} \leq \Lambda.
	\]
By Proposition~\ref{lwpfinite}, there exists a time $T$, depending upon $s,L,$ and $\Lambda$, such that $\flow_L(t,\Psi_{R+1}g_\infty) \in C^0_tC^r_s( [-T,T]\times[0,L] \to \C)$. Proposition~\ref{lwpfinite} also gives 
	\[
	\lim_{k \to \infty} \|\flow_L(t,\Psi_{R+1}g_k) - \flow_L(t,\Psi_{R+1}g_\infty)\|_{C^0_tC^r_s( [-T,T]\times[0,L])} = 0.
	\]
By finite speed of propagation,
	\[
	\lim_{k \to \infty} \|\flow_L(t,g_k) - \flow_L(t,g_\infty)\|_{C^0_tC^r_s( [-T,T]\times[0,R+1-T])} = 0.
	\]
We now seek to iterate the argument.

As above, we have
	\[
	\|\Psi_{R+1-T} \flow_L(T,g_\infty)\|_{C^s([0,L])} \lesssim_{s,L}  	\|\flow_L(T,g_\infty)\|_{C^s([0,L])} \leq \Lambda
	\]
and 
	\[
	\lim_{k \to \infty} \|\Psi_{R+1-T}\flow_L(T,g_k) - \Psi_{R+1-T}\flow_L(T,g_\infty)\|_{C^r_s([0,L])} = 0.
	\]
Thus, we may argue as above, using Proposition~\ref{lwpfinite}, to conclude that
	\[
	\lim_{k \to \infty} \|\flow_L(t,g_k) - \flow_L(t,g_\infty)\|_{C^0_tC^r_s( [0,2T]\times[0,R+1-2T])} = 0.
	\]
We may iterate this argument approximately $2 \lfloor 1/T \rfloor$ times, using the value $\Lambda$ as a persistent bound, to establish \eqref{localconv}. To prove \eqref{allconv}, we argue as above, again using $\Lambda$ as a persistent bound, but without using the operator $\Psi_R$.
\end{proof}

\subsection{Proof of Theorem \ref{invariancefinite}}\label{prooffinite}

By Theorem~\ref{holdersobolev}, the set $C^{s}_0([0,L] \to \C)$ is a Borel subset of $\dot{H}^s_0([0,L] \to \C)$ and the Borel $\sigma$-algebras on $C^s_0([0,L] \to \C)$ generated by the $s$-H\"older norm and by the Sobolev norm must agree. Recall from the discussion in Section~\ref{reform} that $\nu_L(C^{s}_0([0,L])) = 1$.

Let $\Pi_L$ be as in Theorem \ref{BTS}, then $\Pi_L \cap C^{0,s}_0([0,L] \to \C)$ has full $\nu_L$ measure. By Theorem \ref{BTS}, the set
	\[
	\Omega_L := \bigcap_{t \in \Q} \flow_L\left(t,\Pi_L \cap C^{s}_0\right)
	\]
is a well-defined Borel subset of $C^s_0([0,L]\to \C)$ with $\nu_L$ measure $1$. This gives the first assertion of Theorem~\ref{invariancefinite}. 

For the second assertion, let $T \in (0,\infty)$ and let $g \in \Omega_L$. By Theorem \ref{BTS}, $\flow_L(t,g)$ is defined globally in time and we have $\flow_L(t,g) \in C_t^0 \dot{H}^s_0([-T,T] \times [0,L] \to \C)$. The fact that $\flow_L(t,g) \in C_t^0C_r^s([-T,T] \times [0,L] \to \C)$ follows from Proposition \ref{lwpfinite} and the definition of $\Omega_L$. 

The fact that $\flow_L$ preserves Borel measurability follows from Theorem~\ref{BTS} and Theorem~\ref{holdersobolev}. Moreover, by Theorem~\ref{BTS}, $\flow_L$ preserves the measure of all Borel sets.  

Finally, let $A \subseteq C^s_0([0,L] \to \C)$ be $\nu_L$-measurable with $\nu_L(A) = 0$. Recalling Proposition~\ref{regular}, for every $n > 0$, there exists an open set $U_n \supset A$ such that $\nu_L(U_n) < \frac{1}{n}$. By the previous paragraph, $\flow_L(t,U_n)$ is Borel measurable with $\nu_L(\flow_L(t,U_n)) < \frac{1}{n}$. It follows that $\bigcap_{n=1}^\infty \flow_L(t,U_n)$ is a Borel set of measure $0$ which contains $\nu_L(\flow_L(t,A))$. Thus $\nu_L(\flow_L(t,A))$ is $\nu_L$-measurable with measure $0$. As every $\nu_L$-measurable set is the union of a Borel set and a $\nu_L$-null set, Theorem~\ref{invariancefinite} follows.

	
\section{Construction of the infinite volume measure}\label{bigconstruct}

In this section, we prove Theorem~\ref{infvolmeas}. Ultimately, this result will follow from an analysis of the long-time asymptotics of the fundamental solution of a particular parabolic PDE (cf., Definition~\ref{fundsolndefn} and \eqref{parabpde} below). We reduce the measure theoretic problem to the parabolic PDE computation in the following manner: first observing the equivalence of the Borel and the cylinder $\sigma$-algebras (see definition below) on $C^s_0([0,L] \to \C)$ and on $C^s_{loc}([0,\infty) \to \C)$, and then seeking to utilize the Kolmogorov consistency and continuity theorem.

\begin{defn}\label{cylsigalg}
Let $\Lambda = \C$ or $\R$. Let $I \subseteq \R$ be an interval and let $X$ be a subset of $\Lambda^I:=\{ f : I \to \Lambda\}$. The \textit{cylinder set $\sigma$-algebra on $X$} is the $\sigma$-algebra generated by sets of the form
	\[
	\{f \in X \mid f(r) \in B\}
	\]
where $r \in I$ and $B \subseteq \Lambda$ is Borel. We say that a cylinder set probability measure $\mu$ on $X$ is supported on a cylinder measurable subset $A \subseteq X$ if $\mu(A) = 1$.
\end{defn}

\begin{prop}\label{holderpolish}
Let $\Lambda = \C$ or $\R$. The Borel and cylinder $\sigma$-algebras on $C^s_0([0,L]\to \Lambda)$ coincide. Also, endow $C^s_{loc}([0,\infty) \to \Lambda)$ with the metric
	\begin{equation}\label{metric}
	d(f,g) = \sum_{n=1}^\infty 2^{-n} \frac{\|f-g\|_{C^s([0,n])}}{1 + \|f-g\|_{C^s([0,n])}}
	\end{equation}
and the induced metric topology. Then $C^s_{loc}([0,\infty)\to \Lambda)$ is a Polish space (i.e., separable, completely metrizable). Furthermore, the Borel $\sigma$-algebra generated by \eqref{metric} and the cylinder $\sigma$-algebra on $C^s_{loc}([0,\infty) \to \Lambda)$ coincide.
\end{prop}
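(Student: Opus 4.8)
The plan is to establish the three assertions of Proposition~\ref{holderpolish} separately, treating the finite-interval case first and then bootstrapping to the local space.

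\emph{Coincidence of the two $\sigma$-algebras on $C^s_0([0,L]\to\Lambda)$.} First I would note that every evaluation functional $f \mapsto f(r)$ is continuous on $C^s_0([0,L]\to\Lambda)$ (indeed Lipschitz: $|f(r)| = |f(r)-f(0)| \le |r|^s\|f\|_{C^s}$, and more generally $|f(r)-g(r)| \le |r|^s\|f-g\|_{C^s}$), so every cylinder-generating set is Borel; hence the cylinder $\sigma$-algebra is contained in the Borel $\sigma$-algebra. For the reverse inclusion, the space $C^s_0([0,L]\to\Lambda)$ is separable (I would exhibit a countable dense set, e.g.\ piecewise-linear functions with rational nodes and rational values vanishing at $0$ and $L$, and check denseness in the $C^s$ norm by a standard mollification/interpolation estimate — though one must be slightly careful since piecewise-linear functions are only Lipschitz, hence $C^1 \subset C^\sigma$ for all $\sigma$, and the approximation in $C^s$ follows by interpolating a $C^0$ bound against a $C^{s'}$ bound with $s<s'<1$). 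Given separability, every open ball, and hence every open set, is a countable union of closed balls $\{f : \|f-f_0\|_{C^s}\le\rho\}$. It therefore suffices to show each closed ball lies in the cylinder $\sigma$-algebra, and this follows because
\[
\|f-f_0\|_{C^s} = \sup_{q_1\ne q_2\in\Q\cap[0,L]} \frac{|(f-f_0)(q_1)-(f-f_0)(q_2)|}{|q_1-q_2|^s},
\]
the supremum over a \emph{countable} set of cylinder-measurable functions of $f$ (using uniform continuity of the difference quotient in the $C^s$ topology to pass from the rational to the real supremum). Thus the Borel $\sigma$-algebra is generated by countably many cylinder-measurable functions, giving the containment.

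\emph{$C^s_{loc}([0,\infty)\to\Lambda)$ is Polish, and the $\sigma$-algebras coincide there.} The metric $d$ in \eqref{metric} is a standard countable-product-of-seminorms construction; I would verify it is a genuine metric (separation uses that $\|f-g\|_{C^s([0,n])}=0$ for all $n$ forces $f=g$ on $[0,\infty)$) and that $d$-convergence is exactly local $C^s$-convergence on each $[0,n]$. Completeness: a $d$-Cauchy sequence is $C^s([0,n])$-Cauchy for every $n$, hence converges in each $C^s([0,n])$ (a Banach space) to a limit; these limits are compatible and patch to an element of $C^s_{loc}$, and one checks the condition $\lim_{r\to 0}rf(r)=0$ — wait, that condition is for the weighted space; here the space is simply $C^s_{loc}([0,\infty)\to\Lambda)$ with no endpoint vanishing condition, so no issue — and $d$-convergence to it. Separability: take finitely-supported combinations of the countable dense sets from the finite-interval case, suitably truncated. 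For the $\sigma$-algebra comparison, again each evaluation $f\mapsto f(r)$ is $d$-continuous (it factors through restriction to $[0,\lceil r\rceil]$, which is $d$-continuous, followed by a continuous evaluation on $C^s([0,\lceil r\rceil])$), so the cylinder $\sigma$-algebra is contained in the Borel one; and conversely each seminorm $f\mapsto\|f\|_{C^s([0,n])}$ is cylinder-measurable by the countable-supremum argument above, so the metric $d$ is a cylinder-measurable function of $f$, every open ball is cylinder-measurable, and separability upgrades this to all open sets.

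\emph{Main obstacle.} The genuinely nontrivial point — and the step I would spend the most care on — is the separability/denseness claim in the \emph{H\"older} norm: unlike $C^0$ or $L^p$, the $C^s$ norm is not separable on all of $C^s$ (the closure of smooth functions is the "little H\"older" space $c^s\subsetneq C^s$), so one must check that the specific space $C^s_0([0,L]\to\Lambda)$ used here, together with the boundary conditions, is separable — or, more robustly, one avoids full separability and instead argues directly that each closed $C^s$-ball is cylinder-measurable via the countable supremum formula for the norm, and that the Borel $\sigma$-algebra is generated by balls. The cleanest route is in fact the latter: it sidesteps separability entirely for the $\sigma$-algebra statement, and separability for the "Polish" claim can then be handled by the observation that $C^s_{loc}$ embeds into a countable product of $C^s([0,n])$ spaces — but since those need not be separable, one should instead cite or reprove that $C^s([0,L])$ \emph{is} separable when one fixes the H\"older exponent and works on a compact interval with a vanishing boundary condition; this holds because $C^s_0([0,L])$ sits between $c^{s'}_0$ and is separable as the difference quotients form an equicontinuous family — I would pin this down carefully, as it is the crux.
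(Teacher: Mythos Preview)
Your approach mirrors the paper's almost line for line: continuity of point evaluation gives cylinder $\subseteq$ Borel; the rational-pair supremum formula for the H\"older norm shows closed (hence open) balls are cylinder-measurable; completeness of $C^s_{loc}$ reduces to completeness of each $C^s([0,n])$; and separability is argued by extending countable dense subsets of the finite-interval spaces. Where you pause to worry, the paper simply writes ``recall that $C^s([0,n]\to\C)$ is Polish'' and moves on.

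Your worry is well-founded, and in fact sharper than the paper's treatment: the big H\"older space $C^s([0,L])$ under the full H\"older norm is \emph{not} separable (e.g.\ the functions $x \mapsto (x-t)_+^s$ for $t \in (0,L)$ are pairwise at $C^s$-distance $\geq 1$), so the paper's ``recall'' is, read literally, false. But your proposed escape --- ``sidestep separability entirely for the $\sigma$-algebra statement'' --- does not succeed either: knowing that every \emph{ball} is cylinder-measurable does not by itself give that every open \emph{set} is, because in a non-separable metric space an open set need not be a countable union of balls. So the paper's argument and your outline share exactly the same gap, at precisely the spot you flagged. The clean repair is to replace $C^s$ throughout by the \emph{little} H\"older space (the closure of Lipschitz functions in the $C^s$ norm, equivalently those $f$ with $|f(x)-f(y)|/|x-y|^s \to 0$ as $|x-y|\to 0$): this space is genuinely separable and Polish, and it still carries full measure under the Brownian-bridge law and hence under all the Gibbs measures $\nu_L$, $\nu_\infty$ used later.
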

\begin{rem}
The full strength of the fact that $C^s_{loc}([0,\infty)\to \C)$ is Polish will not be used until Section~\ref{invariance}. We record the result for convenience in the following proof.
\end{rem}
\begin{proof}
Note that convergence in the $C^s_0([0,L]\to \C)$ norm implies uniform convergence, and hence point-wise convergence. It follows that evaluation at a point is continuous with respect to this norm, which then implies that the cylinder $\sigma$-algebra is contained in the Borel $\sigma$-algebra on $C^s_0([0,L]\to\C)$. 

For the reverse containment, let us fix $f_0 \in C^s_0([0,L]\to\C)$ and $\lambda > 0$. Then
	\begin{align*}
	&\overline{B_L(f_0,\lambda)}
		:=
	\{f \in C^s_{0}([0,L]\to \C) \mid \|f - f_0\|_{C^s([0,L])} \leq \lambda \}
		\\
		&\hbox{\hskip 8pt} =
	\bigcap_{ \substack{r,r' \in \Q \cap [0,L],\\ r' \leq r}} \{f \mid \big|[f(r)-f_0(r)] - [f(r') - f_0(r')]\big| \leq \lambda(r-r')^s\}
		\\
		&\hbox{\hskip 8pt} =
	\bigcap_{ \substack{r,r' \in \Q \cap [0,L],\\ r' \leq r}} \left\{ f \mid f(r') \in \C,  f(r) \in \overline{B_\C\Big(f(r')+f_0(r') - f_0(r), \lambda(r-r')^s`\Big)} \right\},
	\end{align*}
which is a countable intersection of cylinder sets. Therefore,
	\[
	B_L(f_0,\lambda) := \{ f \mid \|f - f_0\|_{C^s([0,L])} < \lambda \} = \bigcup_{k=1}^\infty \overline{B_L(f_0,\lambda(1-2^{-k}))},
	\]
from which it follows that every Borel subset of $C^s_0([0,L]\to\C)$ is cylinder measurable. 

Fix $n \in \N$ and recall that $C^s([0,n]\to \C)$ is Polish. Let $\{f_{n,m} \mid m \in \N \}$ be a countable dense subset of $C^s([0,n] \to \C)$. We define
	\[
	\tilde{f}_{n,m}(r) = 
		\left\{\begin{array}{lc}
		f_{n,m}(r) & \hbox{ if } r \in [0,n]\\
		f_{n,m}(n) & \hbox{ if } r \in (n,\infty)
		\end{array}\right.
	\]
Then $\{\tilde{f}_{n,m} \mid n,m \in \N\}$ is a countable dense subset of $C^s_{loc}([0,\infty) \to \C)$. 

Also, a Cauchy sequence in $C^s_{loc}([0,\infty) \to \C)$ must also be Cauchy with respect to each semi-norm $\| \cdot \|_{C^s([0,n])}$. Completeness of $C^s_{loc}([0,\infty) \to \C)$ then follows from the completeness of $C^s([0,n] \to \C)$.

Note that convergence in the metric $d$ implies local uniform convergence, and hence point-wise convergence. Similarly to above, the cylinder $\sigma$-algebra is hence contained in the Borel $\sigma$-algebra induced by $d$. 

Arguing as above shows that, for $n \geq 1$, for $f_0 \in C^s_{loc}([0,\infty)\to\C)$ and for $\lambda > 0$,
	\begin{equation}\label{subbase}
	B_n(f_0,\lambda) := \{ f \in C^s_{loc}([0,\infty)\to \C) \mid \|f - f_0\|_{C^s([0,n])} < \lambda \}
	\end{equation}
is also cylinder measurable. As open sets of the form \eqref{subbase} constitute a sub-basis for the topology on $C^s_{loc}([0,\infty) \to \C)$ and this space is separable, it follows that every open ball (and hence every Borel set) is cylinder measurable. Finally, observe that all of the arguments also hold if we replace $\C$ by $\R$.
\end{proof}

To express the Kolmogorov theorem, we first recall a definition.

\begin{defn}\label{consistent}
Let $\Lambda = \C$ or $\R$. Let $I$ be an infinite index set, and for each finite sub-index $A \subseteq I$, let $P_A$ be some Borel probability measure on $\Lambda^{|A|}$. We say that the collection $\{P_A\}_{A\subseteq I, |A| < \infty}$ is a \textit{consistent family of finite dimensional distributions indexed on $I$} if, for every finite $A \subseteq I$ and every $r \in I \setminus A$, we have
	\[
	P_{A}(B) = P_{A \cup \{r\}} (B \times \Lambda)
	\]
for every Borel set $B \subseteq \Lambda^{|A|}$. 
\end{defn}

The proof of the Kolmogorov theorem be found in \cite{ReY} and in \cite{SVar}.

\begin{thm}[Kolmogorov Continuity and Consistency]\label{kolmogorov}
Let $\Lambda = \C$ or $\R$. Let $\{P_{r_1, \ldots, r_n}\}$ be some consistent family of finite dimensional distributions indexed on some interval $I \subseteq \R$. Then there exists a unique (cylinder) probability measure $P$ on $\Lambda^{I} = \{ f : I \to \Lambda\}$ such that for Borel sets $B_1, \ldots, B_n \subseteq \Lambda$ and for $r_1, \ldots, r_n \in I$,
	\[
	P(f(r_j) \in B_j, j=1,\ldots, n) = P_{r_1,\ldots,r_n} (B_1 \times \cdots \times B_n).
	\]
Let $\beta,\gamma > 0$. If, for each compact sub-interval $K \subseteq I$, there exists a $C_K < \infty$ such that for all $r,s \in K$,
	\[
	\E^{P} \left[|f(r) - f(s)|^\beta \right] \leq C_K |r - s|^{1 + \gamma},
	\]
then $P$ is supported on $C^0_{loc}(I \to \Lambda)$. Furthermore, for every $0 \leq s < \gamma/\beta$, $P$ is also supported on $C^{s}_{loc}(I \to \Lambda)$.
\end{thm}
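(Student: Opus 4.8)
The plan is to separate the two assertions: (i) existence and uniqueness of the cylinder measure $P$ with the prescribed finite-dimensional distributions, which is the Kolmogorov extension theorem, and (ii) the regularity upgrade under the moment hypothesis, which is the Kolmogorov--Chentsov criterion.

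\textbf{Existence and uniqueness.} First I would define a finitely additive set function on the algebra $\mathcal{A}$ of cylinder subsets of $\Lambda^I$ by assigning to $\{f : (f(r_1),\dots,f(r_n)) \in B\}$ the value $P_{r_1,\dots,r_n}(B)$ for Borel $B \subseteq \Lambda^n$; the consistency condition of Definition~\ref{consistent}, together with invariance under permuting coordinates, is exactly what makes this well defined and additive. The one substantive point is countable additivity on $\mathcal{A}$, i.e.\ continuity from above at the empty set: given cylinders $A_1 \supseteq A_2 \supseteq \cdots$ with $P(A_k) \geq \delta > 0$ for all $k$, one must produce a point of $\bigcap_k A_k$. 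Since $\Lambda^n$ is Polish, each $P_{r_1,\dots,r_n}$ is inner regular by compact sets; approximating the base of each $A_k$ from inside by a compact set with error $< \delta\, 2^{-k-1}$ and then using that finite products of compacts are compact, a diagonal argument produces the desired point. Carath\'eodory extension then yields $P$ on $\sigma(\mathcal{A})$, which is the cylinder $\sigma$-algebra, and since the cylinders form a $\pi$-system generating it, uniqueness follows from Dynkin's $\pi$--$\lambda$ theorem.

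\textbf{Continuity and H\"older regularity.} Now assume the moment bound. It suffices to treat a fixed compact interval, say $[0,N] \subseteq I$; let $D_m = \{ k 2^{-m} : 0 \le k \le N 2^m \}$ and $D = \bigcup_m D_m$. Fix $0 \le s < \gamma/\beta$ and choose $s < s' < \gamma/\beta$. Chebyshev's inequality and the hypothesis give, for each consecutive pair in $D_m$,
	\[
	P\big( |f((k+1)2^{-m}) - f(k2^{-m})| > 2^{-ms'} \big) \le C_{[0,N]}\, 2^{ms'\beta}\, 2^{-m(1+\gamma)},
	\]
and summing over the $O(2^m)$ such pairs gives a bound $\lesssim 2^{-m(\gamma - s'\beta)}$, which is summable since $\gamma - s'\beta > 0$. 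By Borel--Cantelli there is an a.s.\ finite random $m_0$ such that for all $m \ge m_0$ every level-$m$ consecutive increment of $f$ is at most $2^{-ms'}$. A telescoping (chaining) estimate---writing any two points of $D$ at distance $\le 2^{-m_0}$ as a bounded union of consecutive dyadic intervals over levels $\ge m_0$ and summing the resulting geometric series---then gives $|f(d)-f(d')| \le C(\omega)\,|d-d'|^{s}$ for all $d,d' \in D \cap [0,N]$ with $C(\omega)$ a.s.\ finite. Hence $P$-a.e.\ $f|_{D}$ is uniformly $s$-H\"older on bounded sets and extends uniquely to $\tilde f \in C^s_{loc}(I \to \Lambda)$.

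\textbf{Matching the distributions and conclusion.} I would push $P$ forward under the (cylinder-measurable) map $f \mapsto \tilde f$, defined on the full-measure event just constructed, to a Borel probability measure on $C^s_{loc}(I \to \Lambda)$; by Proposition~\ref{holderpolish} the Borel and cylinder structures there coincide, so this pushforward is determined by its finite-dimensional distributions. The moment bound shows $f(r_n) \to f(r)$ in $L^\beta(P)$, hence in $P$-probability, as $r_n \to r$; together with continuity of $\tilde f$ this forces $\tilde f(r) = f(r)$ $P$-a.s.\ for each fixed $r \in I$. Therefore the pushforward has finite-dimensional distributions $P_{r_1,\dots,r_n}$, and by the uniqueness from the first step it is the canonical measure. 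The main obstacle is the measure-theoretic bookkeeping in these last two steps: sample-path continuity is not a cylinder-measurable event on $\Lambda^I$ when $I$ is uncountable, so one must carry out the Borel--Cantelli and chaining arguments on the countable index set $D$ and only transport the measure onto $C^s_{loc}$ at the very end---precisely the point at which Proposition~\ref{holderpolish} is invoked to make the statement ``$P$ is supported on $C^s_{loc}$'' meaningful.
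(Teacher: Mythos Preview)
Your proof is correct and follows the standard route to both parts of the theorem. The paper does not actually prove this result: it simply states ``The proof of the Kolmogorov theorem [can] be found in \cite{ReY} and in \cite{SVar}.'' So there is no paper-proof to compare against; your sketch is precisely the kind of argument those references contain, and your care with the measurability subtlety (working on a countable dyadic set and invoking Proposition~\ref{holderpolish} only at the end) is exactly right.
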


In order to apply Kolmogorov's theorem to construct the infinite volume Gibbs measure, denoted $\nu_\infty$, we shall reduce the problem to computing the asymptotics of the fundamental solution of a certain parabolic PDE.

\begin{defn}\label{fundsolndefn}
Let $C_1(r,x)$, $C_2(r,x)$, and $C_3(r,x)$ be functions from $[0,\infty) \times \R$ to $\R$ and consider the equation
	\[
	L\phi := -\partial_r \phi + C_1(r,x) \partial_x^2 \phi + C_2(r,x) \partial_x \phi + C_3(r,x) \phi = 0.
	\] 
Let $\phi(r,x;s,y)$ be a function on the following domain: $x,y \in \R$, $s \geq 0$, and $r > s$. We say that $\phi$ is the \textit{fundamental solution of $L\phi =0$ at $(s,y)$} if it obeys both of the following conditions:
    \begin{enumerate}
    \item $\phi$ is continuously differentiable once in $r$ and twice in $x$ and satisfies, as a function of $r$ and $x$, the equation $L\phi =0$ (in the classical sense).
    \item $\lim_{r \downarrow s} \phi(r,x;s,y) = \delta_{x-y}$ as linear functionals on $C_0(\R)$: for $f \in C_0(\R)$,
			\begin{equation}\label{deltafunc}
			\lim_{r\downarrow s} \int_{\R} \phi(r,x;s,y) f(x) \; dx = f(y).
			\end{equation}
    \end{enumerate}
If $\phi(r,x;s,y)$ is the fundamental solution of $L\phi = 0$ at every $(s,y) \in \R^{\geq 0} \times \R$, then we simply say that \textit{$\phi$ is the fundamental solution of $L\phi = 0$}.
\end{defn}

For example, the heat kernel
		\begin{equation}\label{theheatker}
		\phi_0(r,x;s,y) := \frac{1}{\sqrt{2\pi(r-s)}} \exp\left(-\frac{(x-y)^2}{2(r-s)} \right)
		\end{equation}
is the fundamental solution of the heat equation $-\partial_r\phi_0 = \frac{1}{2} \partial_x^2 \phi_0$.

Recalling the definitions of $\nu_L, \nu_{L,1}$, and $\nu_{L,2}$ in \eqref{1dgibbs}, \eqref{1dgibbsreal}, and \eqref{1dgibbsimag}. We now state the main result of this section.

\begin{thm}\label{construct}
	\begin{enumerate}
	\item There exists a (strictly positive) function $\phi(r,x;s,y)$ which is the fundamental solution of
	\begin{equation}\label{parabpde}
	-\partial_r \phi + \frac{1}{2} \partial_{x}^2 \phi - \frac{1}{4} \frac{x^4}{r^2} \phi = 0
	\end{equation}
at each $(s,y) \in (0,\infty) \times \R$ and at $(s,y) = (0,0)$.
	\item For each $L > 1$, the measure $\nu_{L,1}$ obeys the following law: let $0 < r_1 < \cdots < r_N < L$ and let $B_1, \ldots, B_N \subseteq \R$ be Borel sets, then, with $\phi$ as above,
	\begin{align*}
	&\PP_{\nu_{L,1}}(f(r_j) \in B_j, j=1,\ldots, N) 
	\\
	&\hbox{\hskip 6pt}= \int_{B_1} \cdots \int_{B_N} \frac{\phi(L,0;r_N,x_N)}{\phi(L,0;0,0)} \prod_{j=2}^{N}\phi(r_j,x_j;r_{j-1},x_{j-1}) \phi(r_1,x_1;0,0) \; dx_n \cdots dx_1.
	\end{align*}
	\item There is a positive, bounded, continuous function $F(s,y): (0,\infty) \times \R \to \R$ such that, for fixed $s$,
	\[
	\lim_{L \to \infty} \left\|\frac{\phi(L,0;s,y)}{\phi(L,0;0,0)} - F(s,y)\right\|_{C_y^0} = 0.
	\]
	\item There exists a unique cylinder probability measure $\nu_{\infty, 1}$ on $C^0_{loc}([0,\infty) \to \R)$ such that for $0 < r_1 < \cdots < r_N $ and for Borel sets $B_1, \ldots, B_N \subseteq \R$, 
		\begin{align*}
		&\PP_{\nu_{\infty,1}}(f(r_j) \in B_j, j=1,\ldots, N)
		\\
		&\hbox{\hskip 8pt}= \int_{B_1} \cdots \int_{B_N} F(r_N,x_N) \prod_{j=2}^{N}\phi(r_j,x_j;r_{j-1},x_{j-1}) \phi(r_1,x_1;0,0) \; dx_N \cdots dx_1.
		\end{align*}
	Furthermore, for every $s \in [0,\tfrac{1}{2})$, $\nu_{\infty,1}$ is supported on $C^{s}_{loc}([0,\infty) \to \R)$.
	\item Fix $s \in [0,\frac{1}{2})$. Let $1 < R < L$ and let $\nu_{\infty,1}|_{[0,R]}$ and $\nu_{L,1}|_{[0,R]}$ denote the image measure (or push-froward measure) of $\nu_{\infty,1}$ and $\nu_{L,1}$, resp., on $C^s([0,R] \to \R)$ under the restriction map $f \mapsto f|_{[0,R]}$. Then $\nu_{\infty,1}|_{[0,R]}$ and $\nu_{L,1}|_{[0,R]}$ are mutually absolutely continuous, with Radon--Nikodym derivative
		\[
		\frac{d \nu_{L,1}|_{[0,R]}}{d \nu_{\infty,1}|_{[0,R]}}(f) 
			= 
		\frac{\phi(L,0;R,f(R))}{F(R,f(R)) \phi(L,0;0,0)}.
		\]
	For every Borel subset $A \subseteq C^{0}([0,R] \to \R)$, we have
		\begin{equation}\label{convergence1}
		\lim_{L \to \infty} \nu_{L,1}|_{[0,R]}(A) = \nu_{\infty,1}|_{[0,R]}(A).
		\end{equation}
	\item Let $W$ denote the Wiener measure on $C^s_{loc}([0,\infty) \to \R)$. Let $\nu_\infty$ be the Borel probability measure on $C^s_{loc}([0,\infty) \to \C)$ given by
	\begin{equation}\label{infvolmeasure}
	\nu_{\infty}(\{ g \mid \re(g) \in A_1, \im(g) \in A_2 \}) := \nu_{\infty,1}(A_1) W(A_2)
	\end{equation}
for Borel measurable sets $A_1,A_2 \subseteq C^s_{loc}([0,\infty) \to \R)$.

	Let $1 < R < L$. Then $\nu_\infty|_{[0,R]}$ and $\nu_L|_{[0,R]}$ are mutually absolutely continuous measures on $C^s([0,R] \to \C)$. For every Borel subset $A \subseteq C^s([0,R] \to \C)$, we have
	\begin{equation}\label{convergenceall}
	\lim_{L \to \infty} \nu_L|_{[0,R]}(A) = \nu_{\infty}|_{[0,R]}(A).
	\end{equation}
Furthermore, $\nu_\infty$ is the unique probability measure which obeys \eqref{convergenceall}.
	
	Let $\mathcal{F}_R$ denote the completion of the Borel $\sigma$-algebra on $C^s([0,R])$ with respect to any of these measures. Then \eqref{convergenceall} holds for every $A \in \mathcal{F}_R$. 
	\end{enumerate}
\end{thm}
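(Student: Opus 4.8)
The plan is to reduce everything to a Feynman--Kac analysis of the parabolic operator $L=-\partial_r+\tfrac12\partial_x^2-\tfrac14 x^4 r^{-2}$, exploit its parabolic scaling symmetry $(r,x)\mapsto(\lambda r,\sqrt\lambda x)$, and then invoke the Kolmogorov extension theorem (Theorem~\ref{kolmogorov}) together with dominated convergence. Everything except part (3) is then bookkeeping; part (3), the long-time asymptotics of the fundamental solution, is the analytic heart.

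For parts (1)--(2) I would \emph{define}, for $s>0$,
\[
\phi(r,x;s,y):=\phi_0(r,x;s,y)\,\E_{(s,y)\to(r,x)}\!\Big[\exp\big(-\tfrac14\textstyle\int_s^r X_\tau^4\tau^{-2}\,d\tau\big)\Big],
\]
the expectation being over the Brownian bridge $X$ on $[s,r]$ from $y$ to $x$; the exponent is a.s.\ finite since $X$ is a.s.\ bounded and $\tau^{-2}$ is bounded on $[s,r]$, so $\phi>0$, $\phi(r,\cdot;s,y)\to\delta_y$ as $r\downarrow s$ (the weight tends to $1$ and $\phi_0$ is the heat kernel), and interior parabolic regularity upgrades $\phi$ to a classical solution of $L\phi=0$ since $V=\tfrac14 x^4 r^{-2}$ is smooth and locally bounded on $(0,\infty)\times\R$. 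For $(s,y)=(0,0)$ the same formula is used: decomposing a bridge from $(0,0)$ into the deterministic ray $\tfrac\tau r x$ plus a standard bridge $\tilde X$ on $[0,r]$, the law of the iterated logarithm gives $\tilde X_\tau^4\lesssim\tau^2(\log\log\tfrac1\tau)^2$ near $0$, hence $\int_0^r X_\tau^4\tau^{-2}\,d\tau<\infty$ a.s., and the semigroup identity $\phi(r,x;0,0)=\int\phi(r,x;s_0,\xi)\phi(s_0,\xi;0,0)\,d\xi$ (obtained by conditioning the bridge at time $s_0>0$) shows $\phi(\cdot\,;0,0)$ solves $L\phi=0$ for $r>s_0$, hence for all $r>0$. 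Part (2) is then the Feynman--Kac/Kac formula for a reweighted bridge: by \eqref{bblaw}, $\mu_{L,1}$ is the Brownian bridge from $(0,0)$ to $(L,0)$ and $\nu_{L,1}=Z_L^{-1}e^{-\frac14\int_0^L f^4 r^{-2}}\mu_{L,1}$ with $Z_L=\phi(L,0;0,0)/\phi_0(L,0;0,0)$; conditioning the bridge at $r_1<\dots<r_N$ and using the Markov property of the multiplicative functional factors the weight into a product of ratios $q(b,\beta;a,\alpha):=\phi(b,\beta;a,\alpha)/\phi_0(b,\beta;a,\alpha)$, and cancelling the $\phi_0$'s against $Z_L$ gives exactly the stated law.

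For part (3), the scaling symmetry forces $\phi(\lambda a,\sqrt\lambda b;\lambda c,\sqrt\lambda d)=\lambda^{-1/2}\phi(a,b;c,d)$ (directly from the bridge representation), so $\phi(L,0;s,y)/\phi(L,0;0,0)=\phi(1,0;s/L,y/\sqrt L)/\phi(1,0;0,0)$, and a second rescaling gives $\phi(1,0;\varepsilon,\sqrt\varepsilon z)=\varepsilon^{-1/2}\phi(1/\varepsilon,0;1,z)$. Since $\phi(1,0;s/L,0)\to\phi(1,0;0,0)$ (continuity of the heat kernel and dominated convergence for the bridge weight), the limit reduces to
\[
F(s,y)=G(y/\sqrt s),\qquad G(z):=\lim_{T\to\infty}\frac{\phi(T,0;1,z)}{\phi(T,0;1,0)} .
\]
To see $G$ exists, put $\psi(s,y):=\phi(T,0;s,y)$, which solves the adjoint equation $\partial_s\psi+\tfrac12\partial_y^2\psi-\tfrac14 y^4 s^{-2}\psi=0$ on $s<T$ with $\psi(s,\cdot)\to\delta_0$ as $s\uparrow T$, and change to self-similar/log-time variables $\theta(t,z):=\psi(e^{-t},e^{-t/2}z)$; a direct computation turns the equation into $\partial_t\theta=\tfrac12\partial_z^2\theta-\tfrac z2\partial_z\theta-\tfrac14 e^{-t}z^4\theta$, the Ornstein--Uhlenbeck generator plus a killing term that is strongly confining as $t\to-\infty$ (where the $\delta_0$-data sits when $T$ is large) and decays as $t\to+\infty$. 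The strong early killing collapses the data onto the instantaneous ground state, which then adiabatically tracks a $T$-independent profile by the time $t=0$ (i.e.\ $s=1$); rigorously, a parabolic Harnack inequality gives precompactness of the ratios $\phi(T,0;1,\cdot)/\phi(T,0;1,0)$ in $C_{loc}$, every subsequential limit is a positive solution of the adjoint equation normalized to $1$ at the origin, and uniqueness of this normalized positive ``generalized principal eigenfunction'' (via the spectral gap of the Ornstein--Uhlenbeck generator, e.g.\ a Doob transform reducing to a Liouville property for an ergodic diffusion) pins down the limit; this shows $G$, hence $F$, is strictly positive, smooth, and bounded (the profile concentrates near $z=0$, with $G(0)=1$), and $\|\phi(L,0;\cdot)/\phi(L,0;0,0)-F\|_{C_y^0}\to0$ follows by combining the local Harnack bounds with the Gaussian tail $\phi\le\phi_0$.

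For parts (4)--(6), the algebraic facts needed are the Chapman--Kolmogorov identity $\int\phi(r,x;\tau,\xi)\phi(\tau,\xi;s,y)\,d\xi=\phi(r,x;s,y)$ and the $F$-harmonicity $\int F(r,x)\phi(r,x;s,y)\,dx=F(s,y)$, $\int F(r,x)\phi(r,x;0,0)\,dx=1$, the latter obtained by letting $L\to\infty$ in $\int\tfrac{\phi(L,0;r,x)}{\phi(L,0;0,0)}\phi(r,x;s,y)\,dx=\tfrac{\phi(L,0;s,y)}{\phi(L,0;0,0)}$, dominating the integrand by $C\phi(r,x;s,y)$ via $\phi\le\phi_0\le\phi_0(L,0;0,0)$. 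These make the family in (4) a consistent system of probability measures, so Kolmogorov's theorem produces the unique cylinder measure $\nu_{\infty,1}$; using (and checking via $F$-harmonicity) that $\tfrac{d(\nu_{\infty,1}|_{[0,R]})}{d(W|_{[0,R]})}=F(R,f(R))e^{-\frac14\int_0^R f^4\tau^{-2}}\le\|F\|_\infty$ with $W$ the Wiener measure started at $0$, the moment bound $\E^{\nu_{\infty,1}}\!\big[|f(r)-f(r')|^{2p}\big]\le\|F\|_\infty\,c_p|r-r'|^p$ on $[0,R]$ together with Theorem~\ref{kolmogorov} gives support on $C^s_{loc}$ for every $s<\tfrac12$. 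Comparing the multi-time formulas of (2) and (4) (both absolutely continuous w.r.t.\ $W|_{[0,R]}$ with strictly positive densities) yields mutual absolute continuity and the Radon--Nikodym derivative $\tfrac{\phi(L,0;R,f(R))}{F(R,f(R))\phi(L,0;0,0)}$ of part (5); this tends to $1$ pointwise by part (3) and is dominated by $C/F(R,f(R))$, which is $\nu_{\infty,1}$-integrable because the law of $f(R)$ under $\nu_{\infty,1}$ has density $F(R,x)\phi(R,x;0,0)$, so \eqref{convergence1} follows by dominated convergence. Finally $\nu_\infty=\nu_{\infty,1}\otimes W$ and $\nu_L=\nu_{L,1}\otimes\mu_{L,2}$; the imaginary factors restricted to $[0,R]$ are mutually absolutely continuous with density $\phi_0(L,0;R,f(R))/\phi_0(L,0;0,0)\to1$ (bounded, same argument), so $\nu_L|_{[0,R]}$ and $\nu_\infty|_{[0,R]}$ are mutually absolutely continuous and \eqref{convergenceall} holds for Borel $A$; uniqueness of $\nu_\infty$ holds because its restrictions to all $[0,R]$ determine the finite-dimensional distributions, hence (Proposition~\ref{holderpolish} and Kolmogorov) the measure on the Polish space $C^s_{loc}([0,\infty)\to\C)$; and since all measures in play are mutually absolutely continuous on $C^s([0,R]\to\C)$ they share the completion $\mathcal F_R$ and the same null sets, so \eqref{convergenceall} extends to $A\in\mathcal F_R$ by sandwiching $A$ between a Borel set and a Borel null set. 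The main obstacle is squarely part (3): proving the normalized fundamental solution actually converges (not merely that the ratios are precompact) to a positive bounded continuous $F$ uniformly in $y$ — the self-similar reduction to an asymptotically Ornstein--Uhlenbeck problem and the attendant Harnack/spectral-gap uniqueness argument are the technical core, and are the time-inhomogeneous counterpart of the principal-eigenfunction computation in \cite{McV}.
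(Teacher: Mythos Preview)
Your treatment of parts (1), (2), (4)--(6) is essentially sound and close in spirit to the paper's (the paper builds $\phi$ as a monotone limit of parametrix solutions to cut-off equations rather than defining it directly by Feynman--Kac, but the two are equivalent; the Kolmogorov/dominated-convergence arguments in (4)--(6) match). The problem is entirely in part (3), and it is a real one.

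The identity $\phi(\lambda a,\sqrt\lambda\,b;\lambda c,\sqrt\lambda\,d)=\lambda^{-1/2}\phi(a,b;c,d)$ is \emph{false}. Under parabolic scaling the potential $\tfrac14 x^4/r^2$ is indeed invariant, but the heat part $-\partial_r+\tfrac12\partial_x^2$ picks up a factor $\lambda$, so the full operator is not scale-invariant. You can see this directly in your own bridge representation: if $X$ is a bridge on $[s,r]$ from $y$ to $x$, then $\tilde X_\tau=\sqrt\lambda\,X_{\tau/\lambda}$ is a bridge on $[\lambda s,\lambda r]$ and
\[
\int_{\lambda s}^{\lambda r}\frac{\tilde X_\tau^4}{\tau^2}\,d\tau
=\int_s^r\frac{\lambda^2 X_\sigma^4}{\lambda^2\sigma^2}\,\lambda\,d\sigma
=\lambda\int_s^r\frac{X_\sigma^4}{\sigma^2}\,d\sigma,
\]
so the exponent scales by $\lambda$, not by $1$. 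Consequently the reduction $F(s,y)=G(y/\sqrt s)$ cannot hold, and indeed the paper's explicit answer shows the correct self-similar variable is $y\,s^{-1/3}$, not $y\,s^{-1/2}$. Your subsequent logarithmic/Ornstein--Uhlenbeck change of variables is algebraically correct, but it leaves you with a generator whose killing term $\tfrac14 e^{-t}z^4$ is still genuinely time-dependent and \emph{blows up} as $t\to-\infty$ (where your data sits); the ``adiabatic tracking / Harnack / uniqueness of the generalized principal eigenfunction'' outline is not a proof in this setting and does not produce the needed uniform-in-$y$ convergence.

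The paper's route is different and this is precisely the analytic core: it uses the \emph{cubic} time change
\[
\Phi(r,x;s,y)=\tfrac{s}{3}\,\phi\!\big(\tfrac{r^3}{27},\tfrac{xr}{3};\tfrac{s^3}{27},\tfrac{ys}{3}\big),
\]
which transforms \eqref{parabpde} into $-\partial_r\Phi+\tfrac12\partial_x^2\Phi-\tfrac14 x^4\Phi+\tfrac{x}{r}\partial_x\Phi=0$. Now the potential is the \emph{time-independent} quartic $\tfrac14 x^4$, and the only time dependence is a first-order drift that decays like $1/r$. One then expands against the eigenfunctions of the anharmonic oscillator $H=-\tfrac12\partial_x^2+\tfrac14 x^4$, isolates the ground-state contribution $e^{-(r-s)\lambda_0}\psi_0(x)\psi_0(y)$, and shows all higher modes and the Duhamel correction from $\tfrac{x}{r}\partial_x$ are lower order via a combination of short-time Gaussian bounds and long-time $L^2$ energy/spectral-gap estimates. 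This yields the precise asymptotic $\phi(L,0;s,y)\sim c\,L^{-1/6}e^{-3\lambda_0 L^{1/3}}$ (note the stretched exponential, neither polynomial nor pure exponential), from which $F$ and its properties follow. Your square-root rescaling cannot uncover this structure; the cube-root change is the step you are missing.
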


We will break up the proof of Theorem \ref{construct} into several subsections. 

\subsection{Proof of Theorem \ref{construct}, part 1.}\label{existfundsoln}
The main difficulty in constructing a fundamental solution of \eqref{parabpde} lies with the coefficient $-\frac{1}{4}\frac{x^4}{r^2}$. It is neither bounded in $x$ nor is it uniformly H\"older continuous. For $x \neq 0$, this coefficient also goes to $-\infty$ point-wise as $r$ goes to $0$.

To handle these issues, we apply suitable cut-offs. For real numbers $a,b \in \R$, let 
	\[
	a \vee b := \max(a,b)
	\hbox{\hskip 18pt and \hskip 18pt}
	a \wedge b := \min(a,b).
	\] 
Let us consider the cut-off equations
    \begin{equation}\label{cutoff}
    L_n \phi : = -\partial_r \phi + \frac{1}{2} \partial_x^2 u - \frac{1}{4} \frac{ x^4 \wedge n}{r^2 \vee \frac{1}{n}} \phi = 0, \hbox{\hskip 28pt} n = 1,2,\ldots
    \end{equation}
For each $n \in \N$, it is not hard to see that the coefficient $-\frac{1}{4}\frac{x^4 \wedge n}{r^2 \vee 1/n}$ is bounded and uniformly H\"older continuous in $x$ (indeed, Lipschitz). By the parametrix method (cf., \cite[pg. 14--20]{Fri} or \cite{LSU}) there exists a there exists a unique fundamental solution $\phi_n(r,x;s,y)$ of the cut-off PDE $L_n \phi = 0$ at all $(s,y) \in [0,\infty) \times \R$ and obeys the Duhamel formula
	\begin{align}\label{cutoffduhamel}
	&\phi_n(r,x;s,y)  
		\\
		\nonumber
	&\hbox{\hskip 18pt} = \phi_0(r,x;s,y) - \frac{1}{4} \int_s^r \int_\R \phi_0(r,x;\rho,w) \frac{w^4 \wedge n}{\rho^2 \vee \frac{1}{n}} \phi_n(\rho,w;s,y) \; dw \; d\rho.
	\end{align}
Furthermore, $\phi_n > 0$ for each $n \in \N$. Note that $\phi_0$ denotes the standard heat kernel (cf., \eqref{theheatker}), which is consistent with \eqref{cutoff}.

\begin{lem}
Let $\phi_n$ be as above. Then
		\begin{equation}\label{decrease}
		\phi_0 \geq \phi_1 \geq \phi_2 \geq \cdots \geq 0.
		\end{equation}
Furthermore, the function 
	\begin{equation}\label{limit}
	\phi(r,x;s,y) := \lim_{n\to\infty} \phi_n(r,x;s,y)
	\end{equation}
is well defined and obeys the estimate
	\begin{equation}\label{gaussbound}
	0 \leq \phi(r,x;s,y)\leq \phi_0(r,x;s,y)
	\end{equation} 
as well as the Duhamel formula
	\begin{equation}\label{duhamel}
	\phi(r,x;s,y) = \phi_0(r,x;s,y) - \frac{1}{4} \int_s^r \int_\R \phi_0(r,x;\rho,w) \frac{w^4}{\rho^2} \phi(\rho,w;s,y) \; dw \; d\rho
	\end{equation}
for $(s,y) = (0,0)$ or $(s,y) \in (0,\infty) \times \R$. Moreover, $\phi$ is the fundamental solution of \eqref{parabpde} at $(s,y) = (0,0)$ and all $(s,y) \in (0,\infty) \times \R$. Finally, we have $\phi > 0$.
\end{lem}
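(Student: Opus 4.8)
The plan is to read everything off the decreasing family $\{\phi_n\}$ and the cut-off Duhamel identity \eqref{cutoffduhamel}, using the parametrix facts that each $\phi_n$ is a strictly positive classical fundamental solution of $L_n\phi=0$ with bounded, Lipschitz coefficients. First note that $a_n(\rho,w):=\frac{w^4\wedge n}{\rho^2\vee\frac1n}$ is nonnegative, pointwise nondecreasing in $n$, and satisfies $a_n(\rho,w)\uparrow \frac{w^4}{\rho^2}$ for every $\rho>0$; set $c_n:=-\frac14 a_n\le 0$, nonincreasing in $n$. The cleanest route to monotonicity is the Feynman--Kac / killed-Brownian-motion representation
\[
\phi_n(r,x;s,y)=\phi_0(r,x;s,y)\,\E\!\left[\exp\Big(-\tfrac14\!\int_s^r a_n(\rho,B_\rho)\,d\rho\Big)\right],
\]
the expectation being over the Brownian bridge from $(s,y)$ to $(r,x)$: since the weight is $\le 1$ and nonincreasing in $n$, we get $\phi_0\ge\phi_1\ge\cdots\ge 0$ and $\phi\le\phi_0$ at once. (Pure-PDE alternative: $w:=\phi_n-\phi_{n+1}$ satisfies $L_{n+1}w=(c_{n+1}-c_n)\phi_n\le 0$ on $\{r>s\}$, vanishes as $r\downarrow s$, and is dominated by $2\phi_0$, hence lies in the Tychonoff uniqueness class of the bounded-coefficient operator $L_{n+1}$; the parabolic comparison principle gives $w\ge0$, and the case $n=0$ gives $\phi_n\le\phi_0$, which also follows directly from \eqref{cutoffduhamel} and $\phi_n\ge0$.) Thus $\phi:=\lim_n\phi_n$ exists pointwise, is Borel measurable, and obeys \eqref{gaussbound}.

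\emph{The Duhamel formula for $\phi$.} I would pass to the limit in \eqref{cutoffduhamel}. The integrand $\phi_0(r,x;\rho,w)\,a_n(\rho,w)\,\phi_n(\rho,w;s,y)$ converges pointwise to $\phi_0(r,x;\rho,w)\,\frac{w^4}{\rho^2}\,\phi(\rho,w;s,y)$ and is dominated by $\phi_0(r,x;\rho,w)\,\frac{w^4}{\rho^2}\,\phi_0(\rho,w;s,y)$, so it remains only to check that this dominating function is integrable on $(\rho,w)\in(s,r)\times\R$. When $s>0$ this is clear since $\rho^{-2}$ is bounded on $[s,r]$ and the Gaussians have finite moments. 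When $(s,y)=(0,0)$ the key point is the identity $\int_\R w^4\,\phi_0(\rho,w;0,0)\,dw=3\rho^2$, so that $\int_\R \frac{w^4}{\rho^2}\phi_0(\rho,w;0,0)\,dw=3$ stays bounded as $\rho\downarrow 0$: the $x^4$ exactly absorbs the $r^{-2}$ singularity. (This is precisely where $y=0$ matters: for $y\ne 0$ one gets $\sim y^4/\rho^2$, non-integrable near $\rho=0$, which is why those base points are excluded.) Dominated convergence then yields \eqref{duhamel} for $(s,y)=(0,0)$ and for $(s,y)\in(0,\infty)\times\R$.

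\emph{$\phi$ is a classical fundamental solution.} For regularity, fix a compact $K\subseteq\{r>s\}$ (resp.\ $K\subseteq\{r>0\}$ when $s=0$). For $n$ large, $c_n$ coincides on $K$ with the fixed smooth bounded function $-\frac14\frac{x^4}{r^2}$, and $\phi_n\le\phi_0$ is uniformly bounded on $K$; interior parabolic Schauder estimates then bound $\{\phi_n\}$ in $C^{1+\alpha/2,\,2+\alpha}(K)$ for some $\alpha\in(0,1)$, so $\phi_n\to\phi$ in $C^{1,2}_{loc}$ and, letting $n\to\infty$ in $L_n\phi_n=0$, $\phi$ solves \eqref{parabpde} classically on $\{r>s\}$ (resp.\ $\{r>0\}$). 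For the $\delta$-function limit I would test \eqref{duhamel} against $f\in C_0(\R)$: since $\int_\R\phi_0(r,x;\rho,w)f(x)\,dx$ is bounded by $\|f\|_\infty$, the Duhamel correction is at most $\tfrac14\|f\|_\infty\int_s^r\!\int_\R\frac{w^4}{\rho^2}\phi_0(\rho,w;s,y)\,dw\,d\rho$, which $\to 0$ as $r\downarrow s$ (it is $O_{s,y}(r-s)$ for $s>0$ and equals $\tfrac34\|f\|_\infty\,r$ for $(s,y)=(0,0)$). Since $\int_\R\phi_0(r,x;s,y)f(x)\,dx\to f(y)$, we obtain \eqref{deltafunc}, so $\phi$ is the fundamental solution at the claimed base points.

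\emph{Strict positivity, and the main obstacle.} Finally, $\phi\ge 0$ solves \eqref{parabpde}, a parabolic equation with locally smooth coefficients and nonpositive zeroth-order term, on $\{r>s\}$. If $\phi(r_1,x_1;s,y)=0$ for some $r_1>s$, the parabolic strong maximum principle would force $\phi(\rho,\cdot;s,y)\equiv 0$ for all $\rho\in(s,r_1]$, contradicting the $\delta_y$ limit just established; hence $\phi>0$. (Alternatively this is immediate from the Feynman--Kac representation once one checks that $\int_s^r\frac{B_\rho^4}{\rho^2}\,d\rho<\infty$ almost surely for the bridge, which for $s=0$ follows from the law of the iterated logarithm.) The step requiring the most care is the passage from a measurable monotone limit to a genuine fundamental solution: one needs the Schauder compactness — hence the stabilization of the coefficients away from the space-time origin — and, above all, the integrability of the Duhamel kernel at $(0,0)$, where the $x^4$ must be used to defeat the $r^{-2}$ singularity. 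The monotonicity is also not a formality, since the Duhamel series is alternating; it genuinely requires either the comparison principle or the probabilistic representation.
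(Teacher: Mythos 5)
Your argument is correct, and it deviates from the paper's in two places worth noting. For the Duhamel formula, where the paper builds and repeatedly applies the convolution/semigroup identity \eqref{semigroup} together with the Gaussian bounds \eqref{origindamping} and \eqref{postimedamping}, you instead dominate by $\phi_0(r,x;\rho,w)\,\tfrac{w^4}{\rho^2}\,\phi_0(\rho,w;s,y)$ and use the fourth-moment identity $\int_\R w^4\phi_0(\rho,w;0,0)\,dw = 3\rho^2$ to kill the $\rho^{-2}$ singularity at the origin; this is lighter and works just as well for establishing \eqref{duhamel} and the $\delta$-function limit. The more substantial divergence is in proving that $\phi$ is a classical solution: the paper differentiates the Duhamel formula directly (via the mean-value theorem, an integration by parts to avoid a non-integrable singularity in $\partial_x^2$, and the identity \eqref{semigroup}), and as a by-product records the quantitative Gaussian derivative bounds \eqref{firstderfullestimate}, \eqref{secondderfullestimate}, \eqref{timederfullestimate}. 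You instead fix a compact $K\Subset\{r>s\}$, observe that for large $n$ the cut-off coefficients stabilize there, and invoke interior parabolic Schauder estimates to get a uniform $C^{1+\alpha/2,2+\alpha}(K)$ bound, hence $C^{1,2}_{loc}$ convergence. That is a clean and valid alternative for the lemma as stated — but it is a softer compactness argument that does not produce the pointwise Gaussian bounds on $\partial_x\phi$, $\partial_x^2\phi$, $\partial_r\phi$, and the paper does cite those downstream (in the proof of Lemma~\ref{decaymassless}, to justify the $L^2_x$ integrations by parts). So if you adopt the Schauder route you would still need to re-derive those estimates separately where they are used. For monotonicity and strict positivity your reasoning (comparison/Feynman--Kac; strong minimum principle plus the $\delta$ limit, localized so the unbounded zeroth-order coefficient is harmless) matches the paper's.
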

\begin{proof}
Observe that $r^2\vee \frac{1}{n} \geq r^2\vee\frac{1}{n+1}$ and $x^4\wedge n \leq x^4 \wedge (n+1)$. It follows that, for $n \in \N$,
    $$
    - \frac{x^4 \wedge (n+1)}{r^2 \vee\frac{1}{n+1}} \leq -\frac{x^4\wedge n}{r^2 \vee \frac{1}{n}}.
    $$
By the comparison principle (cf., \cite[p. 45-46]{Fri}), we have \eqref{decrease}. As decreasing sequences that are bounded below must tend to a limit, the function $\phi$ given by \eqref{limit} is well-defined and clearly obeys \eqref{gaussbound}.

To establish the Duhamel formula, we first recall a heat semi-group-like identity. Let $B(x,y) = \frac{\Gamma(x)\Gamma(y)}{\Gamma(x+y)}$ be the beta function, and let $\lambda > 0$. For $-\infty < \alpha, \beta< \frac{3}{2}$, we recall that
		\begin{align}
		\label{semigroup}
		&\int_s^r \int_{\R} (r-\rho)^{-\alpha} \exp\left(-\frac{\lambda|x-w|^2}{2(r-\rho)}\right)(\rho - s)^{-\beta}\exp\left(-\frac{\lambda|w-y|^2}{2(\rho-s)}\right) \; dw \; d\rho
			\\
		\nonumber
    &\hbox{\hskip 18pt}
		= \left(\frac{2\pi}{\lambda}\right)^{1/2} B(\tfrac{3}{2}-\alpha, \tfrac{3}{2} - \beta) \cdot (r-s)^{\frac{3}{2} - \alpha - \beta} \exp\left(-\frac{\lambda|x-y|^2}{2(r-s)}\right).
			\end{align}
Indeed, the proof of this identity can be found in \cite[pg. 15]{Fri}.

We first consider the case $(s,y) = (0,0)$. For each $n$, \eqref{decrease} gives
	\begin{align}
	\label{origindamping}
  \frac{w^4\wedge n}{\rho^2 \vee \frac{1}{n}} \phi_n(\rho,w;0,0)
		&\lesssim
	\frac{1}{\rho^{1/2}}\left[\frac{w^4}{\rho^2} \exp\left(-\frac{w^2}{2\rho}\right) \right]
		\\
		\nonumber
		&\lesssim
	\frac{1}{\rho^{1/2}} \exp\left(-\frac{w^2}{8\rho}\right).
	\end{align}
Thus, \eqref{semigroup} gives
	\begin{align}\label{originduhamel}
	&\bigg|\int_0^r \int_\R  \phi_0(r,x;\rho,w) \frac{w^4 \wedge n}{\rho^2 \vee \frac{1}{n}} \phi_n(\rho,w;0,0) \; dw d\rho\bigg|  
		\\ \nonumber
	&\hbox{\hskip 18pt}\lesssim
	\int_0^r \int_\R \frac{\exp\left(-\frac{(x-w)^2}{8(r-\rho)}\right)}{(r-\rho)^{1/2}} \frac{\exp\left(-\frac{w^2}{8\rho}\right)}{\rho^{1/2}} \; dw d\rho
		\\ \nonumber
	&\hbox{\hskip 18pt}\lesssim \sqrt{r}\exp\left(-\frac{x^2}{8r}\right),
	\end{align}
where the implicit constants are independent of $n$. Thanks to \eqref{origindamping}, dominated convergence implies that the first integral in \eqref{originduhamel} converges as $n$ goes to $+\infty$.  Using \eqref{cutoffduhamel} and \eqref{limit}, we have
	\begin{align*}
	\phi(r,x;0,0)
		&=
	\lim_{n \to \infty} \phi_n(r,x;0,0) \\ 
		&=
	\phi_0(r,x;0,0) - \frac{1}{4} \int_0^r \int_\R \phi_0(r,x;\rho,w) \frac{w^4}{\rho^2} \phi(\rho,w;0,0) \; dw \; d\rho,
	\end{align*}
which establishes \eqref{duhamel} in the case $(s,y) = (0,0)$. 

We consider the case $(s,y) \in (0,\infty) \times \R$. For each $n \in \N$ and $\rho > s$, \eqref{decrease} gives
	\begin{align}
	\label{postimedamping}
		\frac{w^4\wedge n}{\rho^2 \vee \frac{1}{n}} \phi_n(\rho,w;s,y)
    	&\leq
    \frac{w^4}{\rho^2} \frac{1}{(\rho - s)^{1/2}} \exp\left(-\frac{(w-y)^2}{2(\rho - s)}\right)
			\\
			\nonumber
    	&\lesssim
    \frac{1+y^4}{(\rho - s)^{1/2}}\left(\frac{1}{\rho^2}+\frac{(w-y)^4}{(\rho-s)^2}\right) \exp\left(-\frac{(w-y)^2}{2(\rho - s)}\right) 
			\\
			\nonumber
    	&\lesssim_{s,y} 
		\frac{1}{(\rho - s)^{1/2}} \exp\left(-\frac{(w-y)^2}{8(\rho - s)}\right).
    \end{align}
Thus, applying \eqref{semigroup} gives
	\begin{equation}\label{positivetimeduhamel}
	\bigg|\int_s^r \int_\R \phi_0(r,x;\rho,w) \frac{w^4 \wedge n}{\rho^2 \vee \frac{1}{n}} \phi_n(\rho,w;s,y) \; dw \; d\rho  \bigg| \lesssim_{y,s} e^{-\frac{(x-y)^2}{8(r-s)}}\sqrt{r-s} .
	\end{equation}
Again, the estimate \eqref{positivetimeduhamel} is uniform in $n$. Using \eqref{cutoffduhamel}, \eqref{decrease}, and \eqref{limit}, dominated convergence gives
	\begin{align*}
	\phi(r,x;s,y)
		&=
	\lim_{n \to \infty} \phi_n(r,x;s,y) \\ 
		&=
	\phi_0(r,x;s,y) - \frac{1}{4} \int_0^r \int_\R \phi_0(r,x;\rho,w) \frac{w^4}{\rho^2} \phi(\rho,w;s,y) \; dw \; d\rho,
	\end{align*}
which establishes \eqref{duhamel} in the case $(s,y) \in (0,\infty) \times \R$.

An immediate corollary of the proof of \eqref{duhamel} is the delta function property \eqref{deltafunc} of fundamental solutions. Indeed, \eqref{duhamel}, \eqref{originduhamel}, and \eqref{positivetimeduhamel} imply
	\[
	\lim_{r \downarrow s} \phi(r,x;s,y) = \lim_{r \downarrow s} \phi_0(r,x;s,y) = \delta_{x-y}.
	\]

Next, we use \eqref{duhamel} to prove that $\phi$ is continuously differentiable twice in $x$ and once in $r$. Clearly, $\phi_0(r,x;s,y) = \frac{1}{\sqrt{2\pi (r-s)}} e^{-\frac{(x-y)^2}{2(r-s)}}$ is infinitely differentiable in every variable. Letting 
	\[
	D(r,x;s,y) := \frac{1}{4} \int_{s}^r \int_\R \phi_0(t,x;\rho,w) \frac{w^4}{\rho^2} \phi(\rho,w;s,y) \; dw \; d\rho
	\]
we claim that $D$ is continuously differentiable twice in $x$ and once in $r$, and
    \begin{align}
	\label{1stder}    
	\partial_x D(r,x;s,y) 
    	&= 
    \frac{1}{4} \int_{s}^r \int_\R \partial_x \phi_0(r,x;\rho,w) \frac{w^4}{\rho^2} \phi(\rho,w;s,y) \; dw \; d\rho  
    	\\
	\label{2ndder}    
	\partial_x^2 D(r,x;s,y) 
    	&= 
    \frac{1}{4} \int_{s}^r \int_\R \partial_x^2 \phi_0(r,x;\rho,w) \frac{w^4}{\rho^2} \phi(\rho,w;s,y) \; dw \; d\rho
    	\\
	\label{timeder}   
    \partial_r D(r,x;s,y) 
    	&= 
    \frac{1}{4}\frac{x^4}{r^2} \phi(r,x;s,y) + \frac{1}{2}\partial_x^2 D(r,x;s,y).
    \end{align}
If we accept this claim for now, then the relation $\phi = \phi_0 - D$ gives
	\[
	\partial_r \phi
		=
	\partial_r \phi_0 - \partial_r D 
		=
	\frac{1}{2}\partial_x^2 \phi_0 - \frac{1}{4}\frac{x^4}{r^2} \phi - \frac{1}{2}\partial_x^2 D
		= 
	\frac{1}{2}\partial_x^2 \phi - \frac{1}{4}\frac{x^4}{r^2} \phi,
	\]
which shows that $\phi$ is indeed the fundamental solution of \eqref{parabpde}, and hence finishes the proof of Theorem 2.1, part 1.

We first establish \eqref{1stder}. The mean-value theorem gives
	\begin{align}
	\label{meanval1stder}
	&\frac{D(r,x+h;s,y) - D(r,x;s,y)}{h} 
		\\
		\nonumber
	&\hbox{\hskip 38pt}= \frac{1}{4} \int_s^r \int_\R \partial_x \phi_0(r,x+\theta_h h;\rho,w)\frac{w^4}{\rho^2}\phi(\rho,w;s,y) \; dw \; d\rho
	\end{align}
for some $\theta_h \in [0,1]$. From \eqref{origindamping} and \eqref{postimedamping}, we have
	\begin{equation}\label{absorbfactor}
	\frac{w^4}{\rho^2} \phi(\rho,w;s,y) \lesssim_{s,y} \frac{1}{\left(\rho - s\right)^{1/2}} \exp\left(-\frac{(w-y)^2}{8(\rho - s)}\right).
	\end{equation}
Also, we have
	\begin{equation}\label{heatker1stder}
	|\partial_x \phi_0(r,x+\theta_h h;\rho,w)|
		\lesssim 
	\frac{1}{r-\rho} \exp\left(-\frac{(x+\theta_h h-w)^2}{8(r-\rho)}\right).
	\end{equation}
Combining \eqref{meanval1stder}, \eqref{absorbfactor}, \eqref{heatker1stder} together, and applying \eqref{semigroup},
	\begin{align}
	\label{1stderestimate}
	&\left\lvert \frac{D(r,x+h;s,y) - D(r,x;s,y)}{h} \right\rvert 
		\\ \nonumber
		&\hbox{\hskip 18pt}\lesssim_{s,y}
	\int_s^r \int_\R \frac{\exp\left(-\frac{(x+\theta_h h-w)^2}{8(r-\rho)}\right)}{r-\rho} \frac{\exp\left(-\frac{(w-y)^2}{8(\rho - s)}\right)}{\left(\rho - s\right)^{1/2}}   \; dw d\rho
		\\ \nonumber
		&\hbox{\hskip 18pt}\lesssim_{s,y} 
	\exp\left(-\frac{(x+\theta_h h - y)^2}{8(r-s)}\right).
	\end{align}
Since the right hand side of \eqref{1stderestimate} converges as $h \to 0$, the generalized dominated convergence theorem implies that $D$ is differentiable once in $x$ and satisfies \eqref{1stder}. The same proof also shows that $D$ is continuously differentiable in $x$. Furthermore, \eqref{heatker1stder} and  \eqref{1stderestimate} imply that, for each $R > 0$, 
		\begin{equation}
		\label{firstderfullestimate}
		\left\lvert \partial_x \phi(r,x;s,y) \right\rvert
			\lesssim_{R,s,y}
		\frac{\exp\left(-\frac{(x-y)^2}{8(r-s)}\right)}{r-s},
		\end{equation}
for $s < r \leq s+R$.

We now establish \eqref{2ndder}. We again apply the mean-value theorem to obtain
	\begin{align}
	\label{2nddermeanval}
	&\frac{\partial_x D(r,x+h;s,y) - \partial_x D(r,x;s,y)}{h}
		\\
		\nonumber	
	&\hbox{\hskip 16pt}= \frac{1}{4} \int_s^r \int_\R \partial_x^2 \phi_0(r,x+\theta_h h;\rho,w)\frac{w^4}{\rho^2}\phi(\rho,w;s,y) \; dw \; d\rho
	\end{align}
for some $\theta_h \in [0,1]$. Unfortunately, the estimate
	\begin{align}
	\label{heatker2ndder}
	&|\partial_x^2 \phi_0(r,x+\theta_h h;\rho,w)|  
		\\
		\nonumber
	&\hbox{\hskip 18pt}\lesssim (r-\rho)^{-\left(\frac{3}{2}-\beta\right)} |x+\theta_h h -w|^{-2\beta} \exp\left(-\frac{(x+ \theta_h h-w)^2}{8(r-\rho)}\right),
  \end{align}
for $\beta\geq 0$, cannot be easily used with \eqref{semigroup}, as the resulting singularity in $\rho$ turns out to not be integrable. Instead, we use integration by parts to move the singularity in $\rho$ to other factors: first recall that 
	\begin{align}
	\label{first}
	\partial_x \phi_0(r,x+\theta_h h;\rho,w) 
		&= 
	- \partial_w \phi_0 (r,x+\theta_h h;\rho,w)
		\\
	\label{second}
	\partial_x^2 \phi_0(r,x+\theta_h h;\rho,w) 
		&= 
	\partial_w^2 \phi_0 (r,x+\theta_h h;\rho,w)
	\end{align}
Using \eqref{second}, integrating by parts once in $w$, and then using \eqref{first} gives
	\begin{align}
	\label{ibpstep}
	&\int_s^r \int_\R \partial_x^2 \phi_0(r,x+\theta_h h;\rho,w)\frac{w^4}{\rho^2}\phi(\rho,w;s,y) \; dw \; d\rho =
		\\ \nonumber
	&\hbox{\hskip 36pt} \int_s^r \int_\R \partial_x \phi_0(r,x+\theta_h h;\rho,w)\frac{4w^3}{\rho^2}\phi(\rho,w;s,y) \; dw \; d\rho
		\\ \nonumber
	&\hbox{\hskip 48pt} + \int_s^r \int_\R \partial_x \phi_0(r,x+\theta_h h;\rho,w)\frac{w^4}{\rho^2} \partial_w\phi(\rho,w;s,y) \; dw \; d\rho
	\end{align}
Using \eqref{gaussbound}, \eqref{firstderfullestimate}, and arguing as in \eqref{postimedamping}, we have, for $s < \rho$,
	\begin{equation}\label{intbypartsbound}
	\left| \frac{4w^3}{\rho^2}\phi(\rho,w;s,y) \right| + \left|\frac{w^4}{\rho^2} \partial_w\phi(\rho,w;s,y) \right| \lesssim_{s,y} \frac{\exp\left(-\frac{(w-y)^2}{16(\rho-s)}  \right)}{\rho-s} .
	\end{equation}
Combining \eqref{heatker1stder}, \eqref{2nddermeanval}, \eqref{ibpstep}, \eqref{intbypartsbound} together, and then applying \eqref{semigroup}, 
	\begin{align}
	\label{2ndderestimate}
	&\left\lvert \frac{\partial_x D(r,x+h;s,y) - \partial_x D(r,x;s,y)}{h}\right\rvert
		\\ \nonumber
	&\hbox{\hskip 18pt} \lesssim_{s,y}
		\int_s^r\int_\R \frac{\exp\left(-\frac{(x+\theta_h h-w)^2}{16(r-\rho)}\right)}{r-\rho} \frac{\exp\left(-\frac{(w-y)^2}{16(\rho-s)}  \right)}{\rho-s} \; dw d\rho
		\\ \nonumber
	&\hbox{\hskip 18pt} \lesssim_{s,y} 
	\frac{1}{(r - s)^{1/2}} \exp\left(-\frac{(x + \theta_h h - y)^2}{16(r - s)}\right).
	\end{align}
Applying a similar generalized dominated convergence as above establishes \eqref{2ndder} and shows that $D$ is continuously differentiable twice in $x$. Furthermore, \eqref{heatker2ndder} and \eqref{2ndderestimate} imply that, for each $R>0$,
	\begin{equation}
	\label{secondderfullestimate}	
		\left\lvert \partial_x^2 \phi(r,x;s,y) \right\rvert
			\lesssim_{R,s,y}
		\frac{\exp\left(-\frac{(x-y)^2}{16(r-s)}\right)}{(r-s)^{3/2}},
	\end{equation}
for $s < r \leq s + R$.

Now, we establish \eqref{timeder}. For $h > 0$, the mean value theorem gives,
	\begin{align*}
	&\frac{D(r+h,x;s,y) - D(r,x;s,y)}{h}
		\\
	&\hbox{\hskip 28pt}= 
	\frac{1}{4}\int_\R \phi_0(r+h,x;r+\theta_h h,w) \frac{w^4 \phi(r+\theta_h h,w;s,y)}{(r+\theta_h h )^2}  \; dw \\
	&\hbox{\hskip 28pt}+ 
	\frac{1}{4}\int_s^r \int_\R \partial_r u_0(r+\theta_h h, x;\rho,w) \frac{w^4}{\rho^2} \phi(\rho,w;s,y) \; dw \; d\rho
	\end{align*}
for some $\theta_h \in [0,1)$. As $h \downarrow 0$, the first term converges to $\frac{1}{4}\frac{x^4}{r^2}\phi(r,x;s,y)$ by the delta function property of $\phi_0$. For second term, observe that $\partial_r \phi_0(r,x;s,y) = \frac{1}{2}\partial_x^2 \phi_0(r,x;s,y)$, and therefore the second term converges to $\frac{1}{2}\partial_x^2 D(r,x;s,y)$ as $h \downarrow 0$. A similar argument can be made for $h < 0$. Also, \eqref{timeder}, \eqref{absorbfactor}, and \eqref{2ndderestimate} imply that, for each $R > 0$,
	\begin{align}
		\label{timederfullestimate}	
		\left\lvert \partial_r \phi(r,x;s,y) \right\rvert
			&\lesssim_{R,s,y}
		\frac{\exp\left(-\frac{(x-y)^2}{16(r-s)}\right)}{(r-s)^{3/2}},
		\end{align}
for $s < r \leq s+R$.

Finally, $\phi > 0$ follows immediately from the maximum principle (c.f., \cite[p.~39]{Fri}). This finishes the final claim.
\end{proof}

\begin{rem}
Observe that, in the proof of the Duhamel formula \eqref{duhamel}, the Gaussian bounds are ineffective when $s =0$ and $y \neq 0$; in particular, the expression $-\frac{1}{4} \frac{x^4}{r^2} \phi(r,x;0,y)$ admits suitable bounds only when $y = 0$.
\end{rem}

\subsection{Proof of Theorem \ref{construct}, part 2.}\label{MTFK}

We revisit $\mu_{L,1}$ and $\nu_{L,1}$. As noted above, $\mu_{L,1}$ is the measure corresponding to the standard Brownian bridge from $r = 0$ to $r = L$ and $d\nu_{L,1}(f) = \frac{1}{Z_L} \exp\left(-\frac{1}{4} \int_0^L |f(r)|^4 r^{-2} \; dr \right) d\mu_{L,1}(f)$.

Recall that $\phi_n(r,x;s,y)$ is the fundamental solution of 
	\[
	-\partial_r \phi + \frac{1}{2} \partial_x^2 \phi - \frac{1}{4} \frac{x^4 \wedge n}{r^2 \vee (1/n)} \phi = 0.
	\]
Let
	\[
	dP_n(f) := \exp\left(-\frac{1}{4} \int_0^L \frac{(f(r))^4 \wedge n}{ r^2 \vee (1/n)} \; dr \right) d\mu_{L,1}(f)
	\]
be a Borel measure on $C^0([0,L] \to \R)$, not necessarily a probability measure. By the multi-time Feynman--Kac formula with respect to Brownian bridges (cf., Theorem \ref{multitimebridge}), $P_n$ obeys the following law: for Borel sets $B_1, \ldots, B_N \subseteq \R$ and for $0 < r_1 < r_2 < \cdots < r_N < L$, we have
	\begin{align}\label{truncatedfeynmankac}
	&P_n (f(r_j) \in B_j, j = 1, \ldots, N) \\
	\nonumber	
	&\hbox{\hskip 8pt} = \int_{B_1} \cdots \int_{B_n}\frac{\phi_n(L,0;r_N,x_N)}{\phi_0(L,0;0,0)}\prod_{j=1}^{N} \phi_n(r_j,x_j;r_{j-1},x_{j-1}) \; dx_N\cdots dx_1,
	\end{align}	
with $(x_0,r_0) := (0,0)$. Using \eqref{decrease} and \eqref{limit} and applying dominated convergence,
	\begin{align*}
	&\lim_{n \to \infty} RHS\eqref{truncatedfeynmankac}\\
	&\hbox{\hskip 8pt} = \int_{B_1}\cdots \int_{B_n}\frac{\phi(L,0;r_N,x_N)}{\phi_0(L,0;0,0)}\prod_{j=1}^{N} \phi(r_j,x_j;r_{j-1},x_{j-1}) \; dx_N\cdots dx_1.
	\end{align*}
Another application of dominated convergence gives
	\[
	\lim_{n \to \infty} LHS\eqref{truncatedfeynmankac} = P( f(r_j) \in B_j, j = 1, \ldots, N)
	\]
where
	\[
	dP(f) := \exp\left(-\frac{1}{4} \int_0^L \frac{(f(r))^4}{r^2} \; dr \right) d\mu_{L,1}(f).
	\]
Let $Z_L := \frac{\phi(L,0;0,0)}{\phi_0(L,0;0,0)}$ be a normalization constant, which is non-zero because of Theorem~\ref{construct}, Part 1 and because $\phi_0(L,0;0,0) = (2\pi L)^{-\frac{1}{2}}$. Then we have 
	\[
	\frac{1}{Z_L} dP(f) = d\nu_{L,1}(f),
	\]
which obeys the desired multi-time law. 

\subsection{Proof of Theorem \ref{construct}, part 3.}\label{asymptote}

To compute $\lim_{L \to \infty} \frac{\phi(L,0;s,y)}{\phi(L,0;0,0)}$, we seek to compute the asymptotics of each factor separately. The main obstruction is that the coefficient $-\frac{1}{4} \frac{x^4}{r^2}$ gives a quartic restoring force that decays with time, and contributes significantly to the asymptotics: it turns out that $\phi$ is neither of polynomial decay in $r$ (cf., heat kernel) nor exponential decay (cf., Mehler kernel).

To handle these issues, we change variables to remove $r$-dependence from the significant terms: the function
	\begin{equation}\label{changeofvar}
	\Phi(r, x; s,y) := \tfrac{s}{3} \phi\left(\tfrac{r^3}{27}, \tfrac{xr}{3};\tfrac{s^3}{27}, \tfrac{ys}{3}\right)
	\end{equation}
is the fundamental solution of
	\begin{equation}\label{changeparabpde}
	-\partial_r \Phi + \frac{1}{2} \partial_x^2 \Phi - \frac{1}{4}x^4 \Phi + \frac{x}{r} \partial_x \Phi = 0
	\end{equation}
at each $(s,y) \in (0,\infty) \times \R$. 

At this point, we set up the separation of variables. Let 
	\[
	H := -\tfrac{1}{2}\partial_x^2 + \tfrac{1}{4}x^4,
	\] 
This is an essentially self--adjoint operator with a discrete spectrum (cf., \cite[Section~5.14]{Titch}). By Sturm--Liouville theory, $H$ has simple eigenvalues, which we list as
	\[
	\lambda_0 < \lambda_1 < \cdots < \lambda _k < \cdots
	\]
Furthermore, each $\psi_k$ (eigenfunction of $H$ corresponding to $\lambda_k$) is Schwartz and $\psi_0$ is sign-definite. Without loss of generality, $\psi_0$ is positive. 

Recall that the eigenvalues of the harmonic oscillator $H_0 = -\frac{1}{2}\partial_x^2 + \frac{1}{2}x^2$ are $k + \frac{1}{2}$, $k \geq 0$. By the min-max principle (cf., \cite[Ch. XIII]{ReS4}) and the fact that $\frac{1}{4} x^4 \geq \frac{1}{2} x^2 - \frac{1}{4}$, we have 
	\begin{equation}\label{eigvalgood}
	\lambda_k \geq k + \tfrac{1}{4}.
	\end{equation}
As usual, the function 
	\[
	e^{-(r-s)H}(x,y) := \sum_{k=0}^\infty e^{-(r-s)\lambda_k} \psi_k(x) \psi_k(y)
	\]
is the fundamental solution of
	\[
	-\partial_r f + \tfrac{1}{2} \partial_x^2 f - \tfrac{1}{4}x^4 f = 0.
	\]

It turns out that the first-order term $\frac{x}{r} \partial_x \Phi$ still gives a large contribution to the asymptotics. To handle this term, first observe that 
	\[
	x \partial_x = -\tfrac{1}{2} + \left( x \partial_x + \tfrac{1}{2} \right)
	\]
is the decomposition of $x \partial_x$ into its self-adjoint and anti-self-adjoint parts. Rewrite \eqref{changeparabpde} as
	\begin{align*}
	0
		&=
	- \partial_r \Phi + \frac{1}{2} \partial_x^2 \Phi - \frac{1}{4}x^4 \Phi + \frac{x}{r} \partial_x \Phi\\ 
		&= 
	- \partial_r \Phi + \left(\frac{1}{2}\partial_x^2 - \frac{1}{4}x^4 - \frac{1}{2r} \right) \Phi + \left(\frac{x}{r} \partial_x + \frac{1}{2r} \right) \Phi
	\end{align*}
Note that $(s/r)^\frac{1}{2} e^{-(r-s)H}(x,y)$ is the fundamental solution of
	\[
	- \partial_r f + \left(\frac{1}{2}\partial_x^2 - \frac{1}{4}x^4 - \frac{1}{2r} \right) f = 0.
	\]
With this in mind, the corresponding Duhamel formula is
	\begin{align}
	\label{changeduhamel}
	\Phi(r,x;s,y) 
		&= 
	\left(\frac{s}{r}\right)^\frac{1}{2} e^{-(r-s)H}(x,y) 
		\\ 
		\nonumber 
		&\hbox{\hskip 3pt}
		+ 
	\int_s^r\int_\R \left(\frac{\rho}{r}\right)^\frac{1}{2} e^{-(r-\rho)H}(x,w) \left[ \frac{w}{\rho}\partial_w + \frac{1}{2\rho}\right] \Phi(\rho,w;s,y) \; dw d\rho.
	\end{align}
This turns out to the correct setting to compute the asymptotics of $\Phi$. The main result of this sub-section is the following proposition. 

\begin{prop}\label{asymptoticsmassless}
For every $(s,y) \in (0,\infty) \times \R$,
	\begin{equation}\label{asympt}
	\lim_{r \to \infty} \left(\frac{r}{s}\right)^\frac{1}{2} e^{(r-s)\lambda_0} \Phi(r,0;s,y) = G(s,y) \psi_0(0),
	\end{equation}
where 
	\[
	G(s,y) = \psi_0(y) + \int_s^\infty \int_\R \left(\frac{\rho}{s}\right)^\frac{1}{2} e^{-(s-\rho)\lambda_0} \psi_0(w) \left[\frac{1}{2\rho}+ \frac{w}{\rho}\partial_w\right]\Phi(\rho,w;s,y) \; dw \; d\rho
	\]
and obeys $0 \leq G(s,y) \lesssim \psi_0(y) + s^{-\frac{1}{2}}$. There exists an $M$ such that for all $s \geq M$, $G(s,y)$ is strictly positive when $|y| < 1$. For fixed $s$, the convergence is uniform in $y$; in particular, $G$ is continuous in $y$. 
\end{prop}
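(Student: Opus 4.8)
The plan is to iterate the Duhamel formula \eqref{changeduhamel} evaluated at $x=0$, extract the leading eigenvalue contribution via the spectral expansion of $e^{-(r-s)H}$, and show that the remainder terms converge as $r\to\infty$ after the renormalization $(r/s)^{1/2}e^{(r-s)\lambda_0}$. The first step is to establish decay estimates on $\Phi$ itself: since $\Phi(r,x;s,y) = \tfrac{s}{3}\phi(\tfrac{r^3}{27},\tfrac{xr}{3};\tfrac{s^3}{27},\tfrac{ys}{3})$ and $\phi\le\phi_0$ by \eqref{gaussbound}, together with the derivative bounds \eqref{firstderfullestimate}, I get pointwise Gaussian-type control on $\Phi$ and $\partial_x\Phi$ on compact $r$-intervals; for large $r$ I instead use \eqref{changeduhamel} and the spectral gap $\lambda_1-\lambda_0\ge 1$ (from \eqref{eigvalgood} plus simplicity) to bootstrap a bound of the form $|\Phi(r,x;s,y)|\lesssim_{s,y}(s/r)^{1/2}e^{-(r-s)\lambda_0}\langle x\rangle^{-N}$, using that $\psi_0$ and all $\psi_k$ are Schwartz and that the perturbation operator $\frac{w}{\rho}\partial_w+\frac{1}{2\rho}$ carries a gain of $\rho^{-1}$ which is integrable against $e^{-(r-\rho)\lambda_0}$ in the relevant regime. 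The key mechanism is that the first-order term, though "large" in the sense flagged before the proposition, is controlled once we separate its anti-self-adjoint part and absorb the $\frac1{2r}$ into the weight $(s/r)^{1/2}$.

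Next I would multiply \eqref{changeduhamel} (with $x=0$) by $(r/s)^{1/2}e^{(r-s)\lambda_0}$ and pass to the limit $r\to\infty$ term by term. For the free term, $(r/s)^{1/2}e^{(r-s)\lambda_0}\cdot(s/r)^{1/2}e^{-(r-s)H}(0,y) = e^{(r-s)\lambda_0}\sum_k e^{-(r-s)\lambda_k}\psi_k(0)\psi_k(y)\to \psi_0(0)\psi_0(y)$, with the error $O(e^{-(r-s)(\lambda_1-\lambda_0)})$ by the gap. For the Duhamel integral, I write $(r/s)^{1/2}e^{(r-s)\lambda_0}(\rho/r)^{1/2}e^{-(r-\rho)H}(0,w) = (\rho/s)^{1/2}e^{(\rho-s)\lambda_0}\big[e^{(r-\rho)\lambda_0}e^{-(r-\rho)H}(0,w)\big]$, and the bracket converges to $\psi_0(0)\psi_0(w)$ uniformly on compacta in $w$ with exponential error, uniformly for $\rho$ in the integration range because $r-\rho$ is large except near the endpoint $\rho=r$, where the decay estimate on $\Phi$ plus the $\rho^{-1}$ gain kills the contribution. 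Dominated convergence — justified by the a priori bounds from the first step, which produce an integrable-in-$\rho$, summable-in-$k$ majorant independent of $r$ — then yields \eqref{asympt} with the stated formula for $G(s,y)$, and simultaneously the bound $0\le G(s,y)\lesssim \psi_0(y)+s^{-1/2}$ (the $s^{-1/2}$ coming from $\int_s^\infty\rho^{-1}(\rho/s)^{1/2}e^{-(s-\rho)\lambda_0}(\cdots)\,d\rho$ after inserting the $\Phi$-bound; note the sign $e^{-(s-\rho)\lambda_0}=e^{(\rho-s)\lambda_0}$ is decaying since $\rho\ge s$... wait, here $\rho\ge s$ so this is growing, so the decay must come entirely from the $\Phi$ factor $\lesssim(s/\rho)^{1/2}e^{(s-\rho)\lambda_0}\langle w\rangle^{-N}$, which cancels it and leaves an integrable $\rho^{-1}$ times a Schwartz tail — this is exactly why the refined decay estimate on $\Phi$ is needed rather than just \eqref{gaussbound}).

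For positivity of $G$ for large $s$: write $G(s,y)=\psi_0(y)+R(s,y)$ where $|R(s,y)|\lesssim s^{-1/2}$ from the bound just derived, while $\psi_0(y)\ge c>0$ for $|y|\le 1$ since $\psi_0$ is continuous, strictly positive (sign-definite and normalized positive), and hence bounded below on the compact set $\{|y|\le1\}$. Choosing $M$ so that the implicit constant times $M^{-1/2}$ is less than $c/2$ gives $G(s,y)\ge c/2>0$ for $s\ge M$, $|y|<1$. Uniformity of the convergence in $y$ on fixed $s$ — and hence continuity of $G$ in $y$ — follows because all the $w$-integrals and $k$-sums above converge uniformly for $y$ in compacta: the $\psi_k$ are Schwartz with at-most-polynomially-growing sup norms (again from \eqref{eigvalgood}, or from standard anharmonic-oscillator asymptotics), and the Gaussian/Schwartz majorants are locally uniform in $y$. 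The main obstacle I anticipate is the endpoint $\rho\to r$ of the Duhamel integral, where $e^{-(r-\rho)H}$ degenerates to a $\delta$-function and the naive spectral bound $\sum_k e^{-(r-\rho)\lambda_k}$ diverges; the resolution is to exploit the $\partial_w$ falling on $\Phi$ (not on the kernel) together with the $\rho^{-1}$ prefactor and the Schwartz decay of $\Phi$ in $w$, so that the short-time singularity is integrable — essentially the same integration-by-parts philosophy used in the proof of part~1 for the second $x$-derivative.
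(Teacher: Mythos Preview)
Your overall architecture is right --- renormalize the Duhamel formula \eqref{changeduhamel}, use the spectral gap to kill the $k\ge 1$ modes, and read off $G$ from the $k=0$ contribution. The positivity argument at the end is also correct and matches the paper. But there is a genuine gap in the middle.

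When you bound the integral defining $G(s,y)$, your own parenthetical catches the issue and then resolves it incorrectly. After inserting your claimed decay $|\Phi(\rho,w;s,y)|\lesssim (s/\rho)^{1/2}e^{-(\rho-s)\lambda_0}\langle w\rangle^{-N}$ into
\[
\int_s^\infty\int_\R \Big(\tfrac{\rho}{s}\Big)^{1/2} e^{(\rho-s)\lambda_0}\,\psi_0(w)\Big[\tfrac{1}{2\rho}+\tfrac{w}{\rho}\partial_w\Big]\Phi(\rho,w;s,y)\,dw\,d\rho,
\]
the exponential and square-root factors cancel exactly, leaving you with $\int_s^\infty \rho^{-1}\,d\rho$, which diverges. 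So your majorant is not integrable and neither the dominated-convergence step nor the bound $G\lesssim\psi_0(y)+s^{-1/2}$ goes through on this estimate. The missing idea is that $\tfrac12+w\partial_w$ is anti-self-adjoint, so after integrating by parts the inner integral becomes $\int_\R [(\tfrac12+w\partial_w)\psi_0](w)\,\Phi(\rho,w;s,y)\,dw$, and $(\tfrac12+w\partial_w)\psi_0$ is \emph{orthogonal} to $\psi_0$. Hence only $P_0^\perp\Phi$ contributes, and one needs the refined estimate $\|P_0^\perp\Phi(\rho,\cdot;s,y)\|_{L^2}\lesssim (s/\rho)^{1/2}e^{-\lambda_1(\rho-s)}$. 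This produces an extra factor $e^{-(\lambda_1-\lambda_0)(\rho-s)}$ in the integrand, which is what actually makes the $\rho$-integral converge and yields the $O(1/s)$ bound.

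A secondary point: your pointwise Schwartz-in-$x$ bound on $\Phi$ for large $r$ is asserted via a bootstrap on \eqref{changeduhamel}, but the natural and much cleaner route is an $L^2$ energy argument --- compute $\partial_r\langle\Phi,\Phi\rangle$, use $\langle -H\Phi,\Phi\rangle\le -\lambda_0\|\Phi\|_2^2$ and $2\langle \tfrac{x}{r}\partial_x\Phi,\Phi\rangle=-\tfrac1r\|\Phi\|_2^2$, and integrate the resulting differential inequality. This gives both the $\lambda_0$ and (on $(\psi_0)^\perp$) the $\lambda_1$ decay rates needed above, without any bootstrap. For the near-endpoint region $\rho\in[r-1,r]$ the paper then moves $\tfrac12+w\partial_w$ onto the heat-type kernel and uses a short-time $L^2$ bound $\|w\partial_w e^{-(r-\rho)H}(0,\cdot)\|_{L^2}\lesssim (r-\rho)^{-3/4}$, rather than trying to control $\partial_w\Phi$ at large times as you propose.
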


Assuming Proposition \ref{asymptoticsmassless} is valid, let us finish the proof of Theorem~\ref{construct}, part~3. Let $M$ be as above, and, for $s \geq 0$, let 
	\[
	N := \max(s+1,\tfrac{M^3}{27}).
	\]
For $L > N$, inverting the change of variables in \eqref{changeofvar} gives
	\[
	\phi(L,0;N,y) 
		= 
	N^{-\frac{1}{3}} \Phi\left(3L^\frac{1}{3},0;3N^\frac{1}{3}, yN^{-\frac{1}{3}}\right).
	\]
Also, recall the identity
	\[
	\phi(L,0;s,y) = \int_\R \phi(L,0;N,w) \phi(N,w;s,y) \; dw.
	\]
Because $0 < \phi(N,w;s,y) \leq \frac{1}{\sqrt{2\pi (N-s)}} e^{-\frac{(w-y)^2}{2(N-s)}}$, the following estimate is independent of $s$ and $y$:
	\begin{equation}\label{unifL1}
	\int_{\R} \phi(N,w;s,y) \; dw = \left\|\phi(N,w;s,y) \right\|_{L^1_w} \leq 1,
	\end{equation}
and so Proposition~\ref{asymptoticsmassless} and H\"older's inequality gives
	\begin{align}
	\nonumber
	\lim_{L \to \infty} L^\frac{1}{6} e^{3\lambda_0 L^{1/3}} \phi(L,0;s,y)
		&=
	\lim_{L \to \infty} \int_\R  L^\frac{1}{6} e^{3\lambda_0 L^{1/3}} \phi(L,0;N,w) \phi(N,w;s,y) \; dw 
		\\	
		\label{IN}
		&\approx_N
	\int_\R G\left(3N^\frac{1}{3}, wN^{-\frac{1}{3}}\right) \phi(N,w;s,y) \; dw.
	\end{align}	
For fixed $s$, the convergence is uniform in $y$, by Proposition~\ref{asymptoticsmassless} and by \eqref{unifL1}. The limit is finite because, by \eqref{unifL1} and the fact that $\psi_0$ is Schwartz,
	\[
	RHS\eqref{IN}
		\lesssim
	\int_\R\left [\psi_0\big(wN^{-\frac{1}{3}}\big) + 1\right] \phi(N,w;s,y) \; dw 
		\lesssim
	1,
	\]
Furthermore, 
	\begin{align*}
	RHS\eqref{IN}
		&\geq
	\int_{|w| < N^{1/3}} G\left(3N^\frac{1}{3}, wN^{-\frac{1}{3}}\right) \phi(N,w;s,y) \; dw,
	\end{align*}
hence, by the positivity aspect of Proposition \ref{asymptoticsmassless} and the fact that $\phi > 0$ (cf., Theorem~\ref{construct}, Part 1), the limit is also strictly positive. In particular, 
	\[
	\lim_{L \to \infty} L^\frac{1}{6} e^{3\lambda_0L^{1/3}} \phi(L,0;0,0) = C >0.
	\] 
It follows that 
	\[
	F(s,y) 
		:= 
	\lim_{L \to \infty} \frac{\phi(L,0;s,y)}{\phi(L,0;0,0)} 
		=
	C^{-1} s^{\frac{1}{6}} e^{\lambda_0 s^{1/3}} \psi_0(0) G\big(3s^{\frac{1}{3}}, ys^{-\frac{1}{3}}\big)
	\] 
is a well-defined, strictly positive function that is bounded in $s$ and $y$. For fixed $s$, the convergence is also uniform in $y$ and so $F$ is continuous in $y$. This finishes the proof of Theorem \ref{construct}, part 3.

We now focus on the proof of Proposition \ref{asymptoticsmassless}. We will compute the asymptotics of the two terms in \eqref{changeduhamel} separately.

\begin{lem}\label{quarticdecay}
For every $x,y \in \R$ and $s > 0$, 
	\[
	\lim_{r \to \infty} \left(\frac{r}{s}\right)^\frac{1}{2}e^{(r-s)\lambda_0}\left[\left(\frac{s}{r}\right)^\frac{1}{2} e^{-(r-s)H}(x,y)  \right]= \psi_0(x) \psi_0(y).
	\]
For fixed $s$, the convergence is uniform in $x$ and $y$.
\end{lem}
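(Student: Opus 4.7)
The plan is to observe first that the prefactors $(r/s)^{1/2}$ and $(s/r)^{1/2}$ cancel identically, so setting $t := r-s$ reduces the claim to
\[
\lim_{t\to\infty} e^{t\lambda_0}\, e^{-tH}(x,y) = \psi_0(x)\,\psi_0(y), \qquad \text{uniformly in } x,y\in\R.
\]
This is a long-time statement about the quartic-oscillator heat semigroup with discrete spectrum and spectral gap $\lambda_1-\lambda_0 > 0$ (by simplicity of the eigenvalues). My strategy is to separate the ground state from the rest of the spectrum using $P_0 := \psi_0\otimes\psi_0$ and $Q := I - P_0$, and exploit that $Q e^{-tH}$ has operator norm $e^{-t\lambda_1}$ on $L^2(\R)$; the only job is to upgrade this $L^2$-level estimate to a pointwise/uniform bound on the kernel.

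The key auxiliary ingredient is a uniform $L^2$-bound on the sections $f_x := e^{-H}(x,\cdot)$. Since the potential $\tfrac{1}{4}x^4 \geq 0$, the Feynman--Kac formula gives the pointwise domination
\[
0 \leq e^{-\tau H}(x,y) \leq \tfrac{1}{\sqrt{2\pi\tau}}\exp\!\left(-\tfrac{(x-y)^2}{2\tau}\right)
\]
by the free heat kernel, whence $\|f_x\|_{L^2}^2 = e^{-2H}(x,x) \leq (4\pi)^{-1/2}$ uniformly in $x$. Given this, for $t\geq 2$ I would use the spectral decomposition together with the fact that $P_0$ commutes with $e^{-H}$ to write
\[
e^{-tH} - e^{-t\lambda_0} P_0 = e^{-H}\bigl(Q\,e^{-(t-2)H}\bigr)e^{-H},
\]
and then evaluate at $(x,y)$ to obtain
\[
\bigl|e^{-tH}(x,y) - e^{-t\lambda_0}\psi_0(x)\psi_0(y)\bigr| = \bigl|\langle f_x, Q\,e^{-(t-2)H} f_y\rangle\bigr| \leq \|f_x\|_{L^2}\|f_y\|_{L^2}\, e^{-(t-2)\lambda_1}.
\]
Multiplying through by $e^{t\lambda_0}$ yields $O\bigl(e^{-t(\lambda_1-\lambda_0)}\bigr)$, uniformly in $x,y$, which is the claim.

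The main obstacle is giving pointwise meaning to the formal expansion $e^{-tH}(x,y) = \sum_k e^{-t\lambda_k}\psi_k(x)\psi_k(y)$, since a priori it converges only in $L^2$. The semigroup-splitting identity above circumvents this: it converts pointwise/uniform bounds on the kernel into an operator-norm bound on $L^2$, where the spectral gap is directly available, with the Feynman--Kac domination supplying the uniform control on the ``outer'' factors $e^{-H}$ that makes the Cauchy--Schwarz estimate go through. The only quantitative facts about the quartic oscillator entering the argument are its spectral gap and the trivial pointwise heat-kernel comparison; no WKB asymptotics for eigenfunctions are required.
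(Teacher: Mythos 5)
Your proposal is correct, but it takes a genuinely different route from the paper. The paper works directly with the eigenfunction expansion: from the eigenvalue equation it extracts $\|\partial_x\psi_k\|_{2}\lesssim\lambda_k^{1/2}$, hence $\|\psi_k\|_\infty\lesssim\lambda_k^{1/2}$, and then uses the Weyl-type lower bound $\lambda_k\geq k+\tfrac14$ to sum $\sum_{k\geq 1}e^{-(r-s)\lambda_k}\psi_k(x)\psi_k(y)$ termwise, obtaining the uniform bound $O(e^{-(r-s)\lambda_1})$ for $r\geq s+1$. You instead avoid any pointwise control of individual eigenfunctions: the factorization $e^{-tH}-e^{-t\lambda_0}P_0=e^{-H}\bigl(Q\,e^{-(t-2)H}\bigr)e^{-H}$ converts the problem into an $L^2$ operator-norm estimate on the complement of the ground state, and the comparison-principle domination of $e^{-\tau H}(x,y)$ by the free heat kernel (which the paper itself records as \eqref{quarticgaussian}) supplies the uniform bound $\|e^{-H}(x,\cdot)\|_{L^2}^2=e^{-2H}(x,x)\leq(4\pi)^{-1/2}$ needed for Cauchy--Schwarz. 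Your argument only uses the spectral gap $\lambda_1-\lambda_0>0$ and is therefore more robust and arguably cleaner; the paper's argument is more explicit and has the side benefit that the bounds $\|\psi_k\|_\infty\lesssim\lambda_k^{1/2}$ and $\lambda_k\geq k+\tfrac14$ established en route are reused repeatedly later (e.g., in the estimates for $I_1,I_2,I_3$ in the proof of Proposition~\ref{asymptoticsmassless} and in the long-term kernel estimates), so your shortcut would not eliminate that work from the paper as a whole. Minor points to make explicit if you write this up: the identity requires $t\geq 2$ (harmless for the limit), and one should note that the kernel of the composition $e^{-H}A e^{-H}$ is indeed $\langle f_x, A f_y\rangle$, which follows since the outer factors have continuous, symmetric, square-integrable sections.
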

\begin{proof}
The identity $(-\frac{1}{2}\partial_x^2 + \frac{1}{4}x^4) \psi_k = \lambda_k \psi_k$ gives $\int_\R \frac{1}{2}|\partial_x \psi_k|^2 + \frac{1}{4}|x^2\psi_k|^2 = \lambda_k$, and so $\|\partial_x \psi_k\|_2 \lesssim \sqrt{\lambda_k}$. Observe that 
	\[
	\|\psi_k^2\|_\infty \leq \|\partial_x(\psi_k^2)\|_1 \leq \|\psi_k\|_2 \|\partial_x \psi_k\|_2 = \|\partial_x \psi_k\|_2,
	\] 
and so
	\begin{equation}\label{supmassless}
	\|\psi_k\|_\infty \lesssim (\lambda_k)^\frac{1}{2}.
	\end{equation}
Thus, for all $x,y\in \R$ and for all $r \geq s+1$, the fact that $\lambda_k \geq k$ (cf., \eqref{eigvalgood}) gives
	\begin{align*}
	\left\lvert \sum_{k=1}^\infty e^{-(r-s)\lambda_k} \phi_k(x) \phi_k(y) \right\rvert 
		&\lesssim 
	e^{-(r-s)\lambda_1} \sum_{k=1}^\infty \lambda_k e^{-(r-s)(\lambda_k - \lambda_1)} 
		\lesssim 
	e^{-(r-s)\lambda_1}.
	\end{align*}
For fixed $s$, it follows that
	\[
	\lim_{r \to \infty} e^{(r-s)\lambda_0} \sum_{k=1}^\infty e^{-(r - s)\lambda_k} \psi_k(x) \psi_k(y) = 0,
	\]
uniformly in $x$ and $y$, which in turn gives the result.
\end{proof}

The asymptotics for the other term in \eqref{changeduhamel} is significantly more delicate and requires several sets of additional, a priori, estimates, which we call short term and long term estimates. 

\subsubsection{Short Term Estimates}
We use Gaussian bounds to obtain rational function bounds in $r,s$. The goal is to obtain bounds so that integrating various expressions in $r$ from $s$ to $s+1$ is finite. For example, \eqref{gaussbound} and changing variables give
	\[
	\Phi(r,x;s,y) \lesssim \frac{s}{\sqrt{r^3 - s^3}}\exp\left(-\frac{3(xr - ys)^2}{2(r^3 - s^3)}\right),
	\]
which gives
	\begin{equation}\label{gaussianmasslesstransform}
	\|\Phi(r,x;s,y)\|_{L_x^2} 
		\lesssim 
	\frac{s}{r^\frac{1}{2}(r^3 - s^3)^\frac{1}{4}}.
	\end{equation}
Another application the comparison principle gives the bound
	\begin{equation}\label{quarticgaussian}
	e^{-(r-s)H}(x,y) \lesssim (r-s)^{-\frac{1}{2}} \exp\left(-\frac{(x-y)^2}{2(r-s)}\right),
	\end{equation}
which implies
	\begin{equation}\label{quarticshortterm}
	\|e^{-(r-s)H}(x,y) \|_{L_x^2} \lesssim \frac{1}{(r-s)^\frac{1}{4}} \hbox{\hskip 16pt and \hskip 16pt}  \|e^{-(r-s)H}(x,y) \|_{L_y^2} \lesssim \frac{1}{(r-s)^\frac{1}{4}}.
	\end{equation}
	
To handle the terms with derivatives, we have the following result.	
	
\begin{lem}\label{quarticderivativeshortterm}
Let $0 < s < r \leq s+1$. Then, for all $x,y \in \R$,
	\[
	\|x \partial_x e^{-(r-s)H}(x,y)\|_{L_x^2} 	\lesssim \frac{1 + |y|^5}{(r-s)^\frac{3}{4}}
		\hbox{\hskip 8pt and \hskip 8pt}
	\|y \partial_y e^{-(r-s)H}(x,y)\|_{L_y^2} 	\lesssim \frac{1 + |x|^5}{(r-s)^\frac{3}{4}}
	\]
\end{lem}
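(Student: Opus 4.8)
The plan is to establish the two (symmetric) estimates by differentiating the semigroup kernel $e^{-(r-s)H}(x,y)$ explicitly in $y$ (resp. $x$), then exploiting the commutation structure of $H = -\tfrac12\partial_x^2 + \tfrac14 x^4$. First I would note that $H$ does not commute with $y\partial_y$, but the commutator $[H, y\partial_y]$ is a first-order operator with polynomial coefficients, so conjugating $y\partial_y$ through the semigroup produces controllable error terms. Concretely, writing $K_t(x,y) := e^{-tH}(x,y)$ with $t = r-s \le 1$, I would use $\partial_y K_t(x,y) = -[e^{-tH}\partial_y](x,y)$ together with the decomposition $x\partial_x = -\tfrac12 + (x\partial_x + \tfrac12)$ into self-adjoint and anti-self-adjoint parts (already exploited in \eqref{changeduhamel}), so that $x\partial_x e^{-tH}(x,y)$ can be rewritten, after integrating by parts against $H$'s spectral expansion, in terms of $e^{-tH}(x,y)$ and $\partial_y e^{-tH}(x,y)$ acting with at most one extra factor of $y$ and $y^5$ respectively.

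The cleanest route is probably to use the eigenfunction expansion directly: $e^{-tH}(x,y) = \sum_k e^{-t\lambda_k}\psi_k(x)\psi_k(y)$ is not absolutely convenient for $L^2_x$ bounds at small $t$ because of the $t^{-1/2}$-type singularity, so instead I would lean on the Gaussian comparison bound \eqref{quarticgaussian}, $e^{-tH}(x,y) \lesssim t^{-1/2}\exp(-(x-y)^2/2t)$, upgraded to its derivatives. Differentiating the parametrix (or invoking the standard parabolic kernel estimates from \cite[pg. 14--20]{Fri}, which give Gaussian bounds on $\partial_x K_t$ and $\partial_x^2 K_t$ up to the usual negative powers of $t$), one gets $|\partial_y e^{-tH}(x,y)| \lesssim t^{-1}\exp(-(x-y)^2/8t)$ for $0 < t \le 1$. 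Then
\[
|x\,\partial_x e^{-tH}(x,y)| \lesssim (|y| + |x-y|)\, t^{-1}\exp\!\left(-\frac{(x-y)^2}{8t}\right) \lesssim (1 + |y|)\, t^{-1}\exp\!\left(-\frac{(x-y)^2}{16t}\right),
\]
absorbing $|x-y|$ into the Gaussian at the cost of worsening its constant. Taking $L^2_x$ norms and using $\|t^{-1}\exp(-(\cdot)^2/16t)\|_{L^2} \lesssim t^{-3/4}$ yields $\|x\partial_x e^{-tH}(x,y)\|_{L^2_x} \lesssim (1+|y|)\, t^{-3/4}$, which is already stronger in $y$ than claimed; the weaker bound with $1 + |y|^5$ then follows trivially, and is stated in that form presumably because it is what is convenient downstream (and matches the $w^4$-type weights appearing, e.g., in \eqref{postimedamping}). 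The second inequality is obtained by the identical argument with the roles of $x$ and $y$ swapped, using $|\partial_x e^{-tH}(x,y)| \lesssim t^{-1}\exp(-(x-y)^2/8t)$ and $|x\partial_x| \le |y\partial_x| + |(x-y)\partial_x|$ — wait, more precisely $|y\,\partial_y e^{-tH}(x,y)| \lesssim (|x| + |x-y|)t^{-1}\exp(-(x-y)^2/8t) \lesssim (1+|x|)t^{-3/4}$ in $L^2_y$.

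The main obstacle is justifying the derivative Gaussian bounds on $e^{-tH}(x,y)$ rigorously: $H$ has the unbounded, non-uniformly-H\"older coefficient $\tfrac14 x^4$, so one cannot cite off-the-shelf global parabolic estimates verbatim. I expect the honest way around this is the same comparison/parametrix philosophy already used for $\phi$ in Section~\ref{existfundsoln}: bound $e^{-tH}(x,y)$ and its $x$-derivatives by the free heat kernel's, arguing that the quartic potential only improves decay, exactly as in the passage from \eqref{cutoffduhamel} to \eqref{secondderfullestimate}. Once the pointwise Gaussian derivative bounds are in hand, the rest is the elementary Gaussian algebra sketched above, and one should also record — for use in the long-term estimates to follow — that the same computation with the spectral expansion gives $\|x\partial_x e^{-tH}(x,\cdot)\|_{L^2}$ decaying like $e^{-t\lambda_1}$ as $t\to\infty$, though that is not needed for this lemma.
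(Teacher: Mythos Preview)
Your overall strategy---write the Duhamel formula for $e^{-(r-s)H}$ against the free heat kernel, differentiate in $x$, and then take $L^2_x$---is exactly what the paper does. The gap is in the derivative Gaussian bound you assert. You claim
\[
|\partial_x e^{-tH}(x,y)| \lesssim t^{-1}\exp\!\Bigl(-\tfrac{(x-y)^2}{8t}\Bigr)
\]
\emph{uniformly in $y$}, and then say the honest justification is ``the same comparison/parametrix philosophy \ldots\ exactly as in the passage from \eqref{cutoffduhamel} to \eqref{secondderfullestimate}.'' But that passage does \emph{not} give $y$-uniform constants: look at \eqref{postimedamping} and \eqref{firstderfullestimate}, where the implicit constants are explicitly $\lesssim_{s,y}$, coming from the $w^4$ in the potential via $w^4 \lesssim (1+y^4)\bigl(1+(w-y)^4\bigr)$. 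Carrying that computation through for $e^{-tH}$---which is precisely what the paper does in the proof of this lemma---yields
\[
|\partial_x e^{-tH}(x,y)| \lesssim \frac{1+y^4}{t}\,e^{-(x-y)^2/4t},
\]
not your $y$-independent bound. Multiplying by $x$, writing $x^2 \lesssim (1+y^2)(1+(x-y)^2)$, and integrating then produces the $(1+y^4)^2(1+y^2) \sim 1+y^{10}$ under the square root, i.e.\ the $1+|y|^5$ in the statement.

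So the $1+|y|^5$ is not, as you guessed, merely ``what is convenient downstream''; it is exactly what your own proposed method delivers once you track the $y$-dependence honestly. Your stronger claim $\|x\partial_x e^{-tH}(x,y)\|_{L^2_x} \lesssim (1+|y|)\,t^{-3/4}$ may well be true (Feynman--Kac suggests the kernel is in fact exponentially small in $y^4 t$), but the parametrix/Duhamel route does not see this, and you have not supplied an alternative argument. If you simply carry out the Duhamel computation carefully, you recover the paper's proof verbatim.
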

\begin{proof}
Note that $e^{-(r-s)H}(x,y) = \sum_{k=0}^\infty e^{-(r-s)\lambda_k} \psi_k(x) \psi_k(y)$ is symmetric in $x$ and $y$, so it suffices to establish the result for $\|x \partial_x e^{-(r-s)H}(x,y) \|_{L_x^2}$. 

Using the Duhamel formula
	\[
	e^{-(r-s)H}(x,y) = \phi_0(r,x;s,y)-\frac{1}{4}\int_s^r \int_\R \phi_0(r,x;s,w) w^4 e^{-(\rho-s)H}(w,y) \; dw \; d\rho,
	\]
then, a similar computation to \eqref{1stder} gives
	\begin{align*}
	\partial_x e^{-(r-s)H}(x,y) 
		&= 
	\partial_x \phi_0(r,x; s,y)
		\\ 
		&\;\;\;\;\;\;\;\;\;\;\;\;
		-\frac{1}{4}\int_s^r \int_\R \partial_x \phi_0(r,x; s,w) w^4 e^{-(\rho-s) H}(w,y) \; dw \; d\rho\\
	&=: (A) + (B).
	\end{align*}
It is not hard to see that
	\[
	|(A)| 
		\lesssim 
	\frac{x-y}{(r-s)^\frac{3}{2}} e^{-\frac{(x-y)^2}{2(r-s)}} 
		\lesssim 
	\frac{1}{r-s}e^{-\frac{(x-y)^2}{4(r-s)}}.
	\]
By Lemma~\ref{semigroup}, \eqref{quarticgaussian}, and the hypothesis $s \leq \rho \leq r \leq s+1$ (in particular, $1 \leq \frac{1}{\sqrt{\rho - s}}$), we have 
	\begin{align*}	
	|(B)|
		&\lesssim
	\int_s^r \int_\R \frac{x - w}{(r-s)^\frac{3}{2}} e^{-\frac{(x - w)^2}{2(r-\rho)} }  \frac{w^4}{\sqrt{\rho-s}} e^{-\frac{(w - y)^2}{2(\rho - s)}} \; dw \; d\rho
		\\
		&\lesssim
	\int_s^r \int_\R \frac{x- y - w}{(r-s)^\frac{3}{2}} e^{-\frac{(x-y - w)^2}{2(r-\rho)}} \frac{(1+y^4)(1+w^4)}{\sqrt{\rho - s}} e^{-\frac{w^2}{2(\rho - s)}} \; dw \; d\rho
		\\	
		&\lesssim
		(1+y^4)\int_s^r \int_\R \frac{x- y - w}{(r-s)^\frac{3}{2}} e^{-\frac{(x-y - w)^2}{2(r-\rho)}} \frac{1}{\sqrt{\rho - s}} e^{-\frac{w^2}{2(\rho - s)}} \; dw \; d\rho \\
		&\hbox{\hskip 18pt} + (1+y^4)\int_s^r \int_\R \frac{x- y - w}{(r-s)^\frac{3}{2}} e^{-\frac{(x-y - w)^2}{2(r-\rho)}} \frac{1}{\sqrt{\rho - s}} \left[\frac{w^4}{(\rho - s)^2} e^{-\frac{w^2}{2(\rho - s)}}\right] \; dw \; d\rho \\
		&\lesssim
	(1+y^4) \int_s^r \int_\R \frac{1}{r-s} e^{-\frac{(x-y - w)^2}{4(r-\rho)}} \frac{1}{\sqrt{\rho - s}} e^{-\frac{w^2}{4(\rho - s)}} \; dw \; d\rho
		\\
		&\lesssim
	(1+y^4) e^{-\frac{(x-y)^2}{4(r-s)}}.
	\end{align*}
Combining both estimates gives 
	\[
	|\partial_x e^{-(r-s)H}(x,y)| \lesssim \frac{1+y^4}{r-s} e^{-\frac{(x-y)^2}{4(r-s)}},
	\]
and so
	\begin{align*}
	\left\|x\partial_x e^{-(r-s)H}(x,y)\right\|_{L_x^2}^2
		&\lesssim
	\frac{1+y^8}{(r-s)^2}\int_\R x^2 \exp\left(-\frac{(x-y)^2}{2(r-s)}\right) \; dx 
		\\
		&\lesssim
	\frac{1+y^{10}}{(r-s)^2}\int_\R (1+x^2) \exp\left(-\frac{x^2}{2(r-s)}\right) \; dx \\ 
		&\lesssim
	\frac{1+y^{10}}{(r-s)^\frac{3}{2}}. \qedhere
	\end{align*}
\end{proof}

\subsubsection{Long Term Estimates} Here, we seek exponential decay estimates in $r-s$ whenever $r \geq s+1$.

\begin{lem}\label{decaymassless}
Let $(s,y) \in [1,\infty) \times \R$ be fixed. For all $r \geq s+1$, 
	\[
	\|\Phi(r,x; s,y) \|_{L_x^2} 
		\lesssim
	\left(\frac{s}{r}\right)^\frac{1}{2} e^{-\lambda_0 (r-s)}.
	\]
Furthermore, let $P_0^\perp$ denote orthogonal projection onto $(\Span(\psi_0))^\perp$. For all $r \geq s + 1$,
	\[
	\| [P_0^\perp \Phi](r,x;s,y)\|_{L_x^2} \lesssim \left(\frac{s}{r}\right)^\frac{1}{2} e^{-\lambda_1(r-s)}.
	\]
\end{lem}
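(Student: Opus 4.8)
The plan is to run an energy (a priori $L^2$) estimate for the parabolic evolution in the $r$-variable, working with the transformed fundamental solution. Recall from \eqref{changeparabpde} that $\Phi$ solves $-\partial_r\Phi + \tfrac12\partial_x^2\Phi - \tfrac14 x^4\Phi + \tfrac{x}{r}\partial_x\Phi = 0$. Setting $u(r) := r^{1/2}\,\Phi(r,\,\cdot\,;s,y)$ — which lies in $L^2_x$ for every $r>s$ by the Gaussian bound underlying \eqref{gaussianmasslesstransform} — and writing $x\partial_x = -\tfrac12 + D$ with $D := x\partial_x + \tfrac12$, one checks directly that $u$ solves $\partial_r u = -Hu + \tfrac1r Du$, where $H = -\tfrac12\partial_x^2+\tfrac14 x^4$ and $D$ is anti-self-adjoint on $L^2(\R)$. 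Since $\langle Du,u\rangle$ is purely imaginary, differentiating gives $\tfrac{d}{dr}\|u(r)\|_{L^2}^2 = -2\langle Hu(r),u(r)\rangle \le -2\lambda_0\|u(r)\|_{L^2}^2$, so $\|u(r)\|_{L^2} \le \|u(s+1)\|_{L^2}\,e^{-\lambda_0(r-s-1)}$ for $r\ge s+1$; here $\lambda_0\ge\tfrac14>0$ by \eqref{eigvalgood}, so this genuinely decays. Finally \eqref{gaussianmasslesstransform} bounds the base value by $\|u(s+1)\|_{L^2} = (s+1)^{1/2}\|\Phi(s+1,\,\cdot\,;s,y)\|_{L^2} \lesssim s\,(s+1)^{-1/2}\bigl((s+1)^3-s^3\bigr)^{-1/4}\lesssim s^{1/2}$ for $s\ge1$, and dividing by $r^{1/2}$ yields the first claim.

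For the second estimate I would project the same evolution onto $\Span(\psi_0)^{\perp}$. Put $v(r) := P_0^{\perp}u(r)$ and $c_0(r):=\langle\psi_0,u(r)\rangle$, so $u = c_0\psi_0 + v$. Because $P_0^{\perp}$ commutes with $H$ and $\langle\psi_0,D\psi_0\rangle=0$ (so $P_0^{\perp}D\psi_0 = D\psi_0$), the function $v$ solves $\partial_r v = -Hv + \tfrac1r P_0^{\perp}Dv + \tfrac{c_0(r)}{r}D\psi_0$, an evolution confined to $\Span(\psi_0)^{\perp}$, where $H\ge\lambda_1$. Exactly as above, $\langle P_0^{\perp}Dv,v\rangle = \langle Dv,v\rangle$ is purely imaginary and drops out of the energy identity, leaving $\tfrac{d}{dr}\|v\|_{L^2}^2 \le -2\lambda_1\|v\|_{L^2}^2 + \tfrac{2}{r}|c_0(r)|\,\|D\psi_0\|_{L^2}\,\|v\|_{L^2}$. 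The first estimate supplies $|c_0(r)|\le\|u(r)\|_{L^2}\lesssim s^{1/2}e^{-\lambda_0(r-s)}$ and $\|v(s+1)\|_{L^2}\le\|u(s+1)\|_{L^2}\lesssim s^{1/2}$, so a Gr\"onwall argument for $\|v(r)\|_{L^2}$, integrating the forcing against $e^{-\lambda_1(r-\rho)}$ and using the $r^{-1}$ weight in the coupling, is intended to produce the bound $(s/r)^{1/2}e^{-\lambda_1(r-s)}$.

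Two structural points I would exploit to make the rates work. First, $\psi_0$ is even and $D$ preserves parity, so $D\psi_0$ is even, hence orthogonal to $\psi_1$ (indeed to all odd modes); decomposing $v = v_{\mathrm{even}} + v_{\mathrm{odd}}$, the odd part solves the \emph{unforced} equation $\partial_r v_{\mathrm{odd}} = -Hv_{\mathrm{odd}} + \tfrac1r Dv_{\mathrm{odd}}$ and so decays like $e^{-\lambda_1(r-s)}$ by the plain energy estimate, while $v_{\mathrm{even}}$ lives in $\Span(\psi_2,\psi_4,\dots)$ where $H\ge\lambda_2$. Second, as an alternative to the energy method one can start from the Duhamel formula \eqref{changeduhamel}, integrate by parts to move the $\partial_w$ in $\tfrac{w}{\rho}\partial_w\Phi$ onto the semigroup kernel, and then use the operator bounds $\|e^{-(r-\rho)H}\|_{L^2\to L^2}\le e^{-\lambda_0(r-\rho)}$ and $\|e^{-(r-\rho)H}P_0^{\perp}\|_{L^2\to L^2}\le e^{-\lambda_1(r-\rho)}$ for the long-$r$ part together with the kernel estimates of Lemma~\ref{quarticderivativeshortterm} (after an extra half-step of the semigroup to absorb their polynomial $w$-growth) for the short-$r$ part, again closing the loop by a Gr\"onwall inequality.

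The main obstacle is the interplay of the decay rates. The coupling $\tfrac1r Du$ feeds the ground-state amplitude $c_0(r)$ — which by the first part decays only at the slow rate $\lambda_0$ — into the orthogonal modes, while the second estimate demands the fast rate $\lambda_1$ for $P_0^{\perp}\Phi$. Making the Gr\"onwall bookkeeping actually return $e^{-\lambda_1(r-s)}$, rather than a weaker mixture, is the delicate step, and it is there that the $r^{-1}$ weight in the coupling, the orthogonality $\langle\psi_0,D\psi_0\rangle=0$, and the parity of $D\psi_0$ against the low eigenmodes must all be combined. The remaining ingredients — the base-case bound from \eqref{gaussianmasslesstransform}, the anti-self-adjointness cancellations, and the Gr\"onwall integration itself — are routine.
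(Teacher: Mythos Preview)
Your energy argument for the first bound is exactly the paper's: the paper works directly with $\|\Phi\|_{L^2}^2$ and derives $\partial_r\|\Phi\|^2 \le -(2\lambda_0+\tfrac1r)\|\Phi\|^2$ from the identity $2\langle x\partial_x\Phi,\Phi\rangle=-\|\Phi\|^2$, then integrates from $r=s+1$ with the initial bound \eqref{gaussianmasslesstransform}; your substitution $u=r^{1/2}\Phi$ just absorbs the $\tfrac1r$ into the weight. For the second bound the paper gives only a one-line justification (``write all the sums beginning at $k=1$''), tacitly asserting the same differential inequality for $\|P_0^\perp\Phi\|^2$ with $\lambda_1$ in place of $\lambda_0$. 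You are right to be uneasy: the drift $\tfrac{x}{r}\partial_x$ does not commute with $P_0^\perp$, and a cross term $\tfrac{2c_0}{r}\langle D\psi_0,P_0^\perp\Phi\rangle$ survives in $\partial_r\|P_0^\perp\Phi\|^2$, so the one-liner does not literally go through.

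However, your proposed fix does not recover the stated $\lambda_1$ rate either. Running Gr\"onwall with the forcing $\tfrac{c_0}{r}D\psi_0$ and $|c_0(r)|\lesssim s^{1/2}e^{-\lambda_0(r-s)}$ yields only
\[
\|P_0^\perp\Phi(r)\|_{L^2} \;\lesssim\; \Big(\tfrac{s}{r}\Big)^{1/2}e^{-\lambda_1(r-s)} \;+\; \Big(\tfrac{s}{r}\Big)^{1/2}\tfrac{1}{r}\,e^{-\lambda_0(r-s)},
\]
and for large $r-s$ the second term dominates the first. Parity does not help: the even forcing $c_0D\psi_0$ lands in the $\lambda_2$-sector, but it still decays at rate $\lambda_0$, so the same extra term reappears; the Duhamel alternative has the identical defect. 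An adiabatic ansatz $v\approx\tfrac{c_0}{r}(H-\lambda_0)^{-1}D\psi_0$ on $\Span(\psi_0)^\perp$ even suggests this extra term is genuinely present. The good news is that this weaker bound is all that is actually needed: in the sole application, \eqref{farasymptotic}, the extra factor $\tfrac1r$ already gives $\int_{s+1}^\infty(s\rho)^{-1/2}e^{\lambda_0(\rho-s)}\|P_0^\perp\Phi(\rho)\|\,d\rho\lesssim\int_{s+1}^\infty\rho^{-2}\,d\rho\lesssim 1/s$.
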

\begin{proof}
Observe that 
	\begin{align*}
	\partial_x \Phi(r,x;s,y) &= \tfrac{rs}{3}\phi_x\left(\tfrac{r^3}{27},\tfrac{xr}{3};\tfrac{s^3}{27},\tfrac{ys}{3}\right)\\
	\partial_x^2 \Phi(r,x;s,y) &= \tfrac{r^2s}{9} \phi_{xx}\left(\tfrac{r^3}{27},\tfrac{xr}{3};\tfrac{s^3}{27},\tfrac{ys}{3}\right)\\
	\partial_r \Phi(r,x;s,y) &= \tfrac{r^2s}{9} \phi_r\left(\tfrac{r^3}{27},\tfrac{xr}{3};\tfrac{s^3}{27},\tfrac{ys}{3}\right) + \tfrac{xs}{3}\phi_x\left(\tfrac{r^3}{27},\tfrac{xr}{3};\tfrac{s^3}{27},\tfrac{ys}{3}\right)
	\end{align*}
By \eqref{gaussbound}, \eqref{firstderfullestimate}, \eqref{secondderfullestimate}, and \eqref{timederfullestimate}, each of the functions
	\[
	\partial_r \Phi, \;\;
	\partial_x^2 \Phi, \;\; 
	x\partial_x \Phi, \;\; \hbox{ and } \;\;
	x^4\Phi, 
	\]
obey Gaussian bounds in $x$ (with coefficients depending on $r,s,y$). In particular, each of the functions are in $L_x^2$.

Writing $\langle \cdot, \cdot \rangle$ for the $L^2_{x}$ inner product, then
    \[
    \partial_r \langle \Phi, \Phi \rangle
    	=
    2 \langle \partial_r\Phi, \Phi\rangle
    	=
    2 \langle \tfrac{1}{2}\partial_x^2 \Phi - \tfrac{1}{4}x^4 \Phi,\Phi \rangle + 2\left\langle \frac{x}{r}\partial_x\Phi, \Phi \right\rangle 
    \]
Observe that, for fixed $r$, 
	\begin{align}
	\label{subsol1}
	\left\langle(\tfrac{1}{2}\partial_x^2 - \tfrac{1}{4}x^4) \Phi , \Phi \right\rangle
		&=
	-\Big\langle \sum_{k=0}^\infty \langle \Phi, \psi_k\rangle \lambda_k \psi_k , \sum_{k=0}^\infty \langle\Phi,\psi_k\rangle \psi_k \Big\rangle
		\\ \nonumber
		&=
	-\sum_{k=0}^\infty \lambda_k \langle \Phi, \psi_k\rangle^2  
		\\ \nonumber
		&\leq
	-\sum_{k=0}^\infty \lambda_0 \langle \Phi, \psi_k\rangle^2
		=
	-\lambda_0 \langle\Phi,\Phi \rangle
	\end{align}
and that
	\begin{equation}\label{subsol2}
	2\left\langle \frac{x}{r}\partial_x\Phi, \Phi \right\rangle 
		=
	\left\langle \frac{x}{r}\partial_x\Phi, \Phi \right\rangle - \frac{1}{r}\left\langle \Phi, (1+ x\partial_x) \Phi \right\rangle
		=
	-\frac{1}{r} \langle \Phi , \Phi \rangle.
	\end{equation}
Combining \eqref{subsol1} and \eqref{subsol2}, we see that, as a function of $r$, $\langle \Phi, \Phi \rangle$ is a subsolution of the ODE $\partial_r f = -\left(2\lambda_0 + \frac{1}{r}\right)f$. At $r = s+1$, we have the initial condition $\|\Phi(s+1,x;s,y)\|^2_{L^2_x}$, which is uniformly bounded in $s$ and $y$ by \eqref{gaussianmasslesstransform}. Therefore,
	\[
	\langle \Phi, \Phi \rangle 
		\leq 
	\|\Phi(s+1,x;s,y)\|^2_{L^2_x} \left( \frac{s+1}{r} e^{-2\lambda_0 (r-(s+1))} \right)
		\lesssim 
	\frac{s}{r} e^{-2\lambda_0 (r-s)},
	\]
where we used the bound $s \geq 1$ to conclude $s + 1 \lesssim s$. The result for $P_0^\perp \Phi$ follows from the fact that we may write all the sums beginning at $k=1$, and then use $\lambda_1$ in place of $\lambda_0$.
\end{proof}

\begin{lem}
Let $s,y$ be fixed. For all $r \geq s+1$, 
	\[
	\|e^{-(r-s)H}(x,y) \|_{L_x^2} 
		\lesssim 
	e^{-\lambda_0 (r-s)}
		\hbox{\hskip 18pt and \hskip 18pt}
	\|x \partial_x e^{-(r-s)H}(x,y) \|_{L_x^2} 
		\lesssim 
	e^{-\lambda_0 (r-s)}
	\]
\end{lem}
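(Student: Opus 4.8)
Both bounds will be read off from the spectral expansion
\[
e^{-(r-s)H}(x,y)=\sum_{k\ge 0}e^{-(r-s)\lambda_k}\,\psi_k(x)\,\psi_k(y),
\]
together with the spectral-gap estimate $\lambda_k\ge k+\tfrac14$ from \eqref{eigvalgood} and the sup-norm bound $\|\psi_k\|_\infty\lesssim\lambda_k^{1/2}$ from \eqref{supmassless}. For the first inequality, orthonormality of $\{\psi_k\}$ in $L_x^2$ gives
\[
\|e^{-(r-s)H}(\cdot,y)\|_{L_x^2}^2=\sum_{k\ge 0}e^{-2(r-s)\lambda_k}|\psi_k(y)|^2=e^{-2(r-s)\lambda_0}\sum_{k\ge 0}e^{-2(r-s)(\lambda_k-\lambda_0)}|\psi_k(y)|^2.
\]
Since $r-s\ge 1$ and $\lambda_k\ge\lambda_0$ we may replace $r-s$ by $1$ inside the sum, and then $|\psi_k(y)|^2\lesssim\lambda_k\lesssim e^{\lambda_k-\lambda_0}$ (using $\lambda_k\ge k$) makes the residual series summable. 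Hence $\|e^{-(r-s)H}(\cdot,y)\|_{L_x^2}\lesssim e^{-(r-s)\lambda_0}$, with a constant that is in fact independent of $y$.

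For the derivative bound the plan is to peel off a short-time factor of the semigroup. Write $e^{-(r-s)H}=e^{-\frac12 H}\,e^{-(r-s-\frac12)H}$, which is legitimate at the kernel level since $r-s-\tfrac12\ge\tfrac12>0$ for $r\ge s+1$. Differentiating under the integral (justified by the pointwise Gaussian bound for $\partial_x e^{-\frac12 H}(x,z)$ obtained in the proof of Lemma~\ref{quarticderivativeshortterm} and by the Gaussian decay of $e^{-(r-s-\frac12)H}(z,y)$ in $z$ from \eqref{quarticgaussian}),
\[
x\,\partial_x e^{-(r-s)H}(x,y)=\int_\R\big[x\,\partial_x e^{-\frac12 H}(x,z)\big]\,e^{-(r-s-\frac12)H}(z,y)\,dz .
\]
Minkowski's integral inequality together with the bound $\|x\,\partial_x e^{-\frac12 H}(\cdot,z)\|_{L_x^2}\lesssim 1+|z|^5$, which is Lemma~\ref{quarticderivativeshortterm} with elapsed time $\tfrac12$, then yields
\[
\|x\,\partial_x e^{-(r-s)H}(\cdot,y)\|_{L_x^2}\lesssim\int_\R(1+|z|^5)\,|e^{-(r-s-\frac12)H}(z,y)|\,dz .
\]
Thus the second estimate reduces to the weighted heat-kernel bound $\int_\R(1+|z|^5)\,|e^{-\tau H}(z,y)|\,dz\lesssim_y e^{-\lambda_0\tau}$, valid for all $\tau\ge\tfrac12$.

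This weighted $L^1$ bound is the main obstacle. For $\tau\in[\tfrac12,1)$ it is merely a boundedness statement following from the Gaussian upper bound \eqref{quarticgaussian}. For $\tau\ge1$ one wants genuine exponential decay, and I would again expand in eigenfunctions: bound $\int(1+|z|^5)|e^{-\tau H}(z,y)|\,dz$ by $\sum_k e^{-\tau\lambda_k}|\psi_k(y)|\,\tilde m_k$ with $\tilde m_k:=\int_\R(1+|z|^5)|\psi_k(z)|\,dz$, so that it suffices to show $\tilde m_k\lesssim\lambda_k^{N}$ for a fixed $N$; then factoring out $e^{-\tau\lambda_0}$ and using $\tau\ge1$, $\lambda_k\ge k$, and $|\psi_k(y)|\lesssim\lambda_k^{1/2}$ closes the argument exactly as for the first estimate. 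By Cauchy--Schwarz $\tilde m_k\lesssim\|(1+z^2)^3\psi_k\|_{L_z^2}$, and this is a fixed power of $\lambda_k$ by iterating the eigenvalue identity $H\psi_k=\lambda_k\psi_k$: since $H\ge\tfrac14 x^4$ as quadratic forms, $\|x^2\psi_k\|_{L^2}^2\le 4\langle H\psi_k,\psi_k\rangle=4\lambda_k$, and applying the same reasoning to $H^m\psi_k=\lambda_k^m\psi_k$ controls $\|x^{2m}\psi_k\|_{L^2}$ by a power of $\lambda_k$. (Alternatively, for $\tau\ge1$ one may invoke the intrinsic ultracontractivity of the quartic oscillator, $e^{-\tau H}(z,y)\asymp e^{-\tau\lambda_0}\psi_0(z)\psi_0(y)$, after which the weighted integral is $\lesssim e^{-\tau\lambda_0}\psi_0(y)\int_\R(1+|z|^5)\psi_0(z)\,dz\lesssim_y e^{-\tau\lambda_0}$ because $\psi_0$ is Schwartz.) Assembling the pieces gives $\|x\,\partial_x e^{-(r-s)H}(\cdot,y)\|_{L_x^2}\lesssim_y e^{-\lambda_0(r-s-\frac12)}\lesssim_y e^{-\lambda_0(r-s)}$, and the only genuinely nontrivial input is the moment/weighted-$L^1$ control on $e^{-\tau H}$.
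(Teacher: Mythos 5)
Your first bound is in essence the paper's argument: expand in eigenfunctions, pull out $e^{-(r-s)\lambda_0}$, and use the gap $\lambda_k\gtrsim k$ together with the uniform bound $|\psi_k(y)|\lesssim\lambda_k^{1/2}$ to sum the tail. (You use Parseval while the paper uses the triangle inequality; either closes the sum.)

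For the derivative bound you take a genuinely different route. The paper never leaves the spectral expansion: starting from the eigenvalue equation it derives, by a single integration by parts, the estimate $\|x\,\partial_x\psi_k\|_{L^2}\lesssim\lambda_k$, using only the already-established bounds $\|\partial_x\psi_k\|_{L^2}\lesssim\lambda_k^{1/2}$ and $\|x\psi_k\|_{L^2}\lesssim\lambda_k^{1/2}$. Once this is in hand, the sum $\sum_k e^{-(r-s)\lambda_k}|\psi_k(y)|\,\|x\partial_x\psi_k\|_{L^2}$ is handled exactly as in the first estimate, and the resulting constant is even independent of $y$. Your approach instead splits off a short-time factor via Chapman--Kolmogorov, invokes Lemma~\ref{quarticderivativeshortterm} at elapsed time $\tfrac12$, and reduces to a weighted $L^1$ decay estimate $\int_\R(1+|z|^5)\,e^{-\tau H}(z,y)\,dz\lesssim_y e^{-\lambda_0\tau}$. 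This is a legitimate and attractive reduction (and the Minkowski step is fine), but the remaining weighted $L^1$ bound is where you need more care. Your moment argument ``iterate $H^m\psi_k=\lambda_k^m\psi_k$'' is a bit too quick: $\langle H^m\psi_k,\psi_k\rangle=\lambda_k^m$ does not immediately dominate $\|x^{2m}\psi_k\|_{L^2}^2$ because $H^m$ contains cross terms that are not manifestly sign-definite; these must be absorbed (they can be, with an integration by parts at each stage, but this should be written out). Your alternative via intrinsic ultracontractivity of the quartic oscillator is correct as a citation, but it imports a nontrivial black box into what the paper handles with one elementary computation. Net: both of your paths lead to a proof, but the paper's direct $\|x\partial_x\psi_k\|_{L^2}\lesssim\lambda_k$ estimate is shorter, self-contained, and yields a $y$-uniform constant, so I would recommend switching to it.
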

\begin{proof}
Indeed, for $r \geq s+1$, \eqref{supmassless} gives
	\begin{align*}
	\| e^{-(r-s)H}(x,y) \|_{L_x^2} 
		&= 
	\sum_{k=0}^\infty e^{-(r-s)\lambda_k} |\psi_k(y)| \cdot \|\psi_k(x)\|_{L^2_x} \\
		&\lesssim
	e^{-\lambda_0(r-s)} \sum_{k=0}^\infty e^{-(r-s)(\lambda_k - \lambda_0)} (\lambda_k)^\frac{1}{2} \\
		&\lesssim
	e^{-\lambda_0(r-s)}.
	\end{align*}

The identity $-\tfrac{1}{2}\partial_x^2 \psi_k + \tfrac{1}{2}x^4 \psi_k = \lambda_k \psi_k$ implies the inequality 
	\[
	-\tfrac{1}{2}x^2\psi_k\partial_x^2 \psi_k \leq \lambda_k x^2 \psi^2_k
	\] 
Integrating both sides by parts gives
	\[
	\|x \partial_x \psi_k\|_2^2 
		\lesssim 
	\lambda_k \int x^2 \psi^2_k - \int (\partial_x \psi_k) (x\psi_k)
		\lesssim
	\lambda_k \|x\psi_k\|_2^2 + \|\partial_x \psi_k\|_2 \|x \psi_k\|_2.
	\]	
Furthermore $\|\partial_x \psi_k\|_2 \lesssim (\lambda_k)^\frac{1}{2}$ and $\|x \psi_k\|_2 \leq \|\psi_k\|_2 + \|x^2 \psi_k\|_2 \lesssim (\lambda_k)^\frac{1}{2}$. It follows that 
	\begin{equation}\label{xdereigfunc}
	\|x\partial_x \psi_k\|_2 \lesssim \lambda_k
	\end{equation}
A similar computation as above gives the second result. 
\end{proof}

At this point, we have all the necessary short term and long term estimates.

\begin{proof}[Proof of Proposition \ref{asymptoticsmassless}]
Recall the Duhamel formula,
	\[
	\Phi(r,0;s,y) 
		= 
	\left(\frac{s}{r}\right)^\frac{1}{2}e^{-(r-s)H}(0,y) 
	+ \int_s^r \int_\R \left(\frac{\rho}{r}\right)^\frac{1}{2} \sum_{k=0}^\infty J_k(\rho,w;s,y)  \; dw \; d\rho,
	\]
with
	\[
	J_k(\rho,w;s,y) := e^{-(r-\rho)\lambda_k} \psi_k(0) \psi_k(w) \left[\frac{1}{2\rho}+ \frac{w}{\rho}\partial_w\right]\Phi(\rho,w;s,y).
	\]
In view of Lemma \ref{quarticdecay}, we first seek to show that
	\[
	\lim_{r \to \infty} \left(\frac{r}{s}\right)^\frac{1}{2} e^{(r-s)\lambda_0} \int_s^r \int_\R \left(\frac{\rho}{r}\right)^\frac{1}{2}\sum_{k=1}^\infty J_k(\rho,w;s,y) \; dw \; d\rho = 0,
	\]
which is to say, that the higher eigenvalues do not contribute to the asymptotic. 

Note that $(\rho/r)^{1/2}\sum_{k=1}^\infty e^{-(r-\rho)\lambda_k} \psi_k(x)\psi_k(w)$ develops a singularity as $\rho$ goes to $r$, and that $\left[\frac{1}{2\rho}+ \frac{w}{\rho}\partial_w\right]\Phi(\rho,w;s,y)$ also develops a singularity as $\rho$ goes to $s$. We split the integral 
	\[
	\int_s^r \int_\R \left(\frac{\rho}{r}\right)^\frac{1}{2}\sum_{k=1}^\infty J_k(\rho,w;s,y)\; dw \; d\rho
	=
	I_1(r;s,y) + I_2(r;s,y) + I_3(r;s,y)
	\] 
into three parts, with
	\begin{align*}
	I_1(r;s,y) := \int_s^{s+1} \int_\R \left(\frac{\rho}{r}\right)^\frac{1}{2}\sum_{k=1}^\infty J_k(\rho,w;s,y)\; dw \; d\rho
		\\
	I_2(r;s,y) := \int_{s+1}^{r-1} \int_\R \left(\frac{\rho}{r}\right)^\frac{1}{2}\sum_{k=1}^\infty J_k(\rho,w;s,y)\; dw \; d\rho 
		\\
	I_3(r;s,y) := \int_{r-1}^r \int_\R \left(\frac{\rho}{r}\right)^\frac{1}{2}\sum_{k=1}^\infty J_k(\rho,w;s,y)\; dw \; d\rho
	\end{align*}
and consider the asymptotics of each part separately.

Before analyzing these integrals, we first record a useful estimate on $\int \sum_{k} J_k dw$. Since $\frac{1}{2} + w\partial_w$ is anti-self-adjoint, we obtain
	\[
	\int_\R \psi_k(w)\left(\tfrac{1}{2}+ w\partial_w\right)\Phi(\rho,w;s,y) \; dw
			= 
	\int_\R \left[-\left(\tfrac{1}{2}+ w\partial_w\right) \psi_k(w)\right]\Phi(\rho,w;s,y) \; dw
	\]
Applying Cauchy--Schwarz, \eqref{eigvalgood}, and \eqref{xdereigfunc},
	\[
	\left\lvert\int_\R \psi_k(w)\left(\tfrac{1}{2}+ w\partial_w\right)\Phi(\rho,w;s,y) \; dw\right\rvert
		\lesssim
	\lambda_k \|\Phi(\rho,w;s,y)\|_{L_w^2},
	\]
Using the previous estimate and the definition of $J_k(\rho,w)$, 
	\begin{align*}
	\left\lvert\int_\R \sum_{k=1}^\infty J_k(\rho,w;s,y) \; dw \right\rvert
		&\lesssim 
	\sum_{k=1}^\infty e^{-(r-\rho)\lambda_k} (\lambda_k)^\frac{3}{2} \frac{1}{\rho} \|\Phi(\rho,w;s,y)\|_{L_w^2}  \\
		&\lesssim
	\frac{1}{\rho}e^{-(r-\rho)\lambda_1} \|\Phi\|_{L_w^2} \sum_{k=1}^\infty e^{-(r-\rho)(\lambda_k - \lambda_1)} (\lambda_k)^\frac{3}{2}.
	\end{align*}
Thus, whenever $s \leq \rho \leq r-1$,
	\begin{equation}\label{dotproductmassless}
	\left\lvert\int_\R \sum_{k=1}^\infty J_k(\rho,w;s,y)\; dw \right\rvert
		\lesssim
	\frac{1}{\rho}e^{-(r-\rho)\lambda_1} \|\Phi(\rho,w;s,y)\|_{L_w^2}.
	\end{equation}
The asymptotics for $I_1$ and $I_2$ follow quickly from \eqref{dotproductmassless}.
	
Indeed, first applying \eqref{dotproductmassless} and then applying \eqref{gaussianmasslesstransform},
	\[
	|I_1(r;s,y)|
		\lesssim 
	\int_{s}^{s+1} \left(\frac{\rho}{r}\right)^\frac{1}{2} \frac{1}{\rho} e^{-(r-\rho)\lambda_1}  \frac{s}{\rho^\frac{1}{2} (\rho^3 - s^3)^\frac{1}{4}} \; d\rho
		\lesssim 
	\frac{1}{(rs)^\frac{1}{2}} e^{-(r-s)\lambda_1}.
	\]
For fixed $s$, it follows that 
	\[
	\lim_{r \to \infty} \left(\frac{r}{s}\right)^\frac{1}{2} e^{(r-s)\lambda_0} I_1(r;s,y) = 0,
	\]
with uniform convergence in $y$ (recall, $\lambda_1 > \lambda_0$). Applying \eqref{dotproductmassless} and Lemma \ref{decaymassless},
	\begin{align*}
	|I_2(r;s,y)| 
		&\lesssim 
	\int_{s+1}^{r-1}  \left(\frac{\rho}{r}\right)^\frac{1}{2}\frac{1}{\rho} e^{-(r-\rho)\lambda_1}  \left(\frac{s}{\rho}\right)^\frac{1}{2} e^{-(\rho - s)\lambda_0} \; d\rho 
		\\
		&\lesssim
	\left(\frac{s}{r}\right)^\frac{1}{2} e^{-\lambda_0(r-s)} \left[ \int_{s+1}^\frac{r}{2} e^{-(r-\rho)(\lambda_1 - \lambda_0)} \frac{d\rho}{\rho}  + \int_\frac{r}{2}^{r-1} e^{-(r-\rho)(\lambda_1 - \lambda_0)} \frac{d\rho}{\rho} \right]
		\\
		&\lesssim 
	\left(\frac{s}{r}\right)^\frac{1}{2} e^{-(r-s)\lambda_0} \left[ \frac{e^{-\frac{r}{2}(\lambda_1 - \lambda_0)}}{s+1} + \frac{2}{r}\right].
	\end{align*}
Again, for fixed $s >0$, 
	\[
	\lim_{r\to\infty} \left(\frac{r}{s}\right)^\frac{1}{2} e^{(r-s)\lambda_0} I_2(r;s,y) = 0
	\]
with uniform convergence in $y$. 

Using the fact that $\frac{1}{2} + w\partial_w$ is anti-self-adjoint and applying Cauchy--Schwarz,
	\begin{align*}
	&|I_3(r;s,y)|
		\\
		&\leq
	\int_{r-1}^r   \left(\frac{\rho}{r}\right)^\frac{1}{2}\frac{1}{\rho} \bigg\|\left(\tfrac{1}{2}+ w\partial_w\right) \sum_{k=1}^\infty e^{-(r-\rho)\lambda_k}\psi_k(0)\psi_k(w) \bigg\|_{L_w^2} \|\Phi(\rho,w;s,y)\|_{L_w^2} \; d\rho
	\end{align*}
For $r-1 \leq \rho< r$, applying \eqref{quarticshortterm} and Lemma \ref{quarticderivativeshortterm} gives
	\begin{align}
	\label{quarticfar}
		&\bigg\| \left[\tfrac{1}{2\rho}+w\partial_w\right] \sum_{k=1}^\infty e^{-(r-\rho)\lambda_k}\psi_k(0)\psi_k(w) \bigg\|_{L_w^2}
		\\
		\nonumber
		&\hbox{\hskip 8pt} \lesssim
	\big\|\left[\tfrac{1}{2}+ w\partial_w\right] e^{-(r-\rho)H}(0,w) \big\|_{L_w^2} 
		+ \big\|\left[\tfrac{1}{2}+ w\partial_w\right] e^{-(r-\rho)\lambda_0}\psi_0(0)\psi_0(w)\big\|_{L_w^2}
		\\
		\nonumber
		&\hbox{\hskip 8pt}
		\lesssim
	(r-\rho)^{-\frac{3}{4}}
	\end{align}
Combining \eqref{quarticfar} and Lemma~\ref{decaymassless},
	\begin{align*}
	|I_3(r;s,y)|
		&\lesssim
	\int_{r-1}^r \left(\frac{\rho}{r}\right)^\frac{1}{2} \frac{1}{(r-\rho)^\frac{3}{4}} \frac{1}{\rho} \left(\frac{s}{\rho}\right)^\frac{1}{2} e^{-(\rho-s)\lambda_0} \; d\rho \\
		&\lesssim
	\frac{1}{r} \left(\frac{s}{r}\right)^\frac{1}{2} e^{-(r-s)\lambda_0}.
	\end{align*}
It follows that, for fixed $s$, 
	\[
	\lim_{r \to \infty} \left(\frac{r}{s}\right)^\frac{1}{2} e^{(r-s)\lambda_0} I_3(r;s,y)
		= 
	0
	\]
with uniform convergence in $y$. 

At this point, we have established \eqref{asympt}, with uniform convergence in $y$. Our next goal is to establish the positivity results. In particular, we shall show that the integral term in
	\[
	G(s,y) := \psi_0(y) + \int_s^\infty \left(\frac{\rho}{s}\right)^\frac{1}{2} e^{-(s-\rho)\lambda_0} \psi_0(w) \left[\frac{1}{2\rho}+ \frac{w}{\rho}\partial_w\right] \Phi(\rho,w;s,y) \; dw \; d\tau
	\]
converges, uniformly in $y$, to $0$ as $s$ goes to $\infty$ (recall that $\psi_0(y) > 0$).

Applying Cauchy--Schwarz and \eqref{gaussianmasslesstransform},
	\begin{align}
	\label{nearasymptotic}
	&\left\lvert\int_s^{s+1} \int_\R \left(\frac{\rho}{s}\right)^\frac{1}{2} e^{-(s-\rho)\lambda_0} \psi_0(w) \left[ \frac{1}{2\rho} + \frac{w}{\rho}\partial_w \right] \Phi(\rho,w;s,y) \; dw \; d\rho\right\rvert
		\\
		\nonumber
		&\hbox{\hskip 18pt}\lesssim 
	\frac{\|(\tfrac{1}{2} + w\partial_w)\psi_0(w)\|_2}{s}  \int_s^{s+1}\|\Phi(\rho,w;s,y)\|_{L_w^2} \; d\rho
		\\
		\nonumber
		&\hbox{\hskip 18pt}\lesssim
	\frac{1}{s} \int_s^{s+1} \frac{s}{\rho^\frac{1}{2}(\rho^3-s^3)^\frac{1}{4}} \; d\rho 
		\\
		\nonumber
		&\hbox{\hskip 18pt}\lesssim
	\frac{1}{s}.
	\end{align}
For the integral over $(s+1,\infty)$, first recall that $\big[\big(\tfrac{1}{2} +w\partial_w\big)\psi_0\big](w)$ is perpendicular to $\psi_0(w)$. Integrating by parts, applying Cauchy--Schwarz, and applying Lemma~\ref{decaymassless},
	\begin{align}
	\label{farasymptotic}
	&\left\lvert\int_{s+1}^{\infty} \int_\R \left(\frac{\rho}{s}\right)^\frac{1}{2} e^{-(s-\rho)\lambda_0} \psi_0(w) \left[ \frac{1}{2\rho} + \frac{w}{\rho}\partial_w \right] \Phi(\rho,w;s,y) \; dw \; d\rho\right\rvert
		\\ \nonumber
	&\hbox{\hskip 18pt} = 
		\left\lvert\int_{s+1}^{\infty} \int_\R \left(\frac{1}{s\rho}\right)^\frac{1}{2} e^{-(s-\rho)\lambda_0} \left[ \tfrac{1}{2} + w\partial_w \right]\psi_0(w)  \Phi(\rho,w;s,y) \; dw d\rho\right\rvert
		\\ \nonumber
	&\hbox{\hskip 18pt} \lesssim
		\int_{s+1}^{\infty} \int_\R \left(\frac{1}{s\rho}\right)^\frac{1}{2} e^{-(s-\rho)\lambda_0} \|[P_0^\perp\Phi](\rho,w;s,y)\|_{L_w^2}  \; d\rho
		\\ \nonumber
	&\hbox{\hskip 18pt} \lesssim
		\int_{s+1}^{\infty} \int_\R \left(\frac{1}{s\rho}\right)^\frac{1}{2} e^{\lambda_0(\rho - s)} \left(\frac{s}{\rho}\right)^\frac{1}{2} e^{-\lambda_1(\rho - s)}  \; d\rho
		\\ \nonumber
	&\hbox{\hskip 18pt} \lesssim
		\frac{1}{s} \int_{s+1}^\infty e^{-(\lambda_1 - \lambda_0)(\rho - s)} d\rho
		\\ \nonumber
	&\hbox{\hskip 18pt} \lesssim
		\frac{1}{s}.
	\end{align}

Since $\psi_0$ is strictly positive and continuous, we have $\inf_{|y| < 1} \psi_0(y) > 0$. In view of \eqref{nearasymptotic} and \eqref{farasymptotic}, there exists some $M$ such that
	$
	G(s,y) > 0 
	$ 
for all $(s,y) \in [M,\infty) \times (-1,1)$. The same estimates also show that $|G(s,y)| \lesssim \psi_0(y) + s^{-\frac{1}{2}}$, which finishes the proof of Proposition \ref{asymptoticsmassless}.
\end{proof}

\subsection{Proof of Theorem \ref{construct}, part 4.}\label{holder}

Our main tool will be the Kolmogorov Continuity and Consistency theorem (cf., Theorem \ref{kolmogorov}), which allows us to upgrade a consistent family of finite dimensional distributions to a (cylinder) measure on path space. 

We construct our consistent family of measures. For $0 < r_1 < r_2 < \cdots < r_N$ and for Borel sets $B_0, B_1, \ldots, B_N \subseteq \R$, let
	\begin{align*}
	&P_{r_1,\ldots, r_N}(B_1 \times \cdots \times B_N) := 
	\\
	&\hbox{\hskip 8pt}\int_{B_1} \cdots \int_{B_N} F(r_N,x_N) \prod_{j=2}^{N}\phi(r_j,x_j;r_{j-1},x_{j-1}) \phi(r_1,x_1;0,0) \; dx_N \cdots dx_1.
	\end{align*}
and let
	\[
	P_{0,r_1,\ldots, r_N}(B_0 \times B_1 \times \cdots \times B_N) := \delta_0(B_0) P_{r_1,\ldots, r_N}(B_1 \times \cdots \times B_N)
	\]
where $\delta_0(B) = 1$ if $0 \in B$ and $\delta_0(B) = 0$ otherwise.

The consistency of this family follows from the semi-group property of $\phi$, given in \eqref{semigroupphi} below. First, recall that $\phi_n$ is the fundamental solution of the cut-off parabolic PDEs in \eqref{cutoff}. Fix $\rho > s$. For $r > \rho$, both $\int_\R \phi_n(r, x; \rho,w) \phi_n(\rho,w;s,y) \; dx$ and $\phi_n(r,x;s,y)$ solve the Cauchy problem $L_n u(r,x) = 0$ with initial data $u(\rho,w) = \phi_n(\rho,w;s,y)$. By uniqueness of bounded solutions of such parabolic PDEs (cf., \cite[Section 1.9]{Fri}), we must have
	\[
	\int_\R \phi_n(r, x; \rho,w) \phi_n(\rho,w;s,y) \; dx  = \phi_n(r,x;s,y).
	\]
Applying \eqref{limit}, \eqref{gaussbound}, and dominated convergence gives
	\begin{equation}\label{semigroupphi}
	\int_\R \phi(r, x; \rho,w) \phi(\rho,w;s,y) \; dx  = \phi(r,x;s,y).
	\end{equation}
Having this semi-group property, then, for all $r > r_N$,
	\begin{align*}
	\int_\R F(r,x) \phi(r,x;r_N,x_N) \; dx 
		&= 
	\lim_{L \to \infty} \int_\R \frac{\phi(L,0;r,x)}{\phi(L,0;0,0)} \phi(r,x;r_N,x_N) \; dx 
		\\
		&= 
	\lim_{L \to \infty} \frac{\phi(L,0;r_N,x_N)}{\phi(L,0;0,0)} 
		\\		
		&=
	F(r_N,x_N).
	\end{align*}
By Theorem \ref{kolmogorov}, there exists a unique cylinder measure, which we denote $\nu_{\infty,1}$, on $R^{[0,\infty)}$ with the desired finite dimensional distributions.

We now show that $\nu_{L,1}$ is supported on the space of continuous functions and, in particular, on locally $s$-H\"older continuous functions, with $s < \frac{1}{2}$. Fix $R > 1$ and let $p > 2$. For $0 < s < r < R$,  
	\begin{align*}
	&\PP_{\nu_{\infty,1}}(|f(r) - f(s)| > \lambda)\\
		&\hbox{\hskip 36pt}=
	\int_\R \int_{|x-y|> \lambda} F(r,x)\phi(r,x;s,y)\phi(s,y;0,0) \; dx \; dy
		\\
		&\hbox{\hskip 36pt}=
	\int_\R \int_{|x-y|> \lambda} \int_\R F(R,w) \phi(R,w;r,x)\phi(r,x;s,y)\phi(s,y;0,0) \; dw \; dx \; dy
	\end{align*}
Because $F(R,w) \lesssim_R 1$ and $\phi(R,w;r,x) \lesssim \frac{1}{\sqrt{R-r}} \exp\left(-\frac{(w-x)^2}{2(R-r)} \right)$,
therefore
	\[
	\int_\R F(R,w) \phi(R,w;r,x) \; dw \lesssim_R \int_\R \frac{1}{\sqrt{R-r}} \exp\left(-\frac{(w-x)^2}{2(R-r)} \right) \; dw \lesssim_R 1.
	\]
and, by \eqref{gaussbound},
	\[
	\int_{|x - y|> \lambda} \phi(r,x;s,y) \; dx 
		\lesssim 
	\int_\lambda^\infty \frac{1}{\sqrt{r-s}} \exp\left(-\frac{x^2}{2(r-s)} \right)\; dx
		\lesssim
	\exp\left(-\frac{\lambda^2}{2(r-s)}\right)
	\]
and, again by \eqref{gaussbound},
	\[
	\int_\R \phi(s,y;0,0) \; dy \leq 1.
	\]
Putting this all together,
	\[
	\PP_{\nu_{\infty,1}}(|f(r) - f(s)| > \lambda) \lesssim_R \exp\left(-\frac{\lambda^2}{2(r-s)}\right)
	\]
It follows that
	\begin{align*}
	\E^{\nu_{\infty,1}}\left[|f(r) - f(s)|^p\right]
		&=
	\int_0^\infty \lambda^{p-1} \PP_{\nu_{\infty,1}}(|f(r) - f(s)| > \lambda) \; d\lambda\\
		&\lesssim_R
	\int_0^\infty \lambda^{p-1} \exp\left(-\frac{\lambda^2}{2(r-s)}\right) \; d\lambda \lesssim_{p,R} (r-s)^\frac{p}{2}.
	\end{align*}
Thus, for all $s \in \big[0, \tfrac{p-2}{2p}\big)$, Theorem \ref{kolmogorov} shows that the measure $\nu_{L,1}$ gives measure one to $C^{s}_{loc}([0,\infty) \to \R)$. Finally, note that as $p \to \infty$, we have $\frac{p-2}{2p} \uparrow \frac{1}{2}$, and so we may choose any $s < \frac{1}{2}$ as our H\"older exponent. 

\subsection{Proof of Theorem \ref{construct}, part 5.}\label{convergegood}

By definition, the measures $\nu_{\infty,1}|_{[0,R]}$ and $\nu_{L,1}|_{[0,R]}$ are Borel measures on $C^{s}([0,R]\to \R)$ that obey the following laws: for $0 < r_1 < r_2 < \cdots < r_N := R$ and Borel sets $B_1, \ldots, B_N \subseteq \R$, 
	\begin{align*}
	&\PP_{\nu_{L,1}|_{[0,R]}}(f(r_j) \in B_j, j=1,\ldots, N) 
	\\
	&\hbox{\hskip 8pt}= \int_{B_1} \cdots \int_{B_N} \frac{\phi(L,0;R,x_N)}{\phi(L,0;0,0)} \prod_{j=2}^{N}\phi(r_{j},x_{j};r_{j-1},x_{j-1}) \phi(r_1,x_1;0,0) \; dx_N \cdots dx_1.
	\end{align*}
and
	\begin{align*}
	&\PP_{\nu_{\infty,1}|_{[0,R]}}(f(r_j) \in B_j, j=1,\ldots, N)
		\\
	&\hbox{\hskip 8pt}= \int_{B_1} \cdots \int_{B_N} F(R,x_N) \prod_{j=2}^{N}\phi(r_j,x_j;r_{j-1},x_{j-1}) \phi(r_1,x_1;0,0) \; dx_N \cdots dx_1.
		\end{align*}
Let $P$ be the Borel measure on $C^s([0,R])$ given by	
	\begin{equation}\label{radonnikodym}
	dP(f) := \frac{F(R,f(R))\phi(L,0;0,0)}{\phi(L,0;R,f(R))}d\nu_{L,1}|_{[0,R]}(f)
	\end{equation}
Recall that, by Theorem~\ref{construct}, part $1$, we have $\phi>0$. Furthermore, as $L >R$, division by $\phi(L,0;R,f(R))$ is well-defined.  It is not hard to see that
	\[
	P(f(r_j) \in B_j, j=1,\ldots, N) = \PP_{\nu_{\infty,1}|_{[0,R]}}(f(r_j) \in B_j, j=1,\ldots, N).
	\]
It follows that the finite dimensional distributions of $P$ and $\nu_{\infty,1}|_{[0,R]}$ are identical, and by the uniqueness aspect of Theorem \ref{kolmogorov}, we have
	\[
	\nu_{\infty,1}|_{[0,R]} = P.
	\]
As the Radon--Nikodym derivative in \eqref{radonnikodym} is strictly positive everywhere, a similar argument shows that $\nu_{L,1}|_{[0,R]}$ is absolutely continuous with respect to $\nu_{\infty,1}|_{[0,R]}$, with Radon--Nikodym derivative $\frac{\phi(L,0;R,f(R))}{F(R,f(R))\phi(L,0;0,0)}$.

Let $A \subseteq C^{s}([0,R] \to \R)$ be a Borel set. Note that, for each $f \in C^s([0,R] \to \R)$,
	\[
	\left\lvert\chi_A(f) \frac{\phi(L,0;R,f(R))}{F(R,f(R))\phi(L,0;0,0)} \right\rvert
		\leq
	\frac{\phi(L,0;R,f(R))}{F(R,f(R))\phi(L,0;0,0)}
	\]
and
	\[
	\lim_{L \to \infty} \frac{\phi(L,0;R,f(R))}{F(R,f(R))\phi(L,0;0,0)} =1
	\] 	
Also, for each $L > R$,
	\[
	\int_{C^0([0,R])} \frac{\phi(L,0;R,f(R))}{F(R,f(R))\phi(L,0;0,0)} d \nu_{\infty,1}|_{[0,R]} (f)
		= 
	\int_{C^0([0,R])} d\nu_{L,1}|_{[0,R]}(f)
		=
	1
	\]
and so the generalized dominated convergence theorem (cf., \cite[Exer. 2.20]{Fol}) gives
	\begin{align*}
	\lim_{L \to \infty} \nu_{L,1}|_{[0,R]}(A)
		&=
	\lim_{L \to \infty}\int_{C^0([0,R])} \chi_A(f) \frac{\phi(L,0;R,f(R))}{F(R,f(R)\phi(L,0;0,0)} \; d \nu_{L,1}|_{[0,R]} (f)	
		\\
		&=
	\int_{C^0([0,R])} \chi_A(f) \; d \nu_{\infty,1}|_{[0,R]}(f)
		\\
		&=
	\nu_{\infty,1}|_{[0,R]}(A),
	\end{align*}
as desired.

\subsection{Proof of Theorem \ref{construct}, part 6}\label{brownian}

Recall the heat kernel $\phi_0(r,x;s,y) = \frac{1}{\sqrt{2\pi(t-s)}} \exp\left(-\frac{(x-y)^2}{2(t-s)} \right)$ and recall that $W$, the standard Wiener measure, obeys the following law: for $0 < r_1 < r_2 < \cdots < r_N$ and for Borel sets $B_1, B_2, \ldots, B_N \subseteq \R$,
	\begin{align*}
		\PP_{W}(f(r_j) \in B_j, j=1,\ldots, N)= \hbox{\hskip 160pt}
		\\
	\hbox{\hskip 18pt} \int_{B_1} \cdots \int_{B_N} \prod_{j=2}^{N}\phi_0(r_j,x_j;r_{j-1},x_{j-1}) \phi_0(r_1,x_1;0,0) \; dx_n \cdots dx_1.
		\end{align*}
Also, recall that $\nu_{L,2} = \mu_{L,2}$ obeys the same finite dimensional distributions as the Brownian bridge from $0$ to $L$, i.e.,
	\begin{align*}
	&\PP_{\nu_{L,2}}(f(r_j) \in B_j, j=1,\ldots, N) =
		\\
	&\hbox{\hskip 8pt} \int_{B_1} \cdots \int_{B_N} \frac{\phi_0(L,0;r_N,x_N)}{\phi_0(L,0;0,0)} \prod_{j=2}^{N}\phi_0(r_j,x_j;r_{j-1},x_{j-1}) \phi_0(r_1,x_1;0,0) \; dx_n \cdots dx_1.
		\end{align*}

We denote by $W|_{[0,R]}$ the image measure of $W$ on $C^s([0,R] \to \R)$, with the Borel $\sigma$-algebra, under the restriction map $f \mapsto f|_{[0,R]}$ and similarly for $\nu_{L,2}|_{[0,R]}$. By an argument similar to Section \ref{holder}, the measures $W|_{[0,R]}$ and $\nu_{L,2}|_{[0,R]}$ are mutually absolutely continuous, with the Radon--Nikodym derivative
	\[
	\frac{d\nu_{L,2}|_{[0,R]}}{dW|_{[0,R]}}(f) = \frac{\phi_0(L,0;R,f(R))}{\phi_0(L,0;0,0)}.
	\]
Mutual absolute continuity of $\nu_\infty|_{[0,R]}$	and $\nu_L|_{[0,R]}$ follow from the tensor product structure of $\nu_\infty$ and the mutual absolute continuity of each of its components.
	
As $\lim_{L \to \infty} \frac{\phi_0(L,0;R,f(R))}{\phi_0(L,0;0,0)} = 1$ for every $f \in C^s([0,R]\to\infty)$, a similar argument as in Section \ref{convergegood} shows that for every Borel set $A \subseteq C^s([0,R] \to \R)$, we have
	\begin{equation}\label{convergence2}
	\lim_{L \to \infty} \nu_{L,2}|_{[0,R]}(A) = W|_{[0,R]}(A).
	\end{equation}
Furthermore, \eqref{convergenceall} follows from \eqref{convergence1}, \eqref{convergence2}, and the fact that $\nu_\infty$ is essentially a tensor product of $\nu_{\infty,1}$ and $W$.

Finally, as the measures $\nu_L|_{[0,R]}$ and $\nu_\infty|_{[0,R]}$ are mutually absolutely continuous on $C^s([0,R]\to \C)$, the completion of the Borel $\sigma$-algebra with respect to each of these measures must coincide. Let us denote this $\sigma$-algebra by $\mathcal{F}_R$. Also, each set $A \in \mathcal{F}_R$ is the union of a Borel set and a null set (cf., Proposition~\ref{regular}). Thus, \eqref{convergenceall} also holds for each $A \in \mathcal{F}_R$.

\section{Almost sure global existence and invariance}\label{invariance}

In this section, we prove Theorem~\ref{invarianceinf}. Let us recall the first order NLW in \eqref{1DNLW} and the definition of $\flow_\infty$ from \eqref{flowinf}. Furthermore, by Proposition~\ref{holderpolish}, $C^s_0([0,\infty)\to\C)$ is a Polish space, and hence we may utilize results from Appendix~\ref{LS}. As before, let $\rho^L_R : C^s_0([0,L]) \to C^s([0,R])$ and $\rho^\infty_R : C^s_{loc}([0,\infty)) \to C^s([0,R])$ denote the restriction maps $g \mapsto g|_{[0,R]}$.

The first two assertions of Theorem~\ref{invarianceinf} are quite immediate.

\begin{prop}
There exists a Borel subset $\Omega_\infty \subseteq C^s_{loc}([0,\infty) \to \C)$ such that 
	\begin{enumerate}
	\item $\nu_\infty(\Omega_\infty) = 1$;
	\item For every $g \in \Omega_\infty$, $\flow_\infty(t,g)$ is defined globally in time. For each $T>0$ and $R>0$, we have $\flow_\infty(t,g)|_{[0,R]} \in C_t^0C^s_r([-T,T] \times [0,R] \to \R)$.
	\end{enumerate}
\end{prop}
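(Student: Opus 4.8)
The plan is to build $\Omega_\infty$ by transporting the finite-volume sets $\Omega_L$ through the restriction maps and using finite speed of propagation together with the compatibility of the measures established in Theorem~\ref{construct} (equivalently Theorem~\ref{infvolmeas}). First I would fix a sequence $L_k \to \infty$ (say $L_k = k$) and, for each $k$, let $\Omega_{L_k} \subseteq C^s_0([0,L_k]\to\C)$ be the full-measure Borel set furnished by Theorem~\ref{invariancefinite}. The key point is that a strong solution on $[0,\infty)$ is determined locally: by finite speed of propagation for \eqref{1DNLW}, the restriction $\flow_\infty(t,g)|_{[0,R]}$ depends only on $g|_{[0,R+|t|]}$, and conversely, if $g|_{[0,R+|t|]}$ agrees with the restriction of some $h\in\Omega_{L_k}$ with $L_k > R+|t|$, then $\flow_L(t,h)|_{[0,R]}$ gives a strong solution of \eqref{1DNLW} on $[0,R]\times[-|t|,|t|]$ in the sense of Definition~\ref{stronginf}, with the $C^0_tC^s_r$ regularity coming from Proposition~\ref{lwpfinite} / Theorem~\ref{invariancefinite}. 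So the natural candidate is
\[
\Omega_\infty := \left\{ g \in C^s_{loc}([0,\infty)\to\C) \;\middle|\; \forall R>0\; \exists k \text{ with } L_k > R \text{ and } \rho^{L_k}_R g \in \rho^{L_k}_R(\Omega_{L_k}) \text{ compatibly} \right\},
\]
made precise by requiring that the locally-defined solutions patch consistently; I would in fact define $\Omega_\infty$ as a countable intersection over $R\in\N$ of sets of the form $(\rho^\infty_R)^{-1}$ of an appropriate full-measure subset of $C^s([0,R]\to\C)$.

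Next I would verify $\nu_\infty(\Omega_\infty)=1$. For each fixed $R$, Theorem~\ref{infvolmeas} (or Theorem~\ref{construct}, parts 5--6) gives that $\nu_\infty|_{[0,R]}$ and $\nu_L|_{[0,R]}$ are mutually absolutely continuous for every $L>R$. Since $\rho^{L}_R(\Omega_{L})$ — or rather its ``cylinder saturation'' — has full $\nu_L|_{[0,R]}$ measure (here I would invoke the analyticity/measurability discussion from Theorem~\ref{invariancefinite} and Appendix~\ref{LS} so that the saturated set is $\nu_L$-measurable, hence its image is $\nu_L|_{[0,R]}$-measurable), mutual absolute continuity forces it to have full $\nu_\infty|_{[0,R]}$ measure, so $(\rho^\infty_R)^{-1}$ of it has full $\nu_\infty$ measure. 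A countable intersection over $R\in\N$ then has full $\nu_\infty$ measure. One must take a little care that the set is genuinely Borel rather than merely $\nu_\infty$-measurable: here I would use that $\flow_L$ preserves Borel sets (the last clause of Theorem~\ref{invariancefinite}), so $\flow_{L}(\cdot,\Omega_{L})$-type sets and their restrictions stay Borel, and that a continuous image complication is avoided by working with the Borel sets $\Omega_L$ directly rather than their forward images when defining membership.

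Then for $g\in\Omega_\infty$ I would check the two claimed properties. Global existence and the Duhamel identity on each $[0,R]\times[-T,T]$: pick $k$ with $L_k > R+T$ and $h\in\Omega_{L_k}$ agreeing with $g$ on $[0,R+T]$; by Theorem~\ref{invariancefinite}, $\flow_{L_k}(t,h)$ exists globally with $\flow_{L_k}(t,h)\in C^0_tC^s_r([-T,T]\times[0,L_k]\to\C)$, and by finite speed of propagation $\flow_{L_k}(t,h)|_{[0,R]}$ is independent of the choice of such $(k,h)$, so it unambiguously defines $\flow_\infty(t,g)|_{[0,R]}$; letting $R,T\to\infty$ gives a global strong solution with the required local regularity, and uniqueness follows from the uniqueness clause of Proposition~\ref{lwpfinite} applied on each compact interval (again via finite speed of propagation, any two strong solutions of \eqref{1DNLW} agreeing at $t=0$ agree on every $[0,R]\times[-T,T]$).

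The main obstacle I anticipate is not the soft measure-theoretic bookkeeping but making the finite-speed-of-propagation gluing watertight: one needs that the locally-defined pieces $\flow_{L_k}(t,\cdot)|_{[0,R]}$ genuinely agree on overlaps (both as $k$ varies and as $R$ varies), which requires a finite-speed-of-propagation statement for \emph{strong} solutions in the $C^s$ sense — this is exactly the kind of statement implicitly used in Corollary~\ref{stability} via the cutoff operators $\Psi_\lambda$, and I would isolate it as a lemma: if $w_1,w_2$ are strong solutions of \eqref{1DNLW} (or \eqref{firstorderfinite1d} on a large ball) with $w_1(0,\cdot)=w_2(0,\cdot)$ on $[0,\rho]$, then $w_1=w_2$ on the backward light cone over $[0,\rho-|t|]$. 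Granting that lemma, everything else is assembling full-measure sets and is routine.
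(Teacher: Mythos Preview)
Your approach is essentially the same as the paper's: transport the finite-volume sets $\Omega_L$ through restriction maps, use mutual absolute continuity of $\nu_L|_{[0,R]}$ and $\nu_\infty|_{[0,R]}$ to get full measure, and patch the flows via finite speed of propagation. The paper defines $\tilde{\Omega}_\infty := \bigcap_{L\geq 2} (\rho^\infty_{\lfloor L/2\rfloor})^{-1}\circ \rho^L_{\lfloor L/2\rfloor}(\Omega_L)$, which is exactly your ``countable intersection over $R$'' candidate written slightly differently.

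The one point where you are imprecise is the Borel measurability. You suggest that ``working with the Borel sets $\Omega_L$ directly rather than their forward images'' avoids the analytic-set issue, but your own construction unavoidably passes through $\rho^L_R(\Omega_L)$, which is a continuous \emph{non-injective} image of a Borel set and hence only analytic; Lusin--Souslin does not apply because restriction is not injective, and the fact that $\flow_L$ preserves Borel sets is irrelevant here. The paper deals with this cleanly: it accepts that $\tilde{\Omega}_\infty$ is merely analytic (hence $\nu_\infty$-measurable by Proposition~\ref{analyticprops}), observes that its complement is $\nu_\infty$-null, chooses a Borel null set $A$ containing that complement (Proposition~\ref{regular}), and sets $\Omega_\infty := C^s_{loc}([0,\infty))\setminus A$. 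This is the missing step in your outline; once you insert it, everything else you wrote goes through, including the gluing argument, which the paper carries out exactly as you describe (and does not isolate a separate finite-speed lemma but simply invokes it inline).
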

\begin{proof}
Let $\Omega_L$ as in Theorem \ref{invariancefinite} and let
	\begin{align*}
	\tilde{\Omega}_\infty 
		&:=  
	\bigcap_{L = 2}^\infty (\rho^\infty_{\lfloor L /2 \rfloor})^{-1} \circ \rho^{L}_{\lfloor L /2 \rfloor} (\Omega_{L})
		\\
		&=
	\{ g \in C^s_{loc}([0,\infty)) \mid \forall L \geq 2, \exists g_L \in \Omega_L \hbox{ s.t. } g_L|_{[0,\lfloor L/2 \rfloor]} \equiv g|_{[0,\lfloor L/2 \rfloor]} \}.
	\end{align*}
Here, $\lfloor L /2 \rfloor$ denotes the largest integer less than or equal to $L/2$. As restriction is a continuous map, therefore $\rho_{\lfloor L /2 \rfloor}^L(\Omega_L)$ is an analytic set. By Proposition~\ref{analyticprops}, it follows that $(\rho^\infty_{\lfloor L /2 \rfloor})^{-1} \circ \rho^{L}_{\lfloor L /2 \rfloor} (\Omega_{L})$ is analytic and, hence, $\tilde{\Omega}_\infty$ is also analytic. By Proposition~\ref{analyticprops}, the set $\tilde{\Omega}_\infty$ is $\nu_\infty$-measurable.

By Theorem \ref{invariancefinite}, we have $\nu_{L} (\Omega_{L}) = 1$. As $\Omega_L \subseteq (\rho^{L}_{\lfloor L /2 \rfloor})^{-1} \circ \rho^{L}_{\lfloor L /2 \rfloor}(\Omega_{L})$, it follows that $\nu_{L}|_{[0,\lfloor L /2 \rfloor]} (\rho^{L}_{\lfloor L /2 \rfloor}(\Omega_{L})) = 1$. By mutual absolute continuity of $\nu_L|_{[0,\lfloor L /2 \rfloor]}$ and $\nu_\infty|_{[0,\lfloor L /2 \rfloor}$ (cf, Theorem \ref{infvolmeas}), we have $\nu_\infty|_{[0,\lfloor L /2 \rfloor]} (\rho^{L}_{\lfloor L /2 \rfloor}(\Omega_{L})) = 1$. In other words,
	\[
	\nu_\infty \left( (\rho^\infty_{\lfloor L /2 \rfloor})^{-1} \circ \rho^{L}_{\lfloor L /2 \rfloor} (\Omega_{L}) \right) = 1.
	\]
Thus, $\nu_\infty(\tilde{\Omega}_\infty) = 1$. In particular, $C^s_{loc}([0,\infty)) \setminus \tilde{\Omega}_\infty$ is measure $0$, and so there is some Borel set $A$ of measure $0$ that contains $C^s_{loc}([0,\infty)) \setminus \tilde{\Omega}_\infty$. We define 
	\[
	\Omega_\infty = C^s_{loc}([0,\infty)) \setminus A.
	\]
Observe that $\Omega_\infty \subseteq \tilde{\Omega}_\infty$ and $\nu_\infty(\Omega_\infty) = 1$.

We now prove the second assertion. Fix $g \in \Omega_\infty$ and fix $T > 0$. For each $L \geq 2$, let $g_L \in \Omega_{L}$ such that $g|_{[0,\lfloor L /2 \rfloor]} \equiv g_L|_{[0,\lfloor L /2 \rfloor]}$. By finite speed of propagation, we have
	\[
	\flow_L(t,g_L)|_{[0,\lfloor L/2\rfloor-t]} = \flow_{L+k}(t,g_{L+k})|_{[0,\lfloor L/2\rfloor-t]}
	\]
for all $L > 2T$, all $t \in [-T,T]$, and all $k \geq 0$. We define $\flow_\infty(t,g)$ to be the unique function such that for each $R > 0$,
	\begin{equation}\label{flowR}
	\flow_\infty(t,g)|_{[0,R]} \equiv \flow_{L}(t,g_L)|_{[0,R]} \hbox{ for all } L > 2(R + T), |t| \leq T
	\end{equation}
and note that it obeys the regularity conditions asserted above.
\end{proof}

We next turn to the invariance assertions of Theorem~\ref{invarianceinf}. To do this, we first show invariance on the fixed time interval $[-1,1]$, and then iterate the flow map to achieve global invariance.

\begin{lem}\label{flownice}
For each $t \in [-1,1]$, the map
	\[
	\flow(t,\cdot) : \Omega_\infty \to C^s_{loc}([0,\infty) \to \C)
	\]
is continuous with respect to \eqref{metric}. Furthermore, for each Borel subset $A \subseteq \Omega_\infty$, the set $\flow_\infty(t,A) = \{\flow_\infty(t,g) \mid g \in A\}$ is a Borel subset of $C^s_{loc}([0,\infty))$.
\end{lem}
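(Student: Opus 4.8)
The plan is to leverage the finite-volume continuity result (Corollary~\ref{stability}) together with finite speed of propagation and the defining relation \eqref{flowR}. First I would fix $t \in [-1,1]$ and a sequence $g_k \to g_\infty$ in $\Omega_\infty$, converging in the metric \eqref{metric}; this means $\|g_k - g_\infty\|_{C^s([0,n])} \to 0$ for every $n$. To show $\flow_\infty(t,g_k) \to \flow_\infty(t,g_\infty)$ in \eqref{metric}, it suffices to show $\|\flow_\infty(t,g_k) - \flow_\infty(t,g_\infty)\|_{C^s([0,R])} \to 0$ for each fixed $R > 0$. Fix such an $R$ and pick $L > 2(R+1)$. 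Using the construction in the previous proposition, for each index $j \in \{k\} \cup \{\infty\}$ choose a representative $g_j^{(L)} \in \Omega_L$ with $g_j^{(L)}|_{[0,\lfloor L/2\rfloor]} \equiv g_j|_{[0,\lfloor L/2\rfloor]}$. The subtlety here is that $g_k^{(L)}$ and $g_\infty^{(L)}$ need not converge on all of $[0,L]$, only on $[0,\lfloor L/2\rfloor]$; but by finite speed of propagation, $\flow_L(t,g_j^{(L)})|_{[0,R]}$ depends only on $g_j^{(L)}|_{[0,R+1]}$, and $R+1 \le \lfloor L/2\rfloor$. So I would instead apply the cut-off argument from Corollary~\ref{stability}: replace $g_j^{(L)}$ by $\Psi_{R+1} g_j^{(L)}$, observe that $\Psi_{R+1}g_k^{(L)} \to \Psi_{R+1}g_\infty^{(L)}$ in $C^s([0,L])$ since these cut-offs depend only on the restriction to $[0,R+1]$ where convergence holds, and then invoke \eqref{localconv} of Corollary~\ref{stability} (after possibly rescaling the time of existence as in that proof) to get $\|\flow_L(t,\Psi_{R+1}g_k^{(L)}) - \flow_L(t,\Psi_{R+1}g_\infty^{(L)})\|_{C^0_tC^s_r([-1,1]\times[0,R])} \to 0$. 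By finite speed of propagation this coincides with $\|\flow_L(t,g_k^{(L)}) - \flow_L(t,g_\infty^{(L)})\|_{C^0_tC^s_r([-1,1]\times[0,R])}$, which by \eqref{flowR} equals $\|\flow_\infty(t,g_k) - \flow_\infty(t,g_\infty)\|_{C^0_tC^s_r([-1,1]\times[0,R])}$. This proves continuity.

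For the second assertion, I would use the fact that $C^s_{loc}([0,\infty)\to\C)$ is Polish (Proposition~\ref{holderpolish}) and that $\Omega_\infty$, being a Borel subset (hence a standard Borel space with its relative $\sigma$-algebra), is itself a Polish space in a compatible topology — more precisely, $\Omega_\infty$ is a Borel subset of a Polish space. The map $\flow_\infty(t,\cdot): \Omega_\infty \to C^s_{loc}([0,\infty)\to\C)$ is continuous, hence Borel measurable, and moreover it is injective (its inverse on the range is $\flow_\infty(-t,\cdot)$, since the flow is a group; this follows from uniqueness of strong solutions and the Duhamel formula). By the Lusin--Souslin theorem (Theorem~\ref{lusinsouslin} in Appendix~\ref{LS}), a Borel-measurable injection from a Polish (or standard Borel) space into a Polish space maps Borel sets to Borel sets. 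Therefore, for each Borel $A \subseteq \Omega_\infty$, the image $\flow_\infty(t,A)$ is Borel in $C^s_{loc}([0,\infty)\to\C)$.

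The main obstacle I anticipate is the bookkeeping in the first part: one must carefully track that the finite-volume flow on $[0,R]$ is genuinely determined by data on $[0,R+1]$ over the time interval $[-1,1]$ (so that the mismatch between the chosen representatives $g_j^{(L)}$ outside $[0,\lfloor L/2\rfloor]$ is irrelevant), and that the time of local existence provided by Proposition~\ref{lwpfinite} can be iterated finitely many times with a persistent bound $\Lambda$ depending only on $\flow_\infty(t,g_\infty)|_{[0,R+1]}$ — exactly the persistent-bound iteration already carried out in the proof of Corollary~\ref{stability}. Once that is set up, the invariance of $\nu_\infty$ under $\flow_\infty(t,\cdot)$ for $t\in[-1,1]$ will follow (in the subsequent argument) from the finite-volume invariance in Theorem~\ref{invariancefinite} transported via the mutual absolute continuity and compatibility in Theorem~\ref{infvolmeas}, and then global invariance by iterating over unit time steps.
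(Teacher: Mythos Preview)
Your proposal is correct and follows essentially the same approach as the paper: continuity via finite speed of propagation plus Corollary~\ref{stability}, and the Borel assertion via injectivity (reversibility of the flow) and Lusin--Souslin. One small point: the paper applies Theorem~\ref{lusinsouslin} by first extending $\flow_\infty(t,\cdot)$ to all of $C^s_{loc}([0,\infty)\to\C)$ (setting it to $0$ off $\Omega_\infty$) so that the domain is literally a Polish space as in the stated hypothesis, whereas you invoke the standard-Borel structure of $\Omega_\infty$ directly; both are valid, but the extension trick matches the exact statement of Theorem~\ref{lusinsouslin} in the appendix.
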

\begin{proof}
For $1 \leq k \leq \infty$, let $g_k \in \Omega_\infty$ such that $d(g_k,g_\infty) \to 0$. Observe that convergence in this metric is equivalent to convergence in each semi-norm.

Fix $n \in \N$. For each $1 \leq k \leq \infty$, choose $\tilde{g}_k \in \Omega_{2n+2}$ such that 
	\[
	\tilde{g}_k|_{[0,n+1]} \equiv g_k|_{[0,n+1]}.
	\]
In particular, $\lim_{n \to \infty}\|\tilde{g}_k - \tilde{g}_\infty\|_{C^s([0,n+1])} = 0$. By Corollary~\ref{stability}, we have
	\[
	\lim_{k \to \infty} \|\flow_{2n+2}(t,\tilde{g}_k) - \flow_{2n+2}(t,\tilde{g}_\infty)\|_{C^0_tC^s_r([-1,1]\times[0,n])} = 0.
	\]
By finite speed of propagation,
	\begin{equation}\label{spatialconvergence}
	\lim_{k \to \infty} \|\flow_{\infty}(t,g_k) - \flow_{\infty}(t,g_\infty)\|_{C^0_tC^s_r([-1,1]\times[0,n])} = 0.
	\end{equation}
As \eqref{spatialconvergence} holds for each $n$, it follows by the above observation that
	\[
	\lim_{k \to \infty} d(\flow_{\infty}(t,g_k), \flow_{\infty}(t,g_\infty)) = 0
	\]
for each $t \in [-1,1]$, proving the continuity assertion.

For fixed $t \in [-1,1]$, we extend $\flow_\infty(t,\cdot)$ to $C^s_{loc}([0,\infty) \to \C)$ by
	\[
	\flow_\infty(t,g) := 0
	\]
for $g \in C^s_{loc}([0,\infty) \to \C) \setminus \Omega_\infty$. This defines a Borel measurable map from $C^s_{loc}([0,\infty) \to \C)$ to itself. Furthermore, the restriction of $\flow_\infty(t,\cdot)$ to $\Omega_\infty$ is an injective map, as the flow is reversible. By the Lusin--Souslin Theorem (cf., Theorem~\ref{lusinsouslin}), $\flow_\infty(t,\cdot)$ maps Borel subsets of $\Omega_\infty$ to Borel subsets of $C^s_{loc}([0,\infty) \to \C)$. 
\end{proof}

\begin{lem}\label{closedgood}
Let $K \subseteq C^s_{loc}([0,\infty) \to \C)$ be a closed set such that $K \subseteq \Omega_\infty$. For every $t \in [-1,1]$, we have
	\[
	\nu_\infty(K) \leq \nu_\infty(\flow_\infty(t,K)).
	\]
\end{lem}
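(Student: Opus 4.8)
\textbf{Proof plan for Lemma~\ref{closedgood}.}

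The idea is to approximate $K$ from inside by sets on which $\flow_\infty(t,\cdot)$ behaves well with respect to the finite-volume measures, and then transfer the finite-volume invariance (Theorem~\ref{invariancefinite}) to the limit using the mutual absolute continuity established in Theorem~\ref{infvolmeas}/Theorem~\ref{construct}. First I would observe that, since $C^s_{loc}([0,\infty)\to\C)$ is Polish (Proposition~\ref{holderpolish}) and $\nu_\infty$ is a Borel probability measure, it is inner regular, so $\nu_\infty(K) = \sup\{\nu_\infty(C) : C\subseteq K \text{ compact}\}$; hence it suffices to prove $\nu_\infty(C)\le\nu_\infty(\flow_\infty(t,C))$ for compact $C\subseteq K\subseteq\Omega_\infty$, using that $\flow_\infty(t,C)\subseteq\flow_\infty(t,K)$ and monotonicity. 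Fix such a compact $C$ and fix $t\in[-1,1]$.

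The key step is a finite speed of propagation comparison. Fix $R>0$. For $C$ compact in $C^s_{loc}$, the set $\rho^\infty_{R+2}(C)$ is compact in $C^s([0,R+2])$, so in particular $\|g\|_{C^s([0,R+2])}$ is bounded uniformly over $g\in C$; using the "linear cut-off'' operator $\Psi_{R+1}$ from the proof of Corollary~\ref{stability} we get, for $L$ large, a compact family of data in $C^s_0([0,L])$ whose $\flow_L$-evolution on $[-1,1]$ agrees with $\flow_\infty$ on $[0,R+1-1]=[0,R]$ by finite speed of propagation. More precisely, I would argue that for each $g\in C\subseteq\Omega_\infty$ there is (by definition of $\Omega_\infty$ via $\tilde\Omega_\infty$) a datum $g_L\in\Omega_L$ agreeing with $g$ on $[0,\lfloor L/2\rfloor]$, and that for $L>2(R+1)$ we have $\flow_\infty(t,g)|_{[0,R]}\equiv\flow_L(t,g_L)|_{[0,R]}$. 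Combining this with the measure invariance in Theorem~\ref{invariancefinite}, part (3), and passing to restrictions $\nu_L|_{[0,R]}$, one obtains, for every Borel $A\subseteq\rho^\infty_R(C)$,
\[
\nu_L|_{[0,R]}(A) = \nu_L|_{[0,R]}\bigl(\flow_L(t,\cdot)\text{-image of }A\text{ on }[0,R]\bigr),
\]
where the right-hand side is exactly the image of $A$ under $g\mapsto\flow_\infty(t,g)|_{[0,R]}$ restricted to $\rho^\infty_R(C)$; the needed Borel measurability of this image is supplied by Lemma~\ref{flownice} together with the Lusin--Souslin theorem. Here I must be careful that $\flow_\infty(t,\cdot)$ need not be injective after restriction to $[0,R]$ — but since it is globally injective on $\Omega_\infty$ and $C$ is compact, I would instead work with the graph/cylinder sets: apply the argument to cylinder sets determined by finitely many coordinates in $[0,R]$, or simply note that what is actually needed is the inequality $\nu_\infty|_{[0,R]}(\rho^\infty_R(C))\le\nu_\infty(\flow_\infty(t,C))$ obtained by letting $L\to\infty$ in the finite-volume identity and invoking \eqref{convergenceall} of Theorem~\ref{construct}, part (6).

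Finally I would let $R\to\infty$. Since $C\subseteq C^s_{loc}([0,\infty)\to\C)$ and $\nu_\infty$ is a Borel probability measure, $(\rho^\infty_R)^{-1}(\rho^\infty_R(C))\downarrow$ a set containing $C$ as $R\to\infty$, and in fact $\nu_\infty\bigl((\rho^\infty_R)^{-1}\rho^\infty_R(C)\bigr)\to\nu_\infty(\overline{C})=\nu_\infty(C)$ for $C$ closed (using that the cylinder $\sigma$-algebra generates the Borel $\sigma$-algebra, Proposition~\ref{holderpolish}, so that a point not in the closed set $C$ is separated from $C$ by some finite-coordinate condition). Combining, $\nu_\infty(C)=\lim_{R\to\infty}\nu_\infty|_{[0,R]}(\rho^\infty_R(C))\le\nu_\infty(\flow_\infty(t,C))\le\nu_\infty(\flow_\infty(t,K))$, and taking the supremum over compact $C\subseteq K$ gives the claim.

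\textbf{Main obstacle.} The delicate point is the measurability and matching of images under restriction: $\flow_\infty(t,\cdot)$ is injective on $\Omega_\infty$ but its composition with $\rho^\infty_R$ is not, so one cannot naively push the finite-volume invariance through $\rho^\infty_R$. The fix is to phrase everything through cylinder sets (finitely many coordinates in $[0,R]$), on which finite speed of propagation makes $\flow_\infty(t,\cdot)$ and $\flow_L(t,\cdot)$ literally agree, and then use the convergence \eqref{convergenceall} together with inner regularity rather than trying to build a single measurable bijection on path space; the Lusin--Souslin theorem (via Lemma~\ref{flownice}) is what guarantees the relevant images are Borel so that $\nu_\infty$ can be evaluated on them.
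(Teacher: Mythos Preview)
Your proposal is essentially correct and follows the same route as the paper: finite-volume invariance (Theorem~\ref{invariancefinite}) plus finite speed of propagation gives a containment of restricted images, hence an inequality at level $L$; send $L\to\infty$ via \eqref{convergenceall}; then send $R\to\infty$ using closedness to recover $K$ (resp.\ $\flow_\infty(t,K)$) as the intersection $\bigcap_R(\rho^\infty_R)^{-1}\rho^\infty_R(\cdot)$.

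Two comments on execution. First, the reduction to compact $C\subseteq K$ is unnecessary: the paper works directly with closed $K$, using that $(\rho^L_{R+1})^{-1}\circ\rho^\infty_{R+1}(K)$ is analytic (hence $\nu_L$-measurable) and intersecting with $\Omega_L$ to ensure $\flow_L$ is defined; the uniform $C^s$ bound you extract from compactness is not actually used anywhere, and the detour through $\Psi_{R+1}$ is a red herring since $\Psi_{R+1}g$ need not lie in $\Omega_L$. Second, your displayed equation claiming an \emph{equality} of restricted measures is not right and is exactly the point you yourself flag as the ``main obstacle'': after restriction to $[0,R]$ the flow is not injective, so you only get the containment
\[
\rho^L_R\bigl(\flow_L[t,\Omega_L\cap(\rho^L_{R+1})^{-1}\rho^\infty_{R+1}(K)]\bigr)\ \subseteq\ \rho^\infty_R(\flow_\infty(t,K)),
\]
which yields the inequality $\nu_L|_{[0,R+1]}(\rho^\infty_{R+1}(K))\le\nu_L|_{[0,R]}(\rho^\infty_R(\flow_\infty(t,K)))$. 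You eventually land on this inequality, but you should lead with it rather than the spurious equality; no ``cylinder-set'' workaround is needed, and Lemma~\ref{flownice} is used only to know $\flow_\infty(t,K)$ is Borel so that $\rho^\infty_R(\flow_\infty(t,K))$ is analytic.
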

\begin{proof}
Let $0 < R \ll L$, and recall that $(\rho_{R+1}^L)^{-1} \circ \rho_{R+1}^\infty(K)$ is an analytic subset of $C^s([0,L] \to \C)$. By Theorem~\ref{invariancefinite}, we have
	\begin{align}
		\label{finitething}
	\nu_L((\rho_{R+1}^L)^{-1} \circ \rho_{R+1}^\infty(K)) 
		&= 
	\nu_L\left(\Omega_L \cap \left((\rho_{R+1}^L)^{-1} \circ \rho_{R+1}^\infty(K) \right)\right)
		\\
		\nonumber
		&=
	\nu_L\left(\flow_L\left[t, \Omega_L \cap (\rho_{R+1}^L)^{-1} \circ \rho_{R+1}^\infty(K) \right] \right)
	\end{align}
for each $t \in \R$.

Next, we claim that, for $t \in [-1,1]$,
	\begin{equation}\label{fsop}
	\rho^L_{R} \left(\flow_L\left[t, \Omega_L \cap \left((\rho_{R+1}^L)^{-1} \circ \rho_{R+1}^\infty(K) \right) \right] \right) \subseteq  \rho^\infty_R \left( \flow_\infty(t,K) \right)
	\end{equation}
Indeed, let $g \in \flow_L\left[t, \Omega_L \cap (\rho_{R+1}^L)^{-1} \circ \rho_{R+1}^\infty(K) \right]$. Then there exists some $f \in (\rho_{R+1}^L)^{-1} \circ \rho_{R+1}^\infty(K) \cap \Omega_L$ such that $\flow_L(t,f) = g$. Furthermore, there exists $\tilde{f} \in K$ such that $\tilde{f}|_{[0,R+1]} \equiv f|_{[0,R+1]}$ and thus, by finite speed of propagation,
	\[
	\flow_\infty(t,\tilde{f})|_{[0,R]} \equiv \flow_L(t,f)|_{[0,R]} \equiv g|_{[0,R]}.
	\]
The claim follows. 

Given \eqref{fsop}, we also have
	\begin{align}
	\label{contains}
	&\flow_L\left[t, \Omega_L \cap \left((\rho_{R+1}^L)^{-1} \circ \rho_{R+1}^\infty(K) \right) \right]
		\\
		\nonumber
		&\hbox{\hskip 18pt}\subseteq
	(\rho_R^L)^{-1} \circ \rho^L_{R} \left(\flow_L\left[t, \Omega_L \cap \left((\rho_{R+1}^L)^{-1} \circ \rho_{R+1}^\infty(K) \right) \right] \right) 
		\\
		\nonumber
		&\hbox{\hskip 18pt}\subseteq 
	(\rho_R^L)^{-1} \circ \rho^\infty_R \left( \flow_\infty(t,K) \right)
	\end{align}
Furthermore, $\flow_\infty(t,K)$ is a Borel subset of $C^s_{loc}([0,\infty) \to \C)$ by Lemma~\ref{flownice}, and thus $(\rho_R^L)^{-1} \circ \rho^\infty_R \left( \flow_\infty(t,K) \right)$ is analytic (and $\nu_L$-measurable) by Proposition~\ref{analyticprops}.

Putting \eqref{contains} into \eqref{finitething}, then
	\begin{align*}
	\nu_L((\rho_{R+1}^L)^{-1} \circ \rho_{R+1}^\infty(K))
		\leq
	\nu_L\left( (\rho_R^L)^{-1} \circ \rho^\infty_R \left( \flow_\infty(t,K) \right) \right),
	\end{align*}
which is to say,
	\[
	\nu_L|_{[0,R+1]} \left( \rho_{R+1}^\infty(K) \right)
		\leq
	\nu_L|_{[0,R]}\left( \rho_R^\infty(\flow_\infty(t,K)) \right).
	\]
Sending $L \to \infty$, Theorem~\ref{infvolmeas} gives
	\[
	\nu_\infty|_{[0,R+1]} \left( \rho_{R+1}^\infty(K) \right)
		\leq
	\nu_\infty|_{[0,R]}\left( \rho_R^\infty(\flow_\infty(t,K)) \right),
	\]
or
	\begin{equation}\label{restrict}
	\nu_\infty \left( (\rho_{R+1}^\infty)^{-1} \circ \rho_{R+1}^\infty(K) \right)
		\leq
	\nu_\infty\left( (\rho_{R}^\infty)^{-1} \circ \rho_R^\infty(\flow_\infty(t,K)) \right).
	\end{equation}
Similar to above, $(\rho_{R}^\infty)^{-1} \circ \rho_R^\infty(\flow_\infty(t,K))$ is an analytic subset of $C^s_{loc}([0,\infty))$ and thus it is indeed $\nu_\infty$-measurable.
	
Note that \eqref{restrict} holds for arbitrary $R > 0$. Furthermore, we have
	\[
	(\rho_{R}^\infty)^{-1} \circ \rho_R^\infty(\flow_\infty(t,K))
		\supseteq
	(\rho_{R+1}^\infty)^{-1} \circ \rho_{R+1}^\infty(\flow_\infty(t,K))
	\]
and similarly for $K$. By dominated convergence,
	\begin{equation}\label{bigcaps}
	\nu_\infty \left( \bigcap_{R=2}^\infty (\rho_{R}^\infty)^{-1} \circ \rho_{R}^\infty(K) \right)
		\leq
	\nu_\infty\left( \bigcap_{R=1}^\infty  (\rho_{R}^\infty)^{-1} \circ \rho_R^\infty(\flow_\infty(t,K)) \right).
	\end{equation}
	
We claim that 
	\begin{equation}\label{closedflow}
	\flow_\infty(t,K) = \bigcap_{R=1}^\infty (\rho_R^\infty)^{-1} \circ \rho_R^\infty (\flow_\infty(t,K)).
	\end{equation}
The containment $\subseteq$ is obvious. Now, let $g \in \bigcap_{R=1}^\infty (\rho_R^\infty)^{-1} \circ \rho_R^\infty (\flow_\infty(t,K))$. For each integer $R \geq 1$, there exists $f_R \in K$ such that 
	\begin{equation}\label{defn}
	\flow_\infty(t,f_R)|_{[0,R]} \equiv g|_{[0,R]}
	\end{equation}
By finite speed of propagation, it follows that for each $n \geq 0$ and for each $R \geq 1$,
	\[
	f_R|_{[0,R-1]} \equiv f_{R+n}|_{[0,R-1]}.
	\]
Thus, the sequence $\{f_R\}_{R=1}^\infty$ converges to some $f \in C^s_{loc}([0,\infty) \to \C)$, with
	\[
	f|_{[0,R-1]} \equiv f_R|_{[0,R-1]}
	\]
for each $R \geq 1$. Indeed, as $K$ is closed, we also have $f \in K$. Given \eqref{defn} and finite speed of propagation, we have
	\[
	\flow_\infty(t,f)|_{[0,R-2]} \equiv g|_{[0,R-2]}
	\]
for each $R \geq 2$, and thus $\flow_\infty(t,f) = g$. This proves the reverse containment.

As $K$ is closed, a similar converging sequence argument as above shows
	\begin{equation}\label{closedset}
	K = \bigcap_{R=2}^\infty (\rho_R^\infty)^{-1}\circ \rho_R^\infty(K).
	\end{equation}
Putting \eqref{closedflow} and \eqref{closedset} into \eqref{bigcaps} finishes the proof.
\end{proof}

The above proof actually shows that $\flow_\infty$ preserves closed subsets $K$ of $C^s_{loc}([0,\infty) \to \C)$ which are contained\footnote{Inner regularity of $\nu_\infty$ guarantees existence of such closed sets. Indeed, we may find compact sets $K \subseteq C^s_{loc}([0,\infty) \to \C)$ contained in $\Omega_\infty$ with measure arbitrarily close to $1$.} in $\Omega_\infty$, though we will not use this fact for later results. Finally, we prove the invariance assertion of Theorem~\ref{invarianceinf}.

\begin{prop}
For each $\nu_\infty$-measurable subset $A \subseteq \Omega_\infty$ and for each $t \in \R$, the set $\flow_\infty(t,A)$ is also $\nu_L$-measurable and $\nu_\infty(\flow_\infty(t,A)) = \nu_\infty(A)$.
\end{prop}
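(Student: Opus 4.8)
The plan is to establish the assertion for $|t|\le 1$ first, and then to bootstrap to arbitrary $t\in\R$ by composing along a decomposition $t=t_1+\cdots+t_n$ with $|t_j|\le 1$, using the group law $\flow_\infty(t+t',g)=\flow_\infty(t,\flow_\infty(t',g))$ (valid on $\Omega_\infty$, since the strong solution through such a $g$ is global). For $|t|\le 1$ and $A\subseteq\Omega_\infty$ Borel, Lemma~\ref{flownice} already provides that $\flow_\infty(t,A)$ is Borel and that $\flow_\infty(t,\cdot)$ is injective on $\Omega_\infty$ (the flow is reversible, with two-sided inverse $\flow_\infty(-t,\cdot)$). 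The first step is the lower bound $\nu_\infty(\flow_\infty(t,A))\ge\nu_\infty(A)$: since $C^s_{loc}([0,\infty)\to\C)$ is Polish (Proposition~\ref{holderpolish}), the finite measure $\nu_\infty$ is inner regular, so I would take compact---hence closed---sets $K_n\subseteq A$ with $\nu_\infty(K_n)\uparrow\nu_\infty(A)$ and apply Lemma~\ref{closedgood} to each $K_n$.

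To upgrade this to an equality I would apply the lower bound simultaneously to $A$ and to $\Omega_\infty\setminus A$. By injectivity their images are disjoint Borel sets with union $\flow_\infty(t,\Omega_\infty)$, a set of $\nu_\infty$-measure at most $1$, so
\[
\nu_\infty(A)+\nu_\infty(\Omega_\infty\setminus A)\le\nu_\infty(\flow_\infty(t,A))+\nu_\infty(\flow_\infty(t,\Omega_\infty\setminus A))\le 1=\nu_\infty(A)+\nu_\infty(\Omega_\infty\setminus A),
\]
which forces $\nu_\infty(\flow_\infty(t,A))=\nu_\infty(A)$ for every Borel $A\subseteq\Omega_\infty$ and, taking $A=\Omega_\infty$, $\nu_\infty(\flow_\infty(t,\Omega_\infty))=1$. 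Next I would pass to the $\nu_\infty$-completion: a $\nu_\infty$-measurable $A\subseteq\Omega_\infty$ is $B\cup N$ with $B\subseteq A$ Borel and $N$ inside a Borel null set (Proposition~\ref{regular}), so $\nu_\infty(B)=\nu_\infty(A)$ and $\flow_\infty(t,B)$ is Borel of the correct measure; for the error term I would use outer regularity to trap $N$ in an open $U$ with $\nu_\infty(U)<\eps$, observe that $U\cap\Omega_\infty$ is a Borel subset of $\Omega_\infty$ of measure $<\eps$, and invoke the Borel case to get $\flow_\infty(t,N)\subseteq\flow_\infty(t,U\cap\Omega_\infty)$ Borel of measure $<\eps$. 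Hence $\flow_\infty(t,N)$ is $\nu_\infty$-null, $\flow_\infty(t,A)=\flow_\infty(t,B)\cup\flow_\infty(t,N)$ is $\nu_\infty$-measurable with $\nu_\infty(\flow_\infty(t,A))=\nu_\infty(A)$, and the Borel-preservation clause is again Lemma~\ref{flownice}. This disposes of the case $|t|\le 1$.

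The hard part is the globalization. Decomposing $t=t_1+\cdots+t_n$ with $|t_j|\le1$ and composing, the subtlety is that an intermediate image such as $\flow_\infty(t_1,A)$ need not be a subset of $\Omega_\infty$, so the $|t|\le1$ case does not apply to it verbatim. My intended remedy is to replace $\flow_\infty(t_1,A)$ by $\flow_\infty(t_1,A)\cap\Omega_\infty$ at each stage: since $\flow_\infty(t_1,\Omega_\infty)$ is $\nu_\infty$-conull, this loses only a $\nu_\infty$-null set, and the further $\flow_\infty(t_2,\cdot)$-image of the discarded null set is again $\nu_\infty$-null by the same outer-regularity argument, performed inside $\Omega_\infty$ after transporting the error set back along $\flow_\infty(-t_1,\cdot)$. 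Iterating through the $n$ factors preserves $\nu_\infty$-measurability, the measure, and (for Borel $A$) Borelness; equivalently, the set of $t\in\R$ for which the full conclusion holds is closed under addition and contains $[-1,1]$, hence equals $\R$. Keeping precise track of which null sets arise as reverse-flow images of null subsets of $\Omega_\infty$ is the most delicate point, but it is routine---all of the genuine input, namely finite speed of propagation together with the Lusin--Souslin theorem, has already been consumed inside Lemmas~\ref{flownice} and \ref{closedgood}. (Alternatively, one could avoid the bookkeeping by replacing $\Omega_\infty$ from the outset with a $\nu_\infty$-conull Borel subset that is invariant under the rational-time flows.)
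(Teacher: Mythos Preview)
Your proposal is correct and follows essentially the same route as the paper: inner regularity plus Lemma~\ref{closedgood} for the lower bound on Borel sets, the complement trick to upgrade to equality, extension to the completion via Proposition~\ref{regular}, and iteration with intersection by $\Omega_\infty$ for general $t$. If anything, you are more careful than the paper about the globalization bookkeeping---the paper dispatches that entire step in a single sentence (``iterating the flow, and intersecting with $\Omega_\infty$ if necessary'')---so your discussion of why the intermediate null sets stay null is a genuine elaboration rather than a deviation.
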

\begin{proof}
We first consider the case when $t \in [-1,1]$ and when $A$ is Borel. In view of Theorem~\ref{regular}, let
	\[
	K_1 \subseteq K_2 \subseteq K_3 \subseteq \cdots \subseteq A
	\]
be compact sets such that $\nu_\infty(A \setminus K_n) \leq \frac{1}{n}$ for each $n$. By Lemma~\ref{closedgood}, we have $\nu_\infty(K_n) \leq \nu_\infty(\flow_\infty(t,K_n))$ and so
	\begin{equation}\label{uponeway}
	\nu_\infty(A) 
		=
	\nu_\infty\Big( \bigcup_{n=1}^\infty K_n \Big) 
		\leq 
	\nu_\infty\Big( \flow_\infty\Big[t,\bigcup_{n=1}^\infty K_n\Big] \Big)
		\leq
	\nu_\infty(\flow_\infty(t,A)).
	\end{equation}
A similar argument also shows
	\begin{equation}\label{upotherway}
	\nu_\infty(\Omega_\infty \setminus A) \leq \nu_\infty(\flow_\infty(t,\Omega_\infty \setminus A)).
	\end{equation}
Since 
	\[
	1 
		= 
	\nu_\infty(A) + \nu_\infty(\Omega_\infty\setminus A)
		\leq
	\nu_\infty(\flow_\infty(t,A)) + \nu_\infty(\flow_\infty(t,\Omega_\infty \setminus A)) 
		\leq
	1,
	\]
all of the inequalities in \eqref{uponeway} and \eqref{upotherway} must actually be equalities.

The case for all $t \in \R$ follows from iterating the flow, and intersecting with $\Omega_\infty$ if necessary. The case for all $\nu_\infty$-measurable $A$ follows from a similar argument as in Section~\ref{prooffinite}.
\end{proof}

\appendix

\section{Some Descriptive Set Theory}\label{LS}

In this section, we recall some defintions and facts from descriptive set theory that will be useful for our paper. In particular, we use these results in Section~\ref{prooffinite} and Section~\ref{invariance}. We refer to \cite{Kech} for proofs. 

\begin{defn}
A topological space is said to be a \textit{Polish space} if it is completely metrizable, and separable with respect to this metric.
\end{defn}

One of the more useful results in this setting is a theorem by Lusin and Souslin, which states that injective Borel maps (in particular, continuous embeddings) between Polish spaces are Borel isomorphisms onto their image.

\begin{thm}[Lusin--Souslin]\label{lusinsouslin}
Let $X,Y$ be Polish spaces, and let $f : X \to Y$ be Borel measurable. If $A \subseteq X$ is Borel and $f|_A$ is injective, then $f(A) \subseteq Y$ is Borel. 
\end{thm}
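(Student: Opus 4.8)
The plan is to follow the standard descriptive-set-theoretic argument (as in \cite{Kech}), which splits into routine topological reductions followed by a Luzin-scheme construction whose one nontrivial ingredient is the Lusin separation theorem.

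First I would reduce to the case $A = X$ with $f$ continuous. For the first reduction: there is a finer Polish topology $\tau_1 \supseteq \tau_X$ on $X$ with the same Borel $\sigma$-algebra in which the Borel set $A$ is clopen; then $A$ with the subspace topology from $\tau_1$ is Polish, and $f|_A$ remains Borel since the Borel structure is unchanged, so we may replace $X$ by $A$. For the second reduction: given a countable base $(V_n)$ of $Y$, the sets $f^{-1}(V_n)$ are Borel in $X$, and adjoining countably many Borel sets to a Polish topology yields a finer Polish topology with the same Borel sets; hence there is a finer Polish topology $\tau_2$ in which $f$ is continuous, and we replace $X$ by $(X,\tau_2)$. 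Now $f \colon X \to Y$ is continuous and injective with $X$ Polish; fix complete compatible metrics $d_X, d_Y$.

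Next I would build a Luzin scheme: sets $(F_s)_{s \in \N^{<\N}}$ in $X$ with $F_\emptyset = X$, $F_s = \bigsqcup_n F_{s\frown n}$ (disjoint union over $n$), $\overline{F_{s\frown n}} \subseteq F_s$, $\operatorname{diam}_{d_X}(F_s) \le 2^{-|s|}$, and also $\operatorname{diam}_{d_Y}(f(F_s)) \le 2^{-|s|}$ — the last is arranged using continuity of $f$ to refine the cover at each stage. Each $F_s$ is Borel, but its image $f(F_s)$ is only analytic, so the key step is to ``round out'' these images to a Borel scheme: the sets $f(F_{s\frown n})$, $n \in \N$, are pairwise disjoint analytic sets (disjointness here uses injectivity of $f$), so the generalized Lusin separation theorem for countably many disjoint analytic sets produces pairwise disjoint Borel sets separating them; intersecting with the parent set, with a small neighborhood of $\overline{f(F_s)}$, and declaring $B_s := \emptyset$ whenever $F_s = \emptyset$ yields a Borel scheme $(B_s)$ with $f(F_s) \subseteq B_s$, $B_{s\frown n} \subseteq B_s$ pairwise disjoint in $n$, and $\operatorname{diam}_{d_Y}(B_s) \le 2^{-|s|+1}$. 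Then I would conclude $f(X) = \bigcap_{k} \bigcup_{|s| = k} B_s$, which is Borel: the inclusion $\subseteq$ is immediate because disjointness of the children $F_{s\frown n}$ forces every $x \in X$ to lie on a unique branch $z \in \N^\N$ with $x \in F_{z|k}$ for all $k$, hence $f(x) \in B_{z|k}$; for $\supseteq$, pairwise disjointness of $\{B_s : |s| = k\}$ forces the nodes witnessing membership of a point $y$ at successive levels to cohere into a branch $z$ with $y \in B_{z|k}$ and $F_{z|k} \ne \emptyset$, and the diameter bounds together with completeness of $d_Y$ then pin $y$ to the single point of $\bigcap_k \overline{f(F_{z|k})}$, which equals $f$ evaluated at the unique point of $\bigcap_k \overline{F_{z|k}}$, hence lies in $f(X)$.

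The hard part will be the separation step: the raw images $f(F_s)$ are only analytic, and the whole argument hinges on producing Borel ``envelopes'' $B_s$ that still form a scheme — nested, pairwise disjoint among siblings, and of shrinking $d_Y$-diameter — which is exactly what forces invocation of the Lusin separation theorem. The finer-topology reductions and the final coherence/diameter bookkeeping are otherwise routine.
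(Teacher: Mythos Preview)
The paper does not supply its own proof of this theorem; it states the result and refers to \cite{Kech} for a proof. Your proposal is precisely the standard argument found there (essentially Theorem~15.1 in Kechris): the reduction via finer Polish topologies to a continuous injection on all of $X$, the construction of a Luzin scheme with shrinking diameters in both $d_X$ and $d_Y$, the use of the Lusin separation theorem (applied to the pairwise disjoint analytic images, disjointness coming from injectivity) to produce a Borel scheme $(B_s)$, and the identification $f(X) = \bigcap_k \bigcup_{|s|=k} B_s$. So your approach is correct and is exactly what the paper is deferring to.
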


Indeed, let us deduce the following result as a corollary:

\begin{prop}\label{holdersobolev}
Fix $L \in (0,\infty)$ and $s \in [0,\infty)$. For each of the Banach spaces 
	\begin{align*}
	X &=L^p([0,L]), \hbox{ with } 1 \leq p < \infty, \hbox{ or} \\ 
	X &= \dot{H}^s_0([0,L]), \hbox{ or} \\ 
	X &= C^{0}([0,L]),
	\end{align*}
the following statement holds: 
	\begin{quote}
	Endow $X$ with its usual norm topology, as well as the corresponding Borel $\sigma$-algebra, $\mathcal{B}_X$. Then $C^s_0([0,L]) \in \mathcal{B}_X$. Furthermore, the restriction of $\mathcal{B}_X$ to $C^s_0([0,L])$ coincides with the standard Borel $\sigma$-algebra induced by the $s$-H\"older norm.
	\end{quote}
\end{prop}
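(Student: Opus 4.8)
The plan is to prove the assertion first for $X = C^0([0,L])$ and then transfer it to $X = L^p([0,L])$ and to $X = \dot{H}^s_0([0,L])$ by means of continuous injections and the Lusin--Souslin theorem (Theorem~\ref{lusinsouslin}); throughout, $\mathcal{B}_{C^s}$ denotes the $\sigma$-algebra on $C^s_0([0,L])$ generated by the open balls of the $s$-H\"older norm, and we use freely that $C^0([0,L])$, $L^p([0,L])$ and $\dot{H}^s_0([0,L])$ are Polish (being separable Banach, resp.\ Hilbert, spaces). In the base case $X = C^0([0,L])$ two things must be checked: that $C^s_0([0,L])$ lies in $\mathcal{B}_{C^0}$, and that $\mathcal{B}_{C^0}$ restricted to $C^s_0([0,L])$ equals $\mathcal{B}_{C^s}$.

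For Borel measurability, the endpoint conditions $f(0) = 0 = f(L)$ carve out a closed linear subspace of $C^0([0,L])$, since each evaluation $f \mapsto f(r)$ is $\|\cdot\|_\infty$-continuous. For the H\"older seminorm $[f]_s := \sup_{r_1 \neq r_2}|f(r_1) - f(r_2)|\,|r_1 - r_2|^{-s}$, uniform continuity of $f \in C^0([0,L])$ lets one replace this supremum by the (countable) supremum over rational pairs $r_1 \neq r_2$ in $[0,L]$; hence $f \mapsto [f]_s$ is a countable supremum of $\|\cdot\|_\infty$-continuous, $[0,\infty]$-valued functionals, so it is lower semicontinuous, each sublevel set $\{[f]_s \le M\}$ is closed, and $\{[f]_s < \infty\} = \bigcup_{M \in \N}\{[f]_s \le M\}$ is $F_\sigma$. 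Intersecting with the endpoint subspace shows $C^s_0([0,L])$ is $F_\sigma$, hence Borel, in $C^0([0,L])$. For the $\sigma$-algebra identity, continuity of the inclusion $C^s_0([0,L]) \hookrightarrow C^0([0,L])$ gives one containment; for the other I would repeat the computation from the proof of Proposition~\ref{holderpolish}: a closed $C^s$-ball around $f_0$ equals a countable intersection, over rational pairs, of the $\|\cdot\|_\infty$-closed sets $\{f : |(f(r_1)-f_0(r_1)) - (f(r_2)-f_0(r_2))| \le \lambda|r_1-r_2|^s\}$ together with $\{\|f - f_0\|_\infty \le \lambda\}$, so it is the trace on $C^s_0([0,L])$ of a $\|\cdot\|_\infty$-closed set; writing each open $C^s$-ball as a countable union of closed ones then gives $\mathcal{B}_{C^s} \subseteq \mathcal{B}_{C^0}|_{C^s_0([0,L])}$.

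To reach $X = L^p([0,L])$, $1 \le p < \infty$, note $C^0([0,L]) \subseteq L^p([0,L])$ and that the inclusion $\iota_p$ is a continuous injection of Polish spaces; Theorem~\ref{lusinsouslin} then makes $\iota_p(C^0([0,L]))$ a Borel subset of $L^p([0,L])$ and $\iota_p$ a Borel isomorphism onto it, so the base case propagates: $C^s_0([0,L])$ is a Borel subset of $L^p([0,L])$, and the trace of $\mathcal{B}_{L^p}$ on it is $\mathcal{B}_{C^s}$. For $X = \dot{H}^s_0([0,L])$ we reduce to the $L^1$ case: since $s \ge 0$ there are continuous inclusions $\dot{H}^s_0([0,L]) \hookrightarrow L^2([0,L]) \hookrightarrow L^1([0,L])$ and, as used in Section~\ref{prooffinite}, $C^s_0([0,L]) \subseteq \dot{H}^s_0([0,L])$; the composite $\iota : \dot{H}^s_0([0,L]) \hookrightarrow L^1([0,L])$ is again a continuous injection of Polish spaces, hence a Borel isomorphism onto a Borel subset of $L^1([0,L])$, and pulling the already-established $L^1$ statement back along $\iota$ gives the claim for $\dot{H}^s_0([0,L])$.

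The step I expect to require the most care is the $\dot{H}^s_0$ case: one must first secure the inclusion $C^s_0([0,L]) \subseteq \dot{H}^s_0([0,L])$ and verify that the Fourier sine expansion of an $s$-H\"older function (which converges uniformly) represents that function, so that the $\dot{H}^s_0$-continuous coefficient functionals recognize $[f]_s$ as a Borel quantity on $\dot{H}^s_0$ through a $\limsup$ of partial sums. A second point to flag explicitly --- it is already tacit in Proposition~\ref{holderpolish} --- is that the $s$-H\"older norm is not separable, so ``the Borel $\sigma$-algebra induced by the $s$-H\"older norm'' is read as the $\sigma$-algebra generated by its open balls; this is compatible with the reduction above because every closed $C^s$-ball is $\|\cdot\|_\infty$-compact by the Arzel\`a--Ascoli theorem. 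Everything else --- uniform continuity, uniform convergence of H\"older Fourier series, and the explicit ball decompositions --- is routine.
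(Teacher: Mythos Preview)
Your proof is correct and takes a genuinely different (and more careful) route than the paper's one-line argument, which simply observes that each target $X$ is separable Banach hence Polish, that $C^s_0$ embeds continuously into each, and invokes Theorem~\ref{lusinsouslin} directly. That shortcut tacitly requires the \emph{domain} $C^s_0$ to be Polish as well --- the paper elsewhere writes ``recall that $C^s([0,n])$ is Polish'' --- whereas you correctly note that $(C^s_0,\|\cdot\|_{C^s})$ is non-separable for $0<s<1$. Your explicit treatment of the $C^0$ case (rational-pair sups making $[f]_s$ lower semicontinuous, closed-ball decomposition as in Proposition~\ref{holderpolish}) exhibits $C^s_0$ as an $F_\sigma$ subset of the Polish space $C^0$, i.e., as standard Borel; Lusin--Souslin applied to the genuinely Polish injections $C^0\hookrightarrow L^p$ and $\dot H^s_0\hookrightarrow L^1$ then transfers everything. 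In effect your base case supplies exactly the missing ``$C^s_0$ is standard Borel'' hypothesis that legitimates the paper's direct appeal to Lusin--Souslin. Your hesitation about the inclusion $C^s_0\subseteq\dot H^s_0$ is also well founded: the lacunary series $\sum_{k\ge 1}2^{-ks}\sin(2^k\pi r/L)$ lies in $C^s_0$ but has infinite $\dot H^s_0$ norm, so for $0<s<1$ the $\dot H^s_0$ clause of the Proposition is ill-posed as written --- a defect of the statement that both your argument and the paper's inherit, not of your reasoning.
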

\begin{proof}
Recall that all of the Banach spaces mentioned above are separable, and hence are Polish spaces. Furthermore, $C^s_0([0,L])$ embeds continuously into each Banach space $X$ above. By Theorem~\ref{lusinsouslin}, it follows that $C^s_0([0,L]) \in \mathcal{B}_X$ for each $X$ above, and that the $\sigma$-algebras mentioned above must coincide.  
\end{proof}

In this paper, we also consider general continuous images of Borel sets, such as restriction maps. These sets are not necessarily Borel, but still obey nice measure theoretic properties.

\begin{defn}\label{analytic}
Let $X$ be a Polish space. 
	\begin{enumerate}
	\item A set $A \subseteq X$ is called \textit{analytic} if there is a Polish space $Y$, a Borel subset $B \subseteq Y$, and a continuous function $f : Y \to X$ such that $f(B) = A$.
	\item Let $\mu$ be a $\sigma$-finite Borel measure on $X$, and let $\mathcal{F}$ be the completion of the Borel $\sigma$-algebra with respect to $\mu$. A set $A \subseteq X$ is called \textit{$\mu$-measurable} if $A \in \mathcal{F}$.
	\item Finally, a set $A \subseteq X$ is called \textit{universally measurable} if $A$ is $\mu$-measurable for every $\sigma$-finite Borel measure $\mu$ on $X$.
	\end{enumerate}
\end{defn}

\begin{prop}[Properties of analytic sets]\label{analyticprops}
Let $X$ be a Polish space.
	\begin{enumerate}
	\item If $A_n \subseteq X$ is analytic for $n \in \N$, then $\bigcap_{n \in \N} A_n$ and $\bigcup_{n \in \N} A_n$ are also analytic.
	\item Let $Y$ be another Polish space and let $f : X \to Y$ be Borel measurable. If $A\subseteq X$ is analytic, then $f(A) \subseteq Y$ is analytic. If $B \subseteq Y$ is analytic, then $f^{-1}(B) \subseteq X$ is analytic.
	\item (Lusin) Analytic sets are universally measurable.
	\end{enumerate}
\end{prop}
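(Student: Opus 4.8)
\textbf{Proof proposal for Proposition~\ref{analyticprops}.}

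The plan is to prove the three items essentially in the order they are stated, reducing everything to the single nontrivial fact that the continuous image of a Borel set is analytic, together with the structural characterization of analytic sets via projections of closed subsets of $X \times \mathcal{N}$ (where $\mathcal{N} = \N^\N$ is Baire space). All of these are standard, so the task is really one of bookkeeping: I would invoke \cite{Kech} for the underlying machinery and assemble the consequences. For item (1), I would first note the elementary fact that if $A \subseteq X$ is analytic then there is a \emph{closed} set $F \subseteq X \times \mathcal{N}$ with $A = \pi_X(F)$ (replace the Polish witnessing space $Y$ by $\mathcal{N}$, using that every Polish space is a continuous bijective image, indeed a Borel image, of a closed subset of $\mathcal{N}$, and absorb the Borel set $B$ into the closed set by intersecting with its graph-closure inside $\mathcal{N}$). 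Given witnesses $A_n = \pi_X(F_n)$ with $F_n \subseteq X \times \mathcal{N}_n$ closed, for the countable union one simply takes the disjoint topological sum $\bigsqcup_n \mathcal{N}_n$ (again Polish) and the union $\bigsqcup_n F_n$; for the countable intersection one uses the product $\prod_n \mathcal{N}_n$ and the set $\{(x,(y_n)_n) : (x,y_n) \in F_n \ \forall n\}$, which is closed in $X \times \prod_n \mathcal{N}_n$ and projects exactly onto $\bigcap_n A_n$.

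For item (2), I would handle the two halves separately. The preimage statement is the easier one: if $B \subseteq Y$ is analytic, write $B = g(C)$ for $C \subseteq Z$ Borel and $g : Z \to Y$ continuous; then $f^{-1}(B) = \pi_X\big(\{(x,z) \in X \times Z : f(x) = g(z),\ z \in C\}\big)$, and the set in parentheses is Borel in the Polish space $X \times Z$, so its projection is analytic by definition. (Alternatively, and more cleanly: the class of analytic subsets of $Y$ is closed under Borel preimages because Borel sets are analytic and the analytic sets are closed under countable unions, intersections, and under preimages of the coordinate and evaluation maps — but the direct projection argument is quickest.) The image statement is where the real content sits: for $A \subseteq X$ analytic and $f : X \to Y$ Borel measurable, I would use that a Borel map has Borel graph $\mathrm{Gr}(f) \subseteq X \times Y$, hence $f(A) = \pi_Y\big((A \times Y) \cap \mathrm{Gr}(f)\big)$. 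Now $A \times Y$ is analytic in $X \times Y$ (product of an analytic set with a Polish space — itself a projection argument) and $\mathrm{Gr}(f)$ is Borel hence analytic, so by item (1) the intersection is analytic in $X \times Y$, and finally the continuous image of an analytic set under $\pi_Y$ is analytic, again by composition of the witnessing continuous maps.

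For item (3), I would just cite Lusin's theorem that analytic sets are universally measurable, which is exactly \cite[Theorem 21.10]{Kech} (every analytic set is $\mu$-measurable for every $\sigma$-finite Borel measure $\mu$, via the capacitability/regularity argument sandwiching an analytic set between an $F_\sigma$ and a $G_\delta$ of equal measure). No further argument is needed here. The main obstacle, such as it is, is purely expository: making sure the ``replace the witnessing Polish space by a closed subset of Baire space, and absorb the Borel subset'' reduction in item (1) is stated correctly, since that is the lemma that makes the union/intersection constructions go through cleanly; once that normal form for analytic sets is in hand, items (1) and (2) are routine diagram-chasing with projections, and item (3) is a direct citation.
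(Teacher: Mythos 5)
Your proposal is correct: the paper gives no proof of this proposition at all, deferring entirely to \cite{Kech}, and your sketch is precisely the standard development found there (normal form for analytic sets as projections of closed subsets of $X\times\N^\N$, the product/disjoint-sum constructions for intersections and unions, the Borel-graph argument for images and preimages under Borel maps, and Lusin's capacitability theorem for universal measurability). Nothing in your argument is missing or circular — in particular your use of ``continuous images of analytic sets are analytic'' inside item (2) is justified directly from the definition by composing witnessing maps — so this matches the intended proof.
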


Finally, we recall a generalization of regularity results for Lebesgue measure to the Polish space setting.

\begin{prop}[Regularity of Borel Measures]\label{regular}
Let $X$ be a Polish space and let $\mu$ be a finite Borel measure on $X$. Then $\mu$ is \textit{regular}: for any $\mu$-measurable set $A \subseteq X$,
	\begin{align*}
	\mu(A)
		&=
	\sup\{\mu(K) \mid K \subseteq A, K \hbox{ compact}\}\\
		&=
	\inf\{\mu(U) \mid U \supseteq A, U \hbox{ open}\}.
	\end{align*}
In particular, a set $A \subseteq X$ is $\mu$-measurable if and only if there exists an $F_\sigma$ set $F \subseteq A$ (resp., $G_\delta$ set $G \supseteq A$) such that $\mu(A \setminus F) = 0$ (resp., $\mu(G \setminus A) = 0$). 
\end{prop}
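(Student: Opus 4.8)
The final statement to prove is Proposition~\ref{regular}, the regularity of finite Borel measures on Polish spaces, together with the characterization of $\mu$-measurable sets in terms of $F_\sigma$ and $G_\delta$ sets.

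\medskip

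\textbf{Proof proposal.} The plan is to reduce everything to inner and outer regularity on the underlying Borel $\sigma$-algebra, and then bootstrap to the completion. First I would establish that every finite Borel measure $\mu$ on a metric space is \emph{outer regular by open sets and inner regular by closed sets} on the Borel $\sigma$-algebra. This is the classical argument: let $\mathcal{A}$ be the collection of Borel sets $A$ such that for every $\eps > 0$ there is a closed $F \subseteq A$ and an open $U \supseteq A$ with $\mu(U \setminus F) < \eps$. One checks $\mathcal{A}$ contains all closed sets (approximate a closed $C$ from outside by the open sets $U_n = \{x : d(x,C) < 1/n\}$, using $\mu(U_n) \downarrow \mu(C)$ by continuity from above of the finite measure $\mu$) and that $\mathcal{A}$ is closed under complementation (trivial, swapping the roles of $F$ and $U$) and under countable unions (given $A_n \in \mathcal{A}$ with closed $F_n$ and open $U_n$, take $U = \bigcup U_n$ and $F = \bigcup_{n \le N} F_n$ for $N$ large, controlling the tail of $\mu(\bigcup U_n \setminus \bigcup_{n\le N} F_n)$ via $\mu(\bigcup_n A_n \setminus \bigcup_{n \le N} A_n) \to 0$). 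Hence $\mathcal{A}$ is a $\sigma$-algebra containing the closed sets, so $\mathcal{A}$ is all of the Borel $\sigma$-algebra.

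\medskip

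Next I would upgrade inner regularity from closed sets to compact sets, which is where completeness and separability of the Polish space $X$ enter. Fix a complete metric $d$ compatible with the topology and a countable dense set. Given $\eps > 0$, for each $n$ cover $X$ by countably many closed balls of radius $1/n$ and choose finitely many of them whose union $C_n$ satisfies $\mu(X \setminus C_n) < \eps 2^{-n}$ (possible since $\mu$ is finite). Then $K := \bigcap_n C_n$ is closed and totally bounded, hence compact by completeness, and $\mu(X \setminus K) < \eps$. Combining this with inner regularity by closed sets: given a Borel set $A$ and $\eps > 0$, pick a closed $F \subseteq A$ with $\mu(A \setminus F) < \eps/2$ and a compact $K$ with $\mu(X \setminus K) < \eps/2$; then $F \cap K$ is compact, contained in $A$, with $\mu(A \setminus (F \cap K)) < \eps$. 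This gives $\mu(A) = \sup\{\mu(K) : K \subseteq A,\ K \text{ compact}\}$ for Borel $A$, and outer regularity by open sets was already established.

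\medskip

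Finally I would pass to the completion $\mathcal{F}$. If $A$ is $\mu$-measurable, write $A = B \cup N$ with $B$ Borel and $N$ contained in a Borel null set $Z$ (this is the standard description of the completion). Outer regularity: for $\eps > 0$ choose open $U \supseteq B$ with $\mu(U \setminus B) < \eps$ and open $V \supseteq Z$ with $\mu(V) < \eps$; then $U \cup V$ is open, contains $A$, and $\mu((U\cup V) \setminus A) \le \mu(U \setminus B) + \mu(V) < 2\eps$. Inner regularity: choose compact $K \subseteq B \subseteq A$ with $\mu(B \setminus K) < \eps$; since $\mu(A) = \mu(B)$, we get $\mu(A \setminus K) < \eps$. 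Iterating outer regularity with $\eps = 1/n$ and intersecting gives a $G_\delta$ set $G \supseteq A$ with $\mu(G \setminus A) = 0$; iterating inner regularity with compact (hence closed) sets $K_n \subseteq A$, $\mu(A \setminus K_n) < 1/n$, and taking $F = \bigcup_n K_n$ gives an $F_\sigma$ set $F \subseteq A$ with $\mu(A \setminus F) = 0$. Conversely, if such $F$ (or $G$) exists then $A = F \cup (A \setminus F)$ exhibits $A$ as a Borel set union a subset of a Borel null set, hence $A \in \mathcal{F}$. I expect the main obstacle to be the compact inner-approximation step — it is the only place genuinely using that $X$ is Polish rather than merely a metric space, and it requires the slightly delicate totally-bounded-plus-complete argument; everything else is routine $\sigma$-algebra bookkeeping.
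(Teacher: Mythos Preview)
Your proof is correct and follows the standard approach. Note, however, that the paper does not actually prove this proposition: Appendix~\ref{LS} opens by stating ``We refer to \cite{Kech} for proofs,'' and Proposition~\ref{regular} is merely quoted from the literature without argument. So there is no proof in the paper to compare your proposal against; you have supplied a complete argument where the paper defers to Kechris.
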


\section{A Multi-time Feynman--Kac formula}

In this section, we make various modifications to the classical Feynman--Kac formula. To this end, we make use of the fact that fundamental solutions of parabolic PDEs (cf., Definition~\ref{fundsolndefn}) also have nice properties in their secondary variables. We list the relevant results below, and refer to \cite{Fri} for proofs. 

\begin{lem}\label{adjoint}
Let $V(r,x) : \R^{\geq 0} \times \R \to \R$ be bounded and continuous, and let $\phi(r,x;s,y)$ be the fundamental solution of
	\[
	\partial_r \phi = \frac{1}{2} \partial_x^2 \phi + V(r,x) \phi
	\] 
Then, as a function of $(s,y)$, we also have $-\partial_s\phi = \frac{1}{2} \partial_y^2 \phi + V(s,y) \phi$. Furthermore, for $g \in C_0(\R)$ and for fixed $r > 0$, the unique solution of 
	\[\left\{ \begin{array}{l}
	-\partial_s \psi = \frac{1}{2} \partial_y^2 \psi + V(s,y) \psi,
		\hbox{\hskip 18pt}
	(s,y) \in [0,r) \times \R
		\\
	\psi(r,\cdot) = g(\cdot)
	\end{array} \right.\]
of sub-exponential growth in $y$ is given by
	\[
	\psi(s,y) = \int_\R g(x) \phi(r,x;s,y) \; dx.
	\]
\end{lem}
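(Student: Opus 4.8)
\textbf{Plan for the proof of Lemma~\ref{adjoint}.}

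The strategy is to reduce everything to the parametrix construction for the forward equation and to a duality/adjoint argument. First I would recall from the parametrix method (cf.\ \cite[pg.~14--20]{Fri}) that the fundamental solution $\phi(r,x;s,y)$ of $\partial_r\phi=\frac12\partial_x^2\phi+V(r,x)\phi$ is constructed with Gaussian-type bounds on $\phi$ and its $x$-derivatives up to second order, uniformly on compact $r$-intervals, and that these same bounds apply to $s$- and $y$-derivatives because the parametrix is built from the Gaussian kernel $\phi_0(r,x;s,y)$ which is symmetric under $(r,x)\leftrightarrow(s,y)$ up to the sign of time. The starting point is the differentiated Chapman--Kolmogorov (semigroup) identity: for $s<\rho<r$,
\[
\phi(r,x;s,y)=\int_\R \phi(r,x;\rho,w)\,\phi(\rho,w;s,y)\,dw,
\]
which follows from uniqueness of bounded solutions of the Cauchy problem exactly as in \eqref{semigroupphi}. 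Differentiating this identity in $\rho$ (justified by the Gaussian bounds on the derivatives of both factors, so that differentiation passes under the integral) and using that $\phi(r,x;\rho,w)$ solves $-\partial_\rho\phi=\frac12\partial_w^2\phi+V(\rho,w)\phi$ in its first pair of variables while $\phi(\rho,w;s,y)$ solves the forward equation in $(\rho,w)$, an integration by parts in $w$ shows the two contributions cancel; letting $\rho\downarrow s$ and using the delta-function property \eqref{deltafunc} then forces $-\partial_s\phi(r,x;s,y)=\frac12\partial_y^2\phi(r,x;s,y)+V(s,y)\phi(r,x;s,y)$ in the classical sense, and continuity of the $(s,y)$-derivatives comes from the parametrix bounds. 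This is essentially the content of the backward Kolmogorov equation; I would either cite \cite[Ch.~1]{Fri} directly or carry out this short computation.

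Given that $\phi$ solves the backward equation in $(s,y)$, define $\psi(s,y):=\int_\R g(x)\phi(r,x;s,y)\,dx$ for fixed $r>0$ and $g\in C_0(\R)$. Differentiating under the integral sign (legitimate by the Gaussian bounds, since $g$ is bounded with compact support) gives immediately that $\psi$ solves $-\partial_s\psi=\frac12\partial_y^2\psi+V(s,y)\psi$ on $[0,r)\times\R$. The terminal condition $\psi(r,\cdot)=g$ is the delta-function property \eqref{deltafunc}: as $s\uparrow r$, $\phi(r,x;s,y)\to\delta_{x-y}$ as linear functionals, so $\psi(s,y)\to g(y)$ (uniformly on compacts, using continuity of $g$). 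The Gaussian upper bound on $\phi$ gives that $\psi$ has at most sub-exponential (indeed Gaussian-rate) growth in $y$, locally uniformly in $s\in[0,r)$, so $\psi$ is an admissible solution.

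For uniqueness within the class of sub-exponentially growing solutions I would invoke the standard uniqueness theorem for backward parabolic Cauchy problems with bounded coefficients (the Tychonoff-type class), e.g.\ \cite[Section~1.9]{Fri} after the time-reversal $s\mapsto r-s$, which turns the backward problem into a forward one with bounded coefficients $\tilde V(\sigma,y):=V(r-\sigma,y)$ and initial datum $g$; any two sub-exponentially growing solutions then differ by a sub-exponentially growing solution of the homogeneous problem with zero data, which must vanish. The only real subtlety is bookkeeping: making sure the Gaussian bounds from the parametrix construction are strong enough to justify all the differentiations under the integral and the passage to the limit $\rho\downarrow s$, and confirming that ``sub-exponential growth'' is precisely the uniqueness class supplied by Friedman's theorem after time-reversal. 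Since $V$ is assumed bounded and continuous, none of this presents a genuine obstacle; the heart of the matter is simply the self-adjointness of the parametrix construction in its two pairs of variables.
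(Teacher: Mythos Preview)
The paper does not actually prove this lemma: the surrounding text says ``we list the relevant results below, and refer to \cite{Fri} for proofs,'' and no argument is given. Your sketch is the standard proof one would find in Friedman (backward Kolmogorov equation via differentiating the Chapman--Kolmogorov identity, then the representation formula and Tychonoff-class uniqueness after time reversal), so in that sense you are supplying strictly more than the paper does.

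One sentence in your write-up reads circularly and should be rephrased. You say you are ``using that $\phi(r,x;\rho,w)$ solves $-\partial_\rho\phi=\frac12\partial_w^2\phi+V(\rho,w)\phi$'' to show the two contributions cancel, but that backward equation in the second pair of variables is precisely the conclusion you are after. The correct logical order is: differentiate the semigroup identity in $\rho$, use only the \emph{forward} equation for the second factor $\phi(\rho,w;s,y)$, integrate by parts in $w$, and arrive at
\[
\int_\R \Bigl[\partial_\rho\phi(r,x;\rho,w)+\tfrac12\partial_w^2\phi(r,x;\rho,w)+V(\rho,w)\phi(r,x;\rho,w)\Bigr]\,\phi(\rho,w;s,y)\,dw=0
\]
for every $s<\rho$ and every $y$; then send $s\uparrow\rho$ (equivalently $\rho\downarrow s$) and use the delta-function property of the second factor to conclude that the bracket vanishes pointwise. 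Your ``then forces'' clause shows you have the right idea, but as written the sentence appears to assume what it proves. With that reordering the argument is correct; the remaining points (differentiation under the integral for $\psi$, the terminal condition from \eqref{deltafunc}, and uniqueness via time reversal and \cite[Section~1.9]{Fri}) are all standard and fine.
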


Let $W_{s,y}$ denote the Wiener measure for Brownian motion starting from time $s$ and at point $y$. We use $B$ to denote a generic function in the support of $W_{s,y}$. As before, 
	\[
	\phi_0(r,x;s,y) := \frac{1}{\sqrt{2\pi(r-s)}} \exp \left(-\frac{(x-y)^2}{2(r-s)} \right)
	\] 
is the heat kernel.

\begin{thm}[Multi-time Feynman--Kac, Brownian Motion]
Let $V(r,x) : \R^{\geq 0} \times \R \to \R$ be bounded and continuous, and let $\phi(r,x;s,y)$ be the fundamental solution of
	\[
	\partial_r \phi = \frac{1}{2} \partial_x^2 \phi + V(r,x) \phi.
	\] 
Let $L \in (0,\infty)$, $s \in [0,L)$, and $y \in \R$. Let $s < r_1 < \cdots < r_N < L$ and let $f_1, \ldots, f_N,g : \R \to \R$ be bounded, measurable functions. With $(r_0,x_0) := (s,y)$, we then have
	\begin{align}
	\label{expectation}
	&\E^{W_{s,y}} \left[ \exp\left(\int_s^L V(\rho,B(\rho)) \; d\rho\right) g(B(L)) \prod_{j=1}^N f(B(r_j)) \right] 	
		\\
		\nonumber
		&\hbox{\hskip 12pt}
		=
	\int_\R \cdots \int_\R g(x)\phi(L,x;r_N,x_N)\prod_{k=1}^{N} f(x_j)\phi(r_j,x_j;r_{j-1},x_{j-1}) \; dxdx_{N}\cdots dx_1 
	\end{align}
\end{thm}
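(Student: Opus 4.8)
The plan is to reduce the multi-time statement to the classical (single-time) Feynman--Kac formula via an induction on $N$, using the Markov property of Brownian motion together with the semigroup/adjoint structure of $\phi$ recorded in Lemma~\ref{adjoint}. First I would set up notation: write $\psi(s,y) := \E^{W_{s,y}}[\exp(\int_s^L V(\rho,B(\rho))\,d\rho)\,g(B(L))]$ for the $N=0$ case. The classical Feynman--Kac formula (which I would either invoke directly or recover from Lemma~\ref{adjoint}, since $\psi$ solves the backward Kolmogorov equation $-\partial_s\psi = \tfrac12\partial_y^2\psi + V(s,y)\psi$ with terminal data $g$ and has sub-exponential growth) gives $\psi(s,y) = \int_\R g(x)\,\phi(L,x;s,y)\,dx$, which is exactly \eqref{expectation} when $N=0$.

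For the inductive step, suppose the formula holds for any configuration of $N-1$ intermediate times on any interval. Given times $s < r_1 < \cdots < r_N < L$, condition on the path up to time $r_1$ and apply the Markov property at $r_1$: writing $\mathcal{F}_{r_1}$ for the natural filtration,
\begin{align*}
&\E^{W_{s,y}}\!\left[\exp\!\Big(\int_s^L V(\rho,B(\rho))\,d\rho\Big)g(B(L))\prod_{j=1}^N f(B(r_j))\right]
\\
&\quad=\E^{W_{s,y}}\!\left[\exp\!\Big(\int_s^{r_1} V(\rho,B(\rho))\,d\rho\Big)f(B(r_1))\,\E^{W_{r_1,B(r_1)}}\!\Big[\exp\!\Big(\int_{r_1}^L V\,d\rho\Big)g(B(L))\prod_{j=2}^N f(B(r_j))\Big]\right].
\end{align*}
By the induction hypothesis applied on the interval $[r_1,L]$ with the $N-1$ times $r_2<\cdots<r_N$, the inner conditional expectation equals $\int_\R\cdots\int_\R g(x)\phi(L,x;r_N,x_N)\prod_{k=2}^N f(x_k)\phi(r_k,x_k;r_{k-1},x_{k-1})\,dx\,dx_N\cdots dx_2$ with $x_1 := B(r_1)$; call this $\Theta(B(r_1))$, which is a bounded measurable function of $B(r_1)$. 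Then the outer expectation is $\E^{W_{s,y}}[\exp(\int_s^{r_1}V\,d\rho)\,(f\cdot\Theta)(B(r_1))]$, a single-time Feynman--Kac quantity on $[s,r_1]$ with terminal data $f\cdot\Theta$; applying the $N=0$ case on $[s,r_1]$ rewrites it as $\int_\R f(x_1)\Theta(x_1)\phi(r_1,x_1;s,y)\,dx_1$. Substituting the expression for $\Theta$ and relabeling $x := x_{N+1}$, $(r_0,x_0)=(s,y)$ gives precisely the right-hand side of \eqref{expectation}, completing the induction.

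The main obstacle I anticipate is justifying the interchanges and the finiteness/integrability needed at each stage: one must check that the inner expectation $\Theta(\cdot)$ is genuinely bounded and measurable (so that Fubini and the Markov property apply cleanly), and that the single-time Feynman--Kac formula is legitimately applicable with the (merely bounded, not continuous) terminal data $f\cdot\Theta$ — this requires either an approximation argument passing from $C_0(\R)$ data to bounded measurable data, or a direct verification that the integral $\int_\R (f\Theta)(x)\phi(r_1,x;s,y)\,dx$ solves the backward equation and represents the conditional expectation. Since $V$ is bounded and continuous and $\phi$ satisfies Gaussian upper bounds (from the comparison with $\phi_0$, cf.\ \eqref{gaussbound}-type estimates and Lemma~\ref{adjoint}), all the relevant integrals converge absolutely and the dominated convergence theorem handles the approximation; the Markov property itself is standard for Wiener measure. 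I would also note that the statement as given in the excerpt corresponds to the case needed in Section~\ref{MTFK} (where $V = -\tfrac14\,(w^4\wedge n)/(\rho^2\vee(1/n))$ is bounded and continuous), so no additional generality is required.
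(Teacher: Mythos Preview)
Your proposal is correct and follows the same overall architecture as the paper: induction on $N$ with the classical Feynman--Kac formula as base case, then splitting at $r_1$ via the Markov property and applying the induction hypothesis on $[r_1,L]$ to produce the function $h(r_1,\cdot)$ (your $\Theta$).

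The one genuine difference is in how the outer expectation on $[s,r_1]$ is handled. You black-box the $N=0$ case a second time, applying it directly with terminal data $f\cdot\Theta$. The paper instead \emph{re-derives} this step by expanding $\exp\bigl(\int_s^{r_1} V\,d\rho\bigr)$ as a Dyson series $\sum_n A_n$, computing each term $J_{s,y}(n)$ via the Markov property and the heat kernel $\phi_0$, and then checking that the sum solves the backward equation $-\partial_s\psi = \tfrac12\partial_y^2\psi + V\psi$ with terminal data $h(r_1,\cdot)f(\cdot)$, before invoking Lemma~\ref{adjoint}. Your route is shorter and conceptually cleaner; the paper's route is more self-contained and makes the Duhamel structure explicit. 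The approximation issue you flag (bounded measurable vs.\ $C_0$ terminal data) is present in both arguments, since Lemma~\ref{adjoint} as stated assumes $g\in C_0(\R)$; your proposed remedy via density and Gaussian bounds on $\phi$ is the standard fix and works equally well for either version.
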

\begin{proof}
The case $N = 0$ is the usual Feynman--Kac formula, which we shall assume as a base case (see \cite{Var} for a proof). For the inductive step, we shall mimic the technique of this proof. 

Let us write the left hand side of \eqref{expectation} as
	\[
	 \E^{W_{s,y}} \left[ \left( \sum_{n=0}^\infty A_n\right) \exp\left(\int_{r_1}^L V(\rho,B(\rho)) \; d\rho\right) g(B(L)) \prod_{j=1}^N f(B(r_j)) \right],
	\]
where 
	\[
	A_n := \frac{1}{n!} \int_s^{r_1} \cdots \int_s^{r_1} V(\rho_1,B(\rho_1)) \cdots V(\rho_n,B(\rho_n)) \; d\rho_n \cdots d\rho_1
	\] 
for $n \geq 1$ and $A_0 = 1$. Observe that, if we let 
	\[
	\mathcal{R}_n := \{ (\rho_1, \ldots, \rho_n) \in \R^n :  s < \rho_1 < \cdots < \rho_n < r_1 \}
	\] 
and let $d\vec{\rho} := d\rho_n \cdots d\rho_1$, then, for $n \geq 1$,
	\[
	A_n =\int_{\mathcal{R}^n} \prod_{j=1}^n V(\rho_j,B(\rho_j)) \; d\vec{\rho}.
	\]
By hypothesis and by Fubini's theorem, one may interchange expectations, sums, and integrals as desired. In particular,
	\begin{align*}
	LHS\eqref{expectation} = \sum_{n=0}^\infty J_{s,y}(n)
	\end{align*}	
where
	\[
	J_{s,y}(0) := \E^{W_{s,y}} \left[\exp\left(\int_{r_1}^L V(\rho,B(\rho)) \; d\rho\right)g(B(L)) \prod_{j=1}^N f(B(r_j)) \right] 
	\]
and, for $n \geq 1$,
	\begin{align*}
	&J_{s,y}(n):= \int_{\mathcal{R}^n} \E^{W_{s,y}} \Bigg[ \prod_{j=1}^n V(\rho_j,B(\rho_j)) \exp\left(\int_{r_1}^L V(\rho,B(\rho)) \; d\rho\right)  g(B(L)) \times \\
	&\hbox{\hskip 64pt} \prod_{j=1}^N f(B(r_j)) \Bigg] \; d\vec{\rho}.
	\end{align*}

Now, let
	\[
	h(r_1,x_1) := \E^{W_{r_1,x_1}}\left[\exp\left(\int_{r_1}^L V(\rho,B(\rho)) \; d\rho\right)g(B(L)) \prod_{j=2}^N f(B(r_j))\right],
	\]
which, by induction, is equal to
	\begin{equation}\label{inductivestep}
	\int_\R \cdots \int_\R g(x)\phi(L,x;r_N,x_N)\prod_{k=2}^{N} f(x_k)\phi(r_k,x_k;r_{k-1},x_{k-1}) \; dxdx_{N}\cdots dx_2.
	\end{equation}
Applying the Markov property of Brownian motion and \eqref{inductivestep},
	\[
	J_{s,y}(0) = \int_\R h(r_1,x_1) f(x_1) \phi_0(r_1,x_1;s,y)  \; dx_1.
	\]
and, for $n \geq 1$,
	\begin{align*}
	&J_{s,y}(n) = \int_{\mathcal{R}^n} \int_{\R^n} \int_\R h(r_1,x_1) f(x_1) \phi_0(r_1,x_1;\rho_n,w_n)  \\
	&\hbox{\hskip 64pt} \prod_{j=2}^n V(\rho_j,w_j) \phi_0(\rho_j,w_j;\rho_{j-1},w_{j-1})V(\rho_1,w_1) \phi_0(\rho_1,w_1;s,y) dx_1 d\vec{w} d\vec{\rho}
	\end{align*}

For the next step, observe that
	\[
	-\partial_s \phi_0(r,x;s,y) = \frac{1}{2} \partial_y^2 \phi_0(r,x;s,y)
	\]
and so
	\[
	-\partial_s J_{s,y}(0) = \frac{1}{2} \partial_y^2 J_{s,y}(0)
	\]
For $n \geq 1$, differentiating $J_{s,y}(n)$ in $s$ produces two terms: one from the bounds in $\mathcal{R}_n$ and one from differentiating $\phi_0(\rho_1,w_1;s,y)$; namely, for $n \geq 1$, 
	\[
	-\partial_s J_{s,y}(n) = V(s,y) J_{s,y}(n-1) + \frac{1}{2} \partial_y^2 J_{s,y}(n)
	\]
In particular, $\sum_{n=0}^\infty J_{s,y}(n)$ obeys\footnote{Indeed, $\sum_{n} J_{s,y}(n)$ represents the iterated Duhamel expansion for the solution of this PDE.} the backwards Kolmogorov equation
	\[
	-\partial_s \psi = \frac{1}{2} \partial_y^2 \psi + V(s,y) \psi
	\]
with terminal condition $\psi(r_1,\cdot) = h(r_1,\cdot)f(\cdot)$. Given Lemma~\ref{adjoint}, we have
	\[
	LHS\eqref{expectation} = \sum_{n=0}^\infty J_{s,y}(n) = \int_\R h(r_1,x_1) f(x_1) \phi(r_1,x_1;s,y) \; dx_1.
	\]
In view of \eqref{inductivestep}, we are done.
\end{proof}
	
Let $BB_{L,x;s,y}$ denote the measure for the Brownian bridge that starts at time $s$ and at point $y$, with ending at time $L$ and at point $x$. We use $BB$ to denote a generic element in the support of $BB_{L,x;s,y}$. The proof of the above theorem also applies, mutatis mutandis, to the following setting.
	
\begin{thm}[Multi-time Feynman--Kac, Brownian Bridge]\label{multitimebridge}
Let $V(r,x) : \R^{\geq 0} \times \R \to \R$ be bounded and continuous, and let $\phi(r,x;s,y)$ be the fundamental solution of 
	\[
	\partial_r \phi = \frac{1}{2} \partial_x^2 \phi + V(r,x) \phi.
	\] 
Let $L \in (0,\infty)$, $s \in [0,L)$, and $y \in \R$. Let $s < r_1 < \cdots < r_N < L$ and let $f_1, \ldots, f_N : \R \to \R$ be bounded, measurable functions. With $(r_0,x_0) := (s,y)$, we then have
	\begin{align*}
	&\E^{BB_{L,x;s,y}} \left[ \exp\left(\int_s^L V(\rho,BB(\rho)) \; d\rho\right) f_1(BB(r_1))\cdots f_N(BB(r_N)) \right] 	
		\\
		&\hbox{\hskip 12pt}
		=
	\int_\R \cdots \int_\R \frac{\phi(L,x;r_N,x_N)}{\phi_0(L,x;s,y)}\prod_{j=1}^{N} f(x_j)\phi(r_j,x_j;r_{j-1},x_{j-1}) \; dx_{N}\cdots dx_1.
	\end{align*}
\end{thm}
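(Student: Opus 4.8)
\textbf{Proof proposal for Theorem~\ref{multitimebridge}.} The plan is to mirror exactly the proof just given for the Brownian motion version, replacing the Wiener measure $W_{s,y}$ by the Brownian bridge measure $BB_{L,x;s,y}$ throughout, and tracking the extra normalization factor $\phi_0(L,x;s,y)^{-1}$ that distinguishes a bridge from free Brownian motion. The starting point is the standard single-time Feynman--Kac formula for Brownian bridges, namely
	\[
	\E^{BB_{L,x;s,y}}\left[ \exp\left(\int_s^L V(\rho,BB(\rho))\; d\rho\right) g(BB(r_1))\right] = \int_\R g(x_1) \frac{\phi(L,x;r_1,x_1)\phi(r_1,x_1;s,y)}{\phi_0(L,x;s,y)}\; dx_1,
	\]
which serves as the base case $N=1$ (the case $N=0$ being the genuine single-time formula $\E^{BB_{L,x;s,y}}[\exp(\int_s^L V)] = \phi(L,x;s,y)/\phi_0(L,x;s,y)$). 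I would either cite this from \cite{Fri,Var} or derive it quickly by conditioning free Brownian motion on its endpoint, using the disintegration $dW_{s,y} = \int_\R \phi_0(L,x;s,y)\, dBB_{L,x;s,y}\, dx$.

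For the inductive step, I would expand $\exp(\int_s^{r_1} V(\rho,BB(\rho))\,d\rho)$ into its Taylor series $\sum_{n\geq 0} A_n$ exactly as in the Brownian motion proof, with $A_n = \int_{\mathcal R_n}\prod_j V(\rho_j, BB(\rho_j))\, d\vec\rho$ over the ordered simplex $\mathcal R_n = \{s < \rho_1 < \cdots < \rho_n < r_1\}$. Interchanging expectation, sum, and integral (justified by boundedness of $V$ and finite time horizon), the left side becomes $\sum_{n\geq 0} J_{s,y}(n)$ where each $J_{s,y}(n)$ is an expectation over $BB_{L,x;s,y}$ of a product of $V$-factors on $[s,r_1]$ times $f_1(BB(r_1))\cdots f_N(BB(r_N))\exp(\int_{r_1}^L V)$. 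The key structural fact is the Markov property of the Brownian bridge: conditioning on the value $BB(r_1) = x_1$, the process on $[r_1,L]$ is itself a Brownian bridge $BB_{L,x;r_1,x_1}$, while on $[s,r_1]$ it is $BB_{r_1,x_1;s,y}$, and the joint density of $(BB(\rho_1),\ldots,BB(\rho_n),BB(r_1))$ on $[s,r_1]$ factors through products of heat kernels divided by $\phi_0(r_1,x_1;s,y)$. Using the induction hypothesis to evaluate
	\[
	h(r_1,x_1) := \E^{BB_{L,x;r_1,x_1}}\left[\exp\left(\int_{r_1}^L V\right)\prod_{j=2}^N f_j(BB(r_j))\right] = \int_{\R^{N-1}} \frac{\phi(L,x;r_N,x_N)}{\phi_0(L,x;r_1,x_1)}\prod_{k=2}^N f_k(x_k)\phi(r_k,x_k;r_{k-1},x_{k-1})\, d\vec x,
	\]
I would then recognize that $\psi(s,y) := \sum_n J_{s,y}(n)$ solves, as a function of $(s,y)$ on $[0,r_1)\times\R$, the backward equation $-\partial_s\psi = \tfrac12\partial_y^2\psi + V(s,y)\psi$ with terminal data $\psi(r_1,\cdot) = h(r_1,\cdot) f_1(\cdot)\, \phi_0(L,x;r_1,\cdot)/\phi_0(L,x;s,y)$ — the extra $\phi_0$ ratio being the only new ingredient relative to the Brownian motion case, and it is accounted for because $\phi_0(L,x;\rho_1,w_1)$ replaces a plain heat kernel in the innermost layer. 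Applying Lemma~\ref{adjoint} (which gives $-\partial_s\phi = \tfrac12\partial_y^2\phi + V\phi$ in the backward variables and represents solutions of the backward equation via integration of terminal data against $\phi$) then collapses $\psi(s,y)$ into $\int_\R h(r_1,x_1) f_1(x_1)\phi(r_1,x_1;s,y)\,dx_1 / \phi_0(L,x;s,y)$, which combined with the formula for $h$ is precisely the claimed right-hand side.

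The main obstacle I anticipate is \emph{bookkeeping the normalization}: one must be careful that the single $\phi_0(L,x;s,y)^{-1}$ appearing in the final answer is produced correctly and not, say, doubled — the bridge on $[s,r_1]$ carries a $\phi_0(r_1,x_1;s,y)^{-1}$, the bridge on $[r_1,L]$ carries a $\phi_0(L,x;r_1,x_1)^{-1}$ inside $h$, and these intermediate normalizations must telescope against the heat-kernel factors (via a semigroup identity for $\phi_0$ and the delta-function initial condition) so that only the global $\phi_0(L,x;s,y)^{-1}$ survives. Concretely, I would introduce the substitution $\widetilde\phi(r,x;s,y) := \phi(r,x;s,y)$ and handle the bridge purely as conditioned Brownian motion from the outset: write $\E^{BB_{L,x;s,y}}[\cdots] = \phi_0(L,x;s,y)^{-1}\E^{W_{s,y}}[\cdots;\, B(L)\in dx]/dx$, apply the already-proven Brownian-motion Multi-time Feynman--Kac formula with the additional test function $g = \mathbf 1$ evaluated at the endpoint (i.e., take a limit of $g$ approximating $\delta_{x}$), and read off the result. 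This reduction sidesteps reproving the Markov/Duhamel machinery and isolates the only genuinely new point, the endpoint conditioning, which is routine once the disintegration formula is in hand. I would present the conditioning-reduction as the primary argument and remark that the direct simplex expansion also works \emph{mutatis mutandis}.
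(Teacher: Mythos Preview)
Your proposal is essentially correct and matches the paper's own treatment, which literally reads ``The proof of the above theorem also applies, \emph{mutatis mutandis}, to the following setting'' --- so you have supplied considerably more detail than the paper does. Your conditioning-reduction argument (write $\E^{BB_{L,x;s,y}}[\cdots] = \phi_0(L,x;s,y)^{-1}\lim_{g\to\delta_x}\E^{W_{s,y}}[\cdots\, g(B(L))]$ and invoke the already-proven Brownian-motion version) is clean and correct, and is the right thing to present as primary.

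One imprecision in your direct simplex argument is worth flagging: the function $\psi(s,y) = \sum_n J_{s,y}(n)$ does \emph{not} itself solve $-\partial_s\psi = \tfrac12\partial_y^2\psi + V\psi$, because each $J_{s,y}(n)$ carries the extra factor $\phi_0(L,x;s,y)^{-1}$, which depends on $(s,y)$. What solves the backward equation is $\tilde\psi(s,y) := \phi_0(L,x;s,y)\,\psi(s,y)$, with terminal data $\tilde\psi(r_1,\cdot) = h(r_1,\cdot)f_1(\cdot)\phi_0(L,x;r_1,\cdot)$ (now genuinely independent of $s$). Applying Lemma~\ref{adjoint} to $\tilde\psi$ and then dividing by $\phi_0(L,x;s,y)$ yields the claim; the factor $\phi_0(L,x;r_1,x_1)$ in the terminal data cancels against the $\phi_0(L,x;r_1,x_1)^{-1}$ inside $h$, exactly the telescoping you anticipated. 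Your terminal-data line as written has $\phi_0(L,x;s,y)$ in the denominator, which cannot be right since terminal data at $s=r_1$ must not depend on $s$. This is a bookkeeping slip rather than a conceptual gap, and your second approach sidesteps it entirely.
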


\begin{bibdiv}
\begin{biblist}
\bib{AT}{article}{
   author={Ayache, Antoine},
   author={Tzvetkov, Nikolay},
   title={$L\sp p$ properties for Gaussian random series},
   journal={Trans. Amer. Math. Soc.},
   volume={360},
   date={2008},
   number={8},
   pages={4425--4439},
   issn={0002-9947},
}

\bib{Bou}{article}{
   author={Bourgain, J.},
   title={Periodic nonlinear Schr\"odinger equation and invariant measures},
   journal={Comm. Math. Phys.},
   volume={166},
   date={1994},
   number={1},
   pages={1--26},
   issn={0010-3616},
}

\bib{Bou2D}{article}{
   author={Bourgain, Jean},
   title={Invariant measures for the $2$D-defocusing nonlinear Schr\"odinger
   equation},
   journal={Comm. Math. Phys.},
   volume={176},
   date={1996},
   number={2},
   pages={421--445},
   issn={0010-3616},
}

\bib{BGross}{article}{
   author={Bourgain, Jean},
   title={Invariant measures for the Gross-Piatevskii equation},
   journal={J. Math. Pures Appl. (9)},
   volume={76},
   date={1997},
   number={8},
   pages={649--702},
   issn={0021-7824},
}

\bib{Bou2}{article}{
   author={Bourgain, J.},
   title={Invariant measures for NLS in infinite volume},
   journal={Comm. Math. Phys.},
   volume={210},
   date={2000},
   number={3},
   pages={605--620},
   issn={0010-3616},
}

\bib{BB1}{article}{
	author={Bourgain, Jean},
	author={Bulut, Aynur},
	title={Almost sure global well posedness for the radial nonlinear Schr\"odinger equation on the unit ball I: the 2D case},
	journal={arXiv:1309.4072},
}

\bib{BB2}{article}{
	author={Bourgain, Jean},
	author={Bulut, Aynur},
	title={Almost sure global well posedness for the radial nonlinear Schr\"odinger equation on the unit ball II: the 3D case},
	journal={arXiv:1302:5409},
}

\bib{BB3}{article}{
	author={Bourgain, Jean},
	author={Bulut, Aynur},
	title={Invariant Gibbs measure evolution for the radial nonlinear wave equation on the 3D ball},
	journal={arXiv:1304.1477},
}

\bib{BrL}{article}{
   author={Brascamp, Herm Jan},
   author={Lieb, Elliott H.},
   title={On extensions of the Brunn-Minkowski and Pr\'ekopa-Leindler
   theorems, including inequalities for log concave functions, and with an
   application to the diffusion equation},
   journal={J. Functional Analysis},
   volume={22},
   date={1976},
   number={4},
   pages={366--389},
}

\bib{BTT}{article}{
   author={Burq, Nicolas},
   author={Thomann, Laurent},
   author={Tzvetkov, Nikolay},
   title={Long Time Dynamics for the one dimensional non linear Schr\"odinger equation},
   journal={arXiv:1002.4054},
}

\bib{BT1}{article}{
   author={Burq, Nicolas},
   author={Tzvetkov, Nikolay},
   title={Invariant measure for a three dimensional nonlinear wave equation},
   journal={Int. Math. Res. Not. IMRN},
   date={2007},
   number={22},
   pages={Art. ID rnm108, 26},
   issn={1073-7928},
}

\bib{BT2}{article}{
   author={Burq, Nicolas},
   author={Tzvetkov, Nikolay},
   title={Random data Cauchy theory for supercritical wave equations. II. A
   global existence result},
   journal={Invent. Math.},
   volume={173},
   date={2008},
   number={3},
   pages={477--496},
   issn={0020-9910},
}

\bib{Caf}{article}{
   author={Caffarelli, Luis A.},
   title={Monotonicity properties of optimal transportation and the FKG and
   related inequalities},
   journal={Comm. Math. Phys.},
   volume={214},
   date={2000},
   number={3},
   pages={547--563},
   issn={0010-3616},
}

\bib{Caf2}{article}{
   author={Caffarelli, Luis A.},
   title={Erratum: ``Monotonicity of optimal transportation and the FKG and
   related inequalities'' [Comm.\ Math.\ Phys. {\bf 214} (2000), no. 3,
   547--563;  MR1800860 (2002c:60029)]},
   journal={Comm. Math. Phys.},
   volume={225},
   date={2002},
   number={2},
   pages={449--450},
   issn={0010-3616},
}

\bib{CoL}{book}{
   author={Coddington, Earl A.},
   author={Levinson, Norman},
   title={Theory of ordinary differential equations},
   publisher={McGraw-Hill Book Company, Inc., New York-Toronto-London},
   date={1955},
   pages={xii+429},
}

\bib{CoOh}{article}{
   author={Colliander, James},
   author={Oh, Tadahiro},
   title={Almost sure well-posedness of the cubic nonlinear Schr\"odinger
   equation below $L\sp 2(\mathbb{T})$},
   journal={Duke Math. J.},
   volume={161},
   date={2012},
   number={3},
   pages={367--414},
   issn={0012-7094},
}

\bib{Eva}{book}{
   author={Evans, Lawrence C.},
   title={Partial differential equations},
   series={Graduate Studies in Mathematics},
   volume={19},
   edition={2},
   publisher={American Mathematical Society},
   place={Providence, RI},
   date={2010},
   pages={xxii+749},
   isbn={978-0-8218-4974-3},
}

\bib{Fol}{book}{
   author={Folland, Gerald B.},
   title={Real analysis, Modern techniques and their applications},
   edition={2},
   publisher={John Wiley \& Sons, Inc., New York},
   date={1999},
   pages={xvi+386},
   isbn={0-471-31716-0},
}

\bib{Fri}{book}{
   author={Friedman, Avner},
   title={Partial differential equations of parabolic type},
   publisher={Prentice-Hall Inc.},
   place={Englewood Cliffs, N.J.},
   date={1964},
   pages={xiv+347},
}

\bib{GKru}{book}{
   author={Gohberg, Israel},
   author={Krupnik, Naum},
   title={One-dimensional linear singular integral equations. I},
   series={Operator Theory: Advances and Applications},
   volume={53},
   note={Introduction;
   Translated from the 1979 German translation by Bernd Luderer and Steffen
   Roch and revised by the authors},
   publisher={Birkh\"auser Verlag},
   place={Basel},
   date={1992},
   pages={266},
   isbn={3-7643-2584-4},
}

\bib{Kac}{article}{
   author={Kac, M.},
   title={On some connections between probability theory and differential
   and integral equations},
   conference={
      title={Proceedings of the Second Berkeley Symposium on Mathematical
      Statistics and Probability, 1950},
   },
   book={
      publisher={University of California Press, Berkeley and Los Angeles},
   },
   date={1951},
   pages={189--215},
}

\bib{Kech}{book}{
   author={Kechris, Alexander S.},
   title={Classical descriptive set theory},
   series={Graduate Texts in Mathematics},
   volume={156},
   publisher={Springer-Verlag},
   place={New York},
   date={1995},
   pages={xviii+402},
   isbn={0-387-94374-9},
}

\bib{LSU}{book}{
   author={Lady{\v{z}}enskaja, O. A.},
   author={Solonnikov, V. A.},
   author={Ural{\cprime}ceva, N. N.},
   title={Linear and quasilinear equations of parabolic type},
   language={Russian},
   series={Translated from the Russian by S. Smith. Translations of
   Mathematical Monographs, Vol. 23},
   publisher={American Mathematical Society},
   place={Providence, R.I.},
   date={1968},
   pages={xi+648},
}

\bib{LRS}{article}{
   author={Lebowitz, Joel L.},
   author={Rose, Harvey A.},
   author={Speer, Eugene R.},
   title={Statistical mechanics of the nonlinear Schr\"odinger equation},
   journal={J. Statist. Phys.},
   volume={50},
   date={1988},
   number={3-4},
   pages={657--687},
   issn={0022-4715},
}

\bib{Loe}{book}{
   author={Lo{\`e}ve, Michel},
   title={Probability theory. II},
   edition={4},
   note={Graduate Texts in Mathematics, Vol. 46},
   publisher={Springer-Verlag},
   place={New York},
   date={1978},
   pages={xvi+413},
   isbn={0-387-90262-7},
}

\bib{LM}{article}{
	author={L\"uhrmann, Jonas},
	author={Mendelson, Dana},
	title={Random data cauchy theory for nonlinear wave equations of power-type on $\R^3$},	
	journal={arXiv:1309.1225}
}

\bib{McV}{article}{
   author={McKean, H. P.},
   author={Vaninsky, K. L.},
   title={Statistical mechanics of nonlinear wave equations},
   conference={
      title={Trends and perspectives in applied mathematics},
   },
   book={
      series={Appl. Math. Sci.},
      volume={100},
      publisher={Springer},
      place={New York},
   },
   date={1994},
   pages={239--264},
}

\bib{NORS}{article}{
   author={Nahmod, Andrea R.},
   author={Oh, Tadahiro},
   author={Rey-Bellet, Luc},
   author={Staffilani, Gigliola},
   title={Invariant weighted Wiener measures and almost sure global
   well-posedness for the periodic derivative NLS},
   journal={J. Eur. Math. Soc. (JEMS)},
   volume={14},
   date={2012},
   number={4},
   pages={1275--1330},
   issn={1435-9855},
}

\bib{NPS}{article}{
   author={Nahmod, Andrea},
   author={Pavlovi\'c, Nata\v{s}a},
   author={Staffilani, Gigliola},
   title={Almost sure existence of global weak solutions for super-critical Navier--Stokes equations},
   journal={arXiv:1204.5444},
}

\bib{NRSS}{article}{
	author={Nahmod, Andrea R.},
   author={Rey-Bellet, Luc},
	author={Sheffield, Scott},
   author={Staffilani, Gigliola},
	title={Absolute continuity of gaussian measures under certain gauge transformations},
	journal={arXiv:1103.4822},
}

\bib{NaSt}{article}{
	author={Nahmod, Andrea},
   author={Staffilani, Gigliola},
   title={Almost sure well-posedness for the periodic 3D quintic nonlinear Schr\"odinger equation below energy space},
   journal={arXiv:1308.1169}
}

\bib{Ohcoup}{article}{
   author={Oh, Tadahiro},
   title={Invariant Gibbs measures and a.s.\ global well posedness for
   coupled KdV systems},
   journal={Differential Integral Equations},
   volume={22},
   date={2009},
   number={7-8},
   pages={637--668},
   issn={0893-4983},
}

\bib{Ohwhite}{article}{
   author={Oh, Tadahiro},
   title={Invariance of the white noise for KdV},
   journal={Comm. Math. Phys.},
   volume={292},
   date={2009},
   number={1},
   pages={217--236},
   issn={0010-3616},
}

\bib{ReS1}{book}{
   author={Reed, Michael},
   author={Simon, Barry},
   title={Methods of modern mathematical physics. I},
   edition={2},
   note={Functional analysis},
   publisher={Academic Press Inc. [Harcourt Brace Jovanovich Publishers]},
   place={New York},
   date={1980},
   pages={xv+400},
   isbn={0-12-585050-6},
}

\bib{ReS2}{book}{
   author={Reed, Michael},
   author={Simon, Barry},
   title={Methods of modern mathematical physics. II. Fourier analysis,
   self-adjointness},
   publisher={Academic Press [Harcourt Brace Jovanovich Publishers]},
   place={New York},
   date={1975},
   pages={xv+361},
}

\bib{ReS3}{book}{
   author={Reed, Michael},
   author={Simon, Barry},
   title={Methods of modern mathematical physics. III},
   note={Scattering theory},
   publisher={Academic Press [Harcourt Brace Jovanovich Publishers]},
   place={New York},
   date={1979},
   pages={xv+463},
   isbn={0-12-585003-4},
}

\bib{ReS4}{book}{
   author={Reed, Michael},
   author={Simon, Barry},
   title={Methods of modern mathematical physics. IV. Analysis of operators},
   publisher={Academic Press [Harcourt Brace Jovanovich Publishers]},
   place={New York},
   date={1978},
   pages={xv+396},
   isbn={0-12-585004-2},
}

\bib{ReY}{book}{
   author={Revuz, Daniel},
   author={Yor, Marc},
   title={Continuous martingales and Brownian motion},
   series={Grundlehren der Mathematischen Wissenschaften [Fundamental
   Principles of Mathematical Sciences]},
   volume={293},
   edition={3},
   publisher={Springer-Verlag},
   place={Berlin},
   date={1999},
   pages={xiv+602},
   isbn={3-540-64325-7},
}

\bib{Rich}{article}{
	author={Richards, Geordie},
	title={Invariance of the gibbs measure for the periodic quartic gKDV},
	journal={arXiv:1209.4337},
}

\bib{Rid}{article}{
   author={Rider, Brian C.},
   title={On the $\infty$-volume limit of the focusing cubic Schr\"odinger
   equation},
   journal={Comm. Pure Appl. Math.},
   volume={55},
   date={2002},
   number={10},
   pages={1231--1248},
   issn={0010-3640},
}

\bib{Sim}{book}{
   author={Simon, Barry},
   title={Functional integration and quantum physics},
   edition={2},
   publisher={AMS Chelsea Publishing, Providence, RI},
   date={2005},
   pages={xiv+306},
   isbn={0-8218-3582-3},
}

\bib{SoS}{article}{
	author={Sohinger, Vedran},
	author={Staffilani, Gigliola},
	title={Randomization and the Gross--Pitaevskii hierarchy},	
	journal={arXiv:1308.3714}
}

\bib{SVar}{book}{
   author={Stroock, Daniel W.},
   author={Varadhan, S. R. Srinivasa},
   title={Multidimensional diffusion processes},
   series={Classics in Mathematics},
   note={Reprint of the 1997 edition},
   publisher={Springer-Verlag},
   place={Berlin},
   date={2006},
   pages={xii+338},
   isbn={978-3-540-28998-2},
   isbn={3-540-28998-4},
}

\bib{Suz}{article}{
   author={de Suzzoni, Anne-Sophie},
   title={Invariant measure for the cubic wave equation on the unit ball of
   $\mathbb{R}\sp 3$},
   journal={Dyn. Partial Differ. Equ.},
   volume={8},
   date={2011},
   number={2},
   pages={127--147},
   issn={1548-159X},
}

\bib{Suz2}{article}{
   author={Suzzoni, Anne-Sophie de},
   title={Consequences of the choice of a particular basis of $L^2(S^3)$ for the cubic wave equation on the sphere and the Euclidean space},
   journal={Commun. Pure Appl. Anal.},
   volume={13},
   date={2014},
   number={3},
   pages={991--1015},
   issn={1534-0392},
}

\bib{Tao}{book}{
   author={Tao, Terence},
   title={Nonlinear dispersive equations},
   series={CBMS Regional Conference Series in Mathematics},
   volume={106},
   note={Local and global analysis},
   publisher={Published for the Conference Board of the Mathematical
   Sciences, Washington, DC},
   date={2006},
   pages={xvi+373},
   isbn={0-8218-4143-2},
}

\bib{Thom}{article}{
   author={Thomann, Laurent},
   title={Random data Cauchy problem for supercritical Schr\"odinger
   equations},
   journal={Ann. Inst. H. Poincar\'e Anal. Non Lin\'eaire},
   volume={26},
   date={2009},
   number={6},
   pages={2385--2402},
   issn={0294-1449},
}

\bib{ThTz}{article}{
   author={Thomann, Laurent},
   author={Tzvetkov, Nikolay},
   title={Gibbs measure for the periodic derivative nonlinear Schr\"odinger
   equation},
   journal={Nonlinearity},
   volume={23},
   date={2010},
   number={11},
   pages={2771--2791},
   issn={0951-7715},
}

\bib{Titch}{book}{
   author={Titchmarsh, E. C.},
   title={Eigenfunction expansions associated with second-order differential
   equations. Part I},
   series={Second Edition},
   publisher={Clarendon Press, Oxford},
   date={1962},
   pages={vi+203},
}

\bib{Tzve}{article}{
   author={Tzvetkov, Nikolay},
   title={Invariant measures for the defocusing nonlinear Schr\"odinger
   equation},
   journal={Ann. Inst. Fourier (Grenoble)},
   volume={58},
   date={2008},
   number={7},
   pages={2543--2604},
   issn={0373-0956},
}

\bib{Tzono}{article}{
   author={Tzvetkov, N.},
   title={Construction of a Gibbs measure associated to the periodic
   Benjamin-Ono equation},
   journal={Probab. Theory Related Fields},
   volume={146},
   date={2010},
   number={3-4},
   pages={481--514},
   issn={0178-8051},
}

\bib{Var}{book}{
   author={Varadhan, S. R. S.},
   title={Stochastic processes},
   series={Courant Lecture Notes in Mathematics},
   volume={16},
   publisher={Courant Institute of Mathematical Sciences, New York},
   date={2007},
   pages={x+126},
   isbn={978-0-8218-4085-6},
}

\bib{Zyg}{book}{
   author={Zygmund, A.},
   title={Trigonometric series. Vol. I, II},
   series={Cambridge Mathematical Library},
   edition={3},
   publisher={Cambridge University Press, Cambridge},
   date={2002},
   pages={xii; Vol. I: xiv+383 pp.; Vol. II: viii+364},
   isbn={0-521-89053-5},
}

\end{biblist}
\end{bibdiv}
	
\end{document}